\numberwithin{equation}{subsection}
\newtheorem{proposition}{Proposition}[subsection]
\newtheorem{lemma}[proposition]{Lemma}
\newtheorem{corollary}[proposition]{Corollary}
\newtheorem{theorem}{Theorem}[section]
\theoremstyle{definition}
\newtheorem{definition}{Definition}[subsection]
\newtheorem{remark}{Remark}[subsection]
\newcommand{\eqdef}{\overset{\mbox{\tiny{def}}}{=}}
\newcommand{\gzerozeronorm}[1]{S_{g_{00}+1;#1}}
\newcommand{\gzerostarnorm}[1]{S_{g_{0*};#1}}
\newcommand{\hstarstarnorm}[1]{S_{h_{**};#1}}
\newcommand{\gnorm}[1]{S_{g;#1}}
\newcommand{\fluidnorm}[1]{S_{P - \bar{p},u^*;#1}}
\newcommand{\totalnorm}[1]{\mathbf{S}_{#1}}
\newcommand{\newufluidenergy}[1]{\mathscr{E}_{u^*;#1}}
\newcommand{\gzerozeroenergy}[1]{E_{g_{00}+1;#1}}
\newcommand{\gzerostarenergy}[1]{E_{g_{0*};#1}}
\newcommand{\hstarstarenergy}[1]{E_{h_{**};#1}}
\newcommand{\genergy}[1]{E_{g;#1}}
\newcommand{\fluidenergy}[1]{E_{P - \bar{p},u^*;#1}}
\newcommand{\totalenergy}[1]{\mathbf{E}_{#1}}
\newcommand{\Ent}{\eta}
\newcommand{\speed}{c_s}
\newcommand{\underpartial}{\underline{\partial}}
\newcommand{\decayparameter}{\varkappa}
\begin{document}

\title{The Nonlinear Future-Stability of the FLRW Family of Solutions to the Euler-Einstein System with a Positive Cosmological Constant}

\email{jspeck@math.princeton.edu}
\author{Jared Speck$^*$}

\begin{abstract}
	In this article, we study small perturbations of the family of Friedmann-Lema\^{\i}tre-Robertson-Walker 
	cosmological background solutions to the $1 + 3$ dimensional Euler-Einstein system with a positive cosmological constant. 
	These background solutions describe an initially uniform quiet fluid of positive energy density evolving in a spacetime 
	undergoing accelerated expansion. Our nonlinear analysis shows that under the equation of state 
	$p = \speed^2 \rho,$ $0 < \speed^2 < 1/3,$ the background solutions are globally future-stable. In particular, we prove that 
	the perturbed spacetime solutions, which have the topological structure $[0,\infty) \times \mathbb{T}^3,$ are future 
	causally geodesically complete. These results are extensions of previous results derived by the author in a collaboration 
	with I. Rodnianski, in which the fluid was assumed to be irrotational. Our novel analysis of a fluid with non-zero vorticity
	is based on the use of suitably-defined energy currents.
\end{abstract}

\thanks{$^*$Princeton University, Department of Mathematics, Fine Hall, Washington Road, Princeton, NJ 08544-1000, USA. This article was initiated while the author was a postdoctoral researcher at the University of Cambridge.}

\thanks{The author was supported in part by the Commission of the European Communities, ERC Grant Agreement No 208007. He was also supported in part by an NSF All-Institutes Postdoctoral Fellowship administered by the Mathematical Sciences Research Institute through its core grant DMS-0441170.}

\keywords{accelerated expansion; cosmological constant; de Donder gauge; energy currents; expanding spacetime; 
expanding universe; geodesically complete; global existence; harmonic gauge; perfect fluid; relativistic Euler equations; relativistic fluid; wave coordinates}

\thanks{\emph{Mathematics Subject Classification 2010.} Primary: 35A01; Secondary: 35L99, 35Q31, 35Q35, 35Q76, 83C05, 83F05}

\date{Version of \today}
\maketitle
\setcounter{tocdepth}{2}

\pagenumbering{roman} \tableofcontents \newpage \pagenumbering{arabic}


\section{Introduction}

The Euler-Einstein system models the evolution of a dynamic spacetime\footnote{A spacetime is defined to be a $4-$dimensional time-orientable Lorentzian manifold $\mathcal{M}$ together with a metric $g_{\mu \nu}$ on $\mathcal{M}$ of signature $(-,+,+,+).$} $(\mathcal{M},g_{\mu \nu})$ containing a perfect fluid. As is discussed below, in applications to cosmology, an ``additional term'' of the form $\Lambda g_{\mu \nu}$ is often added to the equations in order to alter the nature of solutions. The constant $\Lambda,$ which was first contemplated by Einstein \cite{aE1917}, is known as the \emph{cosmological constant}. For physical reasons to be discussed below, we assume throughout the article that $\Lambda > 0$ is a fixed constant, and that the fluid equation of state is $p=\speed^2 \rho,$ where $p$ is the fluid \emph{pressure}, $\rho$ is the \emph{proper energy density}, and the non-negative constant $\speed$ is the \emph{speed of sound}. As is fully discussed in Section \ref{S:EE}, under these assumptions, the Euler-Einstein system comprises the equations\footnote{Throughout the article, we work in units with $8 \pi G = c = 1,$ where $c$ is the speed of light propagation in Maxwell's theory of electromagnetism, and $G$ is Newton's universal gravitational constant.}

\begin{subequations}
\begin{align} 
	\mbox{Ric}_{\mu \nu} - \frac{1}{2}R g_{\mu \nu} + \Lambda g_{\mu \nu} & = T_{\mu \nu}, && (\mu, \nu = 0,1,2,3),  
		\label{E:EinsteinIntro} \\
	D_{\alpha} T^{\alpha \mu} & = 0,&& (\mu = 0,1,2,3),  \label{E:Fluidenergymomentumintro} 
\end{align}
\end{subequations}
where $\mbox{Ric}_{\mu \nu}$ is the \emph{Ricci curvature tensor}, $R = g^{\alpha \beta} \mbox{Ric}_{\alpha \beta}$ is the \emph{scalar curvature}, $T_{\mu \nu} = (\rho + p)u_{\mu} u_{\nu} + p g_{\mu \nu}$ is the energy-momentum tensor of a perfect fluid, and 
$u^{\mu}$ is the \emph{four-velocity}, a future-directed, timelike vectorfield that is subject to the unit normalization constraint $g_{\alpha \beta}u^{\alpha} u^{\beta} = -1.$ The fundamental unknowns in the above system may be considered to be $(\mathcal{M},g_{\mu \nu},p,u^{\mu}).$ Although we limit our discussion to the physically relevant case of $1 + 3$ dimensions, we expect that our work can be easily generalized to apply to the case of $1 + n$ dimensions, $n \geq 3.$

Our choice of the equation of state $p=\speed^2 \rho,$ which is often made in the cosmology literature, is sufficient to close the above system of equations. We remark that without specifying an equation of state, there would be too many fluid variables, and not enough fluid equations. There are far too many references on this subject to provide a comprehensive list, but for an overview of the field, readers may consult e.g. the mathematical references \cite{sC2001}, \cite{aR2005b}, \cite{rW1984}, and the cosmological references \cite{jPbR2003}, \cite{vS2004}. As is explained in Section \ref{S:backgroundsolution}, under the equation of state $p=\speed^2 \rho,$ there exists a well-known family of Friedmann-Lema\^{\i}tre-Robertson-Walker (FLRW) solutions to \eqref{E:EinsteinIntro} - \eqref{E:Fluidenergymomentumintro} that are frequently used to model a fluid-filled universe undergoing accelerated expansion; these are the solutions that we investigate in detail in this article. We remark that \textbf{the accelerated nature of the expansion in the FLRW family is generated by the positivity of $\Lambda$}; when $\Lambda = 0,$ the FLRW expansion is no longer accelerated. 

The cases $p=0$ and $p=(1/3) \rho,$ which are known as the ``pressureless dust'' and ``radiative'' equations of state, are of special significance in the cosmological literature. The latter is often used as a simple model for a ``radiation-dominated'' universe, while the former for a ``matter-dominated'' universe. Unfortunately, as we will see, these two equations of state lie just outside of the scope of our main results, which are restricted to the parameter range $0 < \speed < \sqrt{1/3}.$ We expect that an analogous stability result can be proved in the case $\speed = 0,$ i.e., that the failure of our proof in the case of the pressureless dust is only due to the fact that our methods degenerate: in the case $\speed = 0,$ it can be checked that the energy-density loses one degree of differentiability, which requires a re-working of all of the estimates. We will treat this case in detail in an upcoming article. In contrast, \emph{there may be instability in the cases} $\speed \geq \sqrt{1/3}.$ This is an interesting possibility worthy of further investigation. We state our main results roughly here; they are are stated more precisely as theorems in Sections \ref{S:GlobalExistence} and \ref{S:Asymptotics}.

\begin{changemargin}{.25in}{.25in} 
\textbf{Main Results.} \
If $0 < \speed < \sqrt{1/3},$ then the FLRW background solution 
$([0,\infty) \times \mathbb{T}^3, \widetilde{g}_{\mu \nu}, \widetilde{p},
\widetilde{u}^{\mu}),$ $(\mu, \nu = 0,1,2,3),$ to \eqref{E:EinsteinIntro} - \eqref{E:Fluidenergymomentumintro}, which describes an initially uniform quiet fluid of constant \emph{positive} pressure $\bar{p},$ is globally future-stable under small perturbations. In particular, small perturbations of the initial data corresponding to the background solution have maximal globally hyperbolic developments that are future\footnote{Throughout this article, $\partial_t$ is future-directed.} causally geodesically complete. Here, $\widetilde{g} = -dt^2 + e^{2 \Omega(t)}\sum_{i=1}^3 (dx^i)^2,$ $\widetilde{p} = e^{-(3 + \decayparameter) \Omega}\bar{p},$ and $\widetilde{u}^{\mu} = (1,0,0,0),$ where $\decayparameter  = 3 \speed^2$ and $\Omega(t) \sim (\sqrt{\Lambda/3})t$ is defined in \eqref{E:BigOmega}. Furthermore, in the wave coordinate system introduced in Section \ref{SS:harmoniccoodinates}, the components $g_{\mu \nu}$ of the perturbed metric, its inverse $g^{\mu \nu},$ $p,$ $u^{\mu},$ and various coordinate derivatives of these quantities converge as $t \rightarrow \infty.$
\end{changemargin}

\begin{remark}
	Note that our results do not address the question of whether or not the perturbed solution is converging back to 
	exactly the FLRW solution.
\end{remark}

\begin{remark}
	In this article, we restrict our attention to spacetimes that have spatial slices that are diffeomorphic to $\mathbb{T}^3$ 
	(i.e., $[-\pi,\pi]^3$ with the ends identified). However, we expect that the local patching arguments developed
	in \cite{hR2008} could be used to prove future-stability results for a larger class of background
	solutions featuring alternative compact spatial topologies.
\end{remark}

The \textbf{Main Results} above are an extension of recent work by Rodnianski and the author \cite{iRjS2009}, which provides a proof of analogous results in the case that the fluid is \emph{irrotational}. In a future article, we plan to further extend these results by analyzing the behavior of the fluid when the spacetime expansion is sub-exponential. In the case of an irrotational fluid, the fluid equations (that is, the relativistic Euler equations \eqref{E:Fluidenergymomentumintro}, which are more fully discussed in Section \ref{S:EE}) are equivalent to the following scalar equation for the derivatives $\partial \Phi$ of the \emph{fluid potential} $\Phi:$

\begin{align} \label{E:IrrotationalFluid}
	D_{\alpha} (\sigma^{s} D^{\alpha} \Phi) & = 0,
\end{align}
where $\sigma = - g^{\alpha \beta}(\partial_{\alpha} \Phi)(\partial_{\beta} \Phi)$ is the \emph{enthalpy per particle},
and $s = (1 - \speed^2)/(2\speed^2).$ We remark that in the irrotational case, the pressure, proper energy density,
and four-velocity can be expressed in terms of $\partial \Phi$ as $p = \frac{1}{s+1} \sigma^{s+1},$ $\rho = \frac{2s+1}{s+1} \sigma^{s+1},$ and $u_{\mu} = - \sigma^{-1/2} \partial_{\mu} \Phi.$ Before discussing the differences between our analysis of the full Euler system \eqref{E:Fluidenergymomentumintro} and the scalar equation \eqref{E:IrrotationalFluid}, we provide some additional physical context for the problem at hand.

Our principal motivation for inserting a positive cosmological constant into the Einstein equations is that there is experimental evidence for accelerated expansion. The first such evidence came from experiments carried out in the 1990's, which involved measurements of the Cosmic Microwave Background and the \emph{red shift} of type IA supernovae. The notion of accelerated expansion is perhaps best illustrated through a specific example, such as the following solution to the Einstein-vacuum equations with a positive cosmological constant (i.e., \eqref{E:EinsteinIntro} with $T_{\mu \nu} = 0$): the metric $g = - dt^2 + e^{2Ht} \sum_{a=1}^3 (dx^a)^2$ on the manifold $\mathcal{M} =(-\infty, \infty) \times \mathbb{R}^3,$ where $H = \sqrt{\Lambda/3}.$ In this spacetime, the curves $\gamma^{\mu}(\tau) = (\tau,b,0,0),$ where $b$ is a constant, are (non-affine parameterized) geodesics representing the trajectories of a family of physical observers. Let us denote such a curve (observer) by $\gamma_{(b)}.$ Here, $(x^0,x^1,x^2,x^3)$ is a standard coordinate system on $(-\infty, \infty) \times \mathbb{R}^3.$ At $\tau =0,$ the observers $\gamma_{(0)}$ and $\gamma_{(b)}$ are initially separated by a distance $b,$ while at later times $\tau,$ their distance is $b e^{H \tau}.$ It is roughly this kind of growth in distance at an increasing rate that leads to the term ``accelerated expansion.'' Note that the FLRW background metrics from the \textbf{Main Results} feature this kind of behavior. Also note that the aforementioned distance is measured in the Riemannian manifold $(\Sigma_{\tau}, \underline{g}_{(\tau)}),$ where $\Sigma_{\tau} \eqdef \lbrace x \in \mathcal{M} \ | \ x^0 = \tau \rbrace,$ and $\underline{g}_{(\tau)}$ is the Riemannian metric on $\Sigma_{\tau}$ induced by $g,$ i.e., the first fundamental form of $\Sigma_{\tau}.$ Consequently, it is not a notion inherent to spacetime itself, but instead depends on the particular foliation $\lbrace \Sigma_{\tau} \rbrace_{\tau \in (-\infty,\infty)}.$

The \textbf{Main Results} stated above have made implicit use of the following fundamental facts concerning the Einstein equations:

\begin{enumerate}
	\item The question of the local existence of solutions can be formulated as an \emph{initial value problem}, in which
		suitably specified initial data give rise to unique local solutions.
	\item Each initial data set satisfying the \emph{Einstein constraint equations} (which are discussed below) launches a unique 
		maximal solution known as the \emph{maximal globally hyperbolic development} of the data. We emphasize that we use the 
		term ``maximal'' to mean the largest possible globally hyperbolic\footnote{Recall that a spacetime is globally hyperbolic 
		if and only if it contains a Cauchy hypersurface (see Definition \ref{D:Cauchy}).} spacetime \emph{determined uniquely} by 
		the data.
\end{enumerate}
These two facts, which were established by Choquet-Bruhat \cite{CB1952} and Choquet-Bruhat/Geroch \cite{cBgR1969} respectively, are by now well-known. However, their validity is disguised by the diffeomorphism invariance of the equations. Roughly speaking, this means that each spacetime ``solution'' is represented by an infinite number of solutions to the equations \eqref{E:EinsteinIntro} - \eqref{E:Fluidenergymomentumintro}, all of which are related by changes of coordinates. A closely related difficulty is that there is no canonical coordinate system for analyzing the Einstein equations. In her proof of $(1),$ Choquet-Bruhat overcame these difficulties through the use of \emph{wave coordinates}\footnote{A wave coordinate system is also known as \emph{harmonic gauge} or \emph{de Donder gauge}.}, a special class of coordinate systems that date back at least to 1921, where they are featured in the work of de Donder \cite{tD1921}. More specifically, the classic wave coordinate condition is $\Gamma^{\mu} = 0,$ $(\mu=0,1,2,3),$ where the $\Gamma^{\mu} = g^{\alpha \beta} \Gamma_{\alpha \ \beta}^{\ \mu}$ (see \eqref{E:EMBIChristoffeldef}) are the contracted Christoffel symbols of the metric. Although it is not difficult (see Section \ref{SS:IDREDUCED}) to construct a spacetime coordinate system in which the $\Gamma^{\mu} = 0$ along the ``initial'' Cauchy hypersurface $\mathring{\Sigma}$ (upon which the initial data are specified), it is not immediately clear whether or not this condition can always be extended (for solutions to the Einstein equations) to include a spacetime neighborhood of $\mathring{\Sigma}.$ Choquet-Bruhat was the first to answer this question in the affirmative.

For the purposes of proving a local existence result, the key point is that in a wave coordinate system, the ``gravitational part'' of the Einstein equations (i.e. equations \eqref{E:EinsteinIntro}) is equivalent to a modified system comprising \emph{quasilinear wave equations}. Roughly speaking, the modified system is obtained by setting $\Gamma^{\mu} = 0$ for certain
terms in \eqref{E:EinsteinIntro}. When the modified equations are coupled to the first order fluid equations \eqref{E:Fluidenergymomentumintro} (which are well-known to be hyperbolic), the result is a hyperbolic system of mixed order. For such hyperbolic systems, local existence in a suitable Sobolev space follows from rather standard methods based on energy estimates; see e.g. \cite{lH1997}, \cite{jSmS1998}, \cite{cS2008}, \cite{mT1997III}, and the discussion in Section \ref{SS:ClassicalLocalExistence}. From this perspective, the main contribution of Choquet-Bruhat's work \cite{CB1952} is as follows: under suitable assumptions, the $\Gamma^{\mu}= 0$ condition is propagated by the flow of the modified coupled system;
see Section \ref{SS:PreservationofHarmonicGauge} for more details.

Before commenting further on the subtleties of the analysis, let us discuss some standard facts concerning the initial data. The initial data for the Euler-Einstein system consist of a $3-$dimensional Riemannian manifold $\mathring{\Sigma},$ and the following tensorfields on $\mathring{\Sigma}:$ a Riemannian metric $\mathring{\underline{g}}_{jk},$ a symmetric two-tensor $\mathring{\underline{K}}_{jk},$ a function $\mathring{p},$ and a vectorfield $\underline{\mathring{u}}^j,$ 
$(j,k = 1,2,3).$ A solution consists of a $4-$dimensional manifold $\mathcal{M},$ a Lorentzian metric $g_{\mu \nu},$ a function $p,$ a future-directed, unit-normalized vectorfield $u^{\mu},$ $(\mu, \nu = 0,1,2,3),$ on $\mathcal{M}$ satisfying \eqref{E:EinsteinIntro} - \eqref{E:Fluidenergymomentumintro}, and an embedding $\mathring{\Sigma} \hookrightarrow \mathcal{M}$ such that $\mathring{\underline{g}}_{jk}$ is the first fundamental form\footnote{Recall that $\mathring{\underline{g}}$ is defined at the point $x$ by $\mathring{\underline{g}}(X,Y) = g(X,Y)$ for all $X,Y \in T_x \mathring{\Sigma}.$} of $\mathring{\Sigma},$ $\mathring{\underline{K}}_{jk}$ is the second fundamental form\footnote{Recall that $\mathring{\underline{K}}$ is defined at the point x by $\mathring{\underline{K}}(X,Y) \eqdef g(D_{X} \hat{N},Y)$ for all $X,Y \in T_x \mathring{\Sigma},$ where $\hat{N}$ is the future-directed unit normal to $\mathring{\Sigma}$ at $x.$} of $\mathring{\Sigma},$ the restriction of $p$ to $\mathring{\Sigma}$ is $\mathring{p},$ and $(\underline{\mathring{u}}^1, \underline{\mathring{u}}^2, \underline{\mathring{u}}^3)$ is the $g-$orthogonal projection of $(u^0,u^1,u^2,u^3)$ onto $\mathring{\Sigma};$ see Section \ref{SS:IDENTIFICATIONS} for a summary of the conventions we use for identifying tensors inherent to $\mathring{\Sigma}$ with spacetime tensors. It is important to note that the initial value problem is overdetermined, and that the data are subject to the \emph{Gauss} and \emph{Codazzi} constraints:

\begin{subequations}
\begin{align}
	\mathring{\mathring{\underline{R}}} - \mathring{\underline{K}}_{ab} \mathring{\underline{K}}^{ab} + (\mathring{\underline{g}}^{ab} 
		\mathring{\underline{K}}_{ab})^2 & = 2 T_{00}|_{\mathring{\Sigma}}, \label{E:Gaussintro} \\
	\mathring{\underline{D}}^a \mathring{\underline{K}}_{aj} - \mathring{\underline{g}}^{ab}  \mathring{\underline{D}}_j \mathring{\underline{K}}_{ab}  & = 
		T_{0j}|_{\mathring{\Sigma}}, \label{E:Codazziintro}
\end{align}
\end{subequations}
where $\mathring{\underline{R}}$ is the scalar curvature of $\mathring{\underline{g}},$ $\mathring{\underline{D}}$ is the Levi Civita connection corresponding to $\mathring{\underline{g}}.$ In the above formulas, indices are lowered and raised with
$\mathring{\underline{g}}$ and $\mathring{\underline{g}}^{-1},$ and the future-directed unit normal $\hat{N}$ to $\mathring{\Sigma}$ is assumed to have the components $\hat{N}^{\mu}= \delta_0^{\mu}.$ We note that the analysis of Section \ref{SSS:InitialDataOriginalSystem} implies that under the equation of state $p=\speed^2 \rho,$ we have that $T_{00} |_{\mathring{\Sigma}} = \frac{1 + \speed^2}{\speed^2}\mathring{p} (\mathring{u}_0)^2 - \mathring{p},$ and $T_{0j}|_{\mathring{\Sigma}}= \frac{1 + \speed^2}{\speed^2}\mathring{p} \mathring{u}_0 \underline{\mathring{u}}_j,$ where $\mathring{u}_0 = - \sqrt{1 + \mathring{\underline{g}}_{ab}\underline{\mathring{u}}^a \underline{\mathring{u}}^b}.$

\begin{remark}
In this article, we do not address the issue of solving the constraint equations for the system 
\eqref{E:EinsteinIntro} - \eqref{E:Fluidenergymomentumintro}.
\end{remark}

The fact $(2)$ from above was settled by Choquet-Bruhat and Geroch in 1969 \cite{cBgR1969}, who showed that every initial data set satisfying the constraints \eqref{E:Gaussintro} - \eqref{E:Codazziintro} launches a unique \emph{maximal globally hyperbolic development}. Roughly speaking, this is the largest spacetime solution to the Einstein equations that is uniquely determined by the data (see Section \ref{SS:IVP} for additional details). This is still a local existence result in the sense that the resulting spacetime may contain singularities. In particular, causal geodesics may terminate, which in physics terminology means that an observer (light ray in the case of null geodesics) may run into the end of spacetime in finite affine parameter. Such a singularity corresponds to the breakdown of the deterministic nature of the laws of physics. For spacetimes launched by initial data near that of the FLRW background solutions, \emph{our main result rules out the possibility of these singularities} for observers (light rays) traveling in the ``future direction.''

\subsection{Comparison with previous work}

A noteworthy feature of our main result is that it shows that fluids behave very differently on exponentially expanding backgrounds than they do in flat spacetime. More specifically, if one fixes a background metric on $[0,\infty) \times \mathbb{T}^3$ near $\widetilde{g}_{\mu \nu},$ then our proof can be easily adapted to show that the relativistic Euler equations \eqref{E:Fluidenergymomentumintro} on this expanding background with $0 < \speed < \sqrt{1/3}$ have global solutions arising from data that are close to that of an initial uniform quiet fluid state. In contrast, Christodoulou's monograph \cite{dC2007} shows that on the Minkowski space background, shock singularities can form in solutions to the Euler equations arising from data that are arbitrarily close to that of a uniform quiet fluid state, even if the solution is irrotational. Thus, we state the following as a corollary of our proof: \emph{exponentially expanding spacetimes can stabilize perfect fluids}.

The first author to obtain global stability results for the Einstein equations with $\Lambda > 0$ was Helmut Friedrich, first for $1 + 3-$dimensional vacuum spacetimes \cite{hF1986a}, then later for the Einstein-Maxwell and Einstein-Yang-Mills systems \cite{hF1991}. He developed a technique known as the \emph{conformal method}, which translates (via a suitable change of variables) the question of the global stability of a solution to the Einstein equations into the question of local stability for the \emph{conformal field equations}. Friedrich's work on vacuum spacetimes was later extended by Anderson \cite{mA2005} to apply to $1 + n$ dimensional manifolds, where $n$ is odd. Unfortunately, not all matter models seem to be amenable to the conformal method. 

An alternative to the conformal method, which was provided by Ringstr\"{o}m in \cite{hR2008}, inspired our work on the irrotational case \cite{iRjS2009}. In \cite{hR2008}, Ringstr\"{o}m used a wave coordinate approach to show the existence of an open family of future causally geodesically complete solutions the Einstein-scalar field system. In \cite{hR2008}, the scalar field is postulated to satisfy the equation $g^{\alpha \beta} D_{\alpha} D_{\beta} \Phi = V(\Phi)$ with associated energy-momentum tensor $T_{\mu \nu} = (\partial_{\mu} \Phi)(\partial_{\nu} \Phi) - \big[\frac{1}{2} g^{\alpha \beta} (\partial_{\alpha} \Phi) (\partial_{\beta} \Phi) + V(\Phi)\big]g_{\mu \nu},$ where $D$ is the Levi-Civita connection corresponding to $g,$ and $V'(\Phi)$ is a nonlinearity satisfying $V(0) > 0,$ $V'(0) = 0,$ $V''(0) > 0.$ Although the cosmological constant is set equal to $0$ in \cite{hR2008}, these three conditions allow the nonlinearity $V$ to emulate the presence of a \emph{positive} cosmological constant, if $\Phi$ is sufficiently small. In fact, the case $\Phi \equiv 0$ is equivalent to the vacuum Einstein equations with a cosmological constant $\Lambda = V(0).$ One of the main advantages of Ringstr\"{o}m's framework is that it can be adapted to handle a wide variety of matter models. Its robustness is what prompted us to adapt it to the problem of interest in this article.
 
Our results were further motivated by \cite{uBaRoR1994}, in which Brauer, Rendall, and Reula showed a Newtonian analogue of our main result. More specifically, they studied Newtonian cosmological models\footnote{Their models were based on Newton-Cartan theory, which is a slight generalization of ordinary Newtonian gravitational theory that can be endowed with a highly geometric interpretation.} with a positive cosmological constant and with perfect fluid sources under the equation of state $p = C \rho^{\gamma},$ where $\rho \geq 0$ is the density, $C > 0$ is a constant, and $\gamma > 1$ is a constant. They showed that small perturbations of a uniform quiet fluid state of constant positive density lead to a global solution. The fact that their work did not require the fluid to be irrotational strongly suggested that our results from \cite{iRjS2009} could be generalized to the \textbf{Main Results} stated above.

Finally, we remark that the mechanism of stability for the problem at hand is of a very different nature than that encountered in the well-known body of work on the stability of the Minkowski spacetime solution (the original proof is \cite{dCsK1993}, while alternate proofs/extensions can be found in e.g. \cite{lBnZ2009}, \cite{sKfN2003}, \cite{jL2009}, \cite{hLiR2010}, \cite{jS2010b}), which features $\Lambda = 0.$ In the latter case, there is a delicate competition between the size of the nonlinear terms and the dispersive nature of solutions to the corresponding linearized equations, while in the present case, we are aided by the energy-dissipative effect induced by $\Lambda > 0.$ See \cite{hR2008} for a more thorough comparison.

\subsection{Comments on the analysis} \label{SS:Commentsonanalysis}
We now discuss our strategy for proving global existence. Our analysis of the metric components $g_{\mu \nu}$ is essentially the same as in \cite{hR2008} and \cite{iRjS2009}, but our analysis of the fluid variables differs markedly from the analysis of the scalar fluid equation in \cite{iRjS2009}, and from the analysis encountered in \cite{hR2008}. Let us first discuss a simple model problem that captures the spirit of our analysis of the $g_{\mu \nu}.$ Consider the inhomogeneous wave equation $g^{\alpha \beta} D_{\alpha} D_{\beta} v = F$ for the model metric $g = -dt^2 + e^{2t} \sum_{a=1}^3 (dx^a)^2$ on the manifold-with-boundary $\mathcal{M} = [0,\infty) \times \mathbb{T}^3.$ Here, we are using a standard local coordinate
system $x^1, x^2, x^3$ on $\mathbb{T}^3.$ Simple calculations imply that relative to this coordinate system, the wave equation can be expressed as follows:

\begin{align} \label{E:modelequation}
	- \partial_t^2 v + e^{-2t} \delta^{ab} \partial_a \partial_b v = 3 (\partial_t v)^2 + F.
\end{align}
To estimate solutions to \eqref{E:modelequation}, one can define the ``usual'' energy $E(t) \geq 0$ by
$E^2(t) = \frac{1}{2} \int_{\mathbb{T}^3} (\partial_t v)^2 + e^{-2t} \delta^{ab} (\partial_a v)(\partial_b v) \, d^3 x,$ and a standard integration by parts argument leads to the inequality

\begin{align} \label{E:modelenergyinequality}
	\frac{d}{dt} E \leq -E + \|F\|_{L^2}.
\end{align}
It is clear from \eqref{E:modelenergyinequality} that sufficient estimates of $\|F\|_{L^2}$ in terms of $E$ will lead to 
energy decay\footnote{In our work below, we work with rescaled energies that are approximately constant in time.}. 

In Section \ref{SS:Decomposition}, we decompose our modified version of the Euler-Einstein system in order to better see the structure of its terms. The modified system is constructed in a manner such that the $g_{\mu \nu}$ satisfy equations with ``principal terms'' that are similar to the terms model equation \eqref{E:modelequation}. By ``principal,'' we are not referring to the degree of differentiability, but rather to the effect that these terms have on the solutions. As is typical in the theory of nonlinear hyperbolic PDEs, most of our effort goes towards analyzing the remaining ``error terms,'' which are analogous to the inhomogeneous term $F$ in the model equation. This analysis, which is carried out in Section \ref{S:BootstrapConsequences}, justifies the division into principal and error terms. We remark that the main tools used for estimating the inhomogeneous terms are Sobolev-Moser type estimates, which we have placed in the Appendix for convenience.

In \cite{iRjS2009}, our analysis of the irrotational fluid equation \eqref{E:IrrotationalFluid}, was in the spirit of the above discussion. The new aspect compared to Ringstr\"{o}m's work \cite{hR2008} was that the effective metric for \eqref{E:IrrotationalFluid} is not the inverse spacetime metric $g^{\mu \nu},$ but is instead the \emph{reciprocal acoustical metric} $(m^{-1})^{\mu \nu},$ an inverse Lorentzian metric that features a family of \emph{sound cones} as acoustical null hypersurfaces. The main challenge in \cite{iRjS2009} was that the acoustical metric can lead to instabilities if the pressure decays too quickly. In particular, it can degenerate if the pressure becomes $0$ at a point. Addressing this difficulty was unavoidable, since the FLRW background solutions feature pressures that exponentially decay towards $0$ in time. To close our global existence argument, we had to ensure that the pressure of a perturbed solution decayed at the same rate as the background pressure, and in particular, that the pressure never becomes $0$ in finite time.

Our analysis of the relativistic Euler equations \eqref{E:Fluidenergymomentumintro} is based on energy estimates derived from \emph{energy currents}. The energy currents $\dot{J}^{\mu}$ (see \eqref{E:EnergyCurrent}) are vectorfields that can be used via the divergence theorem to control the evolution of the $L^2$ norms of the fluid variables and their derivatives. This approach to analyzing the Euler equations differs from the well-known \emph{symmetric hyperbolic} framework (see e.g. \cite{rCdH1962}, \cite{cD2010}, \cite{kF1954}) in that it allows the analysis to take place directly in the \emph{Eulerian} variables $(p,u)$ rather than an artificially introduced collection of unphysical state-space variables; see the introduction of \cite{jS2008b} for a more detailed discussion on the merits of the energy current framework. Our energy currents ultimately owe their existence to the fact that the relativistic Euler equations are a hyperbolic system of PDEs derivable from a Lagrangian. That is, the relativistic Euler equations are the Euler-Lagrange equations corresponding to a Lagrangian. For such hyperbolic systems, Christodoulou has developed a geometric-analytic framework \cite{dC2000} for deriving energy estimates based on the availability of energy currents. In fact, the use of this technique in the context of deriving energy estimates for the relativistic Euler equations was first introduced in \cite{dC2000}. However, there is a subtlety: the Lagrangian formulation of relativistic fluid dynamics described in \cite{dC2000} naturally has as its unknowns the so-called \emph{Lagrangian} variables, while in this article, we takes our unknowns to be the Eulerian variables. In order to derive energy currents in the Eulerian variables, the natural way to proceed is to first derive them in the Lagrangian variables, and then translate them into expressions involving the Eulerian variables. This is a decidedly nontrivial task since the transformation from Lagrangian to Eulerian variables is nonlinear. Further complicating matters is the fact that the relativistic Euler equations in Lagrangian variables suffer from a degeneracy that renders them just outside of the scope of the framework of \cite{dC2000}.
In particular, the energy currents derived in \cite{dC2000} are not fully $L^2$ coercive, but are instead only semi-coercive. Nonetheless, one can still derive energy currents that are $L^2-$ coercive in the Eulerian variables. These issues are themselves of interest, and we will provide a detailed discussion of them in a future article. In the present article, we restrict our attention to adapting the Eulerian-variable energy currents used by Christodoulou in \cite{dC2007}, and by the author in \cite{jS2008b}, \cite{jS2008a}, without fully explaining their origin or motivating their coerciveness properties.

Finally, we would like to make some comments on the assumption $0 < \speed < \sqrt{1/3}$ stated in the \textbf{Main Results} above. In order to do this, we have to provide some details concerning the behavior of the fluid norms 
$U_{N - 1}(t),$ $\fluidnorm{N}(t),$ and the total solution norm $\totalnorm{N}(t),$ which are defined in \eqref{E:ujnorm}, \eqref{E:fluidnorm}, and \eqref{E:totalnorm} respectively. Here, $N \geq 3$ is an integer. Now if $\decayparameter < 1$ (i.e., $\speed < \sqrt{1/3}$), then the bootstrap assumption $U_{N-1}(t) \leq \epsilon$ plus Sobolev embedding together imply that $\| u^j \|_{L^{\infty}} \leq C e^{-(1 + q)H t} \epsilon,$ where the small positive constant $q$ is defined in \eqref{E:qdef}. From this decay estimate, it follows that the length of the spatial part of $u,$ as measured by the background metric $\widetilde{g},$ is exponentially decaying toward the quiet background fluid state: from standard Sobolev-Moser estimates (see Appendix \ref{A:SobolevMoser}) and the bootstrap assumption $\fluidnorm{N}(t) \leq \epsilon,$ it follows that $\| \widetilde{g}_{ab} u^a u^b \|_{H^N} = \|e^{2 \Omega} \delta_{ab} u^a u^b \|_{H^N}  
\leq C e^{2Ht} \sum_{j=1}^3 \|u^j\|_{L^{\infty}} \|u^j\|_{H^N} \leq C e^{-qHt} \fluidnorm{N}(t) \leq C \epsilon e^{-qHt};$ this last estimate shows that the four-velocity exponentially decays towards the quiet background state, which is $\widetilde{u}^j \equiv 0,$ $(j = 1,2,3).$ In contrast, the bootstrap assumption $\fluidnorm{N}(t) \leq \epsilon$ would by itself only lead to the estimate $\| u^j \|_{L^{\infty}} \leq C \epsilon e^{-Ht},$ which would lead to the estimate $\| \widetilde{g}_{ab} u^a u^b \|_{H^N} \leq C \epsilon.$ Note that this weaker estimate does not imply that the four-velocity is decaying towards the uniform quiet background state. In fact, the sole reason we introduce the quantity $U_{N-1}(t)$ is to capture the additional $L^{\infty}$ exponential decay of the four-velocity. As we will see in 
Proposition \ref{P:Nonlinearities}, the rapid decay of the fluid towards the quiet state plays a crucial role in many of our estimates. 

It is important to note that the above reasoning breaks down when $\decayparameter \geq 1$ (the degeneracy that occurs in the case $\speed = 0$ has been discussed above). We first remark that \emph{the influence of the positive cosmological constant on the behavior of the fluid four-velocity is essentially captured by the dissipative $(\decayparameter - 2) \omega u^j$ term on the right-hand side of} equation \eqref{E:partialtuj}. When $\decayparameter \geq 1,$ equation \eqref{E:partialtuj} suggests (we are assuming that $\triangle^{'j}$ is an error term) that we would at best be able to prove an $L^{\infty}$ estimate of the form $\| u^j(t) \|_{L^{\infty}} \leq C \epsilon e^{-(\decayparameter - 2)Ht};$ i.e., we no longer expect to have an improved estimate of the form $\| u^j \|_{L^{\infty}} \leq C \epsilon e^{-(1 + q)H t}$ (for some $q > 0$), and this lack of improvement reverberates throughout the remaining estimates. For example, the $g_{ab} u^a u^b$ term in \eqref{E:u0HNfirstinequality} is no longer expected to decay, so that inequality \eqref{E:u0upperHN} must be downgraded to at best $\|u^0 - 1\|_{H^N} \leq C \totalnorm{N}.$ By Sobolev embedding, we conclude that at best the downgraded estimate $\|u^0 - 1\|_{L^{\infty}} \leq C \epsilon$ holds. This weakened estimate returns to haunt us in equation \eqref{E:divergenceofdotJagain}, which involves the term $\frac{2(1 + \speed^2)P (u^0 - 1)}{\speed^2} (\partial_t g_{ab}) \dot{u}^a \dot{u}^b.$ This now-fatal term leads to the following downgraded version of inequality \eqref{E:ENintegral}: $\fluidenergy{N}^2(t) \leq \fluidenergy{N}^2(t_1) + C \int_{t_1}^t \totalenergy{N}^2(\tau) \, d \tau,$ in which we have lost the availability of the $e^{-q H \tau}$ damping factor. Here, $\fluidenergy{N}$ is the fluid energy defined in \eqref{E:fluidenergydef}, and $\totalenergy{N}$ is the total solution energy defined in \eqref{E:totalenergy}. By Proposition \ref{P:energynormomparison}, $\fluidenergy{N} \approx \fluidnorm{N}$ and $\totalenergy{N} \approx \totalnorm{N}$ for perturbations of the FLRW solutions. This resulting weakened estimate allows for the possibility that $\fluidenergy{N}(t)$ may grow unabatedly in time, which would eventually destroy the bootstrap assumption $\fluidnorm{N}(t) \leq \epsilon.$ We remark that \emph{these difficulties have nothing to do with the fact that we are studying the coupled system}: they would remain even if we were studying the relativistic Euler equations on the fixed spacetime background $\big((-\infty,\infty) \times \mathbb{T}^3, \widetilde{g}_{\mu \nu} \big).$

\subsection{Outline of the structure of the paper} 

\begin{itemize}
	\item In Section \ref{S:Notation}, we describe our conventions for indices and introduce some notation for
		differential operators and Sobolev norms.
	\item In Section \ref{S:EE}, we introduce the Euler-Einstein system.
	\item In Section \ref{S:backgroundsolution}, we use a standard ODE ansatz to derive a well-known family of 
		background Friedmann-Lema\^{\i}tre-Robertson-Walker (FLRW) solutions to the Euler-Einstein system.
	\item In Section \ref{S:ReducedEquations} we introduce our version of wave coordinates and use algebraic identities 
		valid in such a coordinate system to construct a modified version of the Euler-Einstein system. We then
		discuss how to construct data for the modified system from data for the unmodified system 
		in a manner that is compatible with the wave coordinate condition. Next, we discuss 
		classical local existence for the modified system, including the continuation principle that is used in
		Section \ref{S:GlobalExistence}. We also sketch a proof of the fact that the modified system is equivalent to 
		the unmodified system in a wave coordinate system.
	\item In Section \ref{S:NormsandEnergies}, we introduce the relevant norms and the related energies for the modified system 
		that we use in our global existence argument. In order to construct the fluid energies, we introduce the fluid
		equations of variation and the fluid energy currents. We also provide a preliminary analysis of the derivatives of the 
		energies, but the inhomogeneous terms are not estimated until Section \ref{S:BootstrapConsequences}.
	\item In Section \ref{S:LinearAlgebra}, we introduce some bootstrap assumptions on the spacetime metric $g_{\mu \nu}.$
		We then use these assumptions to provide some linear-algebraic lemmas that are useful for
		analyzing $g_{\mu \nu}$ and the inverse metric $g^{\mu \nu}.$
	\item In Section \ref{S:BootstrapAssumptions}, we introduce our main bootstrap assumption, which is a smallness condition
		on $\totalnorm{N},$ a norm of difference between the perturbed solution and the background solution. We also define
		the positive constants $q$ and $\upeta_{min},$ which play a fundamental role in the technical estimates of the following
		sections.
	\item Section \ref{S:BootstrapConsequences} contains most of the technical estimates. We assume the bootstrap assumptions 
		from the previous sections and use them to deduce estimates for $g_{\mu \nu},$ $g^{\mu \nu},$ $p,$ $u^{\mu},$ and for the 
		nonlinearities appearing in the modified equations. 
	\item In Section \ref{S:EnergyNormEquivalence}, we show that the Sobolev norms and energies defined in  
		Section \ref{S:NormsandEnergies} are equivalent.
	\item In Section \ref{S:GlobalExistence}, we use the estimates from the previous sections to prove our main theorem, which is 
		a small-data global (where ``small'' means close to the background solution) existence result for the modified equations. 
		Our theorem shows that initial data satisfying the Euler-Einstein constraints, 
		the wave coordinate condition, and the smallness condition lead to a future-geodesically complete solution of the 
		unmodified Euler-Einstein system. 
	\item In Section \ref{S:Asymptotics}, we provide a theorem concerning the convergence of the 
		global solutions as $t \to \infty.$
\end{itemize}

\section{Notation} \label{S:Notation}

In this section, we briefly introduce some notation that we use in this article.

\subsection{Index conventions}
Greek indices $\alpha, \beta, \cdots$ take on the values $0,1,2,3,$ while Latin indices $a,b,\cdots$ 
(which we sometimes call ``spatial indices'') take on the values $1,2,3.$ Pairs of repeated indices, with one raised and one lowered, are summed (from $0$ to $3$ if they are Greek, and from $1$ to $3$ if they are Latin). We raise and lower indices with the spacetime metric $g_{\mu \nu}$ and its inverse $g^{\mu \nu}.$ Exceptions to this rule include the constraint equations \eqref{E:Gaussintro} - \eqref{E:Codazziintro} and \eqref{E:Gauss} - \eqref{E:Codazzi}, in which we use the $3-$metric $\mathring{\underline{g}}_{jk}$ and its inverse $\mathring{\underline{g}}^{jk}$ to lower and raise indices, and in Section \ref{S:Asymptotics}, in which all indices are lowered and raised with $g_{\mu \nu}$ and $g^{\mu \nu}$ except for the $3-$metric $g_{jk}^{(\infty)},$ which has $g_{(\infty)}^{jk}$ as its corresponding inverse metric.

\subsection{Coordinate systems and differential operators}
Throughout this article, we work in a standard local coordinate system $x^1,x^2,x^3$ on $\mathbb{T}^3.$
Although strictly speaking this coordinate system is not globally well-defined, the vectorfields $\partial_j \eqdef \frac{\partial}{\partial x^{j}}$ are globally well-defined. This coordinate system extends to a local coordinate system $x^0,x^1,x^2,x^3$ on manifolds with boundary of the form $\mathcal{M} =[0,T) \times \mathbb{T}^3,$ and we often write $t$ instead of $x^0.$ In this local coordinate system, the background FLRW metric $\widetilde{g}$ is of the form \eqref{E:backgroundmetricform}. We write $\partial_{\mu}$ to denote the coordinate derivative $\frac{\partial}{\partial x^{\mu}},$ and we often write $\partial_t$ instead of $\partial_0.$ Throughout the article, we will perform all of our computations with respect to the fixed frame $\big\lbrace \partial_{\mu} \big\rbrace_{\mu = 0,1,2,3}.$

If $\vec{\alpha} = (n_1,n_2,n_3)$ is a triplet of non-negative integers, then we define the spatial multi-index coordinate differential operator $\partial_{\vec{\alpha}}$ by $\partial_{\vec{\alpha}} \eqdef \partial_1^{n_1} \partial_2^{n_2} \partial_3^{n_3}.$ We denote the order of $\vec{\alpha}$ by $|\vec{\alpha}|,$ where $|\vec{\alpha}| \eqdef n_1 + n_2 + n_3.$

We write 

\begin{align}
	D_{\mu} T_{\mu_1 \cdots \mu_s}^{\nu_1 \cdots \nu_r} = 
		\partial_{\mu} T_{\mu_1 \cdots \mu_s}^{\nu_1 \cdots \nu_r} + 
		\sum_{a=1}^r \Gamma_{\mu \ \alpha}^{\ \nu_{a}} T_{\mu_1 \cdots \mu_s}^{\nu_1 \cdots \nu_{a-1} \alpha \nu_{a+1} \nu_r} - 
		\sum_{a=1}^s \Gamma_{\mu \ \mu_{a}}^{\ \alpha} T_{\mu_1 \cdots \mu_{a-1} \alpha \mu_{a+1} \mu_s}^{\nu_1 \cdots \nu_r}
\end{align} 
(where $\Gamma_{\mu \ \nu}^{\ \alpha}$ is defined in \eqref{E:EMBIChristoffeldef}) to denote the components of the covariant derivative of a tensorfield on $\mathcal{M}$ with components (relative to the coordinate frame introduced above) 
$T_{\mu_1 \cdots \mu_s}^{\nu_1 \cdots \nu_r}.$ 

We write $\partial^{(N)} T_{\mu_1 \cdots \mu_s}^{\nu_1 \cdots \nu_r}$ to denote the array
containing of all of the $N^{th}$ order \emph{spacetime} coordinate derivatives (including time derivatives) of the component $T_{\mu_1 \cdots \mu_s}^{\nu_1 \cdots \nu_r}.$ Similarly, we write $\underpartial^{(N)} T_{\mu_1 \cdots \mu_s}^{\nu_1 \cdots \nu_r}$ to denote the array containing of all $N^{th}$ order \emph{spatial coordinate} derivatives of the component $T_{\mu_1 \cdots \mu_s}^{\nu_1 \cdots \nu_r}.$ We omit
the superscript $^{(N)}$ when $N=1.$

\subsection{Identification of spacetime tensors and spatial tensors} \label{SS:IDENTIFICATIONS}

We will often view $\mathbb{T}^3$ as an embedded submanifold of the spacetime $\mathcal{M}$ under an embedding
$\iota_t$ of the form $\iota_t: \mathbb{T}^3 \rightarrow \lbrace t \rbrace \times \mathbb{T}^3 \subset \mathcal{M},$ 
$\iota_t(x^1,x^2,x^3) \eqdef (t,x^1,x^2,x^3).$ Note that the embedding is a diffeomorphism between $\mathbb{T}^3$ and $\lbrace t \rbrace \times \mathbb{T}^3.$ We will often suppress the embedding by identifying $\mathbb{T}^3$ with its image $\iota_t(\mathbb{T}^3).$ Furthermore, if $T_{j_1 \cdots j_s}^{j_1 \cdots j_r}$ is a $\mathbb{T}^3-$inherent ``spatial'' tensorfield, then there is a unique ``spacetime'' tensorfield  $T_{\mu_1 \cdots \mu_s}^{'\nu_1 \cdots \nu_r}$ defined along $\iota_t (\mathbb{T}^3) \simeq \mathbb{T}^3$ such that $\iota_t^* T' = T$ and such that $T'$ is tangent\footnote{Recall that $T_{\mu_1 \cdots \mu_s}^{'\nu_1 \cdots \nu_r}$ is tangent to $\iota_t (\mathbb{T}^3)$ if any contraction of any upstairs (downstairs) index with the unit normal covector $n_{\mu}$ (unit normal vector $n^{\mu}$) results in $0;$ for downstairs indices, this notion depends on the spacetime metric $g_{\mu \nu}.$} to $\iota_t(\mathbb{T}^3).$ Here $\iota_t^*$ denotes
the pullback by $\iota_t.$ We will often identify $T$ with $T',$ and use the same symbol to denote both, e.g. $T_{j_1 \cdots j_s}^{j_1 \cdots j_r} \simeq T_{\mu_1 \cdots \mu_s}^{\nu_1 \cdots \nu_r};$ we especially apply this convention to the 
initial data. All of these standard identifications should be clear in context.

\subsection{Norms} \label{SS:NORMS}
All of the Sobolev norms we use are defined relative to the local coordinate system $x^1,x^2,x^3$ on $\mathbb{T}^3$ introduced above. We remark that our norms are not coordinate invariant quantities, since we work with the norms of the \emph{components} of tensorfields relative to this coordinate system. If $f$ is a function defined on the hypersurface 
$\lbrace x \in \mathcal{M} \ | \ t = const \rbrace \simeq \mathbb{T}^3,$ then relative to this coordinate system, we define the standard Sobolev norm $\big\| f \big\|_{H^N}$ as follows, where $d^3 x \eqdef dx^1 dx^2 dx^3:$

\begin{align} \label{E:SobolevNormDef}
	\big\| f \big\|_{H^N} \eqdef
		\bigg( \sum_{|\vec{\alpha}| \leq N} 
		\int_{\mathbb{T}^3} \big|\partial_{\vec{\alpha}} f(t,x^1,x^2,x^3) \big|^2 
		d^3x \bigg)^{1/2}.
\end{align}
The symbol $d^3x$ represents a slight abuse of notation\footnote{A precise definition of the norm \eqref{E:SobolevNormDef} would involve the use of an atlas on $\mathbb{T}^3$ and a partition of unity that is subordinate to the corresponding covering.} since the coordinate system $x^1,x^2,x^3$ is not globally well-defined on $\mathbb{T}^3$ (even though the differential operators $\partial_{\vec{\alpha}}$ are). 

Using the above notation, we can write the $N^{th}$ order homogeneous Sobolev norm of $f$ as

\begin{align}
	\big\| \underpartial^{(N)} f \big\|_{L^2} \eqdef
		\sum_{|\vec{\alpha}| = N} \big\| \partial_{\vec{\alpha}} f \big\|_{L^2}.
\end{align}

If $\mathfrak{K} \subset \mathbb{R}^n$ or $\mathfrak{K} \subset \mathbb{T}^n,$ then $C^N_b(\mathfrak{K})$ denotes the set of $N-$times continuously differentiable functions (either scalar or array-valued, depending on context) on the interior of
$\mathfrak{K}$ with bounded derivatives up to order $N$ that extend continuously to the closure of $\mathfrak{K}.$ The norm of a function $F 
    \in C^N_b(\mathfrak{K})$ is defined by
    \begin{equation} \label{E:CbMNormDef}
        |F|_{N,\mathfrak{K}} \eqdef \sum_{|\vec{I}|\leq N} \mbox{ess} \sup_{\cdot \in \mathfrak{K}}
        |\partial_{\vec{I}}F(\cdot)|,
    \end{equation}
    where $\partial_{\vec{I}}$ is a multi-indexed operator representing repeated partial differentiation
    with respect to the arguments $\cdot$ of $F,$ which may be either spacetime coordinates or metric/fluid components
    depending on context. When $N=0,$ we also use the notation
    
    \begin{align} 
    	|F|_{\mathfrak{K}} \eqdef \mbox{ess} \sup_{\cdot \in \mathfrak{K}} |F(\cdot)|.
    \end{align}
    Furthermore, we use the notation
    \begin{align} 
    	|F^{(N)}|_{\mathfrak{K}} \eqdef \sum_{|\vec{I}| = N}|\partial_{\vec{I}} F|_{\mathfrak{K}}.
    \end{align}
    The quantity $|F^{(N)}|_{\mathfrak{K}}$ is a measure of the size of the $N^{th}$ order derivatives of $F$ 
    on the set $\mathfrak{K}.$ In the case that $\mathfrak{K} = \mathbb{T}^3,$ we sometimes use the more familiar
    notation 
    
    \begin{align}
    	\| F \|_{L^{\infty}} & \eqdef \mbox{ess} \sup_{x \in \mathbb{T}^3} |F(x)|, \\
    	\| F \|_{C_b^N} & \eqdef \sum_{|\vec{\alpha}| \leq N} \big\| \partial_{\vec{\alpha}} F \|_{L^{\infty}}.
    \end{align}
    
	 	If $A$ is an $m \times n$ array-valued function with entries $A_{jk},$ 
	 	$(1 \leq j \leq m, 1\leq k \leq n),$ then in Section \ref{S:BootstrapConsequences}, we write e.g.
    $\| A \|_{H^N}$ to denote the $m \times n$ \emph{array} whose entries are $\|A_{jk} \|_{H^N}.$ We use similar notation for 
    other norms of $A.$ 
		
		If $I \subset \mathbb{R}$ is an interval and $X$ is a normed function space, then we use the notation
    $C^N(I,X)$ to denote the set of $N$-times continuously differentiable maps from $I$ into $X.$

\subsection{Running constants} \label{SS:runningconstants}
We use $C$ to denote a running constant that is free to vary from line to line. In general, it can depend on $N$ (see \eqref{E:Ndef}) and $\Lambda,$ but can be chosen to be independent of all functions $(g_{\mu \nu}, P, u^j),$ $(\mu, \nu = 0,1,2,3),$ $(j=1,2,3),$ that are sufficiently close to the background solution $(\widetilde{g}_{\mu \nu},	\widetilde{P}, \widetilde{u}^j)$ of Section \ref{S:backgroundsolution}. We sometimes use notation such as $C(N)$ to indicate the dependence of $C$ on quantities that are peripheral to the main argument. Occasionally, we use $c,$ $C_*,$ etc., to denote a constant that plays a distinguished role in the analysis. We remark that many of the constants blow-up as $\Lambda \rightarrow 0^+.$

\subsection{A warning on the sign of \texorpdfstring{$\hat{\Square}_g$}{}}

Although we often choose notation that agrees with the notation used by Ringstr\"{o}m in \cite{hR2008}, our reduced wave operator $\hat{\square}_g \eqdef g^{\alpha \beta} \partial_{\alpha} \partial_{\beta}$ has the opposite sign of the one in \cite{hR2008}. It has the same sign as the one used in \cite{iRjS2009}.

\section{The Euler-Einstein System} \label{S:EE}

In this section, we provide a detailed introduction to the Euler-Einstein system, including the notion of a perfect fluid. We then discuss several aspects of the initial value problem formulation of the Euler-Einstein system, including the initial data and the \emph{maximal globally hyperbolic development} of the data.

\subsection{Introduction} \label{SS:EEIntro}

The Einstein equations connect the \emph{Einstein tensor} $\mbox{Ric}_{\mu \nu} - \frac{1}{2}g_{\mu \nu} R,$
which contains information about the curvature of the spacetime $(\mathcal{M},g_{\mu \nu})$ 
to the \emph{energy-momentum tensor} $T_{\mu \nu},$ which models the matter content of spacetime:

\begin{align} \label{E:EinsteinField}
	\mbox{Ric}_{\mu \nu} - \frac{1}{2}R g_{\mu \nu} + \Lambda g_{\mu \nu} = T_{\mu \nu},
\end{align}
where $\mbox{Ric}_{\mu \nu}$ is the \emph{Ricci curvature tensor}, $R$
is the \emph{scalar curvature}, and $\Lambda$ is the \emph{cosmological constant}. We remark that \textbf{the stability results proved in this article heavily depend upon the assumption} $\Lambda > 0.$ Recall that the Ricci curvature tensor and scalar curvature are defined in terms of the \emph{Riemann curvature tensor}\footnote{Under our sign convention, $D_{\mu} D_{\nu} X_{\alpha} - D_{\nu} D_{\mu} X_{\alpha} = \mbox{Riem}_{\mu \nu \alpha}^{\ \ \ \ \beta} X_{\beta}.$} $\mbox{Riem}_{\mu \alpha \nu}^{\ \ \ \ \beta}$, which can be expressed in terms of the \emph{Christoffel symbols} $\Gamma_{\mu \ \nu}^{\ \alpha}$ of the metric. In an arbitrary local coordinate system, these quantities can be expressed as follows:

\begin{subequations}
\begin{align}
\mbox{Riem}_{\mu \alpha \nu}^{\ \ \ \ \beta} & \eqdef 
	\partial_{\alpha} \Gamma_{\mu \ \nu}^{\ \beta}
	- \partial_{\mu} \Gamma_{\alpha \ \nu}^{\ \beta}
  + \Gamma_{\alpha \ \lambda}^{\ \beta} \Gamma_{\mu \ \nu}^{\ \lambda}
	- \Gamma_{\mu \ \lambda}^{\ \beta} \Gamma_{\alpha \ \nu}^{\ \lambda},	
	\label{E:Riemanndef} \\
\mbox{Ric}_{\mu \nu} & \eqdef \mbox{Riem}_{\mu \alpha \nu}^{\ \ \ \ \alpha} 
	= \partial_{\alpha} \Gamma_{\mu \ \nu}^{\ \alpha}
	- \partial_{\mu} \Gamma_{\alpha \ \nu}^{\ \alpha}
  + \Gamma_{\alpha \ \lambda}^{\ \alpha} \Gamma_{\mu \ \nu}^{\ \lambda}
	- \Gamma_{\mu \ \lambda}^{\ \alpha} \Gamma_{\alpha \ \nu}^{\ \lambda},
	\label{E:Riccidef} \\
R & \eqdef g^{\alpha \beta} \mbox{Ric}_{\alpha \beta}, \label{E:Rdef} \\
\Gamma_{\mu \ \nu}^{\ \alpha} & \eqdef \frac{1}{2} g^{\alpha \lambda}(\partial_{\mu} g_{\lambda \nu} 
	+ \partial_{\nu} g_{\mu \lambda} - \partial_{\lambda} g_{\mu \nu}). \label{E:EMBIChristoffeldef}
\end{align}
\end{subequations}

The twice-contracted Bianchi identities (see e.g. \cite{rW1984}) are

\begin{align}
	D_{\alpha} \mbox{Ric}^{\alpha \mu} - \frac{1}{2}D^{\mu} R & = 0,&& (\mu = 0,1,2,3),
\end{align}
which by \eqref{E:EinsteinField} leads to the following equations satisfied by $T^{\mu \nu}:$ 

\begin{align} \label{E:Tdivergence}
	D_{\alpha} T^{\alpha \mu} & = 0, && (\mu = 0,1,2,3).
\end{align}

We will now briefly introduce the notion of a perfect fluid. For a more detailed introduction, including a very interesting account of the history of the subject, readers may consult Christodoulou's survey article \cite{dC2007b}. The energy-momentum tensor for a perfect fluid is

\begin{align} \label{E:Tlowerfluid}	
	T^{\mu \nu} = (\rho + p) u^{\mu} u^{\nu} + p g^{\mu \nu},
\end{align}
where $\rho \geq 0$ is the \emph{proper energy density}, $p \geq 0$ is the \emph{pressure}, and $u$
is the \emph{four-velocity}, a unit-length (i.e., $u_{\alpha} u^{\alpha} = -1)$ future-directed vectorfield
on $\mathcal{M}.$ The \emph{Euler equations}, which are the laws of motion for a perfect relativistic fluid, are the four equations \eqref{E:Tdivergence} together with a conservation law \eqref{E:Eulernumberdensity} for the number of fluid elements. In an arbitrary local coordinate system, they can be expressed as follows:

\begin{subequations}
\begin{align}	
  D_{\alpha} T^{\alpha \mu} & = 0,&& (\mu = 0,1,2,3), \label{E:Euler} \\
	D_{\alpha} (n u^{\alpha}) & = 0,&& \label{E:Eulernumberdensity}
\end{align}
\end{subequations}
where $n$ is the \emph{proper number density} of the fluid elements. We also introduce the thermodynamic variable $\Ent,$ the \emph{entropy per particle}, which we will discuss below.

Equations \eqref{E:Euler} - \eqref{E:Eulernumberdensity} do not form a closed system, even in a fixed spacetime $(\mathcal{M},g_{\mu \nu});$ there are too many fluid variables, and not enough equations. The standard way of remedying this difficulty is to appeal the laws of thermodynamics, which imply the following relationships between the fluid variables (see e.g. \cite{yGsTZ1998}, \cite{dC2007b}, \cite{dC2007}):    
    
    \begin{enumerate}
  		\item $\rho \geq 0$ is a function of $n \geq 0$ and ${\Ent} \geq 0.$
  		\item $p \geq 0$ is defined by
        \begin{align} \label{E:rhopnrelation}
            p= n \left. \frac{\partial \rho}{\partial n} \right|_{\Ent} -
            \rho,                                                                                   
        \end{align}
        where the notation $\left. \right|_{\cdot}$ indicates partial differentiation with $\cdot$ held
        constant.
        \item A perfect fluid satisfies
        \begin{align}
        \left. \frac{\partial \rho}{\partial n} \right|_{\Ent} >0, \left. \frac{\partial p}{\partial
        n} \right|_{\Ent}>0, \left. \frac{\partial \rho}{\partial {\Ent}} \right|_n \geq 0 \
        \mbox{with} \ ``=" \ \iff \ \Ent=0.                             \label{E:EOSAssumptions}
        \end{align}
        As a consequence, we have that $\zeta,$ the \emph{speed
        of sound}\footnote{In general, $\zeta$ is not constant. However, for the equations of
        state we study in this article, $\zeta$ is equal to the constant $\speed.$} in the fluid, is always real for $\Ent > 0:$
            \begin{align}
                \zeta^2 \overset{def}{=} \left.\frac{\partial p}{\partial
                \rho}\right|_{\Ent} = \frac{{\partial p / \partial n}|_{\Ent}}{{\partial \rho / \partial
                n}|_{\Ent}} > 0.                                                                         \label{E:SpeedofSound}
            \end{align}
        \item We also demand that the speed of sound is positive and less than or equal to the speed of light
        whenever $n > 0$ and $\Ent > 0$:
            \begin{align} \label{E:Causality}
                n>0 \ \mbox{and} \ \Ent > 0 \implies 0 \leq \zeta \leq 1.
            \end{align}
    \end{enumerate}
    
   	Postulates (1) - (3) are manifestations of the laws of thermodynamics and fundamental thermodynamic assumptions, while 
   	Postulate (4) is connected to the notion of causality. More specifically, it ensures that at each $x \in \mathcal{M},$
   	vectors that are causal with respect to the \emph{sound cone}\footnote{The
    sound cone is defined to be the subset of tangent vectors $X \in T_x \mathcal{M}$ such that $m_{\alpha \beta} X^{\alpha} 
    X^{\beta} = 0,$ where $m_{\mu \nu} \eqdef g_{\mu \nu} + (1 - \zeta^2)u_{\mu} u_{\nu}$ is the \emph{acoustical metric}; see
    e.g. \cite{dC2007}, \cite{iRjS2009}, \cite{jS2008a} for information concerning the role of the acoustical metric
    in the analysis of the Euler equations.} in $T_x \mathcal{M}$ are necessarily causal with respect to the gravitational null 
    cone\footnote{By gravitational null cone at $x,$ we mean the subset of tangent vectors $X \in T_x \mathcal{M}$ such that 
    $g_{\alpha \beta} X^{\alpha} X^{\beta} = 0.$} in $T_x \mathcal{M}.$ The physical interpretation of Postulate (4) is that 
    the speed of sound is no greater than the speed of propagation of gravitational waves, i.e., that sound waves are causal as
    measured by $g_{\mu \nu}.$ See \cite{jS2008a} for a more detailed analysis of the geometry of the sound cone and the 
    gravitational null cone.
    
    The assumptions $\rho \geq 0,$ $p \geq 0$ together imply that the energy-momentum tensor \eqref{E:Tlowerfluid} 
    satisfies both the \emph{weak energy condition} ($T_{\alpha \beta} X^{\alpha} X^{\beta} \geq 0$ holds whenever $X$ is 
    timelike and future-directed with respect to the gravitational null cone) and the \emph{strong energy condition}
    ($[T_{\mu \nu} - 1/2 g^{\alpha \beta}T_{\alpha \beta} g_{\mu \nu}]X^{\mu}X^{\nu} \geq 0$ holds whenever 
    $X$ is timelike and future-directed with respect to the gravitational null cone). Furthermore, if we assume that the 
    equation of state is such that $p=0$ when $\rho = 0,$ then \eqref{E:SpeedofSound} and \eqref{E:Causality} together imply 
    that $p \leq \rho.$ This latter inequality implies that the \emph{dominant energy condition} holds
    ($-g^{\mu \alpha}T_{\alpha \nu} X^{\nu}$ is causal and future-directed whenever $X$ is causal and future-directed 
    with respect to the gravitational null cone). We remark that many important theorems in general relativity 
    use the assumption that the matter model satisfies the dominant energy condition. As examples, we cite 
    the Hawking-Penrose singularity theorem \cite{rP1965}, \cite{sH1967} 
    and the positive mass theorem, which was first proved by Schoen-Yau 
    \cite{rSstY1979}, \cite{rSstY1981}, and later by Witten \cite{eW1981}.
		
Under the remaining postulates, Postulate $(1)$ is equivalent to making a choice of an \emph{equation of state}, which is a function that expresses $p$ in terms of $\Ent$ and $\rho.$ An equation of state is not necessarily a fundamental law of nature, but can instead be an empirical relationship between the fluid variables. In this article, we consider only the case of a 
\emph{ barotropic}\footnote{A \emph{barotropic} fluid is one for which $p$ is a function of $\rho$ alone.} fluid under the equation of state $p = \speed^2 \rho,$ where $0 < \speed < \sqrt{\frac{1}{3}},$ and according to \eqref{E:SpeedofSound}, the constant $\speed$ is the \emph{speed of sound}. In this case, $\Ent$ plays no role in our analysis of the Euler equations,
and this quantity is absent from the remainder of the article.

\subsection{The initial value problem} \label{SS:IVP}

In this section, we discuss various aspects of the initial value problem for the Einstein equations, including the initial data and the notion of the \emph{maximal globally hyperbolic development} of the data. We assume that we are given an equation of state $p=p(\rho)$ subject to the restrictions discussed in Section \ref{S:EE}. We remark that the discussion in this section is very standard, and finds its in origin in the seminal works \cite{CB1952} by Choquet-Bruhat and \cite{cBgR1969} by Choquet-Bruhat/Geroch. The discussion in this section concerns the initial value problem formulation ``in the abstract,'' without reference to a choice of gauge for the Einstein equations. In Section \ref{S:ReducedEquations}, we will provide a concrete formulation of the initial value problem for the Euler-Einstein system that is suitable for our purposes at hand. Our formulation is based on a modification of the wave coordinate gauge used in  \cite{hR2008}, which is itself a modification of the gauge used by Choquet-Bruhat in \cite{CB1952}.

\subsubsection{Summary of the Euler-Einstein system} \label{SSS:Summary}

We first summarize the results of the previous sections by stating that the Euler-Einstein system is the following system of equations:

\begin{subequations}
\begin{align} 
	\mbox{Ric}_{\mu \nu} - \frac{1}{2}R g_{\mu \nu} + \Lambda g_{\mu \nu} & = T_{\mu \nu},&& (\mu, \nu = 0,1,2,3), 
		\label{E:EinsteinFieldsummary} \\
	D_{\alpha} T^{\alpha \mu} & = 0,&& (\mu = 0,1,2,3), \label{E:DivergenceofTis0Summary} \\
	D_{\alpha}(n u^{\alpha}) & = 0,&& \label{E:ConservationofParticleSummary}
\end{align}
\end{subequations}
where $T_{\mu \nu} = (\rho + p) u_{\mu} u_{\nu} + p g_{\mu \nu},$ and $p = p(\rho)$ is given by an equation of state subject to the hypotheses discussed in Section \ref{SS:EEIntro}.

We remark that by taking the trace of each side of \eqref{E:EinsteinFieldsummary} implies that
$R - 4 \Lambda = T,$ which implies that \eqref{E:EinsteinFieldsummary} is equivalent to

\begin{align} \tag{\ref{E:EinsteinFieldsummary}'}
	\mbox{Ric}_{\mu \nu} - \Lambda g_{\mu \nu} - T_{\mu \nu} + \frac{1}{2}T g_{\mu \nu} = 0.
\end{align}
Since $T \eqdef g^{\alpha \beta} T_{\alpha \beta} = 3p - \rho,$ it follows that 

\begin{align} 
	T_{\mu \nu} - \frac{1}{2}T g_{\mu \nu} =  (\rho + p) u_{\mu} u_{\nu} + \frac{1}{2}(\rho - p) g_{\mu \nu}.
\end{align}
Under the equation of state of interest to us, namely $p = \speed^2 \rho,$ we have that

\begin{align}
	T_{\mu \nu} - \frac{1}{2}T g_{\mu \nu} = \frac{1 + \speed^2}{\speed^2}p u_{\mu} u_{\nu}
		+ \frac{1 - \speed^2}{2\speed^2}pg_{\mu \nu}.
\end{align}

\subsubsection{Summary and alternative formulation of the Euler-Einstein system under the equation of state $p=\speed^2 \rho$}
Using the results of Section \ref{SSS:Summary}, under the equation of state $p = \speed^2 \rho,$ the Euler-Einstein system comprises the equations

\begin{align} \label{E:EulerEinstein}
	\mbox{Ric}_{\mu \nu} - \Lambda g_{\mu \nu} - T_{\mu \nu} + \frac{1}{2}T g_{\mu \nu} = 0,&& (\mu, \nu = 0,1,2,3),
\end{align}
where $T_{\mu \nu} = \frac{(1 + \speed^2)p}{\speed^2}u_{\mu} u_{\nu} + p g_{\mu \nu},$
together with the equations of motion for a perfect fluid: 

\begin{subequations}
\begin{align} 
	D_{\alpha} T^{\alpha \mu} & = 0,&&  (\mu = 0,1,2,3), \label{E:fluid1} \\
	D_{\alpha} (n u^{\alpha}) & = 0, \label{E:fluid2} 
\end{align}
\end{subequations}
and the normalization condition $g_{\alpha \beta} u^{\alpha} u^{\beta} = -1.$

By projecting in the direction of $u$ and onto the $g-$orthogonal complement of $u,$ it can be checked that equations \eqref{E:fluid1} are equivalent to the following system:

\begin{subequations}
\begin{align}
	u^{\alpha} D_{\alpha} p + (1+ \speed^2)pD_{\alpha} u^{\alpha} & = 0, &&  \label{E:fluidu} \\
	(1+ \speed^2)p u^{\alpha} D_{\alpha} u^{j} + \speed^2 \Pi^{j \alpha} D_{\alpha} p & = 0,&& (j=1,2,3), \label{E:fluiduperp} 
\end{align}
\end{subequations}
where 

\begin{subequations}
\begin{align}
	g_{\alpha \beta} u^{\alpha} u^{\beta} & = -1, \label{E:fourvelocitynormalized} \\
	\Pi^{\mu \nu} & \eqdef u^{\mu} u^{\nu} + g^{\mu \nu}. \label{E:Pi}
\end{align}
\end{subequations}
In the above expression, $\Pi^{\mu \nu}$ is the projection onto the $g-$orthogonal complement of $u^{\mu},$
which is sometimes referred to as the \emph{simultaneous space} of $u^{\mu}.$ Furthermore, for a \emph{barotropic} equation of state, that is, one of the form $p = p(\rho),$ the fundamental thermodynamic law \eqref{E:rhopnrelation}, which reads $p= n \frac{d \rho}{d n} - \rho$ in this case, can be used to show that equation \eqref{E:fluid2} is an automatic consequence of \eqref{E:fluid1}. Therefore, \eqref{E:fluidu} - \eqref{E:Pi} is an equivalent formulation of the relativistic Euler equations under the equation of state $p=\speed^2 \rho;$ we will work with this formulation for the remainder of the article.

\subsubsection{Initial data for the Euler-Einstein system} \label{SSS:InitialDataOriginalSystem}

Initial data for the system \eqref{E:EinsteinFieldsummary} - \eqref{E:ConservationofParticleSummary} consist of a $3-$dimensional manifold $\mathring{\Sigma}$ together with the following fields on $\mathring{\Sigma}:$ a Riemannian metric $\mathring{\underline{g}}_{jk},$ a covariant two-tensor $\mathring{\underline{K}}_{jk},$ a function $\mathring{p},$ and a vectorfield $\underline{\mathring{u}}^j,$ $(j,k = 1,2,3).$

It is well-known that one cannot consider arbitrary data for the Einstein equations. The data are in fact subject to the following constraints, where $\mathring{\underline{D}}$ is the Levi Civita connection corresponding to $\mathring{\underline{g}}:$

\begin{subequations}
\begin{align}
	\mathring{\underline{R}} - \mathring{\underline{K}}_{ab} \mathring{\underline{K}}^{ab} + (\mathring{\underline{g}}^{ab} \mathring{\underline{K}}_{ab})^2 & = 2T_{00}|_{\mathring{\Sigma}}, \label{E:Gauss} \\
	\mathring{\underline{D}}^a \mathring{\underline{K}}_{aj} - \mathring{\underline{g}}^{ab}  \mathring{\underline{D}}_j \mathring{\underline{K}}_{ab}  & = 
		T_{0j}|_{\mathring{\Sigma}}. \label{E:Codazzi}
\end{align}
\end{subequations}
In the above expressions, indices are raised and lowered using $\mathring{\underline{g}}^{-1}$ and $\mathring{\underline{g}}.$
We remark that in the case of the equation of state $p = \speed^2 \rho,$ and under the assumptions discussed in the next paragraph, we have that $T_{00} |_{\mathring{\Sigma}} = \frac{1 + \speed^2}{\speed^2}\mathring{p} (\mathring{u}_0)^2 - \mathring{p},$ and $T_{0j}|_{\mathring{\Sigma}}= \frac{1 + \speed^2}{\speed^2}\mathring{p} \mathring{u}_0 \underline{\mathring{u}}_j,$ where $\mathring{u}_0 = - \sqrt{1 + \mathring{\underline{g}}_{ab}\underline{\mathring{u}}^a \underline{\mathring{u}}^b}.$
 
The constraints \eqref{E:Gauss} - \eqref{E:Codazzi} are known as the \emph{Gauss} and \emph{Codazzi} equations respectively. These equations relate the geometry of the ambient Lorentzian spacetime $(\mathcal{M},g)$ (which has to be constructed) to the geometry inherited by an embedded Riemannian hypersurface (which will be $(\mathring{\Sigma},\mathring{\underline{g}})$ after construction). Without providing the rather standard details (see e.g. \cite{dC2008}, \cite{rW1984}), we remark that equations \eqref{E:Gauss} - \eqref{E:Codazzi} can be derived as consequences of the following assumptions:

\begin{itemize}
	\item $\mathring{\Sigma}$ is a spacelike submanifold of the spacetime manifold $\mathcal{M}$
	\item $\mathring{\underline{g}}$ is the first fundamental form of $\mathring{\Sigma}$
	\item $\mathring{\underline{K}}$ is the second fundamental form of $\mathring{\Sigma}$
	\item The Euler-Einstein equations are satisfied along $\mathring{\Sigma}$
	\item We are using a coordinate system $(x^0 =t, x^1, x^2, x^3)$ on $\mathcal{M}$ such that $\mathring{\Sigma} = 
		\lbrace x \in \mathcal{M} \ | \ t = 0 \rbrace,$ 
	and along $\mathring{\Sigma},$ $g_{00} = -1,$ $g_{0j} = 0,$ $g_{jk} = \mathring{\underline{g}}_{jk},$ $\partial_t g_{jk} = 
	2\mathring{\underline{K}}_{jk},$ $p = \mathring{p},$ and $u^{j} = \underline{\mathring{u}}^j,$ with 
	$g_{\alpha \beta} u^{\alpha} u^{\beta} = -1.$ The next-to-last condition is a consequence of the 
	assumption that $(\underline{\mathring{u}}^1,\underline{\mathring{u}}^2,\underline{\mathring{u}}^3)$ is the 
	$g-$orthogonal projection of the four-velocity $(u^0,u^1,u^2,u^3)$ onto $\mathring{\Sigma}.$
\end{itemize}

We recall that under the above assumptions, $\mathring{\underline{g}}$ and $\mathring{\underline{K}}$ are defined by

\begin{subequations}
\begin{align}
	\mathring{\underline{K}}|_x(X,Y) & = g|_x(D_{\hat{N}}X,Y),&& \forall X,Y \in T_x \mathring{\Sigma}, \\
	g|_x(X,Y) & = \mathring{\underline{g}}|_x(X,Y)&& \forall X,Y \in T_x \mathring{\Sigma},
\end{align}
\end{subequations}
where $\hat{N}$ is the future-directed normal\footnote{Under the above assumptions, it follows that at every point $x \in \mathring{\Sigma},$ $\hat{N}^{\mu} = \delta_0^{\mu}.$} to $\mathring{\Sigma}$ at $x,$ and $D$ is the Levi-Civita connection corresponding to $g.$

\subsubsection{The definition of a solution to the Euler-Einstein system}
In this section, we provide the definition of a solution to the Euler-Einstein system launched by a given initial data set. The
discussion in this section and the next one follows the approach of \cite{hR2008}, which was replicated in \cite{iRjS2009}. We begin with the following definition, which describes the maximal region in which a solution is determined by its values on a set $S.$

\begin{definition}
Given any set $S \subset \mathcal{M},$ we define $\mathcal{D}(S),$ the Cauchy development of $S,$ to be the union
$\mathcal{D}(S) = \mathcal{D}^+(S) \cup \mathcal{D}^-(S),$ where $\mathcal{D}^+(S)$ is the set of all points $x \in \mathcal{M}$ such that every past inextendible causal curve through $x$ intersects $S,$ and $\mathcal{D}^-(S)$ 
is the set of all points $p \in \mathcal{M}$ such that every future-inextendible\footnote{A curve $\gamma :[s_0, s_{max}) \rightarrow \mathcal{M}$ is said to be \emph{future-inextendible} if there does not exist an immersed future-directed curve 
$\widetilde{\gamma} : I \rightarrow \mathcal{M}$ with $[s_0, s_{max}) \subset I,$ $[s_0, s_{max}) \neq I,$ and
$\widetilde{\gamma}|_{[s_0, s_{max})} = \gamma.$ Past inextendibility is defined in an analogous manner.} causal curve through $p$ intersects $S.$
\end{definition}

We also define a \emph{Cauchy hypersurface}.

\begin{definition} \label{D:Cauchy}
	A Cauchy hypersurface in a Lorentzian manifold $\mathcal{M}$ is a hypersurface $\Sigma$ 
	that is intersected exactly once by every inextendible timelike curve in $\mathcal{M}.$ 
\end{definition}
It is well-known that if $\Sigma \subset \mathcal{M}$ is a Cauchy hypersurface, then $\mathcal{D}(\Sigma) = \mathcal{M}$ (see e.g. \cite{bO1983}).

We now provide the definition a solution launched by an initial data set.

\begin{definition} \label{D:Solution}
Given sufficiently smooth initial data $(\mathring{\Sigma}, \mathring{\underline{g}}_{jk}, \mathring{\underline{K}}_{jk}, \mathring{p}, \underline{\mathring{u}}^j),$ $(j,k = 1,2,3)$ as described in Section \ref{SSS:InitialDataOriginalSystem}, a (classical) solution to the system \eqref{E:EinsteinFieldsummary} - \eqref{E:ConservationofParticleSummary} is a $4-$dimensional manifold $\mathcal{M},$ a Lorentzian metric $g_{\mu \nu},$ a function $p,$ a vectorfield $u^{\mu},$ 
$(\mu, \nu = 0,1,2,3),$ and an embedding $\mathring{\Sigma} \hookrightarrow \mathcal{M}$ subject to the following conditions:

	\begin{itemize}
		\item $g$ is a $C^2$ tensorfield, $p$ is a $C^1$ function, and $u$ is a $C^1$ tensorfield 
		\item Equations \eqref{E:EinsteinFieldsummary} - \eqref{E:ConservationofParticleSummary}
			are satisfied by the components of $g,$ $p,$ and $u$
		\item $\mathring{\Sigma}$ is a spacelike Cauchy hypersurface in $(\mathcal{M},g)$
		\item $\mathring{\underline{g}}$ is the first fundamental form of $\mathring{\Sigma}$
		\item $\mathring{\underline{K}}$ is the second fundamental form of $\mathring{\Sigma}$ 
		\item $\pi(u) = \underline{\mathring{u}}$ where $\pi$ denotes $g-$orthogonal projection 
			onto $\mathring{\Sigma}$
		\end{itemize}
	The array $(\mathcal{M}, g_{\mu \nu}, p, u^{\mu})$ is called a \emph{globally hyperbolic development} of the initial data.
\end{definition}

\subsubsection{The maximal globally hyperbolic development}

In this section, we state a fundamental abstract existence result of Choquet-Bruhat and Geroch \cite{cBgR1969}, which states that for initial data of sufficient regularity, there is a unique ``largest'' spacetime \emph{determined uniquely} by it. The following definition captures the notion of this ``largest'' spacetime. 

\begin{definition}
Given sufficiently smooth initial data for the Euler-Einstein system \eqref{E:EinsteinFieldsummary} - \eqref{E:ConservationofParticleSummary} that satisfy the constraints \eqref{E:Gauss} - \eqref{E:Codazzi}, a \emph{maximal globally hyperbolic development} of the data is a globally hyperbolic development $(\mathcal{M},g,p,u)$ together with an embedding $\iota: \mathring{\Sigma} \hookrightarrow \mathcal{M}$ with the following property: if $(\mathcal{M}',g',p', u')$ is any other globally hyperbolic development of the same data with embedding $\iota': \mathring{\Sigma} \rightarrow \mathcal{M}',$ then there is a map $\psi: \mathcal{M}' \rightarrow \mathcal{M}$ that is a diffeomorphism onto its image such that $\psi^* g = g', \psi^* p = p', \psi^* u = u'$ and $\psi \circ \iota' = \iota.$ Here, $\psi^*$ denotes the pullback by $\psi.$
\end{definition}

Before we can state the theorem, we also need the following definition, which captures the notion of having two different representations of the same spacetime.

\begin{definition}
The developments $(\mathcal{M},g,p,u)$ and $(\mathcal{M}',g',p',u')$ are said to be \emph{isometrically isomorphic} if the map $\psi$ from the previous definition is a diffeomorphism from $\mathcal{M}$ to $\mathcal{M}'.$
\end{definition}

We now state the aforementioned theorem. 

\begin{theorem}[\textbf{Existence of an MGHD}] \cite{cBgR1969} \label{T:MGHD}
Given sufficiently smooth\footnote{The article \cite{cBgR1969} only discusses the case of smooth data. However, as discussed in \cite[Section 6]{pCgGdP2010}, the regularity assumptions of Theorem \ref{T:LocalExistence} are sufficient for the conclusions of
Theorem \ref{T:MGHD} to be valid.} initial data for the Euler-Einstein system \eqref{E:EinsteinFieldsummary} - \eqref{E:ConservationofParticleSummary}, there exists a maximal globally hyperbolic development of the data which is unique up to isometric isomorphism. 
\end{theorem}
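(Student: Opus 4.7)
The plan is to follow the two-step strategy pioneered by Choquet-Bruhat and Geroch: first establish a local existence and uniqueness theorem for the Euler-Einstein system in a suitable gauge, and then patch local developments together into a unique maximal one via a Zorn's lemma argument. The local existence ingredient is already indicated in the introduction and is carried out for the system of interest in Section \ref{S:ReducedEquations}: pick wave coordinates along $\mathring{\Sigma}$ by setting $g_{00}|_{\mathring{\Sigma}} = -1,$ $g_{0j}|_{\mathring{\Sigma}} = 0,$ $g_{jk}|_{\mathring{\Sigma}} = \mathring{\underline{g}}_{jk},$ prescribe $\partial_t g_{\mu \nu}|_{\mathring{\Sigma}}$ so that the second fundamental form agrees with $\mathring{\underline{K}}$ and so that $\Gamma^{\mu}|_{\mathring{\Sigma}} = 0,$ and solve the reduced quasilinear wave system for $g_{\mu \nu}$ coupled to the first-order fluid system \eqref{E:fluidu} - \eqref{E:fluiduperp}. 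Standard mixed-order hyperbolic theory yields a unique local solution in high Sobolev regularity, and the twice-contracted Bianchi identity combined with the constraints \eqref{E:Gauss} - \eqref{E:Codazzi} shows that $\Gamma^{\mu}$ satisfies a homogeneous linear wave equation with vanishing Cauchy data, hence vanishes identically and the reduced system really does solve \eqref{E:EinsteinFieldsummary} - \eqref{E:ConservationofParticleSummary}.

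Next, I would promote local existence to a \emph{geometric} (gauge-independent) local uniqueness statement. Given any two developments of the same data, one constructs wave-map coordinates on each by solving $g^{\alpha\beta} D_\alpha D_\beta x^\mu = 0$ with Cauchy data matching the chosen coordinates on $\mathring{\Sigma};$ these are local diffeomorphisms near $\mathring{\Sigma}$ that put each development into the gauge above, so uniqueness for the reduced system forces them to be isometric on a neighborhood of $\iota_i(\mathring{\Sigma}).$ A standard domain-of-dependence argument then propagates this identification throughout the common Cauchy development of $\mathring{\Sigma},$ yielding the key \textbf{common extension lemma}: any two globally hyperbolic developments of the same data admit a third development into which both embed isometrically.

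With these two ingredients in hand, I would assemble the maximal development. Let $\mathfrak{F}$ be the collection of globally hyperbolic developments of the given data modulo isometric isomorphism fixing the embedding of $\mathring{\Sigma},$ partially ordered by the existence of an isometric embedding commuting with the $\iota$'s; every totally ordered chain admits an upper bound built as the direct limit of the embeddings (global hyperbolicity is inherited because $\iota(\mathring{\Sigma})$ remains a Cauchy hypersurface in the union). Zorn's lemma then produces a maximal element, and the common extension lemma forces any two maximal elements to be isometrically isomorphic. The main obstacle — and the genuinely subtle point, the one that occupies most of \cite{cBgR1969} — is the common extension lemma: the pointwise identification coming from local uniqueness must be propagated to a maximal open subset of the two developments in such a way that the resulting quotient is Hausdorff, which requires a careful analysis of the achronal boundary of the agreement set to exclude branching phenomena. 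Everything else in the proof is essentially formal category-theoretic bookkeeping on top of the local theory of Section \ref{S:ReducedEquations}.
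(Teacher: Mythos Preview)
Your outline is a faithful sketch of the classical Choquet-Bruhat--Geroch argument, but note that the paper does not actually prove this theorem: it is stated with a citation to \cite{cBgR1969} (and a footnote pointing to \cite{pCgGdP2010} for the finite-regularity extension) and no proof is given. So there is no ``paper's own proof'' to compare against; the result is simply quoted from the literature as background for the subsequent stability analysis.
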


The remainder of this article concerns the ``future'' properties of the maximal globally hyperbolic developments of sufficiently smooth data near those corresponding to the FLRW background solutions introduced in Section \ref{S:backgroundsolution}.

\section{FLRW Background Solutions} \label{S:backgroundsolution}

Our main results address the future-stability of a well-known class of FLRW\footnote{Technically, the term ``FLRW'' is usually reserved for a class of solutions that have spatial slices diffeomorphic to $\mathbb{S}^3,$ $\mathbb{R}^3,$ or hyperbolic space (see \cite{rW1984}).} background solutions $([0,\infty) \times \mathbb{T}^3,\widetilde{g}_{\mu \nu}, \widetilde{p}, \widetilde{u}^{\mu}),$ $(\mu, \nu = 0,1,2,3),$ to the system \eqref{E:EulerEinstein} $+$ \eqref{E:fluidu} - \eqref{E:fluiduperp}. These background solutions physically represent the evolution of an initially \emph{uniform} \emph{quiet} fluid in a spacetime that is undergoing exponentially accelerated expansion. To find these solutions, we follow a standard
procedure that is outlined e.g. in \cite[Chapter 5]{rW1984} which, under appropriate ansatzes, reduces the Euler-Einstein equations to ODEs. The discussion in this section was essentially provided in \cite[Section 4]{iRjS2009}, but we repeat it here
for convenience. To proceed, we first make the ansatz that the background metric $\widetilde{g} = \widetilde{g}(t)$ is of the form

\begin{align} \label{E:backgroundmetricform}
	\widetilde{g} = -dt^2 + a^2(t) \sum_{i=1}^3 (dx^i)^2,
\end{align}
from which it follows that the only corresponding non-zero Christoffel symbols are

\begin{align} \label{E:BackgroundChristoffel}
	\widetilde{\Gamma}_{j \ k}^{\ 0} = \widetilde{\Gamma}_{k \ j}^{\ 0}  = a \dot{a} \delta_{jk}, 
		&& \widetilde{\Gamma}_{j \ 0}^{\ k} = \widetilde{\Gamma}_{0 \ j}^{\ k}  = \omega \delta_j^k, && (j,k=1,2,3), 
\end{align}
where 

\begin{align}
	\omega \eqdef \frac{\dot{a}}{a} \label{E:omegadef},
\end{align}
and $\dot{a} \eqdef \frac{d}{dt} a.$ Using definitions \eqref{E:Riccidef} and \eqref{E:Rdef}, together with \eqref{E:BackgroundChristoffel}, we compute that

\begin{subequations}
\begin{align} \label{E:BackgroundEinsteintensor}
	\widetilde{\mbox{Ric}}_{00} - \frac{1}{2} \widetilde{R} \widetilde{g}_{00} & = 3 \Big(\frac{\dot{a}}{a}\Big)^2, && \\
	\widetilde{\mbox{Ric}}_{0j} - \frac{1}{2} \widetilde{R} \widetilde{g}_{0j} & = 0,  && (j=1,2,3),\\
	\widetilde{\mbox{Ric}}_{jk} - \frac{1}{2} \widetilde{R} \widetilde{g}_{jk} & = - (2a \ddot{a} + \dot{a}^2)\delta_{jk},
		&& (j,k=1,2,3). 
\end{align}
\end{subequations}
We then assume that $\widetilde{\rho} = \widetilde{\rho}(t),$ $\widetilde{p} = \widetilde{p}(t),$ and 
$\widetilde{u}^{\mu} \equiv (1,0,0,0).$ We also assume that the equation of state $p = \speed^2 \rho$ holds, and for simplicity, we assume for the remainder of the article that

\begin{align} \label{E:ainitial}
	a(0) = 1.
\end{align}
Inserting these ansatzes into the Einstein equations \eqref{E:EulerEinstein}, we deduce (as in \cite{rW1984}) the following equations, which are known as the \emph{Friedmann equations} in the cosmology literature:

\begin{subequations}
\begin{align}
	\widetilde{\rho} a^{3(1 + \speed^2)} & \equiv \bar{\rho},
		\label{E:backgroundrhoafact} \\
	\dot{a} = a \sqrt{\frac{\Lambda}{3} + \frac{\widetilde{\rho}}{3}} & = a \sqrt{\frac{\Lambda}{3} 
		+ \frac{\bar{\rho}}{3a^{3(1 + \speed^2)}}}, \label{E:backgroundaequation}
\end{align} 
\end{subequations}
where the \emph{positive} constant $\bar{\rho}$ denotes the initial (uniform) energy density. We also denote the initial
pressure by $\bar{p} \eqdef \speed^2 \bar{\rho}.$ Observe that the rapid expansion of the background spacetime can be easily deduced from the ODE \eqref{E:backgroundaequation}, which suggests that the asymptotic behavior $a(t) \sim e^{Ht}, H \eqdef \sqrt{\Lambda/3}.$ A more detailed analysis of $a(t)$ is given in Lemma \ref{L:backgroundaoftestimate}.

For aesthetic reasons, we also introduce the quantities

\begin{align}
	\Omega(t) & \eqdef \ln a(t), \label{E:BigOmega} \\
	\decayparameter & \eqdef 3 \speed^2,
\end{align}
which implies that

\begin{align}
	a(t) & = e^{\Omega(t)}, \\
	\omega(t) & = \frac{d}{dt} \Omega(t).
\end{align}
For future use, we note the following simple consequences of the above discussion; we leave it to the 
reader to supply the details:

\begin{subequations}
\begin{align}
	3 \omega^2 - \Lambda & = \frac{1}{\speed^2} \widetilde{p}, \label{E:3omegasquaredminusLambdaidentity} \\
	\dot{\omega} & = - \frac{(1 + \speed^2)}{2\speed^2} \widetilde{p}, \label{E:omegadotidentity} \\
	3 \dot{\omega} + 3 \omega^2 - \Lambda & = -\frac{1 + 3\speed^2}{2\speed^2} \widetilde{p}.
\end{align}
\end{subequations}

\subsection{Analysis of Friedmann's equation}

The following lemma summarizes the asymptotic behavior of solutions to the ODE \eqref{E:backgroundaequation}.

\begin{lemma}\cite[Lemma 4.2.1]{iRjS2009} \label{L:backgroundaoftestimate}
	Let $\bar{\rho}, \varsigma > 0$ be constants, and let $a(t)$ be the solution to the following ODE: 
	
	\begin{align}
		\frac{d}{dt}a & = a \sqrt{\frac{\Lambda}{3} + \frac{\bar{\rho}}{3a^{\varsigma}}}, && a(0) = 1.
	\end{align}
	Then with $H \eqdef \sqrt{\Lambda/3},$ the solution $a(t)$ is given by
	
	\begin{align}
		a(t) & = \bigg\lbrace\mbox{sinh}\Big(\frac{\varsigma H t}{2}\Big) 
			\sqrt{\frac{\bar{\rho}}{3H^2} + 1}
		 	+	\mbox{cosh}\Big(\frac{\varsigma H t}{2} \Big) \bigg\rbrace^{2/\varsigma}, 
	\end{align}
	and for all integers $N \geq 0,$ there exists a constant $C_N > 0$ such that for all $t \geq 0,$ with 
	$A \eqdef \bigg\lbrace \frac{1}{2} \Big(\sqrt{\frac{\bar{\rho}}{3H^2} + 1} + 1 \Big) \bigg\rbrace^{2/\varsigma},$
	we have that
	
	\begin{subequations}
	\begin{align}
		(1/2)^{2/\varsigma} e^{Ht} \leq a(t) & \leq A e^{Ht}, \\
		\Big| e^{-Ht} \frac{d^N}{dt^N}a(t) - A H^N \Big| & \leq C_Ne^{-\varsigma Ht}.
	\end{align}
	\end{subequations}
	
	Furthermore, for all integers $N \geq 0,$ there exists a constant $\widetilde{C}_N > 0$ such that
	for all $t \geq 0,$ with 
	\begin{align} 
		\omega \eqdef \frac{\dot{a}}{a}, 
	\end{align}	
	 we have that
	
	\begin{subequations}
	\begin{align}
		H \leq \omega(t) & \leq \sqrt{H^2 + \frac{\bar{\rho}}{3}}, \\
		\Big| \frac{d^N}{dt^N}\big(\omega(t) - H\big) \Big| & \leq \widetilde{C}_N e^{-\varsigma Ht}.
	\end{align}
	\end{subequations}
\end{lemma}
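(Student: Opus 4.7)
The plan is to first solve the ODE explicitly, and then read off the quantitative asymptotic estimates directly from the closed-form expression. To obtain the explicit solution, I would introduce the substitution $u(t) \eqdef a(t)^{\varsigma/2}$, which converts the separable ODE $\dot{a}/a = \sqrt{H^2 + \bar{\rho}/(3a^{\varsigma})}$ into
\begin{equation*}
\dot{u} = \tfrac{\varsigma}{2}\sqrt{H^2 u^2 + \bar{\rho}/3}, \qquad u(0) = 1.
\end{equation*}
Separating variables and using the trigonometric substitution $Hu = \sqrt{\bar{\rho}/3}\,\sinh\theta$, the equation integrates to $\theta = (\varsigma H/2)t + \theta_0$, where $\theta_0$ is fixed by $\sinh\theta_0 = H\sqrt{3/\bar{\rho}}$ (equivalently $\cosh\theta_0 = \sqrt{\bar\rho/(3H^2)+1}\eqdef B$). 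Expanding $\sinh(\alpha+\beta)$ and raising to the power $2/\varsigma$ yields exactly the claimed formula for $a(t)$.

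Next I would rewrite $a(t) = f(t)^{2/\varsigma}$ in ``pure exponential'' form. Using $\sinh$ and $\cosh$ expansions,
\begin{equation*}
f(t) = \tfrac{B+1}{2}\,e^{(\varsigma H/2)t} - \tfrac{B-1}{2}\,e^{-(\varsigma H/2)t} = P e^{\alpha t}\bigl(1 - (Q/P)\,e^{-\varsigma H t}\bigr),
\end{equation*}
where $P = (B+1)/2$, $Q = (B-1)/2$, $\alpha = \varsigma H /2$, and $P^{2/\varsigma} = A$. Since $B \geq 1$ we have $0 \leq Q/P < 1$, so dropping the (negative) correction gives immediately the upper bound $a(t) \leq A e^{Ht}$. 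For the lower bound I would argue directly from the ODE: $\omega = \dot{a}/a \geq H$, hence $a(t) \geq e^{Ht} \geq (1/2)^{2/\varsigma}e^{Ht}$.

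For the derivative bounds I would expand via the (absolutely and uniformly convergent on $[0,\infty)$) binomial series
\begin{equation*}
a(t) = A e^{Ht}\bigl(1 - (Q/P)\,e^{-\varsigma H t}\bigr)^{2/\varsigma} = A e^{Ht} + A e^{Ht}\sum_{k \geq 1} \binom{2/\varsigma}{k}(-Q/P)^{k}\,e^{-k\varsigma H t}.
\end{equation*}
Each term is of the form $c_k\,e^{(1-k\varsigma)Ht}$, so differentiating $N$ times term by term produces $A H^N e^{Ht}$ plus a remainder bounded by $C_N e^{(1-\varsigma)Ht}$, which gives the estimate on $e^{-Ht}\frac{d^N a}{dt^N} - AH^N$ after dividing by $e^{Ht}$. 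The termwise differentiation is justified because $Q/P < 1$ and the derivatives $H^n(1-k\varsigma)^n$ pick up only polynomial growth in $k$, keeping the series absolutely convergent.

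Finally, for $\omega = \dot{a}/a$, the identity $\omega(t) = \sqrt{H^2 + \bar\rho/(3 a^\varsigma)}$ plus $a(t) \geq 1$ gives the pointwise bounds $H \leq \omega(t) \leq \sqrt{H^2 + \bar\rho/3}$ at once. The decay estimate for $\omega - H$ follows from the algebraic identity $\sqrt{x+y} - \sqrt{x} = y/(\sqrt{x+y}+\sqrt{x})$ combined with $a(t) \geq e^{Ht}$, which gives $\omega - H \leq C\,a^{-\varsigma} \leq Ce^{-\varsigma Ht}$. For higher derivatives I would differentiate $\omega^2 = H^2 + \bar\rho/(3a^{\varsigma})$ once to obtain $\dot\omega = -(\varsigma \bar\rho/6)a^{-\varsigma}$, and then proceed by induction using the Leibniz rule, combined with the already-established estimates $|\frac{d^n}{dt^n} a(t)| \leq C_n e^{Ht}$ and $|\frac{d^n}{dt^n} a^{-\varsigma}(t)| \leq \widetilde C_n e^{-\varsigma Ht}$ (the latter coming from the explicit expansion in the previous paragraph applied to $a^{-\varsigma}$). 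The main obstacle is purely bookkeeping — controlling the combinatorial blow-up in $N$ when differentiating the binomial series — but since only finitely many $N$ need ever be considered in the applications, the constants $C_N, \widetilde C_N$ are allowed to depend on $N$, and no genuine analytic difficulty arises.
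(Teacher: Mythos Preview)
The paper does not actually prove this lemma; it is quoted from \cite[Lemma 4.2.1]{iRjS2009} and stated without proof, so there is no ``paper's proof'' to compare against. Your approach --- the substitution $u=a^{\varsigma/2}$, the hyperbolic integration, the rewriting $f(t)=Pe^{\alpha t}(1-(Q/P)e^{-\varsigma Ht})$, and the binomial expansion for the derivative asymptotics --- is the standard and correct route, and your treatment of $\omega$ via $\omega^2 = H^2 + \bar\rho/(3a^\varsigma)$ is exactly what one expects.

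One small slip: your parenthetical ``equivalently $\cosh\theta_0 = \sqrt{\bar\rho/(3H^2)+1}$'' is wrong. From $\sinh\theta_0 = H\sqrt{3/\bar\rho}$ you get $\cosh\theta_0 = \sqrt{1+3H^2/\bar\rho}$, which is \emph{not} equal to $B$. The correct identification is that after multiplying through by the prefactor $\sqrt{\bar\rho/3}/H$ in $u = (\sqrt{\bar\rho/3}/H)\sinh\theta$, the coefficient of $\sinh(\varsigma Ht/2)$ becomes $(\sqrt{\bar\rho/3}/H)\cosh\theta_0 = B$ and the coefficient of $\cosh(\varsigma Ht/2)$ becomes $(\sqrt{\bar\rho/3}/H)\sinh\theta_0 = 1$, which recovers the stated formula. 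This is a bookkeeping typo only; the rest of your argument is unaffected and correct.
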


\begin{remark}
	Because of equation \eqref{E:backgroundaequation}, we will assume for the remainder of the article that
	$\varsigma = 3(1 + \speed^2).$
\end{remark}

\section{The Modified Euler-Einstein System} \label{S:ReducedEquations}
In this section, we recall the wave coordinate system introduced in \cite{iRjS2009}, which was based on the framework used in \cite{hR2008}. We then use algebraic identities that are valid in wave coordinates to construct a modified version of the Euler-Einstein equations, which is a system of quasilinear wave equations coupled to the first-order Euler equations, and which contains \emph{energy-dissipative} terms. We then construct data for the modified system from given data for the unmodified system in a manner consistent with our wave coordinate system. Next, to facilitate our analysis in later sections, we algebraically decompose the modified system into principal terms and error terms. We then discuss local existence and a continuation principle for the modified system, and we sketch a standard proof of the fact that the modified system is equivalent to the unmodified system if the Einstein constraint equations and the wave coordinate condition are both satisfied along the initial Cauchy hypersurface $\mathring{\Sigma}.$ Finally, for convenience, we introduce some PDE matrix-vector notation for the Euler equations.

\subsection{Wave coordinates} \label{SS:harmoniccoodinates}
To hyperbolize the Einstein equations, we use a coordinate system in which the contracted Christoffel symbols $\Gamma^{\mu} \eqdef g^{\alpha \beta} \Gamma_{\alpha \ \beta}^{\ \mu}$ of the spacetime metric $g$ are equal to the contracted Christoffel symbols $\widetilde{\Gamma}^{\mu} \eqdef \widetilde{g}^{\alpha \beta} \widetilde{\Gamma}_{\alpha \ \beta}^{\ \mu}$ of the FLRW background metric $\widetilde{g}.$ This well-known condition is known as a \emph{wave coordinate} condition since $\Gamma^{\mu} \equiv \widetilde{\Gamma}^{\mu}$ if and only if the coordinate functions\footnote{The $x^{\mu}$ are scalar-valued functions, despite the fact that they have indices.} $x^{\mu}$ are solutions to the wave equation $g^{\alpha \beta} D_{\alpha} D_{\beta} x^{\mu} + \widetilde{\Gamma}^{\mu} = 0.$ Using \eqref{E:backgroundmetricform} and \eqref{E:BackgroundChristoffel}, we compute that in wave coordinates, we have

\begin{align} \label{E:HarmonicGauge}
	\Gamma^{\mu} & = \widetilde{\Gamma}^{\mu} = 3 \omega \delta_{0}^{\mu},&& \Gamma_{\mu} = 
		g_{\mu \alpha} \Gamma^{\alpha} = 3 \omega g_{0 \mu},
\end{align}
where $\omega(t)$ is defined in \eqref{E:omegadef}.

We now introduce the tensorfields\footnote{Technically, $P^{\mu}$ and $P_{\mu}$ 
do not have the coordinate transformation properties of a vectorfield/covectorfield. Nonetheless, we will treat them as 
vectorfields/covectorfields when we compute their covariant derivatives. On the other hand, $Q^{\mu}$ and $Q_{\mu}$ do have the transformation properties of a vectorfield/covectorfield.}

\begin{subequations}
\begin{align}
	P^{\mu} & \eqdef \widetilde{\Gamma}^{\mu} = 3 \omega \delta_0^{\mu}, && P_{\mu} = \widetilde{\Gamma}_{\mu} = 3 \omega g_{0 
		\mu}, \\
	Q^{\mu} & \eqdef P^{\mu} - \Gamma^{\mu}, && Q_{\mu} = P_{\mu} - \Gamma_{\mu}. \label{E:Qdef}
\end{align}
\end{subequations}

The idea behind wave coordinates is that when $Q^{\mu} \equiv 0,$ whenever it is expedient, we may replace $\Gamma^{\mu}$ with $3 \omega \delta_{0}^{\mu}$ (and vice-versa) without altering the content of the Einstein equations. The existence of such a coordinate system is nontrivial, and it was only in 1952 that Choquet-Bruhat \cite{CB1952} first showed that they exist in general (see Proposition \ref{P:Preservationofgauge}). With this idea in mind, we define (as in \cite[Equation $(47)$]{hR2008}) 
the \emph{modified Ricci tensor} $\widehat{\mbox{Ric}}_{\mu \nu}$ by

\begin{align} \label{E:modifiedRicci}
	\widehat{\mbox{Ric}}_{\mu \nu} & \eqdef \mbox{Ric}_{\mu \nu} + \frac{1}{2} \big(D_{\mu} Q_{\nu} + D_{\nu} Q_{\mu} \big)  \\
	& = -\frac{1}{2} \hat{\square}_g g_{\mu \nu} + \frac{1}{2} \big(D_{\mu} P_{\nu} + D_{\nu}P_{\mu} \big)
		+ g^{\alpha \beta} g^{\gamma \delta} (\Gamma_{\alpha \gamma \mu} \Gamma_{\beta \delta \nu}
		+ \Gamma_{\alpha \gamma \mu} \Gamma_{\beta \nu \delta}
		+ \Gamma_{\alpha \gamma \nu} \Gamma_{\beta \mu \delta}), \notag
\end{align}
where

\begin{align} \label{E:reducedwaveoperator}
	\hat{\square}_g \eqdef g^{\alpha \beta} \partial_{\alpha} \partial_{\beta}
\end{align}
is the \emph{reduced wave operator} corresponding to the metric $g.$

We now replace the $\mbox{Ric}_{\mu \nu}$ with $\widehat{\mbox{Ric}}_{\mu \nu}$ in \eqref{E:EulerEinstein}, expand the covariant differentiation in \eqref{E:fluidu} - \eqref{E:fluiduperp}, and add additional inhomogeneous terms $I_{\mu \nu}$ to the left-hand side of \eqref{E:EulerEinstein}, thereby arriving at the \emph{modified Euler-Einstein system}:

\begin{subequations}
\begin{align}
	\widehat{\mbox{Ric}}_{\mu \nu} - \Lambda g_{\mu \nu} - T_{\mu \nu} + \frac{1}{2} T g_{\mu \nu} + I_{\mu \nu} & = 0, 
		&& (\mu, \nu = 0,1,2,3), \label{E:FirstmodifiedRicci} \\
	u^{\alpha} D_{\alpha} p + (1+ \speed^2)pD_{\alpha} u^{\alpha} & = 0,&& \label{E:firstEulermodified} \\
	u^{\alpha} D_{\alpha} u^{j} + \frac{\speed^2}{(1 + \speed^2)p}\Pi^{j \alpha} D_{\alpha} p & = 0,
		&& (j = 1,2,3). \label{E:Eulerjmodified} 
\end{align}
\end{subequations}
Here, the additional terms are defined to be

\begin{subequations}
\begin{align} 
	I_{00} & \eqdef -2 \omega Q^0 = 2 \omega(\Gamma^0 - 3 \omega), \label{E:gaugetermI00} \\
	I_{0j} = I_{j0} & \eqdef 2 \omega Q_j = 2 \omega(3 \omega g_{0j} - \Gamma_j), \label{E:gaugetermI0j} \\
	I_{jk} = I_{jk} & \eqdef 0. \label{E:gaugetermIjk} 
\end{align}
\end{subequations}

We have several important remarks to make concerning the modified system \eqref{E:FirstmodifiedRicci} - \eqref{E:Eulerjmodified}. First, because the principal term on the left-hand side of \eqref{E:FirstmodifiedRicci} is $-\frac{1}{2} \hat{\square}_g g_{\mu \nu},$ the modified equations \eqref{E:FirstmodifiedRicci} are quasilinear wave equations, and are therefore of hyperbolic character. Since the Euler equations \eqref{E:firstEulermodified} - \eqref{E:Eulerjmodified} are also hyperbolic in a fixed spacetime, (see e.g. \cite{dC2007}, \cite{jS2008a}), it follows that the modified Euler-Einstein system is a hyperbolic system of mixed order. Second, the gauge terms $I_{\mu \nu}$ have been added to the system in order to produce an energy dissipation effect that is analogous to the effect created by the $3(\partial_t v)^2$ term on the right-hand side of the model equation \eqref{E:modelequation}. These dissipation-inducing terms play a key role in the global existence theorem of Section \ref{S:GlobalExistence}. Finally, in Section \ref{SS:ClassicalLocalExistence}, we will elaborate upon the following fact: if the initial data satisfy the Gauss and Codazzi constraints \eqref{E:Gauss} - \eqref{E:Codazzi}, and if the wave coordinate condition $Q_{\mu}|_{t=0} = 0$ is satisfied, then $Q_{\mu},$ $I_{\mu \nu} \equiv 0,$ and $\widehat{\mbox{Ric}}_{\mu \nu} \equiv \mbox{Ric}_{\mu \nu};$ i.e., under these conditions, the solution to \eqref{E:FirstmodifiedRicci} - \eqref{E:Eulerjmodified} is also a solution to the Euler-Einstein system \eqref{E:EinsteinFieldsummary} - \eqref{E:ConservationofParticleSummary}.

\subsection{Summary of the modified system \texorpdfstring{for the equation of state $p = \speed^2 \rho$}{for the FLRW equation of state}}

For convenience, we summarize the results of the previous section by listing the modified Euler-Einstein system 
(where $j=1,2,3)$: 

\begin{subequations}
\begin{align}
	\widehat{\mbox{Ric}}_{00} + 2 \omega \Gamma^0 - 6 \omega^2 - \Lambda g_{00} - p \Big(\frac{1 + \speed^2}{\speed^2}(u_0)^2 
		+ \frac{1 - \speed^2}{2\speed^2} g_{00} \Big) & = 0, \label{E:Rhat00} \\
	\widehat{\mbox{Ric}}_{0j} - 2 \omega (\Gamma_j - 3 \omega g_{0j}) - \Lambda g_{0j} - p \Big(\frac{1 + \speed^2}{\speed^2}u_0 u_j 
		+ \frac{1 - \speed^2}{2\speed^2} g_{0j} \Big) & = 0, \label{E:Rhat0j} \\
	\widehat{\mbox{Ric}}_{jk} - \Lambda g_{jk} - p \Big(\frac{1 + \speed^2}{\speed^2}u_j u_k 
		+ \frac{1 - \speed^2}{2\speed^2} g_{jk} \Big) & = 0, \label{E:Rhatjk} \\
	u^{\alpha} D_{\alpha} p + (1+ \speed^2)p D_{\alpha} u^{\alpha} & = 0, \label{E:firstEulersummary} \\
	u^{\alpha} D_{\alpha} u^{j} + \frac{\speed^2}{(1 + \speed^2)p} \Pi^{j \alpha} D_{\alpha} p & = 0, \label{E:secondEulersummary} 
\end{align}
\end{subequations}
where $g_{\alpha \beta} u^{\alpha} u^{\beta} = -1$ and $\Pi^{\mu \nu} = u^{\mu} u^{\nu} + g^{\mu \nu}.$

\subsection{Construction of initial data for the modified system} \label{SS:IDREDUCED}

In this section, we assume that we are given initial data $(\mathring{\Sigma}, \mathring{\underline{g}}_{jk}, \mathring{\underline{K}}_{jk}, \mathring{p}, \underline{\mathring{u}}^j),$ $(j,k = 1,2,3),$ for the Euler-Einstein equations \eqref{E:EinsteinFieldsummary} - \eqref{E:ConservationofParticleSummary} as described in Section \ref{SSS:InitialDataOriginalSystem}. In particular, we assume that they satisfy the constraints \eqref{E:Gauss} - \eqref{E:Codazzi}. We will use this data to construct initial data for the modified equations that lead to a solution $(\mathcal{M}, g_{\mu \nu}, p, u^{\mu}),$ $(\mu, \nu = 0,1,2,3),$ of both the modified system and the Einstein equations; recall that a solution solves both systems $\iff Q_{\mu} \equiv 0,$ where $Q_{\mu}$ is defined in \eqref{E:Qdef}. We remark that
one may consider arbitrary data for the modified equations \eqref{E:Rhat00} - \eqref{E:secondEulersummary}, but without further assumptions, the resulting solution is not necessarily a solution to the Einstein equations \eqref{E:EinsteinFieldsummary}
- \eqref{E:ConservationofParticleSummary}. 

To supply complete data for the modified equations, we can specify along $\mathring{\Sigma} = \lbrace t=0 \rbrace$ the full spacetime metric components $g_{\mu \nu}|_{t=0},$ their future-directed normal derivatives $\partial_t g_{\mu \nu}|_{t=0},$ 
the pressure $p,$ and the $g-$orthogonal projection of the four-velocity onto $\mathring{\Sigma}.$ To satisfy the requirements

\begin{itemize}
	\item $\mathring{\Sigma} = \lbrace t=0 \rbrace$
	\item $\mathring{\underline{g}}$ is the first fundamental form of $\mathring{\Sigma}$
	\item $\mathring{\underline{K}}$ is the second fundamental form of $\mathring{\Sigma}$
	\item $\partial_t$ is future-directed and normal to $\mathring{\Sigma}$
	\item $p|_{\mathring{\Sigma}} = \mathring{p}$
	\item $\underline{\mathring{u}}$ is the $g-$orthogonal projection of
		(the unit-normalized) four-velocity $u$ onto $\mathring{\Sigma},$
\end{itemize}
we set

\begin{subequations}
\begin{align}
	g_{00}|_{t=0} & = -1, \ \ g_{0j}|_{t=0} = 0, \ \ g_{jk}|_{t=0} = \mathring{\underline{g}}_{jk}, \\
	p|_{t=0} & = \mathring{p}, \ \ u_j |_{t=0} = \underline{\mathring{u}}_j, \ \ u_0|_{t=0} = - \sqrt{1 + 
		\mathring{\underline{g}}_{ab}\underline{\mathring{u}}^a 
		\underline{\mathring{u}}^b}, \\
	(\partial_t g_{jk})|_{t=0} & = 2 \mathring{\underline{K}}_{jk}.
\end{align}
\end{subequations}

To satisfy the initial wave coordinate condition $Q_{\mu}|_{t=0} = 0,$ we first compute that

\begin{subequations}
\begin{align}
	\Gamma_0|_{t=0} & = - \frac{1}{2} (\partial_{t} g_{00})|_{t=0} - \mathring{\underline{g}}^{ab} \mathring{\underline{K}}_{ab} 
		\label{E:Gamma0attequals0}, \\
	\Gamma_j|_{t=0} & = -(\partial_t g_{0j})|_{t=0} + \frac{1}{2} \mathring{\underline{g}}^{ab}(2 \partial_a \mathring{\underline{g}}_{bj}
		- \partial_j \mathring{\underline{g}}_{ab}). \label{E:Gammajattequals0}
\end{align}
\end{subequations}
Using \eqref{E:Gamma0attequals0} and \eqref{E:Gammajattequals0}, the condition $Q_{\mu}|_{t=0} = 0$ is easily seen to be 
equivalent to the following relations, where $\omega$ is defined in \eqref{E:omegadef}:

\begin{subequations}
\begin{align}
	(\partial_{t} g_{00})|_{t=0} & = 2 (- 3 \omega|_{t=0} \underbrace{g_{00}|_{t=0}}_{-1} - \mathring{\underline{g}}^{ab} 
		\mathring{\underline{K}}_{ab}) =  2 \big(3 \omega(0)  - \mathring{\underline{g}}^{ab} 
		\mathring{\underline{K}}_{ab}\big), \\
	(\partial_t g_{0j})|_{t=0} & = - 3 \omega|_{t=0} \underbrace{g_{0 j}|_{t=0}}_{0} + \frac{1}{2} 
		\mathring{\underline{g}}^{ab}(2 \partial_a \mathring{\underline{g}}_{bj} - \partial_j \mathring{\underline{g}}_{ab}) = 
		\mathring{\underline{g}}^{ab}( \partial_a \mathring{\underline{g}}_{bj} - \frac{1}{2} \partial_j 
		\mathring{\underline{g}}_{ab}).
\end{align}
\end{subequations}
We remark that in the above expressions, $\mathring{\underline{g}}^{jk}$ denotes a component of the
inverse of $\mathring{\underline{g}}.$ This completes our specification of the data for the modified equations.

\subsection{Decomposition of the modified system in wave coordinates} \label{SS:Decomposition}
In order to reveal the dissipative structure discussed in Section \ref{SS:Commentsonanalysis}, we decompose the modified equations \eqref{E:Rhat00} - \eqref{E:secondEulersummary} into principal terms and error terms, which we denote by variations of the symbol $\triangle.$ A central component of our global existence argument is the derivation of suitable bounds for the error terms; the estimates of Section \ref{S:BootstrapConsequences} will justify the claim that the $\triangle$ terms are in fact error terms. We begin by recalling the previously mentioned rescaling $h_{jk}$ of the spatial indices of the metric:

\begin{align} \label{E:hjkdef}
	h_{jk} \eqdef e^{-2 \Omega} g_{jk}.
\end{align}
We will also make use of the following rescaling of the pressure:

\begin{align} \label{E:RESCALEDPRESSURE}
	P \eqdef e^{3(1 + \speed^2) \Omega}p.
\end{align}
The above rescaled quantities will be order $1$ in our global existence theorem, which makes them more convenient to work with; i.e., we have rescaled purely for convenience. We remark that for the FLRW background solution, 
$P \equiv \bar{p}$ and $h_{jk} \equiv \delta_{jk}.$ The decomposition is carried out in the next proposition.

\begin{proposition}[\textbf{Decomposition of the Modified Equations}] \label{P:Decomposition}
	The equations \eqref{E:Rhat00} - \eqref{E:secondEulersummary} in the unknowns $(g_{\mu \nu},P,u^j),$ $(\mu, \nu = 0,1,2,3),$
	$(j = 1,2,3),$ can be written as 

\begin{subequations}
\begin{align}
	\hat{\square}_{g} (g_{00} + 1) & = 5 H \partial_t g_{00} + 6 H^2 (g_{00} + 1) + \triangle_{00},
		\label{E:finalg00equation} \\
	\hat{\square}_{g} g_{0j} & = 3H \partial_t g_{0j} + 2 H^2 g_{0j} - 2Hg^{ab}\Gamma_{a j b} + \triangle_{0j}, 
		\label{E:finallg0jequation} \\
	\hat{\square}_{g} h_{jk} & = 3H \partial_t h_{jk} + \triangle_{jk}, \label{E:finalhjkequation} 
\end{align}

\begin{align}
	u^{\alpha} \partial_{\alpha} (P - \bar{p})+ (1 + \speed^2) \Big(\frac{-1}{u_0}\Big) P u_a \partial_t u^a + (1 + \speed^2) P 
		\partial_{a}u^{a} & = \triangle, \label{E:finalfirstEuler} \\
	u^{\alpha} \partial_{\alpha} u^{j} + \frac{\speed^2}{(1 + \speed^2)P}\Pi^{j \alpha} \partial_{\alpha} (P - \bar{p})
		& = (\decayparameter - 2) \omega u^{j} + \triangle^j, \label{E:finalEulerj}
\end{align}
\end{subequations}
where 

\begin{align} \label{E:U0UPPERISOLATED}
	u^0 & = - \frac{g_{0a}u^a}{g_{00}} + \sqrt{1 + \Big(\frac{g_{0a}u^a}{g_{00}}\Big)^2 - \frac{g_{ab}u^a u^b}{g_{00}} 
		- \Big(\frac{g_{00} + 1}{g_{00}}\Big)}, \\
	\Pi^{\mu \nu} & = u^{\mu} u^{\nu} + g^{\mu \nu},
\end{align}

	\begin{align}
		H & \eqdef \sqrt{\frac{\Lambda}{3}},&& \decayparameter \eqdef 3\speed^2,
	\end{align}
	the error terms $\triangle_{\mu \nu},$ $\triangle,$ $\triangle^j$ can be expressed as
 
\begin{subequations} 
\begin{align}
	\frac{1}{2} \triangle_{00} & = \triangle_{A,00} + \triangle_{C,00}
		- \frac{3\speed^2 + 1}{2 \speed^2} (g_{00} + 1) e^{-3(1 + \speed^2) \Omega} \bar{p}
		- \frac{3\speed^2 + 1}{2 \speed^2}e^{-3(1 + \speed^2) \Omega}(P - \bar{p}), \label{E:triangle00} \\
	& \ \ - \frac{1 + \speed^2}{\speed^2} (u_0 + 1)(u_0 - 1) e^{-3(1 + \speed^2) \Omega}P   
		- \frac{1 - \speed^2}{2 \speed^2} \big(g_{00} + 1\big)e^{-3(1 + \speed^2) \Omega}P \notag \\
	& \ \ + \frac{5}{2} (\omega - H) \partial_t g_{00} + 3 (\omega^2 - H^2)(g_{00} + 1), \notag \\
	\frac{1}{2} \triangle_{0j} & = \triangle_{A,0j} + \triangle_{C,0j}
		+ \frac{1 - 3 \speed^2}{4 \speed^2} e^{-3(1 + \speed^2)} \bar{p} g_{0j} 
		- \frac{1 + \speed^2}{\speed^2}e^{-3(1 + \speed^2) \Omega}Pu_0u_j \label{E:triangle0j} \\
	& \ \ - \frac{1 - \speed^2}{2\speed^2} e^{-3(1 + \speed^2) \Omega}P g_{0j} 
		+ \frac{3}{2} (\omega - H) \partial_t g_{0j} + (\omega^2 - H^2) g_{0j} 
		- (\omega - H) g^{ab}\Gamma_{a j b}, \notag \\
	\frac{1}{2} \triangle_{jk} & = e^{-2 \Omega} \triangle_{A,jk} + \frac{1 + \speed^2}{2\speed^2}
		\bar{p}e^{-3(1 + \speed^2) \Omega}(g^{00} + 1)h_{jk} - 2 \omega g^{0a} \partial_{a} h_{jk} \label{E:trianglejk} \\
		& \ \ - \frac{1 - \speed^2}{2\speed^2}e^{-3(1 + \speed^2) \Omega}(P - \bar{p})h_{jk} 
			- \frac{1 + \speed^2}{\speed^2} e^{-2 \Omega} e^{-3(1 + \speed^2) \Omega} P u_j u_k
			+ \frac{3}{2}(\omega - H) \partial_t h_{jk}, \notag 
	\end{align}
	
	\begin{align}
	\triangle & = - (1 + \speed^2)P \triangle_{\alpha \ 0}^{\ \alpha}u^0  
		-(1 + \speed^2) P \triangle_{\alpha \ a}^{\ \alpha}u^a \label{E:triangledef} \\
		&  \ \ + \frac{(1 + \speed^2)P}{2u_0} 
			\Big\lbrace (\partial_t g_{00})(u^0)^2 + 2 (\partial_t g_{0a})u^0 u^a + (\partial_t g_{ab})u^a u^b 	
			\Big\rbrace, \notag \\
	\triangle^j & = (\decayparameter - 2)\omega (u^0 - 1) u^j - \triangle_{\alpha \ \beta}^{\ j} u^{\alpha} u^{\beta}
		+ \decayparameter \omega g^{0j}, \label{E:trianglejdef}
\end{align}
\end{subequations}
the $\triangle_{A,\mu \nu}$ are defined in \eqref{E:triangleA00def} - \eqref{E:triangleAjkdef}, 
$\triangle_{C,00},$ $\triangle_{C,0j}$ are defined in \eqref{E:triangleC00def} - \eqref{E:triangleC0jdef},
and the $\triangle_{\mu \ \nu}^{\ \alpha}$ are defined in \eqref{E:triangleGamma000} - \eqref{E:triangleGammaikj}.

Furthermore, $u^0$ is a solution to the following equation:
	
	\begin{align}
		u^{\alpha} \partial_{\alpha} u^{0}  + \frac{\speed^2}{(1 + \speed^2)P}\Pi^{0 \alpha} \partial_{\alpha} (P - \bar{p})
			& =	\triangle^0, \label{E:u0equation} 
	\end{align}
	where
	
	\begin{align}
		\triangle^0 & = \decayparameter \omega \Big\lbrace (u^0 - 1)(u^0 + 1) + (g^{00} + 1) \Big\rbrace
			- \omega g_{ab} u^a u^b - \triangle_{\alpha \ \beta}^{\ 0} u^{\alpha} u^{\beta}. 
			\label{E:triangle0def}
	\end{align}

\end{proposition}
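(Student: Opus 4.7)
The plan is to derive each of the five displayed equations \eqref{E:finalg00equation}--\eqref{E:finalEulerj} by an algebraic rearrangement of \eqref{E:Rhat00}--\eqref{E:secondEulersummary}, isolating a linear constant-coefficient principal part (built from $H$, $\decayparameter$) from a remainder that is collected into the $\triangle$ family. The key external inputs are the expression \eqref{E:modifiedRicci} for $\widehat{\mbox{Ric}}_{\mu\nu}$, the wave-coordinate identity $P_\mu = 3\omega g_{0\mu}$, the background Christoffel formulas \eqref{E:BackgroundChristoffel}, and the FLRW background identities \eqref{E:3omegasquaredminusLambdaidentity}--\eqref{E:omegadotidentity}.

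For the three metric equations I would first substitute \eqref{E:modifiedRicci} into \eqref{E:Rhat00}--\eqref{E:Rhatjk}, producing $-\tfrac{1}{2}\hat{\square}_g g_{\mu\nu}$ plus the symmetrized derivative of $P_\mu = 3\omega g_{0\mu}$ plus the three Christoffel-square contributions. Computing $D_\mu P_\nu + D_\nu P_\mu$ with $P_\mu = 3\omega g_{0\mu}$ and then writing $\omega = H + (\omega - H)$ produces the stated $5H\partial_t g_{00}$, $3H\partial_t g_{0j}$, $3H\partial_t h_{jk}$ dissipation coefficients, with the $(\omega-H)$ pieces absorbed into $\triangle_{\mu\nu}$. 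For the $g_{00}$ equation the $6H^2(g_{00}+1)$ term is obtained by combining $-\Lambda g_{00}$, the piece $-2\omega\Gamma^0 + 6\omega^2$ coming from the gauge term \eqref{E:gaugetermI00}, and the background-fluid trace pieces via \eqref{E:3omegasquaredminusLambdaidentity}, writing everything as ``principal part at $H$'' plus an $(\omega-H)$ error; the pressure and velocity differences $P-\bar p$ and $u_0\pm 1$ are split off to form \eqref{E:triangle00}. The $g_{0j}$ computation proceeds identically using \eqref{E:gaugetermI0j} and yields $2H^2 g_{0j} - 2Hg^{ab}\Gamma_{ajb}$ plus \eqref{E:triangle0j}. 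For $h_{jk} = e^{-2\Omega} g_{jk}$ I commute $\hat{\square}_g$ past the conformal factor: the identity $e^{-2\Omega}\hat{\square}_g g_{jk} = \hat{\square}_g h_{jk} - 2\omega g^{0a}\partial_a h_{jk} - (\text{lower order})$ produces both the $-2\omega g^{0a}\partial_a h_{jk}$ term in $\triangle_{jk}$ and, after \eqref{E:omegadotidentity}, the residual $\bar p$-coefficient appearing in \eqref{E:trianglejk}. The Christoffel-square pieces are packaged into $\triangle_{A,\mu\nu}$, and the $2\omega Q$-type gauge contributions into $\triangle_{C,\mu\nu}$.

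For the fluid equations I would expand $D_\alpha u^\alpha = \partial_\alpha u^\alpha + \Gamma_{\alpha\,\beta}^{\,\,\alpha}u^\beta$ in \eqref{E:firstEulersummary}, multiply through by $e^{3(1+\speed^2)\Omega}$ to convert $p$ to the rescaled $P$, and use $\partial_t(g_{\alpha\beta}u^\alpha u^\beta) = 0$ to trade $\partial_t u^0$ for $\partial_t u^a$ plus the curly-braced metric-derivative expression in \eqref{E:triangledef}; the background contraction $3\omega(1+\speed^2)P$ generated by rescaling cancels against the leading piece of $(1+\speed^2)P\,\Gamma_{\alpha\,0}^{\,\,\alpha}u^0$ via the wave gauge, leaving $(1+\speed^2)P\triangle_{\alpha\,0}^{\,\,\alpha}u^0 + (1+\speed^2)P\triangle_{\alpha\,a}^{\,\,\alpha}u^a$, which matches \eqref{E:triangledef}. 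Equation \eqref{E:U0UPPERISOLATED} arises from solving the unit normalization as a quadratic in $u^0$ and selecting the future-directed root. For \eqref{E:finalEulerj} I expand $u^\alpha D_\alpha u^j = u^\alpha\partial_\alpha u^j + \Gamma_{\alpha\,\beta}^{\,\,j}u^\alpha u^\beta$, isolate the background contribution $\widetilde\Gamma_{0\,a}^{\,\,j}(u^0 u^a + u^a u^0) = 2\omega u^j$, and then combine this with the rescaling identity obtained by multiplying \eqref{E:secondEulersummary} by the appropriate power of $e^\Omega$ (which yields the $\decayparameter\omega g^{0j}$ contribution from the Christoffel symbols), producing the net coefficient $(\decayparameter - 2)\omega u^j$ as the principal dissipative term plus \eqref{E:trianglejdef}. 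The derivation of \eqref{E:u0equation}--\eqref{E:triangle0def} is identical in structure, now using the normalization $g_{\alpha\beta}u^\alpha u^\beta = -1$ together with \eqref{E:BackgroundChristoffel} to produce the $(g^{00}+1)$ and $g_{ab}u^a u^b$ pieces in $\triangle^0$.

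The main obstacle is purely bookkeeping: every cancellation used above is elementary, but verifying that the ``linear principal part at $H$'' is correctly separated from the ``everything else'' requires carefully tracking contributions from three distinct sources, namely the $(\omega-H)$ and $(P-\bar p)$ remainders, the Christoffel-symbol $\Gamma\Gamma$ terms, and the wave-gauge deviations $Q^\mu$. The background identities \eqref{E:3omegasquaredminusLambdaidentity}--\eqref{E:omegadotidentity} are the essential tool for trading background fluid and cosmological-constant quantities for coefficients of $H$ and $\decayparameter$, and ensuring that all non-principal residuals land in the correct $\triangle_{\bullet}$ expression is where the bulk of the care is required.
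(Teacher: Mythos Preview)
Your proposal is correct and follows essentially the same route as the paper's proof. The paper organizes the intermediate computations into four standalone lemmas (Lemmas \ref{L:modifiedRiccidecomposition}--\ref{L:Christoffeldecomposition}, the first three quoted from \cite{hR2008}) which respectively package the $\widehat{\mbox{Ric}}_{\mu\nu}$ expansion, the decomposition of the Christoffel-square term $A_{\mu\nu}$, the combination $A_{\mu\nu}+I_{\mu\nu}$, and the Christoffel symbols $\Gamma_{\mu\ \nu}^{\ \alpha}$; you describe these same computations inline, but the substance is identical. One small inaccuracy: in your sketch of \eqref{E:finalEulerj} you attribute the $\decayparameter\omega g^{0j}$ term to ``the Christoffel symbols,'' whereas in the paper's derivation it arises from rewriting the pressure gradient $\frac{\speed^2}{(1+\speed^2)p}\Pi^{j\alpha}\partial_\alpha p$ in terms of the rescaled $P$ (the $\Pi^{j0}$ piece of $-\decayparameter\omega\Pi^{j0}$ splits as $-\decayparameter\omega u^0 u^j - \decayparameter\omega g^{0j}$); this is a labeling issue rather than a mathematical gap.
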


\begin{remark}
	Equation \eqref{E:U0UPPERISOLATED} enforces the normalization condition $g_{\alpha \beta}u^{\alpha}u^{\beta} = - 1$ and 
	the future-directed condition $u^0 > 0.$
\end{remark}

\begin{proof}
	The proof is a series of tedious computations based Lemma \ref{L:modifiedRiccidecomposition} -
	Lemma \ref{L:Christoffeldecomposition}. We provide the proofs of \eqref{E:finalhjkequation} - \eqref{E:finalEulerj} and 
	leave the remaining details to the reader. To obtain \eqref{E:finalhjkequation}, we first use equation \eqref{E:Rhatjk}, 
	Lemma \ref{L:modifiedRiccidecomposition}, and Lemma \ref{L:Amunudecomposition} to obtain the following equation for $h_{jk} = 
	e^{-2\Omega} g_{jk}:$
	
	\begin{align} \label{E:boxhjkfirstformula}
		\hat{\square}_g h_{jk} & = 3 \omega \partial_t h_{jk}
			+ 2[3 \omega^2 + \partial_t \omega - \Lambda]h_{jk} 
			- 4 \omega g^{0a} \partial_a h_{jk} \\
		& \ \ + 2 e^{-2\Omega} \triangle_{A;jk} 
			- 2(\partial_t \omega)(g^{00} + 1)h_{jk}
			- \frac{2(1 + \speed^2)}{\speed^2}e^{-2\Omega} e^{-3(1 + \speed^2)\Omega}P u_j u_k \notag \\
		& \ \ - \frac{1 - \speed^2}{\speed^2} e^{-3(1 + \speed^2)\Omega}P h_{jk}. \notag
		\end{align}
		Now using \eqref{E:3omegasquaredminusLambdaidentity} and \eqref{E:omegadotidentity}, it follows that
		$2[3 \omega^2 + \partial_t \omega - \Lambda]h_{jk} = \frac{1-\speed^2}{\speed^2} \bar{p}e^{-3(1 + \speed^2)\Omega} h_{jk}.$
		Substituting into \eqref{E:boxhjkfirstformula}, and using 
		$\partial_t \omega = - \frac{(1 + \speed^2)}{2\speed^2} \bar{p} e^{-3(1 + \speed^2)\Omega}$ (i.e., 
		\eqref{E:omegadotidentity}), it follows that
	
		\begin{align} \label{E:boxhjksecondformula}
			\hat{\square}_g h_{jk} & = 3 \omega \partial_t h_{jk}
				- \frac{1 - \speed^2}{\speed^2} e^{-3(1 + \speed^2)\Omega}(P - \bar{p}) h_{jk} 
				- 4 \omega g^{0a} \partial_a h_{jk} \\
			& \ \ + 2 e^{-2\Omega} \triangle_{A;jk} 
				+ \frac{(1 + \speed^2)}{\speed^2} \bar{p} e^{-3(1 + \speed^2)\Omega}(g^{00} + 1)h_{jk} \notag \\
			& \ \ - \frac{2(1 + \speed^2)}{\speed^2}e^{-2\Omega} e^{-3(1 + \speed^2)\Omega}P u_j u_k. \notag
		\end{align}	
		Equation \eqref{E:finalhjkequation} now easily follows from \eqref{E:boxhjksecondformula}. We remark that the proofs of 
		\eqref{E:finalg00equation} and \eqref{E:finallg0jequation} require the use of Lemma \ref{L:AmunuplusImunudecomposition}.
		
		To obtain
		\eqref{E:finalfirstEuler}, we first expand the covariant differentiation in \eqref{E:firstEulersummary}
		to deduce the following equation:
		
		\begin{align} \label{E:firstEulerChristoffel}
			u^{\alpha} \partial_{\alpha} p + (1+ \speed^2)p \partial_{\alpha} u^{\alpha} =
				- (1+ \speed^2)p \Gamma_{\alpha \ \beta}^{\ \alpha} u^{\beta}.
		\end{align}
		Lemma \ref{L:Christoffeldecomposition} implies that 
		$\Gamma_{\alpha \ \beta}^{\ \alpha} u^{\beta} = 3 \omega u^0 + \triangle_{\alpha \ \beta}^{\ \alpha} u^{\beta},$
		while the normalization condition $g_{\alpha \beta} u^{\alpha} u^{\beta} = -1$ implies that 
		$\partial_t u^0 = - \frac{1}{u_0} \big\lbrace u_a \partial_t u^a 
		+ \frac{1}{2}(\partial_t g_{\alpha \beta})u^{\alpha} u^{\beta} \big\rbrace.$ 
		Equation \eqref{E:finalfirstEuler} now follows from multiplying both sides of
		\eqref{E:firstEulerChristoffel} by $e^{3(1 + \speed^2) \Omega}$ and using the fact that
		$e^{3(1 + \speed^2) \Omega} \partial_t p = \partial_t P - 3(1 + \speed^2) \omega P.$
		
		Similarly, to obtain \eqref{E:finalEulerj}, we first expand the covariant differentiation in \eqref{E:secondEulersummary} 
		to deduce the following equation: 		
		
		\begin{align}  \label{E:finalEulerChristoffel}
			u^{\alpha} \partial_{\alpha} u^j + \frac{\speed^2}{(1 + \speed^2)p} \Pi^{j \alpha} \partial_{\alpha}p
				& = - \Gamma_{\alpha \ \beta}^{\ j} u^{\alpha} u^{\beta}.
		\end{align}
		Lemma \ref{L:Christoffeldecomposition} implies that 
		$\Gamma_{\alpha \ \beta}^{\ j} u^{\alpha} u^{\beta} = 2 \omega u^0 u^j +
		\triangle_{\alpha \ \beta}^{\ j} u^{\alpha} u^{\beta}.$ Equation \eqref{E:finalEulerj}
		now follows from \eqref{E:finalEulerChristoffel} and the fact that
		$\frac{\speed^2}{(1 + \speed^2)p} \Pi^{j \alpha} \partial_{\alpha} p 
		= \frac{\speed^2}{(1 + \speed^2)P} \Pi^{j \alpha} \partial_{\alpha} (P - \bar{p})
			- \decayparameter \omega \big\lbrace u^0 u^j + g^{0j} \big\rbrace.$
		
\end{proof}

We now state the following four lemmas, which are needed for the proof of Proposition \ref{P:Decomposition}.

\begin{lemma} \cite[Lemma 4]{hR2008} \label{L:modifiedRiccidecomposition}
	The modified Ricci tensor from \eqref{E:modifiedRicci} can be decomposed as follows:
	\begin{align}
		\widehat{\mbox{Ric}}_{\mu \nu} & = - \frac{1}{2} \hat{\Square}_g g_{\mu \nu}
			+ \frac{3}{2}(g_{0 \mu} \partial_{\nu} \omega + g_{0 \nu} \partial_{\mu} \omega)
			+ \frac{3}{2} \omega \partial_t g_{\mu \nu} + A_{\mu \nu}, &&  (\mu, \nu = 0,1,2,3), \\
			\mbox{where} \notag \\
		A_{\mu \nu} & = g^{\alpha \beta} g^{\kappa \lambda}
			\big[(\partial_{\alpha} g_{\nu \kappa})(\partial_{\beta} g_{\mu \lambda})
			- \Gamma_{\alpha \nu \kappa} \Gamma_{\beta \mu \lambda}\big], && (\mu, \nu = 0,1,2,3).	\label{E:Amunudef}
	\end{align}
	
\end{lemma}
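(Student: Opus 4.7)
The plan is to start from the second line of \eqref{E:modifiedRicci}, namely
\[
\widehat{\mbox{Ric}}_{\mu \nu} = -\tfrac{1}{2} \hat{\square}_g g_{\mu \nu} + \tfrac{1}{2} \big(D_{\mu} P_{\nu} + D_{\nu}P_{\mu} \big) + g^{\alpha \beta} g^{\gamma \delta} (\Gamma_{\alpha \gamma \mu} \Gamma_{\beta \delta \nu} + \Gamma_{\alpha \gamma \mu} \Gamma_{\beta \nu \delta} + \Gamma_{\alpha \gamma \nu} \Gamma_{\beta \mu \delta}),
\]
and to reduce the middle term to $\frac{3}{2}(g_{0\mu}\partial_\nu \omega + g_{0\nu}\partial_\mu \omega) + \frac{3}{2}\omega \partial_t g_{\mu\nu}$ while showing that the quadratic Christoffel combination simplifies to $A_{\mu\nu}$ from \eqref{E:Amunudef}.

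First, I would compute $\frac{1}{2}(D_\mu P_\nu + D_\nu P_\mu)$ using $P_\mu = 3\omega g_{0\mu}$. Writing out $D_\mu P_\nu = \partial_\mu(3\omega g_{0\nu}) - 3\omega\,\Gamma_{\mu 0 \nu}$ and applying the Leibniz rule, I get $3(\partial_\mu \omega) g_{0\nu} + 3\omega\, \partial_\mu g_{0\nu} - 3\omega\,\Gamma_{\mu 0 \nu}$. Substituting the Christoffel identity $\Gamma_{\mu 0 \nu} = \tfrac{1}{2}(\partial_\mu g_{0\nu} + \partial_\nu g_{0\mu} - \partial_t g_{\mu\nu})$ gives
\[
D_\mu P_\nu = 3(\partial_\mu \omega) g_{0\nu} + \tfrac{3\omega}{2}\partial_\mu g_{0\nu} - \tfrac{3\omega}{2}\partial_\nu g_{0\mu} + \tfrac{3\omega}{2}\partial_t g_{\mu\nu}.
\]
Symmetrizing in $(\mu,\nu)$ kills the antisymmetric $\partial_\mu g_{0\nu} - \partial_\nu g_{0\mu}$ piece and yields exactly the two intermediate terms appearing in the claimed decomposition.

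Next, I would handle the three Christoffel-squared terms. Relabeling dummy indices and using the symmetry $\Gamma_{\alpha\gamma\mu} = \Gamma_{\gamma\alpha\mu}$, the second and third products combine to $2 g^{\alpha\beta}g^{\gamma\delta}\Gamma_{\alpha\gamma\mu}\Gamma_{\beta\nu\delta}$ (after a relabeling for the third). Then I expand each remaining $\Gamma$ via \eqref{E:EMBIChristoffeldef} as $\tfrac{1}{2}(\partial_\alpha g_{\gamma\mu} + \partial_\gamma g_{\alpha\mu} - \partial_\mu g_{\alpha\gamma})$. The strategy is to reorganize the resulting $\partial g \cdot \partial g$ terms so that the $-\Gamma_{\alpha\nu\kappa}\Gamma_{\beta\mu\lambda}$ term of $A_{\mu\nu}$ is extracted, with the remaining terms conspiring to form the $(\partial_\alpha g_{\nu\kappa})(\partial_\beta g_{\mu\lambda})$ piece. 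Concretely, one of the three products exactly gives $-g^{\alpha\beta}g^{\kappa\lambda}\Gamma_{\alpha\nu\kappa}\Gamma_{\beta\mu\lambda}$ up to sign conventions, while the remaining two products combine (after index relabeling exploiting the symmetry of $g^{\alpha\beta}$) into the $(\partial_\alpha g_{\nu\kappa})(\partial_\beta g_{\mu\lambda})$ quadratic.

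The main obstacle will be the bookkeeping in the last step: verifying that after fully expanding the three Christoffel products via \eqref{E:EMBIChristoffeldef} the $27$ resulting $\partial g \cdot \partial g$ terms collapse, via the symmetry of $g^{\alpha\beta}g^{\gamma\delta}$ under $(\alpha,\gamma)\leftrightarrow(\beta,\delta)$, into precisely the compact form of $A_{\mu\nu}$. Since this is a purely algebraic identity independent of the wave-coordinate condition, once the bookkeeping is done the lemma follows. As this is \cite[Lemma 4]{hR2008}, I would also cross-check by noting that the identity must reduce to the standard expression for $\mbox{Ric}_{\mu\nu}$ in wave coordinates (i.e., $Q_\mu = 0$), which it does since in that case $P_\mu = \Gamma_\mu$ and one recovers the familiar reduced Einstein equations.
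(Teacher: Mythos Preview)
The paper does not provide its own proof of this lemma; it simply cites \cite[Lemma 4]{hR2008} and closes with a $\qed$. Your proposal therefore goes beyond what the paper does, supplying the direct computation that the citation defers to Ringstr\"om.

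Your treatment of the $\tfrac{1}{2}(D_\mu P_\nu + D_\nu P_\mu)$ term is correct and complete: the substitution $P_\nu = 3\omega g_{0\nu}$, the expansion of $\Gamma_{\mu 0 \nu}$, and the symmetrization go through exactly as you describe. For the Christoffel-squared part you are appropriately candid that the step is pure bookkeeping, though your description of which term yields the $-\Gamma_{\alpha\nu\kappa}\Gamma_{\beta\mu\lambda}$ piece is vague (``up to sign conventions''); in fact none of the three products on the left individually matches it, and the identity emerges only after expanding all three via \eqref{E:EMBIChristoffeldef} and repeatedly using the symmetry $\Gamma_{\alpha\gamma\mu} = \Gamma_{\mu\gamma\alpha}$ together with relabeling under $g^{\alpha\beta}g^{\gamma\delta}$. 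This is a genuine but routine algebraic check, and your overall strategy is the right one.
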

\hfill $\qed$

\begin{lemma} \cite[Lemma 5]{hR2008} \label{L:Amunudecomposition}
	The term $A_{\mu \nu}$ ($\mu, \nu = 0,1,2,3$) defined in \eqref{E:Amunudef} can be decomposed into principal terms and error 
	terms $\triangle_{A,\mu \nu}$ as follows:
	
	\begin{subequations}
	
	\begin{align}
		A_{00} & = 3 \omega^2 - \omega g^{ab}\partial_t g_{ab} + 2 \omega g^{ab} \partial_a g_{0b} + \triangle_{A,00}, \\
		A_{0j} & = 2 \omega g^{00} \partial_t g_{0j} - 2 \omega^2 g^{00}g_{0j} - \omega g^{00} \partial_j g_{00} 
			+ \omega g^{ab}\Gamma_{ajb} + \triangle_{A,0j}, && (j=1,2,3), \\
		A_{jk} & = 2 \omega g^{00} \partial_t g_{jk} - 2 \omega^2 g^{00} g_{jk} + \triangle_{A,jk}, && (j,k = 1,2,3),
	\end{align}
	\end{subequations}
	where
	
	\begin{subequations}
	\begin{align}
		\triangle_{A,00} & = (g^{00})^2 \Big\lbrace(\partial_t g_{00})^2 - (\Gamma_{000})^2 \Big\rbrace 
			+ g^{00}g^{0a} \Big\lbrace 2 (\partial_t g_{00})(\partial_t g_{0a} + \partial_a g_{00}) 
			- 4 \Gamma_{000} \Gamma_{00a} \Big\rbrace \label{E:triangleA00def} \\
		& \ \ + g^{00}g^{ab} \Big\lbrace (\partial_t g_{0a})(\partial_t g_{0b}) 
				+ (\partial_a g_{00}) (\partial_b g_{00}) 
				- 2 \Gamma_{00a} \Gamma_{00b} \Big\rbrace \notag \\
		& \ \ + g^{0a} g^{0b} \Big\lbrace 2(\partial_t g_{00})(\partial_a g_{0b}) + 2(\partial_t g_{0b})(\partial_a g_{00}) 
				- 2 \Gamma_{000} \Gamma_{a0b} - 2 \Gamma_{00b} \Gamma_{00a} \Big\rbrace \notag \\
		& \ \ + g^{ab} g^{0l} \Big\lbrace 2(\partial_t g_{0a})(\partial_l g_{0b}) + 2(\partial_b g_{00})(\partial_a g_{0l}) 
				- 4\Gamma_{00a} \Gamma_{l0b}  \Big\rbrace \notag \\
		& \ \ + g^{ab}g^{lm}(\partial_a g_{0l})(\partial_b g_{0m}) 
				+ \frac{1}{2} g^{lm}(\underbrace{g^{ab} \partial_t g_{al} - 2\omega \delta_l^b}_{
				e^{2 \Omega} g^{ab} \partial_t h_{al} - 2 \omega g^{0b}g_{0l}})(\partial_b g_{0m} + \partial_m g_{0b}) \notag \\
		& \ \ - \frac{1}{4} g^{ab} g^{lm}(\partial_a g_{0l} + \partial_l g_{0a})(\partial_b g_{0m} + \partial_m g_{0b}) \notag \\
		& \ \ - \frac{1}{4}(\underbrace{g^{ab} \partial_t g_{al} - 2\omega \delta_l^b}_{
				e^{2 \Omega} g^{ab} \partial_t h_{al} - 2 \omega g^{0b}g_{0l}}) 
				(\underbrace{g^{lm} \partial_t g_{bm} - 2 \omega \delta_b^l}_{e^{2 \Omega} g^{lm} \partial_t h_{bm} 
				- 2 \omega g^{0l}g_{0b}}), \notag 
		\end{align}
		
		\begin{align}
		\triangle_{A,0j} & =  (g^{00})^2 \Big\lbrace (\partial_t g_{00}) (\partial_t g_{0j}) - \Gamma_{000} \Gamma_{0j0} \Big\rbrace
			\label{E:triangleA0jdef} \\
		& \ \ + g^{00}g^{0a} \Big\lbrace (\partial_t g_{00})(\partial_t g_{aj} + \partial_a g_{0j}) 
			+(\partial_t g_{0j})(\partial_t g_{0a} + \partial_a g_{00}) \notag \\
		& \hspace{1in} - 2 \Gamma_{000} \Gamma_{0ja} - 2 \Gamma_{0j0} \Gamma_{00a} \Big\rbrace \notag \\
		& \ \ + g^{00} (\underbrace{g^{ab} \partial_t g_{bj} - 2 \omega \delta_j^a}_{e^{2 \Omega} g^{ab} \partial_t h_{bj}
			- g^{0a}g_{0j}}) \Big(\partial_t g_{0a} - \frac{1}{2} \partial_a g_{00}\Big) 
			+ \frac{1}{2} g^{00} g^{ab}(\partial_a g_{00})(\partial_b g_{0j} + \partial_j g_{0b}) \notag \\
		& \ \ + g^{0a} g^{0b} \Big\lbrace (\partial_t g_{00})(\partial_a g_{bj}) + (\partial_t g_{0b})(\partial_a g_{0j})  			
			+ (\partial_a g_{00})(\partial_t g_{bj}) + (\partial_a g_{0b})(\partial_t g_{0j})  \notag  \\
		& \hspace{1 in} - \Gamma_{000} \Gamma_{ajb} - 2 \Gamma_{00b} \Gamma_{0ja} - \Gamma_{a0b} \Gamma_{0j0} \Big\rbrace \notag \\
		& \ \ + g^{ab} g^{0l} \Big\lbrace (\partial_t g_{0a})(\partial_l g_{bj}) + (\partial_l g_{0a})(\partial_t g_{bj}) 
			+ (\partial_b g_{00})(\partial_a g_{lj}) + (\partial_b g_{0l})(\partial_a g_{0j}) - 2 \Gamma_{00a} \Gamma_{ljb} 
			\Big\rbrace \notag \\
		& \ \ - g^{ab} g^{0l} \Big\lbrace (\partial_l g_{0a} + \partial_a g_{0l})\Gamma_{0jb} 
			-\frac{1}{2}(\partial_t g_{la})(\partial_b g_{0j} - \partial_j g_{0b}) \Big\rbrace \notag \\
		& \ \ + \omega g^{0a}(\underbrace{\partial_t g_{aj} - 2 \omega g_{aj}}_{e^{2 \Omega} \partial_t h_{aj}})
			+ \frac{1}{2}g^{0l}(\underbrace{g^{ab} \partial_t g_{la} - 2\omega \delta_{l}^b}_{e^{2 \Omega} g^{ab} \partial_t h_{la} 
				- 2 \omega g^{0b}g_{0l}})\partial_t g_{bj} \notag \\
		& \ \ + g^{ab} g^{lm} \Big\lbrace (\partial_a g_{0l})(\partial_b g_{mj}) 
			- \frac{1}{2} (\partial_a g_{0l} + \partial_l g_{0a})\Gamma_{bjm} \Big\rbrace 
			+ \frac{1}{2} g^{ab} (\underbrace{g^{lm} \partial_t g_{la} - 2\omega \delta_a^m}_{e^{2 \Omega} g^{lm} \partial_t h_{la} - 
			g^{0m}g_{0a}})\Gamma_{bjm}, \notag
		\end{align}
		
		\begin{align}
		\triangle_{A,jk} & = (g^{00})^2 \Big\lbrace (\partial_t g_{0j}) (\partial_t g_{0k}) 
			- \Gamma_{0j0} \Gamma_{0k0} \Big\rbrace \label{E:triangleAjkdef} \\
		& \ \ + g^{00}g^{0a} \Big\lbrace (\partial_t g_{0j})(\partial_t g_{ak} + \partial_a g_{0k}) 
			+ (\partial_t g_{0k})(\partial_t g_{aj} + \partial_a g_{0j}) \notag \\
		& \hspace{1in} - 2 \Gamma_{0j0} \Gamma_{0ka} - 2 \Gamma_{0k0} \Gamma_{0ja} \Big\rbrace \notag \\
		& \ \ + g^{00}g^{ab} \Big\lbrace (\partial_a g_{0j})(\partial_b g_{0k})  
			- \frac{1}{2}(\partial_a g_{0j} - \partial_j g_{0a})(\partial_b g_{0k} - \partial_k g_{0b}) \Big\rbrace	\notag \\
		& \ \ - \frac{1}{2} g^{00} \Big\lbrace (\underbrace{g^{ab} \partial_t g_{aj} - 2 \omega \delta_j^b}_{
			e^{2 \Omega} g^{ab} \partial_t h_{aj} - 2 \omega g^{0b}g_{0j}})(\partial_b g_{0k} - \partial_k g_{0b})
			+ (\underbrace{g^{ab} \partial_t g_{bk} - 2 \omega \delta_k^a}_{e^{2 \Omega}g^{ab} \partial_t 
			h_{bk} - 2 \omega g^{0a}g_{0k}})(\partial_a g_{0j} - \partial_j g_{0a}) \Big\rbrace \notag \\
		& \ \ + \omega g^{00}(\underbrace{g_{bk}g^{ab} - \delta_k^a}_{-g_{0k}g^{0a}})\partial_t g_{aj} 
			+ \frac{1}{2} g^{00}(\underbrace{g^{ab}\partial_t g_{aj} - 2 \omega \delta_j^b}_{e^{2 \Omega} g^{ab} \partial_t 
			h_{aj} - 2 \omega g^{0b}g_{0j}})(\underbrace{\partial_t g_{bk} - 2 \omega g_{bk}}_{e^{2 \Omega}\partial_t h_{bk}}) 
			\notag \\
		& \ \ + g^{0a} g^{0b} \Big\lbrace (\partial_t g_{0j})(\partial_a g_{bk}) + (\partial_t g_{bj})(\partial_a g_{0k})  			+ 
			(\partial_a g_{0j})(\partial_t g_{bk}) + (\partial_a g_{bj})(\partial_t g_{0k})  \notag  \\
		& \hspace{1in} - \Gamma_{0j0} \Gamma_{akb} - 2 \Gamma_{0jb} \Gamma_{0ka} - \Gamma_{ajb} \Gamma_{0k0} \Big\rbrace \notag \\
		& \ \ + g^{ab} g^{0l} \Big\lbrace (\partial_t g_{aj})(\partial_l g_{bk}) + (\partial_l g_{aj})(\partial_t g_{bk}) 
			+ (\partial_b g_{0j})(\partial_a g_{lk}) + (\partial_b g_{lj})(\partial_a g_{0k}) \notag \\
		& \hspace{1in} - 2 \Gamma_{0ja} \Gamma_{lkb} - 2 \Gamma_{lja} \Gamma_{0kb} \Big\rbrace \notag \\
		& \ \ + g^{ab} g^{ml} \Big\lbrace (\partial_a g_{lj})(\partial_b g_{mk}) - \Gamma_{ajl} \Gamma_{bkm} \Big\rbrace. 
		\notag
	\end{align}
	\end{subequations}
	
\end{lemma}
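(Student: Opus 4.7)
My plan is to establish the decomposition by direct expansion of the defining formula \eqref{E:Amunudef}, organizing the double sum according to whether each summation index $\alpha,\beta,\kappa,\lambda$ equals $0$ or takes a spatial value. For each of the three cases $(\mu,\nu) = (0,0), (0,j), (j,k)$, the goal is to isolate the $\omega$-linear contributions, which will form the principal part, from everything else, which will constitute the error term $\triangle_{A,\mu\nu}$.

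The mechanism producing the principal terms is the specific form of the background Christoffel symbols: for the FLRW metric one has $\widetilde{\Gamma}_{0ab} = \widetilde{\Gamma}_{a0b} = \omega\,\widetilde{g}_{ab}$ and $\widetilde{\Gamma}_{ab0} = -\omega\,\widetilde{g}_{ab}$, with all other mixed-index background Christoffels vanishing. Whenever one of the two Christoffel factors in $\Gamma_{\alpha\nu\kappa}\Gamma_{\beta\mu\lambda}$ has such a structure, contracting its background value against the appropriate component of $g^{-1}$ produces an $\omega$-linear piece after summation. Concretely, I would first substitute the defining formula $\Gamma_{\mu\nu\rho} = \frac{1}{2}(\partial_\mu g_{\nu\rho} + \partial_\nu g_{\mu\rho} - \partial_\rho g_{\mu\nu})$ into the $\Gamma\Gamma$ products, fully expand, and exploit the partial cancellation with the $(\partial g)(\partial g)$ term of \eqref{E:Amunudef}. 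Then, within each remaining summand, I would invoke the metric rescaling $\partial_t g_{ab} = 2\omega g_{ab} + e^{2\Omega}\partial_t h_{ab}$ (from \eqref{E:hjkdef}) together with the inverse-metric identity $g^{ab}g_{bj} = \delta_j^a - g^{0a}g_{0j}$---a consequence of $g^{\alpha\mu}g_{\mu\beta} = \delta_\beta^\alpha$---to extract the $\omega$-multiples and produce exactly the underbraced combinations appearing in \eqref{E:triangleA00def}--\eqref{E:triangleAjkdef}.

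The step-by-step procedure is therefore: (i) partition the quadruple sum into manageable groups by distinguishing time and spatial indices, using the symmetry $A_{\mu\nu} = A_{\nu\mu}$ to cut down the workload; (ii) in each group, expand the Christoffel products via the defining formula and cancel what one can against the derivative-squared terms; (iii) in the residual expressions, pull out every background $\omega$-contribution via the two substitutions above; (iv) collect the resulting $\omega$-linear pieces into the principal parts and verify that they match the three expressions stated for $A_{00}, A_{0j}, A_{jk}$, with the remaining terms matching $\triangle_{A,00}, \triangle_{A,0j}, \triangle_{A,jk}$.

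The main difficulty is organizational rather than conceptual: the error terms in \eqref{E:triangleA00def}--\eqref{E:triangleAjkdef} are written in a very particular form, with several summands rewritten via the underbrace identities so that the pure $\partial_t h$ pieces and the $g^{0*}g_{0*}$ pieces appear explicitly separated. Matching this exactly requires that the inverse-metric identity be applied consistently and not asymmetrically---precisely the kind of bookkeeping on which such calculations notoriously stumble. Since the result is attributed to \cite[Lemma 5]{hR2008}, which uses essentially the same wave coordinate framework, I expect the argument to go through verbatim after making the correspondence of notation explicit; no new ideas are required beyond the systematic use of the Christoffel formula and the inverse-metric relation.
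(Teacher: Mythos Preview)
Your proposal is correct and matches the only reasonable approach: the paper itself gives no proof of this lemma at all, simply citing \cite[Lemma 5]{hR2008} and ending with a $\qed$. The computation you outline---splitting the quadruple sum by time/spatial indices, expanding the Christoffel products, and systematically extracting the $\omega$-linear pieces via the identities $\partial_t g_{ab} = 2\omega g_{ab} + e^{2\Omega}\partial_t h_{ab}$ and $g^{ab}g_{bj} = \delta_j^a - g^{0a}g_{0j}$---is exactly the bookkeeping exercise that produces the stated decomposition, and your assessment that the difficulty is purely organizational is accurate.
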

\hfill $\qed$

\begin{lemma} \cite[Lemma 6]{hR2008} \label{L:AmunuplusImunudecomposition}
	The sums $A_{00} + I_{00}$ and $A_{0j} + I_{0j},$ $(j=1,2,3),$ can be decomposed into principal terms and error terms as 
	follows, where $I_{00}, I_{0j}$ are defined in \eqref{E:gaugetermI00} - \eqref{E:gaugetermI0j}; $A_{00},A_{0j}$ are 
	defined in \eqref{E:Amunudef}; and $\triangle_{A,00}, \triangle_{A,0j}$ are defined in \eqref{E:triangleA00def} - 
	\eqref{E:triangleA0jdef}:
	
	\begin{subequations}
	\begin{align}
		A_{00} + 2 \omega \Gamma^0 - 6 \omega^2 & = \omega \partial_t g_{00} + 3 \omega^2(g_{00} + 1)
			+ 3 \omega^2 g_{00} + \triangle_{A,00} + \triangle_{C,00},  \\
		A_{0j} + 2\omega(3 \omega g_{0j} - \Gamma_j) & = 4 \omega^2 g_{0j} - \omega g^{ab} \Gamma_{ajb} + \triangle_{A,0j} 
			+ \triangle_{C,0j}, 
	\end{align}
	\end{subequations}
	where
	
	\begin{subequations}
	\begin{align}
		\triangle_{C,00} & = -6 (g_{00})^{-1} \omega^2 \Big\lbrace (g_{00} + 1)^2 - g^{0a}g_{0a} \Big\rbrace 
			- \omega (g^{00} + 1)(\underbrace{g^{ab} \partial_t g_{ab} - 6 \omega}_{e^{2\Omega}g^{ab} \partial_t h_{ab} - 2 \omega 
			g^{0a}g_{0a}})  \label{E:triangleC00def} \\
		& \ \ + 2\omega(g^{00} + 1)g^{ab} \partial_a g_{0b} + \omega(g^{00} + 1)(g^{00} - 1) \partial_t g_{00} + 2\omega g^{00} 
			g^{0a}(\Gamma_{0a0} + 2 \Gamma_{00a}) \notag \\
		& \ \ + 4\omega g^{0a} g^{0b} \Gamma_{0ab} + 2 \omega g^{ab} g^{0l} \Gamma_{alb}, \notag \\
		\triangle_{C,0j} & = 2 \omega^2(g^{00} + 1) g_{0j}
		 - 2 \omega g^{0a} \Big\lbrace (\underbrace{\partial_t g_{aj} - 2\omega g_{aj}}_{e^{2 \Omega} \partial_t h_{aj}}) + 
		 \partial_a g_{0j} - \partial_j g_{0a} \Big\rbrace. \label{E:triangleC0jdef}
	\end{align}
	\end{subequations}
\end{lemma}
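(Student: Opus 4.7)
My plan is to establish both decompositions by direct algebraic manipulation: combine the expressions for $A_{00}$ and $A_{0j}$ supplied by Lemma \ref{L:Amunudecomposition} with explicit formulas for $\Gamma^0$ and $\Gamma_j$ in terms of metric components, then extract the ``principal'' pieces (those that survive at the FLRW background) by writing $g^{00} = -1 + (g^{00}+1)$ wherever $g^{00}$ appears as a leading factor and absorbing every residue — i.e., every term that carries a factor from $\lbrace g_{00}+1,\ g^{00}+1,\ g^{0a},\ g_{0a},\ e^{2\Omega}\partial_t h_{ab}\rbrace$ or that is quadratic in metric derivatives — into $\triangle_{C,\mu\nu}$. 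This is the same strategy Ringström uses in \cite[Lemma 6]{hR2008}; the content of the lemma is essentially an accounting of terms.

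For the $00$-component, the first step is to compute, from $\Gamma_{\mu} = g^{\alpha\beta}\Gamma_{\alpha\mu\beta}$ and the definition of the Christoffel symbols of the first kind, the identity
\begin{align*}
  \Gamma_0 = \tfrac{1}{2}g^{00}\partial_t g_{00} + g^{0a}\partial_a g_{00} + g^{ab}\partial_a g_{0b} - \tfrac{1}{2}g^{ab}\partial_t g_{ab}.
\end{align*}
Writing $2\omega\Gamma^0 = -2\omega\Gamma_0 + 2\omega(g^{00}+1)\Gamma_0 + 2\omega g^{0a}\Gamma_a$ and expanding $g^{00} = -1 + (g^{00}+1)$ inside $-2\omega\Gamma_0$, the $-\omega g^{ab}\partial_t g_{ab}$ and $+2\omega g^{ab}\partial_a g_{0b}$ pieces from Lemma \ref{L:Amunudecomposition}'s $A_{00}$ cancel exactly the $+\omega g^{ab}\partial_t g_{ab}$ and $-2\omega g^{ab}\partial_a g_{0b}$ pieces coming from $-2\omega\Gamma_0$, isolating the clean principal term $\omega\partial_t g_{00}$. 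The constant term $3\omega^2$ from $A_{00}$ and the $-6\omega^2$ from $I_{00}$ combine to $-3\omega^2$, which I rewrite as $3\omega^2(g_{00}+1) + 3\omega^2 g_{00} - 6\omega^2(g_{00}+1)$; the first two pieces match the claimed principal terms and the $-6\omega^2(g_{00}+1)$ residue is pushed into $\triangle_{C,00}$. To convert $-6\omega^2(g_{00}+1)$ into the form $-6(g_{00})^{-1}\omega^2\lbrace(g_{00}+1)^2 - g^{0a}g_{0a}\rbrace$ appearing in \eqref{E:triangleC00def}, I invoke the consequence $(g^{00}+1)g_{00} = (g_{00}+1) - g^{0a}g_{0a}$ of $g^{\alpha\beta}g_{\beta 0} = \delta^{\alpha}_0$, which permits re-expressing the $(g_{00}+1)$ factor in a form quadratically small at the background while simultaneously organizing the remaining $(g^{00}+1)\Gamma_0$ and $g^{0a}\Gamma_a$ contributions by re-expanding $\Gamma_0, \Gamma_a$ once more in Christoffel symbols of the first kind.

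The argument for $A_{0j} + I_{0j}$ proceeds analogously. First compute
\begin{align*}
  \Gamma_j = g^{0\alpha}(\partial_0 g_{j\alpha} + \partial_\alpha g_{0j} - \partial_j g_{0\alpha}) - \tfrac{1}{2}g^{ab}(2\partial_a g_{jb} - \partial_j g_{ab}) - \cdots
\end{align*}
(a routine expansion), then write $I_{0j} = 6\omega^2 g_{0j} - 2\omega\Gamma_j$ and split the coefficient $g^{00}$ appearing in the $2\omega g^{00}\partial_t g_{0j}$ and $-2\omega^2 g^{00}g_{0j}$ and $-\omega g^{00}\partial_j g_{00}$ terms of $A_{0j}$ as $-1 + (g^{00}+1)$. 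The $-1$ pieces combine with $-2\omega\Gamma_j$ in such a way that the $\partial_t g_{0j}$, $\partial_j g_{00}$ and various $g^{0a}\partial\cdot$ contributions either cancel or drop to residues containing factors of $(g^{00}+1)$ and $g^{0a}$. The principal term $+\omega g^{ab}\Gamma_{ajb}$ from $A_{0j}$ combines with the $-2\omega g^{ab}\Gamma_{ajb}$ piece contributed by $-2\omega\Gamma_j$ to yield the claimed $-\omega g^{ab}\Gamma_{ajb}$; the $2\omega^2 g_{0j}$ from $6\omega^2 g_{0j}$ surviving after $-2\omega^2 g^{00}g_{0j} \to 2\omega^2 g_{0j} - 2\omega^2(g^{00}+1)g_{0j}$ produces the desired $4\omega^2 g_{0j}$; everything else is absorbed into $\triangle_{C,0j}$ per \eqref{E:triangleC0jdef}. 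To arrive at the precise form of \eqref{E:triangleC0jdef}, I use the identity $g^{ab}\partial_t g_{ab} - 6\omega = e^{2\Omega}g^{ab}\partial_t h_{ab} - 2\omega g^{0a}g_{0a}$ (which follows from $g_{ab} = e^{2\Omega}h_{ab}$ together with $g^{ab}g_{ab} = 3 - g^{0a}g_{0a}$) in order to recognize that each remaining term either contains an explicit factor vanishing at the background or is of $e^{2\Omega}\partial_t h$ type.

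The main obstacle is bookkeeping rather than conceptual depth: one must collect dozens of terms and verify that what remains after cancellation matches the explicit expressions \eqref{E:triangleC00def}--\eqref{E:triangleC0jdef} exactly. Two supporting checks I would run to catch errors are (i) evaluating both sides of the asserted identities on the exact FLRW background, where $\triangle_{A,\mu\nu}, \triangle_{C,\mu\nu}$ must vanish since they are tailored to do so, and (ii) verifying that every summand of $\triangle_{C,\mu\nu}$ indeed carries at least one factor of $(g_{00}+1), (g^{00}+1), g^{0a}, g_{0a},$ or $e^{2\Omega}\partial_t h_{ab}$ as advertised by the underbrace annotations in \eqref{E:triangleC00def}--\eqref{E:triangleC0jdef}. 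Given these sanity checks, the result follows from a direct, if lengthy, symbolic computation.
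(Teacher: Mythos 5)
Your overall strategy is the right one, and it is exactly how this lemma is established: the paper gives no proof of its own (it simply cites Ringstr\"{o}m's Lemma 6), and the underlying argument is the direct expansion of $\Gamma^0,\Gamma_j,$ the splitting $g^{00} = -1 + (g^{00}+1),$ and the absorption of background-vanishing residues into $\triangle_{C,\mu\nu}.$ Your treatment of the $00$ component is correct as described: the cancellations of $\pm\omega g^{ab}\partial_t g_{ab}$ and $\mp 2\omega g^{ab}\partial_a g_{0b}$ do occur, and the conversion of the residue $-6\omega^2(g_{00}+1)$ into $-6(g_{00})^{-1}\omega^2\lbrace(g_{00}+1)^2 - g^{0a}g_{0a}\rbrace + 6\omega^2(g^{00}+1)$ via $(g^{00}+1)g_{00} = (g_{00}+1) - g^{0a}g_{0a}$ is precisely what reconciles your leftover terms with \eqref{E:triangleC00def}; the remaining pieces match after expanding $2\omega(g^{00}+1)\Gamma_0 + 2\omega g^{0a}\Gamma_a$ in Christoffel symbols of the first kind, as you indicate.

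The accounting you give for the $0j$ component, however, does not produce the stated decomposition as written. After the derivative cancellations, the surviving zero-derivative terms of $A_{0j}+I_{0j}$ are $6\omega^2 g_{0j} - 2\omega^2 g^{00}g_{0j},$ and your proposed split $-2\omega^2 g^{00}g_{0j} = 2\omega^2 g_{0j} - 2\omega^2(g^{00}+1)g_{0j}$ yields $8\omega^2 g_{0j} - 2\omega^2(g^{00}+1)g_{0j}$ --- the wrong coefficient and the wrong sign relative to the $+2\omega^2(g^{00}+1)g_{0j}$ in \eqref{E:triangleC0jdef}. The missing step is that the term $-2\omega g^{0a}\partial_t g_{aj}$ coming from $-2\omega\Gamma_j$ must first be rewritten as $-2\omega g^{0a}(\partial_t g_{aj} - 2\omega g_{aj}) - 4\omega^2 g^{0a}g_{aj},$ after which the identity $g^{0a}g_{aj} = -g^{00}g_{0j}$ (from $g^{0\mu}g_{\mu j} = 0$) converts $-4\omega^2 g^{0a}g_{aj}$ into $+4\omega^2 g^{00}g_{0j};$ combining gives $6\omega^2 g_{0j} + 2\omega^2 g^{00}g_{0j} = 4\omega^2 g_{0j} + 2\omega^2(g^{00}+1)g_{0j},$ which is exactly the claimed principal term plus the first term of \eqref{E:triangleC0jdef}, with $-2\omega g^{0a}\lbrace e^{2\Omega}\partial_t h_{aj} + \partial_a g_{0j} - \partial_j g_{0a}\rbrace$ accounting for the rest. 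Two smaller points: your displayed expansion of $\Gamma_j$ is off (the $g^{00}$ block is double-counted and the $g^{ab}$ block has the wrong sign; the clean form is $\Gamma_j = g^{00}\Gamma_{0j0} + 2g^{0a}\Gamma_{0ja} + g^{ab}\Gamma_{ajb}$), and the identity $g^{ab}\partial_t g_{ab} - 6\omega = e^{2\Omega}g^{ab}\partial_t h_{ab} - 2\omega g^{0a}g_{0a}$ you invoke is the underbrace in \eqref{E:triangleC00def}, not \eqref{E:triangleC0jdef}, where only $\partial_t g_{aj} - 2\omega g_{aj} = e^{2\Omega}\partial_t h_{aj}$ is needed. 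With these corrections your term-by-term verification closes, and your two sanity checks (evaluation on the FLRW background and the smallness audit of each summand of $\triangle_{C,\mu\nu}$) are sound.
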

\hfill $\qed$

\begin{lemma} \label{L:Christoffeldecomposition}
	The Christoffel symbols $\Gamma_{\mu \ \nu}^{\ \alpha}$ can be decomposed into principal terms and
	error terms $\triangle_{\mu \ \nu}^{\ \alpha}$ as follows:
	
	\begin{subequations}
	\begin{align}
		\Gamma_{0 \ 0}^{\ 0} & = \triangle_{0 \ 0}^{\ 0},  \label{E:triangleGamma000} \\
		\Gamma_{j \ 0}^{\ 0} = \Gamma_{0 \ j}^{\ 0} & = \triangle_{j \ 0}^{\ 0} = \triangle_{0 \ j}^{\ 0}, 
			\label{E:triangleGammaj00}   \\
		\Gamma_{0 \ 0}^{\ j} & = \triangle_{0 \ 0}^{\ j}, \label{E:triangleGamma0j0} \\
		\Gamma_{0 \ k}^{\ j} = \Gamma_{k \ 0}^{\ j} & = \omega \delta_k^j +  \triangle_{0 \ k}^{\ j}  
			= \omega \delta_k^j +  \triangle_{k \ 0}^{\ j}, \label{E:triangleGamma0jk} \\
		\Gamma_{j \ k}^{\ 0} & =  \omega g_{jk} + \triangle_{j \ k}^{\ 0},
			\label{E:triangleGammaj0k}  \\
		\Gamma_{i \ j}^{\ k} & = \triangle_{i \ j}^{\ k},  \label{E:triangleGammaikj}
\end{align}
\end{subequations}
where
	
	\begin{subequations}
	\begin{align}
		2 \triangle_{0 \ 0}^{\ 0} & = g^{00} \partial_t g_{00} + 2 g^{0a} \partial_t g_{0a} - g^{0a}\partial_a g_{00}, 
			\label{E:triangle000} \\
		2 \triangle_{j \ 0}^{\ 0} & = g^{00} \partial_j g_{00} + g^{0a}(\partial_j g_{a0} - \partial_a g_{j0}) 
			+ 2 \omega g^{0a} g_{ja}
			+ g^{0a}(\underbrace{\partial_t g_{ja} - 2 \omega g_{ja}}_{e^{2 \Omega} \partial_t h_{aj}}), \label{E:trianglej00} \\
		2 \triangle_{0 \ 0}^{\ j} & = g^{j0} \partial_t g_{00} + 2 g^{ja}\partial_t g_{0a} - g^{ja} \partial_a g_{00},  
			\label{E:triangle00j}\\
		2 \triangle_{0 \ k}^{\ j} & = g^{j0} \partial_k g_{00} + g^{ja} \partial_k g_{0a} - g^{ja} \partial_a g_{0k}
			+ (\underbrace{g^{ja} \partial_t g_{ak} - 2 \omega \delta_k^j}_{e^{2 \Omega} g^{ja} \partial_t 
			h_{ak} - 2 \omega g^{0j}g_{0k}}), \label{E:triangle0kj} \\
		2 \triangle_{j \ k}^{\ 0} & = g^{00}(\partial_j g_{0k} + \partial_k g_{0j}) + g^{0a}(\partial_j g_{ak} 
			+ \partial_k g_{aj} - \partial_a g_{jk}) \label{E:trianglejk0}\\
		& \ \ + (\underbrace{\partial_t g_{jk} - 2 \omega g_{jk}}_{e^{2 \Omega} \partial_t h_{jk}}) - 2(g^{00} + 1) \omega g_{jk} 
			- (g^{00} + 1)(\underbrace{\partial_t g_{jk} - 2 \omega g_{jk}}_{e^{2 \Omega} \partial_t h_{jk}}), \notag \\
		2 \triangle_{i \ j}^{\ k} & = g^{ka}(\partial_i g_{aj} + \partial_j g_{ia} - \partial_a g_{ij}). 
			\label{E:triangleikj}
	\end{align}
	\end{subequations}

\end{lemma}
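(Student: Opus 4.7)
The plan is to compute each of the six independent Christoffel symbols directly from the definition \eqref{E:EMBIChristoffeldef}, namely $\Gamma_{\mu \ \nu}^{\ \alpha} = \frac{1}{2} g^{\alpha \lambda}(\partial_{\mu} g_{\lambda \nu} + \partial_{\nu} g_{\mu \lambda} - \partial_{\lambda} g_{\mu \nu})$, and then split the contracted index $\lambda$ into its time and spatial parts $\lambda = 0$ and $\lambda = a$. The ``principal terms'' appearing in \eqref{E:triangleGamma0jk} and \eqref{E:triangleGammaj0k} should correspond precisely to the nonzero background Christoffel symbols recorded in \eqref{E:BackgroundChristoffel}, i.e. $\omega \delta_k^j$ and $\omega g_{jk}$ (the latter rather than $\omega e^{2\Omega}\delta_{jk}$, because the lemma keeps the full perturbed metric $g_{jk}$ in the principal piece and shoves the anisotropic part of $g_{jk} - e^{2\Omega}\delta_{jk}$ into $\triangle_{j \ k}^{\ 0}$). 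The remaining four components $\Gamma_{0 \ 0}^{\ 0}$, $\Gamma_{j \ 0}^{\ 0}$, $\Gamma_{0 \ 0}^{\ j}$, $\Gamma_{i \ j}^{\ k}$ have zero background values and are themselves defined to be the error terms.

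For the four trivial cases, I would simply unpack the definition and collect terms. For instance, for $\Gamma_{0 \ 0}^{\ 0}$ the splitting $g^{0\lambda} = g^{00} \delta_0^\lambda + g^{0a}\delta_a^\lambda$ yields immediately $2\Gamma_{0\ 0}^{\ 0} = g^{00}\partial_t g_{00} + 2g^{0a}\partial_t g_{0a} - g^{0a}\partial_a g_{00}$, which is exactly \eqref{E:triangle000}. The computations for $\Gamma_{j \ 0}^{\ 0}$, $\Gamma_{0 \ 0}^{\ j}$, and $\Gamma_{i \ j}^{\ k}$ are analogous; in each case one keeps the $g^{00}$ and $g^{0a}$ (or $g^{j0}$ and $g^{ja}$) pieces separate, uses the symmetry $\partial_t g_{jk} - 2 \omega g_{jk} = e^{2\Omega} \partial_t h_{jk}$ where applicable, and reads off the claimed formula.

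For the two nontrivial cases $\Gamma_{0 \ k}^{\ j}$ and $\Gamma_{j \ k}^{\ 0}$, I would first compute the full expressions, then subtract off the principal terms $\omega \delta_k^j$ and $\omega g_{jk}$ respectively. To match the stated form of the error terms, the key algebraic manipulation is to trade $\partial_t g_{jk}$ for $(\partial_t g_{jk} - 2\omega g_{jk}) + 2\omega g_{jk} = e^{2\Omega}\partial_t h_{jk} + 2\omega g_{jk}$, and similarly to rewrite combinations of the form $g^{ab}\partial_t g_{ak} - 2\omega\delta_k^b$ using the identity
\begin{align*}
g^{ab}\partial_t g_{ak} - 2\omega \delta_k^b
= g^{ab}(\partial_t g_{ak} - 2\omega g_{ak}) + 2\omega(g^{ab}g_{ak} - \delta_k^b)
= e^{2\Omega} g^{ab}\partial_t h_{ak} - 2\omega g^{0b} g_{0k},
\end{align*}
which follows from $g^{b\alpha}g_{\alpha k} = \delta_k^b$. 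Analogous identities such as $g^{ab}g_{bk} - \delta_k^a = - g^{0a}g_{0k}$ account for the factors of $(g^{00}+1)$ and $g_{0j}g^{0a}$ that appear in the error expressions. Once these substitutions are made, the remaining terms match \eqref{E:triangle0kj} and \eqref{E:trianglejk0} after collecting like terms.

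The main difficulty is purely bookkeeping: verifying that every rearrangement preserves equality and that the rewritten forms with the underbraced expressions are identified correctly. There is no analytic obstruction; the lemma is a purely algebraic restatement of the Christoffel definition under the ansatz that the FLRW Christoffel values are the ``principal'' ones. Since the right-hand sides are defined specifically so that they close the identities, the verification reduces to expanding both sides and checking term-by-term cancellation, which is routine.
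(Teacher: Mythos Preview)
Your proposal is correct and essentially identical to the paper's own proof, which consists of the single sentence: ``The proof is again a series of tedious computations that follow from the definition $\Gamma_{\mu \ \nu}^{\ \alpha} = \frac{1}{2} g^{\alpha \lambda}(\partial_{\mu} g_{\lambda \nu} + \partial_{\nu} g_{\mu \lambda} - \partial_{\lambda} g_{\mu \nu}).$'' You have in fact supplied more detail than the paper does, correctly identifying the key algebraic identities (e.g., $g^{ab}g_{ak} - \delta_k^b = -g^{0b}g_{0k}$ from $g^{b\alpha}g_{\alpha k} = \delta_k^b$) needed to produce the underbraced rewritings.
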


\begin{proof}
	The proof is again a series of tedious computations that follow from the definition 
	$\Gamma_{\mu \ \nu}^{\ \alpha} = \frac{1}{2} g^{\alpha \lambda} 
	(\partial_{\mu} g_{\lambda \nu} + \partial_{\nu} g_{\mu \lambda} - \partial_{\lambda} g_{\mu \nu}).$ 
\end{proof}

\subsection{Classical local existence and the continuation principle} \label{SS:ClassicalLocalExistence}

In this section, we discuss classical local existence results and continuation criteria for the modified system \eqref{E:finalg00equation} - \eqref{E:finalEulerj}. The theorems in this section are stated without proof; we provide references for the rather standard techniques that can be used to prove them.

\begin{theorem}[\textbf{Local Existence for the Modified System}] \label{T:LocalExistence}
	Let $N \geq 3$ be an integer. Let $\mathring{g}_{\mu \nu} = g_{\mu \nu}|_{t=0},$ $2 \mathring{K}_{\mu \nu} = 
	(\partial_t g_{\mu \nu})|_{t=0},$ $(\mu, \nu = 0,1,2,3),$
	$\mathring{P} = P|_{t=0} = p|_{t=0},$ $\mathring{u}^{j} = u^{j}|_{t=0},$ $(j=1,2,3),$ 
	$g_{\alpha \beta}u^{\alpha}u^{\beta}|_{t=0} = -1,$ 
	be initial data (not necessarily satisfying the Einstein constraints) on the manifold $\mathbb{T}^3$ 
	for the modified system \eqref{E:Rhat00} - \eqref{E:secondEulersummary} satisfying (for $j,k = 1,2,3$)
	
	\begin{subequations}
	\begin{align}
		\underpartial \mathring{g}_{jk} & \in H^{N}, \ \mathring{g}_{00} + 1 \in H^{N+1}, \ \mathring{g}_{0j} \in H^{N+1}, \\
		\mathring{K}_{jk} - \omega(0)\mathring{g}_{jk} & \in H^{N}, \ 
			\mathring{K}_{00} \in H^{N}, \ 
			\mathring{K}_{0j} \in H^{N}, \\ 
		\ \mathring{P} - \bar{p} & \in H^{N}, \ \mathring{u}^j \in H^N,
	\end{align}
	\end{subequations}
	where $\bar{p} > 0$ is a constant. Assume that $\inf_{x \in \mathbb{T}^3} \mathring{p} > 0.$
	Assume further that there is a constant $C > 0$ such that
	
	\begin{align} \label{E:localexistencemathringgjklowerequivalenttostandardmetric}
		C \delta_{ab} X^a X^b \leq \mathring{g}_{ab}X^a X^b \leq C^{-1} \delta_{ab} X^a X^b, &&
		\forall(X^1,X^2,X^3) \in \mathbb{R}^3,
	\end{align}
	and such that $\mathring{g}_{00} < 0,$ so that by Lemma \ref{L:ginverseformluas}, the $4 \times 4$ matrix $\mathring{g}_{\mu 
	\nu}$ is Lorentzian. Then these data launch a unique classical solution $(g_{\mu \nu}, P, u^{\mu})$ to the modified system 
	existing on a slab $[T_-, T_+] \times \mathbb{T}^3,$ with $T_- < 0 < T_+,$ such that
	
	\begin{align}
		g_{\mu \nu} \in C^2([T_-, T_+] \times \mathbb{T}^3), \ P \in C^1([T_-, T_+] \times \mathbb{T}^3), 
		\ u^{\mu} \in C^1([T_-, T_+] \times \mathbb{T}^3),
	\end{align}
	such that $g_{00} < 0,$ and such that the eigenvalues of the $3 \times 3$ matrix $g_{jk}$ are uniformly bounded
	below from $0$ and from above. 
	
	The solution has the following regularity properties:
	
	\begin{subequations}
	\begin{align}
		\underpartial g_{jk} & \in C^0([T_-, T_+],H^{N}), \ g_{00} + 1 \in C^0([T_-, T_+],H^{N+1}), 
			\ g_{0j} \in C^0([T_-, T_+],H^{N+1}), \\
		\partial_t g_{jk} - 2\omega(t) g_{jk} & \in C^0([T_-, T_+],H^{N}), \ \partial_t g_{00} \in C^0([T_-, T_+],H^{N}), 
			\ \partial_t g_{0j} \in C^0([T_-, T_+],H^{N}), \\ 
		P - \bar{p} & \in C^0([T_-, T_+],H^{N}), \ u^j \in C^0([T_-, T_+],H^{N}), \ u^0 - 1 \in C^0([T_-, T_+],H^{N}).
	\end{align}
	\end{subequations}
	
	Furthermore, $g_{\mu \nu}$ is a smooth Lorentzian metric on $(T_-,T_+) \times \mathbb{T}^3,$
	and the sets $\lbrace t \rbrace \times \mathbb{T}^3$ are Cauchy hypersurfaces in the Lorentzian manifold 
	$(\mathcal{M} \eqdef (T_-, T_+) \times \mathbb{T}^3,g_{\mu \nu})$ for $t \in (T_-, T_+).$
	
	In addition, there exists an open neighborhood $\mathcal{O}$ of
	$(\mathring{g}_{\mu \nu}, \mathring{K}_{\mu \nu}, \mathring{P}, \mathring{u}^{j})$
	such that all data belonging to $\mathcal{O}$ launch solutions that also exist on the slab 
	$[T_-, T_+] \times \mathbb{T}^3$ and that have the 
	same regularity properties as $(g_{\mu \nu},P,u^{\mu}).$ Furthermore, on $\mathcal{O},$ the map from the initial data to the 
	solution is continuous.\footnote{By continuous, we mean continuous relative to the norms on the data and the norms on the 
	solution that are stated in the hypotheses and above conclusions of the theorem.}
	
	Finally, if, as described in Section \ref{SS:IDREDUCED}, the data for the modified system are constructed from data for the 
	Einstein-Euler system satisfying the constraints \eqref{E:Gauss} - \eqref{E:Codazzi} on an open subset $S \in 
	\mathbb{T}^3,$ and if the wave coordinate condition $Q_{\mu}|_{S} = 0$ holds, then 
	$(g_{\mu \nu}, p ,u^{\mu})$ is also a solution to the unmodified equations \eqref{E:EulerEinstein} - \eqref{E:fluid2}  on 
	$\mathcal{D}(S),$ the Cauchy development of $S.$

\end{theorem}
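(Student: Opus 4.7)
The plan is to treat the modified system \eqref{E:finalg00equation}--\eqref{E:finalEulerj} as a quasilinear hyperbolic system of mixed order: three sets of quasilinear wave equations for $g_{\mu \nu}$ coupled to a first-order system for $(P,u^j)$. The local existence part closely parallels the classical Choquet-Bruhat approach \cite{CB1952}, adapted to the coupled fluid setting. First, I would establish linearized energy estimates. For a wave equation $\hat{\square}_g v = F$ with $g$ Lorentzian, standard $H^N$ energies built from $\partial_t v$ and $\underpartial v$ satisfy $\tfrac{d}{dt} \mathcal{E}_N \leq C(\|\partial g\|_{L^\infty}) \mathcal{E}_N + \|F\|_{H^N}$. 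For the fluid piece I would use the Eulerian energy-current framework advertised in Section \ref{SS:Commentsonanalysis}, producing $L^2$-coercive energies for $(P - \bar p, u^j)$ whose time derivatives are controlled by Sobolev norms of the unknowns. The coupling is subcritical because the wave equations deliver $g_{\mu \nu}$ one order of regularity higher than the fluid variables, matching exactly the order of the coupling terms.

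With these a priori estimates in hand, a standard Picard iteration---freezing coefficients at the previous iterate and solving the resulting linear wave-transport system---converges in a lower-order norm while the $H^N$ norm remains uniformly bounded on a short interval $[T_-, T_+]$; alternatively one can appeal to abstract mixed-order symmetric-hyperbolic theorems such as those in \cite{mT1997III}, \cite{cS2008}. Positivity of $P$ follows from $\inf_{\mathbb{T}^3} \mathring{P} > 0$ by continuity; the Lorentzian character of $g_{\mu \nu}$, together with the eigenvalue bounds on $g_{jk}$, follows from the hypothesis \eqref{E:localexistencemathringgjklowerequivalenttostandardmetric} by continuity and Lemma \ref{L:ginverseformluas}. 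Applying the same estimates to the difference of two solutions yields uniqueness and continuous dependence, and hence the open neighborhood $\mathcal{O}$ of data producing solutions on a common interval. The Cauchy hypersurface property of $\{t\} \times \mathbb{T}^3$ in $(\mathcal{M},g)$ is immediate from the timelike character of $\partial_t$ with respect to $g$ on the slab and the compactness of $\mathbb{T}^3$.

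The main obstacle, and the most interesting step, is the last claim: that under the hypotheses on $S$, the modified solution is actually a solution to the unmodified Euler-Einstein system on $\mathcal{D}(S)$. The strategy is the classical one: derive a homogeneous linear wave equation for $Q_\mu$ with vanishing Cauchy data on $S$ and invoke uniqueness. Taking the $g$-divergence of \eqref{E:FirstmodifiedRicci}, using the decomposition $\widehat{\mbox{Ric}}_{\mu \nu} = \mbox{Ric}_{\mu \nu} + \tfrac{1}{2}(D_\mu Q_\nu + D_\nu Q_\mu)$, the twice-contracted Bianchi identity $D^\mu(\mbox{Ric}_{\mu \nu} - \tfrac{1}{2} R g_{\mu \nu}) = 0$, and the conservation law $D^\mu T_{\mu \nu} = 0$ forced by \eqref{E:firstEulermodified}--\eqref{E:Eulerjmodified}, one produces an equation of the schematic form $\hat{\square}_g Q_\nu = (\text{linear in } Q \text{ and its first derivatives})$; the gauge terms $I_{\mu \nu}$ defined in \eqref{E:gaugetermI00}--\eqref{E:gaugetermIjk} were chosen by Ringstr\"{o}m precisely so that their divergence combines cleanly into this structure. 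The initial condition $Q_\mu|_S = 0$ holds by the construction of Section \ref{SS:IDREDUCED}, while $\partial_t Q_\mu|_S = 0$ is precisely the content of the Gauss and Codazzi constraints \eqref{E:Gauss}--\eqref{E:Codazzi}: these constraints are equivalent to the vanishing on $S$ of the normal components $\mbox{Ric}_{0 \mu} - \Lambda g_{0 \mu} - T_{0 \mu} + \tfrac{1}{2} T g_{0 \mu}$, which in turn forces the appropriate first time derivatives of $Q_\mu$ via the modified equations \eqref{E:Rhat00}--\eqref{E:Rhat0j}. Standard uniqueness for linear wave equations on the globally hyperbolic region $\mathcal{D}(S)$ then yields $Q_\mu \equiv 0$ on $\mathcal{D}(S)$, whence $I_{\mu \nu} \equiv 0$ and $\widehat{\mbox{Ric}}_{\mu \nu} = \mbox{Ric}_{\mu \nu}$ there. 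The hard part is making the algebraic cancellation leading to the $Q$-wave equation precise in the presence of $\Lambda > 0$, the dissipation terms $I_{\mu \nu}$, and the fluid source, and verifying that the first normal derivatives of $Q_\mu$ on $S$ truly vanish as a consequence of the constraints---both are classical in the vacuum case but must be reverified in the present setting.
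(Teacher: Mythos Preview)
Your proposal is correct and follows essentially the same approach as the paper: the paper's own proof is even terser than your sketch, merely citing standard references (\cite{lH1997}, \cite{aM1984}, \cite{jSmS1998}, \cite{cS2008}, \cite{jS2008a}, \cite{mT1997III}, and \cite[Proposition 1]{hR2008}) for the energy-estimate-plus-iteration local existence argument, and deferring the final gauge-preservation claim to Proposition \ref{P:Preservationofgauge}, whose proof sketch is exactly the homogeneous $Q_\mu$-wave-equation argument you describe. Your outline supplies precisely the details the paper leaves to the literature.
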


\begin{remark}
	The hypotheses in Theorem \ref{T:LocalExistence} have been stated in a manner that allows us to apply to it initial data 
	near that of the background solution of Section \ref{S:backgroundsolution}. Furthermore, we remark that
	the assumptions and conclusions concerning the metric components $g_{jk}$ would appear more natural if expressed in terms of
	the variables $h_{jk} \eqdef e^{-2 \Omega} g_{jk}$; these rescaled quantities are the ones that we use in our global 
	existence proof.
\end{remark}

\begin{proof}
	The existence aspect of Theorem \ref{T:LocalExistence} can be proved using standard methods that follow from energy estimates 
	in the spirit of those proved below in Sections \ref{SS:GENERGIES}, \ref{SS:fluidEnergies}, and 
	\ref{S:EnergyNormEquivalence}. See e.g. \cite[Ch. VI]{lH1997}, \cite[Ch. 2]{aM1984}, \cite[Ch. 5]{jSmS1998}, 
	\cite[Ch. 1]{cS2008}, \cite{jS2008a}, and \cite[Ch. 16]{mT1997III} for details on how to prove local existence as a 
	consequence of the availability of these kinds of energy estimates. Also see \cite[Proposition 1]{hR2008}. 
	The fact that $(g_{\mu \nu},p,u^{\mu}),$ where $p = e^{-3(1 + \speed^2)\Omega}P,$ 
	is also a solution to the modified equations if the constraints and the 
	wave coordinate condition $Q_{\mu}|_{S} = 0$ are satisfied is discussed 
	more fully in Section \ref{SS:PreservationofHarmonicGauge}.

	\begin{remark}
		Our use of energy currents in Sections \ref{SS:fluidEnergies} and \ref{S:EnergyNormEquivalence} to derive energy estimates
		for the relativistic Euler equations may be unfamiliar to some readers. However, once one has such estimates, local 
		existence can be proved using the standard arguments mentioned above.
	\end{remark}
	
\end{proof}

In our proof of Theorem \ref{T:GlobalExistence}, we will use the following continuation principle, which provides
standard criteria that are sufficient to ensure that a solution to the modified equations exists globally in time.

\begin{theorem}[\textbf{Continuation Principle}] \label{T:ContinuationCriterion}
	Assume the hypotheses of Theorem \ref{T:LocalExistence}. Let $T_{max}$ be the supremum over all times $T_+$ such that the 
	solution $(g_{\mu \nu},p,u^{\mu})$ exists on the interval $[0,T_+)$ and has the properties stated in the conclusions of 
	Theorem \ref{T:LocalExistence}. Then if $T_{max} < \infty,$ one of the following four possibilities must occur:
	
	\begin{enumerate}
		\item There is a sequence $(t_n,x_n) \in [0,T_{max}) \times \mathbb{T}^3$ such that
			$\lim_{n \to \infty} g_{00}(t_n,x_n) = 0.$
		\item There is a sequence $(t_n,x_n) \in [0,T_{max}) \times \mathbb{T}^3$ such that the smallest eigenvalue of
			$g_{jk}(t_n,x_n)$ converges to $0$ as $n \to \infty$. 
		\item There is a sequence $(t_n,x_n) \in [0,T_{max}) \times \mathbb{T}^3$ such that
			$\lim_{n \to \infty} P(t_n,x_n) = 0.$
		\item $\lim_{t \to T_{max}^-} \sup_{0 \leq \tau \leq t} \Bigg\lbrace 
			\sum_{\mu,\nu =0}^3 \Big(\| g_{\mu \nu}(\tau,\cdot) \|_{C_b^2}
			+ \| \partial_t g_{\mu \nu}(\tau,\cdot) \|_{C_b^1} \Big) \\
				+ \| P(\tau,\cdot) \|_{C_b^1} + \| \partial_t P(\tau,\cdot) \|_{L^{\infty}}
				+ \sum_{j=1}^3 \Big( \| u^j(\tau,\cdot) \|_{C_b^1} + \| \partial_t u^j(\tau,\cdot) \|_{L^{\infty}} \Big) \Bigg\rbrace = 
				\infty.$
		\end{enumerate}
		Similar results hold for an interval of the form $(T_{min},0].$
\end{theorem}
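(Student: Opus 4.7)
The plan is to argue by contradiction: assume $T_{max} < \infty$ and that none of the breakdown scenarios (1)--(4) occur, and then show the solution extends past $T_{max}$, contradicting maximality. The negation of (1)--(3) provides uniform positive constants $\delta_1, \delta_2, \delta_3 > 0$ such that on $[0,T_{max}) \times \mathbb{T}^3$ one has $g_{00} \leq -\delta_1$, every eigenvalue of the spatial block $g_{jk}$ is at least $\delta_2$, and $P \geq \delta_3$. The negation of (4) yields a finite constant $M$ bounding the listed $C_b^2/C_b^1/L^\infty$ norms uniformly on $[0,T_{max})$. Together, these bounds keep the modified system strictly hyperbolic (the reduced wave operator $\hat{\square}_g$ is uniformly Lorentzian and the coefficient $\speed^2/[(1+\speed^2)P]$ appearing in \eqref{E:finalEulerj} is uniformly bounded), and via the linear-algebraic identities from the forthcoming Section \ref{S:LinearAlgebra} they imply uniform $C_b^1$ control on $g^{\mu\nu}$ and on all $\Gamma_{\mu\ \nu}^{\ \alpha}$.

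The next step is to upgrade these $C^2/C^1$ bounds to uniform-in-time bounds on the full Sobolev norms $\|g - \widetilde g\|_{H^{N+1}}$, $\|\partial_t g\|_{H^N}$, $\|P - \bar p\|_{H^N}$, $\|u^j\|_{H^N}$ on $[0, T_{max})$. For the metric components, I would apply the standard $L^2$-type energy estimate to the system of quasilinear wave equations \eqref{E:finalg00equation}--\eqref{E:finalhjkequation} after commuting with $\partial_{\vec\alpha}$ for $|\vec\alpha| \leq N$; the uniform $C^1$ bound on $g^{\mu\nu}$ and $C^2$ bound on $g$ render the standard energy identity controllable, with inhomogeneities estimated by Sobolev--Moser type inequalities (cf.\ the Appendix). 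For the fluid variables, I would use the energy currents $\dot J^\mu$ introduced in Section \ref{SS:fluidEnergies} applied to the equations of variation obtained by differentiating \eqref{E:finalfirstEuler}--\eqref{E:finalEulerj}; the uniform positivity $P \geq \delta_3$ ensures that these currents remain coercive in the Eulerian variables, and the $C^1$ control of the lower-order quantities gives an $L^2$ bound on the inhomogeneities. A Gronwall argument on $[0,T_{max})$, whose coefficients depend only on $\delta_1,\delta_2,\delta_3,M$ and $T_{max}$, then yields the desired uniform $H^N$ bounds.

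Once uniform Sobolev bounds are in hand, a standard argument (using that Sobolev spaces are complete and that bounded sets are weakly pre-compact, together with continuity in time from the equations themselves) shows that the solution extends continuously in $t$ to the closed interval $[0, T_{max}]$, with limiting data $(g_{\mu\nu}, \partial_t g_{\mu\nu}, P, u^j)|_{T_{max}}$ in the same Sobolev spaces required by the hypotheses of Theorem \ref{T:LocalExistence}. Since the uniform lower bounds on $-g_{00}$, on the eigenvalues of $g_{jk}$, and on $P$ persist in the limit, this limiting data satisfies all hypotheses of Theorem \ref{T:LocalExistence}. Applying that local existence theorem with initial time $t = T_{max}$ produces a classical solution on a slab $[T_{max}, T_{max} + \sigma] \times \mathbb{T}^3$ for some $\sigma > 0$; gluing it to the existing solution via standard uniqueness yields an extension beyond $T_{max}$, contradicting the definition of $T_{max}$.

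The main obstacle is the second step: closing uniform-in-time $H^N$ estimates for the coupled wave-fluid system purely from the $C^2/C^1$ bound in (4) and the uniform lower bounds from the negations of (1)--(3). The wave-equation piece is essentially classical once the Lorentzian nature of $g$ is uniformly secured, but the fluid piece requires using energy currents in the Eulerian variables and checking that their coerciveness constants degrade only through $1/P$ and the eigenvalues of $g_{jk}$—quantities that are bounded under our assumptions. One must also verify that commuting $\partial_{\vec\alpha}$ through $u^{\alpha}\partial_\alpha$ and through the coefficient $\speed^2/[(1+\speed^2)P]$ produces commutator terms of lower order that are controlled by the $C^1$ bound; this is where the Sobolev--Moser estimates of the Appendix play their essential role. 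The rest of the argument is a routine invocation of Theorem \ref{T:LocalExistence} at the limiting time.
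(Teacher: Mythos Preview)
Your proposal is correct and follows precisely the standard contradiction-based continuation argument that the cited references (\cite[Ch.~VI]{lH1997}, \cite[Ch.~1]{cS2008}, \cite{jS2008b}) contain; the paper itself does not supply a proof but only refers to these sources and remarks that conditions (1)--(2) govern the hyperbolicity of $\hat{\Square}_g$ while condition (3) prevents degeneration of the Euler equations. Your sketch is therefore consistent with, and considerably more detailed than, what appears in the paper.
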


\begin{proof}
	See e.g. \cite[Ch. VI]{lH1997}, \cite[Ch. 1]{cS2008}, \cite{jS2008b} for the ideas behind a proof. 
	If either $(1)$ or $(2)$ occurs, then the hyperbolicity of the operator $\hat{\Square}_g$ can break down.
	Condition $(3)$ is connected to the fact that the Euler equations can degenerate when the pressure vanishes.
\end{proof}

\subsection{Preservation of the wave coordinate condition} \label{SS:PreservationofHarmonicGauge}

In Section \ref{SS:IDREDUCED}, from given initial data for the Einstein equations, we constructed initial data for the modified equations that in particular satisfy the wave coordinate condition along the Cauchy hypersurface $\mathring{\Sigma};$ i.e., $Q_{\mu}|_{t=0} = 0.$ As mentioned in the statement of Theorem \ref{T:LocalExistence}, these data launch a solution of both the modified equations and the Einstein equations. As we have discussed previously, this fact follows from the fact that the condition $Q_{\mu} \equiv 0$ holds in $\mathcal{D}(\mathring{\Sigma}).$ We now formulate this result as a proposition.

\begin{proposition}[\textbf{Preservation of the Wave Coordinate Condition}] \label{P:Preservationofgauge}
	Let \\ 
	$\big(\mathring{\Sigma} = \lbrace x \in \mathcal{M} \ | \ t=0 \rbrace, \mathring{\underline{g}}_{jk}, 
	\mathring{\underline{K}}_{jk}, \mathring{p}, \underline{\mathring{u}}^j),$ 
	$(j,k=1,2,3),$ be initial data for the Euler-Einstein system \eqref{E:EinsteinFieldsummary} 
	- \eqref{E:ConservationofParticleSummary} that satisfy the constraints \eqref{E:Gauss} - \eqref{E:Codazzi}. Let $\big(g_{\mu 
	\nu}|_{t=0}, \partial_t g_{\mu \nu}|_{t=0}, p|_{t=0}, u^{\mu}|_{t=0}\big),$ $(\mu,\nu=0,1,2,3),$ be initial data for the 
	modified equations \eqref{E:FirstmodifiedRicci} - \eqref{E:Eulerjmodified} that are constructed from the data for 
	the Einstein equations as described in Section \ref{SS:IDREDUCED}. In particular, we recall that the construction of Section 
	\ref{SS:IDREDUCED} leads the fact that $Q_{\mu}|_{t=0} = 0$ where $Q_{\mu}$
	is defined in \eqref{E:Qdef}. Let $(\mathcal{M},g_{\mu \nu}, p, u^{\mu})$ be the maximal 
	globally hyperbolic development of the data for the modified equations. Then $Q_{\mu} = 0$ in 
	$\mathcal{D}(\mathring{\Sigma}) = \mathcal{M}.$
\end{proposition}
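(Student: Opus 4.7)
The plan is to derive a homogeneous linear hyperbolic system satisfied by $Q_\mu$ in $\mathcal{D}(\mathring\Sigma)$, show that both $Q_\mu$ and $\partial_t Q_\mu$ vanish on $\mathring\Sigma$, and conclude by uniqueness for linear wave equations. The first step is propagation. Rewriting \eqref{E:FirstmodifiedRicci} via the definition of $\widehat{\mbox{Ric}}_{\mu\nu}$ gives
\[
\mbox{Ric}_{\mu\nu} - \Lambda g_{\mu\nu} - T_{\mu\nu} + \tfrac{1}{2}T g_{\mu\nu} + \tfrac{1}{2}\bigl(D_\mu Q_\nu + D_\nu Q_\mu\bigr) + I_{\mu\nu} = 0.
\]
I apply $D^\mu$ to this identity. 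The contracted second Bianchi identity $D^\mu \mbox{Ric}_{\mu\nu} = \tfrac{1}{2}D_\nu R$ together with $D^\mu T_{\mu\nu} = 0$---a consequence of the fluid equations \eqref{E:firstEulersummary}--\eqref{E:secondEulersummary}---removes the divergence of $T$. The trace of the modified equation yields $R = 4\Lambda - T - D_\alpha Q^\alpha - I_\alpha^\alpha$, so differentiating cancels $\tfrac{1}{2}D_\nu R + \tfrac{1}{2}D_\nu T$ in favor of $-\tfrac{1}{2}D_\nu D_\alpha Q^\alpha - \tfrac{1}{2}D_\nu I_\alpha^\alpha$. Commuting covariant derivatives via $D^\mu D_\nu Q_\mu = D_\nu D_\alpha Q^\alpha + \mbox{Ric}_{\nu}^{\ \beta}Q_\beta$ then eliminates the $D_\nu D_\alpha Q^\alpha$ terms and leaves
\[
g^{\alpha\beta}D_\alpha D_\beta Q_\nu + \mbox{Ric}_{\nu}^{\ \beta}Q_\beta + \mathcal{L}_\nu(Q, DQ) = 0,
\]
where $\mathcal{L}_\nu$ comes from $D^\mu I_{\mu\nu} - \tfrac{1}{2}D_\nu I_\alpha^\alpha$ and is linear in $(Q, DQ)$ because $I_{\mu\nu}$ is itself linear in $Q$ by \eqref{E:gaugetermI00}--\eqref{E:gaugetermIjk}. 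Treating the smooth solution $(g,p,u)$ as fixed, this is a linear second-order hyperbolic system for $Q_\mu$.

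The second step is vanishing Cauchy data. By the construction of Section \ref{SS:IDREDUCED}, $Q_\mu|_{t=0} \equiv 0$, whence also $\partial_j Q_\mu|_{t=0} = 0$. To see that $\partial_t Q_\mu|_{t=0} = 0$, I restrict the $(0,\nu)$ components of the modified equation to $\mathring\Sigma$. There, $I_{0\nu}$ vanishes (being linear in $Q$), and all Christoffel contributions to $D_\mu Q_\nu$ vanish for the same reason, so that $\tfrac{1}{2}(D_0 Q_\nu + D_\nu Q_0)|_{\mathring\Sigma}$ reduces to $\partial_t Q_0$ when $\nu=0$ and to $\tfrac{1}{2}\partial_t Q_j$ when $\nu=j$. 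This yields
\[
\partial_t Q_0\big|_{\mathring\Sigma} = -\bigl(\mbox{Ric}_{00} - \Lambda g_{00} - T_{00} + \tfrac{1}{2}T g_{00}\bigr)\big|_{\mathring\Sigma}, \quad \tfrac{1}{2}\partial_t Q_j\big|_{\mathring\Sigma} = -\bigl(\mbox{Ric}_{0j} - \Lambda g_{0j} - T_{0j} + \tfrac{1}{2}T g_{0j}\bigr)\big|_{\mathring\Sigma}.
\]
The right-hand sides are precisely the $(0,\nu)$ components of the unmodified Einstein equation on $\mathring\Sigma$. Under the normalization $g_{00}|_{t=0} = -1$, $g_{0j}|_{t=0} = 0$, $\hat N^\mu = \delta_0^\mu$ built into the data construction, the standard Gauss and Codazzi identities expressing $G_{00}$ and $G_{0j}$ in terms of $\mathring{\underline R}, \mathring{\underline K}$, and $\mathring{\underline D}$ identify these quantities with the constraint expressions \eqref{E:Gauss}--\eqref{E:Codazzi}, which vanish by hypothesis. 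Hence $\partial_t Q_\mu|_{t=0} = 0$.

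The third step is uniqueness. With trivial Cauchy data on the Cauchy hypersurface $\mathring\Sigma$ in the MGHD, a standard energy estimate for the linear hyperbolic system of Step 1 (propagated both forward and backward in time) forces $Q_\mu \equiv 0$ on $\mathcal{D}(\mathring\Sigma) = \mathcal{M}$. The principal technical obstacle is Step 2: one must verify that the restriction to $\mathring\Sigma$ of the $(0,\nu)$ components of the modified equation reproduces \emph{exactly} the Gauss-Codazzi constraints, with the correct coefficients. The specific choice of $I_{\mu\nu}$ in \eqref{E:gaugetermI00}--\eqref{E:gaugetermIjk}---which simultaneously generates the dissipative structure exploited in Section \ref{SS:Decomposition} while vanishing identically whenever $Q_\mu = 0$---is what makes this compatibility possible, so confirming it amounts to an algebraic consistency check on the modified system's design.
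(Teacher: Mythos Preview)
Your argument is correct and follows exactly the approach the paper indicates: derive a homogeneous linear hyperbolic system for $Q_\mu$ (via the divergence of the modified equation, the Bianchi identity, and $D_\alpha T^{\alpha\mu}=0$ from the fluid equations), verify that $Q_\mu|_{t=0}=\partial_t Q_\mu|_{t=0}=0$ using the Gauss--Codazzi constraints, and invoke uniqueness. The paper itself only sketches this, deferring to \cite{CB1952} and \cite[Proposition~5.6.1]{iRjS2009}, so your write-up is in fact more detailed than what appears there.
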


\begin{remark}
	To prove Proposition \ref{P:Preservationofgauge}, we do not need the full assumption that the equation of state is of the 
	form $p= \speed^2 \rho;$ we only need the assumption $p = p(\rho)$ and the assumptions described in Section \ref{S:EE}.
\end{remark}

\begin{proof}
	This is a rather standard result whose main ideas can be traced back to \cite{CB1952}. 
	The sketch of a proof given in \cite[Proposition 5.6.1]{iRjS2009}
	can be easily adapted to apply to the system we are studying here. The basic idea is to show that for any
	solution to the modified equations, the quantities $Q_{\mu} = \Gamma_{\mu} - \widetilde{\Gamma}_{\mu}$ 
	are solutions to a homogeneous system of wave equations with trivial initial data, i.e., that 
	$Q_{\mu}|_{t=0} = \partial_t Q_{\mu}|_{t=0} = 0.$ The fact that $Q_{\mu} \equiv 0$ then follows from 
	the standard uniqueness theorem for such systems.
\end{proof}

\subsection{Matrix-vector notation for the fluid variables}
To streamline our notation, it will often be useful to write the fluid equations \eqref{E:finalfirstEuler} - \eqref{E:finalEulerj} using PDE matrix-vector notation. To this end, we introduce the array of fluid variables $\mathbf{W},$ the array of decay-inducing inhomogeneous terms $\mathbf{b},$ and the array of error inhomogeneous terms $\mathbf{b}_{\triangle},$ which are defined by

\begin{align} \label{E:arraydefs}
	\mathbf{W} \eqdef 
		\left( \begin{array}{c}
			P - \bar{p} \\
			u^1 \\
			u^2 \\
			u^3
		\end{array} \right), \qquad 
		\mathbf{b} \eqdef 	\left( \begin{array}{c}
			0 \\
			(\decayparameter - 2) \omega u^{1} \\
			(\decayparameter - 2) \omega u^{2} \\
			(\decayparameter - 2) \omega u^{3}
		\end{array} \right), \qquad
		\mathbf{b}_{\triangle} \eqdef 	\left( \begin{array}{c}
			\triangle \\
			\triangle^1 \\
			\triangle^2 \\
			\triangle^3
		\end{array} \right).
\end{align}	
In the above formulas, $\decayparameter = 3 \speed^2,$ and $\triangle,$ $\triangle^j$ are defined in \eqref{E:triangledef} - \eqref{E:trianglejdef}.

We also introduce the $4 \times 4$ matrices $A^{\mu},$ which are defined by

\begin{subequations}
\begin{align}
	A^0 & =   \begin{pmatrix}
                        u^0 & -(1 + \speed^2)P \frac{u_1}{u_0}  & -(1 + \speed^2)P\frac{u_2}{u_0}  
                        	& -(1 + \speed^2) P \frac{u_3}{u_0}  \\
                       	\Big( \frac{\speed^2}{(1 + \speed^2)P} \Big) \Pi^{10} & u^0 & 0 & 0 \\
                        \Big( \frac{\speed^2}{(1 + \speed^2)P} \Big) \Pi^{20} & 0 & u^0 & 0\\
                        \Big( \frac{\speed^2}{(1 + \speed^2)P} \Big) \Pi^{30} & 0 & 0 & u^0 \\
                    \end{pmatrix}, \label{E:A0def} \\
 A^1 & =   \begin{pmatrix} \label{E:A1def}
                        u^1 & (1 + \speed^2)P & 0  & 0  \\
                       	\frac{\speed^2}{(1 + \speed^2)P} \Pi^{11} & u^1 & 0 & 0 \\
                        \frac{\speed^2}{(1 + \speed^2)P} \Pi^{21} & 0 & u^1 & 0\\
                        \frac{\speed^2}{(1 + \speed^2)P} \Pi^{31} & 0 & 0 & u^1 \\
                    \end{pmatrix},                   
\end{align}
\end{subequations}
and analogously for for $A^2,$ $A^3.$ For future use, we also calculate that the matrix $A^0$ satisfies
$\mbox{det}(A^0) = (u^0)^2 \Big\lbrace \overbrace{(u^0)^2(1-\speed^2) - \speed^2 g^{00}}^{(u^0)^2 - \speed^2 \Pi^{00}} \Big\rbrace,$ and its inverse is given by

\begin{align} \label{E:A0inverse}
	(&A^0)^{-1} = \big\lbrace (u^0)^2 - \speed^2 \Pi^{00} \big\rbrace^{-1} \\ 
										& \times \begin{pmatrix}
                       	u^0 & (1 + \speed^2)P \frac{u_1}{u_0} & (1 + \speed^2)P \frac{u_2}{u_0} 
                       		& (1 + \speed^2)P \frac{u_3}{u_0}  \\
                       	- \frac{\speed^2}{(1 + \speed^2)P} \Pi^{10} & 
                       		u^0 + \frac{\speed^2}{u_0 u^0}\Big(\Pi^{20}u_2 + \Pi^{30}u_3\Big) 
                       		& - \frac{\speed^2}{u_0 u^0} \Pi^{10}u_2  & -\frac{\speed^2}{u_0 u^0} \Pi^{10}u_3 \\
                        - \frac{\speed^2}{(1 + \speed^2)P} \Pi^{20} & - \frac{\speed^2}{u_0 u^0} \Pi^{20}u_1 
                        	& u^0 + \frac{\speed^2}{u_0 u^0}\Big(\Pi^{10}u_1 + \Pi^{30}u_3\Big) & 
                        		- \frac{\speed^2}{u_0 u^0} \Pi^{20}u_3 \\
                        - \frac{\speed^2}{(1 + \speed^2)P} \Pi^{30} & 
                        	- \frac{\speed^2}{u_0 u^0} \Pi^{30}u_1 & - \frac{\speed^2}{u_0 u^0} \Pi^{30}u_2 
                        	& u^0 + \frac{\speed^2}{u_0 u^0}\Big(\Pi^{10}u_1 + \Pi^{20}u_2\Big) \\
                    \end{pmatrix}. \notag
\end{align}

Using this notation, the fluid equations \eqref{E:finalfirstEuler} - \eqref{E:finalEulerj} can be written in
the following compact form:

\begin{align} \label{E:matrixvectorfluidequations}
	A^{\beta} \partial_{\beta} \mathbf{W} = \mathbf{b} + \mathbf{b}_{\triangle}.
\end{align}
We remark that we have split the inhomogeneous term into two pieces to facilitate our analysis in later sections.

In Section \ref{SS:Nonlinearities}, we will provide estimates for the time derivatives of the fluid quantities. Therefore, as a preliminary step, we isolate them in the next corollary.

\begin{corollary} \label{C:isolatepartialtPandpartialtuj}
Let $(P,u^1,u^2,u^3)$ be a solution to the modified relativistic Euler equations \eqref{E:finalfirstEuler} - \eqref{E:finalEulerj}, where $u^0$ is defined by \eqref{E:U0UPPERISOLATED}. Then the time derivatives of the fluid quantities can be expressed as follows:

\begin{subequations}
\begin{align}
	\partial_t (P - \bar{p}) & = \triangle', \label{E:partialtP} \\  
	\partial_t u^0 & = \triangle'^0, \label{E:partialtu0} \\
	\partial_t u^j & = (\decayparameter - 2)\omega u^j + \triangle'^j, \label{E:partialtuj}
\end{align}
\end{subequations}
where

\begin{subequations}
\begin{align}
	\triangle' & = \frac{1}{u^0} \Big\lbrace 1 -\frac{1}{(u^0)^2} \speed^2 \Pi^{00} \Big\rbrace^{-1} \label{E:triangleprimedef} \\
		& \times \Big\lbrace -u^a \partial_a P - (1 + \speed^2)P \partial_a u^a - \frac{(1 + \speed^2)P}{u_0u^0}u^a g_{b \beta}u^{\beta} 	
		\partial_a u^b \notag \\ 
	& \ \ \ \ \ + \frac{\speed^2}{u^0}\Pi^{a0}\partial_a P + \frac{(1 + \speed^2)P}{u_0u^0}(\decayparameter - 2)\omega g_{a 
		\alpha}u^{\alpha} u^a
		+ \triangle + \frac{(1 + \speed^2)P}{u_0u^0} g_{a \alpha} u^{\alpha} \triangle^a \Big\rbrace, \notag \\
	\triangle'^0 & = \frac{1}{u^0} \Big\lbrace \triangle^0 - \frac{\speed^2}{(1 + \speed^2)P}\Pi^{00} \triangle'
		- \frac{\speed^2}{(1 + \speed^2)P}\Pi^{0a} \partial_a P \Big\rbrace, \label{E:triangleprime0def} \\
	\triangle'^j & = \frac{1}{u^0} \Big\lbrace - \triangle_{\alpha \ \beta}^{\ j} u^{\alpha} u^{\beta}
		+ \decayparameter \omega g^{0j} - \frac{\speed^2}{(1 + \speed^2)P}\Pi^{0j}\triangle' 
		- \frac{\speed^2}{(1 + \speed^2)P}\Pi^{aj}\partial_a P - u^a \partial_a u^j \Big\rbrace. \label{E:triangleprimejdef}
\end{align}
\end{subequations}

\end{corollary}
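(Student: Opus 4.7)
The plan is to treat equations \eqref{E:finalfirstEuler}, \eqref{E:finalEulerj}, and \eqref{E:u0equation} as a system linear in the unknown time derivatives $\big(\partial_t(P-\bar p), \partial_t u^0, \partial_t u^j\big)$, and invert it by elimination rather than by inverting the $4\times 4$ matrix $A^0$ from \eqref{E:A0def} directly. The coupling is only between $\partial_t(P-\bar p)$ and the $\partial_t u^j$ via the terms $(1+\speed^2)(-1/u_0)Pu_a\partial_t u^a$ in \eqref{E:finalfirstEuler} and $\Pi^{j0}\partial_t(P-\bar p)$ in \eqref{E:finalEulerj}; this is a $2\times 2$ system (after contracting the $u^j$-equations with $u_j$) that can be solved by hand.

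First, I would split $u^{\alpha}\partial_{\alpha} u^j = u^0 \partial_t u^j + u^a\partial_a u^j$ and $\Pi^{j\alpha}\partial_\alpha(P-\bar p) = \Pi^{j0}\partial_t(P-\bar p) + \Pi^{ja}\partial_a(P-\bar p)$ in \eqref{E:finalEulerj}, yielding
\begin{align*}
u^0 \partial_t u^j &= -u^a \partial_a u^j - \tfrac{\speed^2}{(1+\speed^2)P}\Pi^{j0}\partial_t(P-\bar p) \\
&\quad - \tfrac{\speed^2}{(1+\speed^2)P}\Pi^{ja}\partial_a(P-\bar p) + (\decayparameter-2)\omega u^j + \triangle^j.
\end{align*}
This expresses $\partial_t u^j$ in terms of $\partial_t(P-\bar p)$, spatial derivatives, and error terms. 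Multiplying by $u_j$, summing, and substituting the resulting expression for $u_a\partial_t u^a$ into \eqref{E:finalfirstEuler}, I collect the $\partial_t(P-\bar p)$ terms on the left to obtain
\begin{equation*}
\Bigl[u^0 + \tfrac{\speed^2}{u_0 u^0} u_a \Pi^{a0}\Bigr]\partial_t(P-\bar p) = (\text{spatial derivatives}) + (\text{errors}).
\end{equation*}

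The key algebraic simplification is to compute the coefficient in brackets. Using $\Pi^{a0} = u^a u^0 + g^{a0}$, the normalization identity $u^\alpha u_\alpha = -1$ (which gives $u^a u_a = -1 - u^0 u_0$), and the metric-raising identity $g^{a0}u_a = u^0 - g^{00}u_0$ (which follows from $u^0 = g^{0\beta}u_\beta$), I compute
\begin{equation*}
u_a\Pi^{a0} = (-1-u^0 u_0)u^0 + u^0 - g^{00}u_0 = -u_0\bigl[(u^0)^2 + g^{00}\bigr] = -u_0\,\Pi^{00},
\end{equation*}
so that the coefficient collapses to $u^0\bigl\{1 - \speed^2\Pi^{00}/(u^0)^2\bigr\}$. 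This exactly matches the scalar prefactor in \eqref{E:triangleprimedef}; dividing through yields \eqref{E:partialtP} with $\triangle'$ as stated. Substituting this closed-form $\partial_t(P-\bar p)$ back into the step-one expression produces \eqref{E:partialtuj} and \eqref{E:triangleprimejdef}, after recognizing the $(\decayparameter-2)\omega u^j$ decay term and bundling the rest into $\triangle'^j$. The identity for $\partial_t u^0$ comes from applying the same splitting to \eqref{E:u0equation} and inserting the now-known $\partial_t(P-\bar p)$, giving \eqref{E:triangleprime0def}.

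The main obstacle is purely algebraic: tracking the exact cancellation $u_a\Pi^{a0} = -u_0\Pi^{00}$ so that the determinant factor $1 - \speed^2\Pi^{00}/(u^0)^2$ emerges cleanly, which matches $\det(A^0)/(u^0)^4$ from \eqref{E:A0inverse} and is therefore nonzero for perturbations of the FLRW background (where $u^0 \approx 1$ and $\Pi^{00}$ is close to $0$). Everything else is bookkeeping; no estimates, no analysis — the corollary is a purely algebraic rearrangement of Proposition \ref{P:Decomposition}.
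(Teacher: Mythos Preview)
Your proposal is correct and follows essentially the same approach as the paper: both isolate $\partial_t(P-\bar p)$ by solving the linear system in the time derivatives, then back-substitute into \eqref{E:finalEulerj} and \eqref{E:u0equation}. The only cosmetic difference is that the paper obtains $\triangle'$ by applying the first row of the precomputed inverse $(A^0)^{-1}$ from \eqref{E:A0inverse} to \eqref{E:matrixvectorfluidequations}, whereas you carry out the elimination by hand via the identity $u_a\Pi^{a0}=-u_0\Pi^{00}$; these are the same linear-algebra step written two ways.
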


\begin{proof}

To obtain \eqref{E:partialtP}, simply multiply both sides of \eqref{E:matrixvectorfluidequations} by the first row of $(A^0)^{-1},$ use \eqref{E:arraydefs}, \eqref{E:A1def} (plus the analogous, but unwritten, formulas for $A^2,A^3$) and \eqref{E:A0inverse}, and consider the first component of the resulting expression. Equation \eqref{E:partialtuj} then follows from isolating $\partial_t u^j$ in equation \eqref{E:finalEulerj} and using \eqref{E:partialtP} to substitute $\triangle'$ for $\partial_t(P - \bar{p}).$ Equation \eqref{E:partialtu0} similarly follows from \eqref{E:u0equation} and \eqref{E:partialtP}.

\end{proof}

\section{Norms and Energies} \label{S:NormsandEnergies}

In this section, we define the Sobolev norms\footnote{Technically, $\fluidnorm{N}$ is a norm of the difference between the perturbed fluid variables and the background FLRW fluid solution $(\widetilde{P},\widetilde{u}^1,\widetilde{u}^2,\widetilde{u}^3)
= (\bar{p},0,0,0).$} and energies that will play a central role in our global existence theorem of Section \ref{S:GlobalExistence}. They are designed with equations \eqref{E:finalg00equation} - \eqref{E:finalEulerj} and Theorem
\ref{T:ContinuationCriterion} in mind. Let us make a few comments on these quantities. First, we remark that in Section \ref{S:EnergyNormEquivalence}, we will show that if the norms are sufficiently small, then they are equivalent to the energies; i.e., the energies can be used to control Sobolev norms of solutions. The reason that we introduce the energies is that their time derivatives can be estimated via integration by parts/ the divergence theorem. Next, we recall that the background solution variables $(\widetilde{P},\widetilde{u}^1,\widetilde{u}^2,\widetilde{u}^3)$ satisfy $(\widetilde{P},\widetilde{u}^1,\widetilde{u}^2,\widetilde{u}^3) = (\bar{p},0,0,0)$ where $\bar{p} > 0$ is the initial (constant) pressure, and the rescaled pressure $P$ is defined in \eqref{E:RESCALEDPRESSURE}. The quantity 
$\fluidnorm{N}$ introduced below in \eqref{E:fluidnorm} measures the difference of the perturbed fluid variables $(P,u^1,u^2,u^3)$ from the background $(\bar{p},0,0,0).$ We also follow Ringstr\"{o}m \cite{hR2008}
by introducing scalings by $e^{\upalpha \Omega},$ where $\upalpha$ is a number, in the definitions of the norms and energies. The effect of these scalings is that in our proof of global existence, a convenient and viable bootstrap assumption to make for these quantities is that they are of size $\epsilon,$ where $\epsilon$ is sufficiently small. Finally, we remark that the small positive number $q$ that appears in this section and throughout this article is defined in \eqref{E:qdef} below, and we remind the reader that $h_{jk} \eqdef e^{-2\Omega} g_{jk}, P \eqdef e^{3(1 + \speed^2)\Omega}p.$

\subsection{Norms for \texorpdfstring{$g_{\mu \nu}, P - \bar{p}, u^j$}{the spacetime metric and the fluid quantities}} \label{SS:Normsforgandfluidvariables}

\begin{definition} \label{D:Norms}
Let $N$ be a positive integer. We define the norms $\gzerozeronorm{N}(t),$ $\gzerostarnorm{N}(t),$ $\hstarstarnorm{N}(t),$
$\gnorm{N}(t),$ $U_{N-1}(t),$ $\fluidnorm{N}(t),$ and $\totalnorm{N}(t)$ as follows:

\begin{subequations}
\begin{align}
	\gzerozeronorm{N} & \eqdef e^{q \Omega} 
			\| \partial_t g_{00} \|_{H^N} +  e^{q \Omega} \| g_{00} + 1 \|_{H^N}
			+ e^{(q - 1)\Omega} \| \underpartial g_{00} \|_{H^N}, \label{E:mathfrakSNg00} \\
	\gzerostarnorm{N} & \eqdef 
		\sum_{j=1}^3 \Big( e^{(q - 1) \Omega} \| \partial_t g_{0j} \|_{H^N} 
			+  e^{(q - 1) \Omega} \| g_{0j} \|_{H^N} 
			+ e^{(q - 2)\Omega} \| \underpartial g_{0j} \|_{H^N} \Big),  	
			\label{E:mathfrakSNg0*} \\
	\hstarstarnorm{N} & \eqdef \sum_{j,k=1}^3 \Big( e^{q \Omega} 
		\| \partial_t h_{jk} \|_{H^N} + \| \underpartial h_{jk} \|_{H^{N-1}}
		+ e^{(q - 1)\Omega} \| \underpartial h_{jk} \|_{H^N} \Big), \label{E:mathfrakSNh**} \\
	\gnorm{N} & \eqdef \gzerozeronorm{N} + \gzerostarnorm{N} + \hstarstarnorm{N},		
		\label{E:supgnorm} 
\end{align}
\end{subequations}

\begin{subequations}
\begin{align}	
U_{N-1} & \eqdef e^{(1 + q) \Omega} \Big(\sum_{j=1}^3 \| u^{j} \|_{H^{N-1}}^2 \Big)^{1/2}, \label{E:ujnorm} \\
	\fluidnorm{N} & \eqdef e^{\Omega} \| u^{j} \|_{H^N} + \| P - \bar{p} \|_{H^N}, \label{E:fluidnorm} 
\end{align}
\end{subequations}

\begin{align}
	\totalnorm{N} & \eqdef \gnorm{N} + \fluidnorm{N} + U_{N-1}. \label{E:totalnorm}
\end{align}

Note that as discussed in Section \ref{SS:Commentsonanalysis}, the norm $U_{N-1}$ in \eqref{E:totalnorm}
controls the lower-order derivatives of the $u^j$ with \emph{larger weights} than the norm $\fluidnorm{N}.$

\end{definition}

\subsection{Energies for the metric \texorpdfstring{$g$}{the metric}} \label{SS:GENERGIES}

\subsubsection{The building block energy for \texorpdfstring{$g_{\mu \nu}$}{the metric components}}

The energies for the metric components will be built from the quantities defined in the following lemma. They are designed 
with equations \eqref{E:finalg00equation} - \eqref{E:finalhjkequation} in mind.

\begin{lemma} \cite[Lemma 15]{hR2008} \label{L:buildingblockmetricenergy}
Let $v$ be a solution to the scalar equation

\begin{align} \label{E:vscalar}
	\hat{\square}_g v = \upalpha H \partial_t v + \upbeta H^2 v + F,
\end{align}

\noindent where $	\hat{\square}_g = g^{\lambda \kappa} \partial_{\lambda} \partial_{\kappa},$ 
$\upalpha > 0$ and $\upbeta \geq 0,$ and define $\mathcal{E}_{(\upgamma,\updelta)}[v, \partial v] \geq 0$ by

\begin{align} \label{E:mathcalEdef}
		\mathcal{E}_{(\upgamma,\updelta)}^2[v,\partial v] \eqdef \frac{1}{2} \int_{\mathbb{T}^3} 
		\big\lbrace -g^{00} (\partial_t v)^2 + g^{ab}(\partial_a v)(\partial_b v) - 2 \upgamma H g^{00} v \partial_t v
		+ \updelta H^2 v^2 \big\rbrace \, d^3 x.
\end{align}

Then there are constants 
$\upeta > 0,$ $C >0,$ $\updelta \geq 0,$ and $\upgamma \geq 0,$ with $\upeta$ and $C$ depending 
on $\upalpha,$ $\upbeta,$ $\upgamma$ and $\updelta,$ such that 

\begin{align}
	|g^{00} + 1| \leq \upeta
\end{align}
implies that

\begin{align} \label{E:mathcalEfirstlowerbound}
	\mathcal{E}_{(\upgamma,\updelta)}^2[v,\partial v] \geq C \int_{\mathbb{T}^3} (\partial_t v)^2 + g^{ab}(\partial_a 
		v)(\partial_b v) + C_{(\upgamma)} v^2 \, d^3x,
\end{align} where $C_{(\upgamma)} = 0$ if $\upgamma = 0$ and $C_{(\upgamma)} = 1$ if $\upgamma > 0 .$ Furthermore, if
$\upbeta = 0,$ then $\upgamma = \updelta = 0.$ Finally, we have that

\begin{align} \label{E:mathcalEtimederivativebound}
	\frac{d}{dt} (\mathcal{E}_{(\upgamma,\updelta)}^2[v,\partial v]) & \leq - \upeta H \mathcal{E}_{(\upgamma,\updelta)}^2[v,\partial v] 
		+ \int_{\mathbb{T}^3} \Big\lbrace - (\partial_t v + \upgamma H v)F + \triangle_{\mathcal{E};(\upgamma, \updelta)}[v,\partial v] \Big\rbrace \, d^3 x,
\end{align}
where

\begin{align} \label{E:trianglemathscrEdef}
	\triangle_{\mathcal{E};(\upgamma, \updelta)}[v,\partial v] & = - \upgamma H (\partial_a g^{ab}) v \partial_b v
		- 2 \upgamma H (\partial_a g^{0a}) v \partial_t v - 2 \upgamma H g^{0a}(\partial_a v)(\partial_t v) \\
	& \ \ - (\partial_a g^{0a})(\partial_t v)^2 - (\partial_a g^{ab})(\partial_b v)(\partial_t v)
		- \frac{1}{2}(\partial_t g^{00})(\partial_t v)^2 \notag \\
	& \ \ + \bigg(\frac{1}{2} \partial_t g^{ab} + H g^{ab} \bigg) (\partial_a v) (\partial_b v)
		- \upgamma H (\partial_t g^{00}) v \partial_t v - \upgamma H (g^{00} + 1)(\partial_t v)^2. \notag
\end{align}

\end{lemma}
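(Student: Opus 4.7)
The proof has two parts. For the coercivity bound \eqref{E:mathcalEfirstlowerbound}, I would complete the square to rewrite the integrand of $\mathcal{E}_{(\upgamma,\updelta)}^2$ as
\begin{align*}
-g^{00}\bigl(\partial_t v + \upgamma H v\bigr)^2 + g^{ab}(\partial_a v)(\partial_b v) + \bigl(\updelta + \upgamma^2 g^{00}\bigr) H^2 v^2.
\end{align*}
When $|g^{00}+1| \leq \upeta$ is sufficiently small, $-g^{00}$ is bounded below by a positive constant and the $v^2$ coefficient is close to $\updelta - \upgamma^2$. Taking $\updelta > \upgamma^2$ (in the case $\upgamma > 0$) yields \eqref{E:mathcalEfirstlowerbound} directly, with the $v^2$ control present only when $\upgamma > 0$. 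When $\upbeta = 0$ we are free to take $\upgamma = \updelta = 0$, dispensing with $v$-dependent terms altogether; this is natural since \eqref{E:vscalar} then has no forcing linear in $v$, so no $v^2$-control is needed in the energy.

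For the differential inequality \eqref{E:mathcalEtimederivativebound}, I would differentiate $\mathcal{E}_{(\upgamma,\updelta)}^2$ and apply the product rule. The terms involving $\partial_t^2 v$ appear in $-2g^{00}(\partial_t v)(\partial_t^2 v)$ and $-2\upgamma H g^{00} v \partial_t^2 v$, and in each one I would substitute
\begin{align*}
-g^{00}\partial_t^2 v = 2g^{0a}\partial_a\partial_t v + g^{ab}\partial_a\partial_b v - \upalpha H \partial_t v - \upbeta H^2 v - F.
\end{align*}
This substitution produces the principal dissipative contributions $-2\upalpha H (\partial_t v)^2$ and $-2\upgamma\upbeta H^3 v^2$, the source term $-2(\partial_t v + \upgamma H v)F$, and four mixed-derivative quantities $g^{0a}(\partial_t v)(\partial_a\partial_t v)$, $g^{ab}(\partial_t v)(\partial_a\partial_b v)$, $g^{0a} v \partial_a\partial_t v$, and $g^{ab} v \partial_a\partial_b v$. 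Integration by parts on $\mathbb{T}^3$ converts these into expressions involving either $\partial_a g^{ab}$ or $\partial_a g^{0a}$, which match the $\partial g$-type terms in $\triangle_{\mathcal{E};(\upgamma,\updelta)}$, together with one additional principal contribution $-2\upgamma H g^{ab}(\partial_a v)(\partial_b v)$ from the $g^{ab} v \partial_a\partial_b v$ term. Crucially, in the two mixed-derivative integrals involving $\partial_t v$, the second-order $g^{ab}$ contribution cancels against the $2g^{ab}(\partial_a v)(\partial_b\partial_t v)$ piece arising from $\partial_t[g^{ab}(\partial_a v)(\partial_b v)]$, leaving only a $\partial_a g^{ab}$ remainder.

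The hard part is then to verify that the sum of the remaining principal terms is bounded above by $-2\upeta H$ times the coercive integrand of $\mathcal{E}^2$. Two rearrangements reveal the structure: writing $-\upgamma H g^{00}(\partial_t v)^2 = -\upgamma H(g^{00}+1)(\partial_t v)^2 + \upgamma H(\partial_t v)^2$ sends the first piece into $\triangle_\mathcal{E}$ and extracts the anti-dissipative $+\upgamma H(\partial_t v)^2$; and writing $\tfrac{1}{2}(\partial_t g^{ab})(\partial_a v)(\partial_b v) = \bigl[\tfrac{1}{2}\partial_t g^{ab} + H g^{ab}\bigr](\partial_a v)(\partial_b v) - H g^{ab}(\partial_a v)(\partial_b v)$ sends the bracketed piece into $\triangle_\mathcal{E}$ and extracts an extra dissipative $-H g^{ab}(\partial_a v)(\partial_b v)$. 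The residual principal form is
\begin{align*}
-(\upalpha-\upgamma) H(\partial_t v)^2 - (1 + \upgamma) H g^{ab}(\partial_a v)(\partial_b v) - \upgamma\upbeta H^3 v^2 + (\updelta - \upbeta - \upgamma\upalpha) H^2 v \partial_t v,
\end{align*}
and the delicate step is to fix $\upgamma \in (0,\upalpha)$ small and $\updelta > \upgamma^2$ so that this form is bounded above by $-2\upeta H$ times the integrand of $\mathcal{E}^2$. The $H^2 v \partial_t v$ cross term is absorbed by Young's inequality, with a small parameter chosen so that the resulting $(\partial_t v)^2$ and $v^2$ deficits are controlled by $-(\upalpha - \upgamma) H(\partial_t v)^2$ and $-\upgamma\upbeta H^3 v^2$ respectively; this is where the assumption $\upbeta > 0$ is used to produce genuine $v^2$ dissipation with $\upgamma > 0$. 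When $\upbeta = 0$ the choice $\upgamma = \updelta = 0$ removes all $v$-dependent principal and cross terms, so only the $(\partial_t v)^2$ and $g^{ab}$ dissipations remain, and the argument goes through trivially.
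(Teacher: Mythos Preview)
Your proposal is correct and follows the same route as the paper: the paper's proof simply cites \cite{hR2008} and records the key identity
\[
\frac{d}{dt}\mathcal{E}_{(\upgamma,\updelta)}^2 = \int_{\mathbb{T}^3}\Big\lbrace -(\upalpha-\upgamma)H(\partial_t v)^2 + (\updelta-\upbeta-\upgamma\upalpha)H^2 v\partial_t v - \upbeta\upgamma H^3 v^2 - (1+\upgamma)H g^{ab}(\partial_a v)(\partial_b v) - (\partial_t v + \upgamma H v)F + \triangle_{\mathcal{E};(\upgamma,\updelta)}\Big\rbrace\,d^3x,
\]
which is exactly the residual principal form you derived; your ``differentiate the energy and substitute for $\partial_t^2 v$'' computation is equivalent to the paper's ``multiply the equation by $-(\partial_t v + \upgamma H v)$ and integrate by parts,'' and your discussion of how to choose $\upgamma,\updelta$ and absorb the cross term via Young's inequality fills in details the paper omits.
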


\begin{proof}
	A proof based on a standard integration by parts argument (multiply both sides of equation \eqref{E:vscalar} by
	$- (\partial_t v + \upgamma H v)$ before integrating by parts over $\mathbb{T}^3$)
	is given in Lemma 15 of \cite{hR2008}. In particular, we quote the following identity:	
	
	\begin{align} \label{E:mathcalEtimederivativeformula}
		\frac{d}{dt} (\mathcal{E}_{(\upgamma,\updelta)}^2[v,\partial v]) & = \int_{\mathbb{T}^3} 
		\Big\lbrace -(\upalpha - \upgamma) H (\partial_t v)^2 + (\updelta - \upbeta - \upgamma \upalpha) H^2 v \partial_t v 
		- \upbeta \upgamma H^3 v^2 \\ 
	& \ \ \ \ \ - (1 + \upgamma) H g^{ab}(\partial_a v)(\partial_b v) - (\partial_t v + \upgamma H v)F 
		+ \triangle_{\mathcal{E};(\upgamma, \updelta)}[v,\partial v] \Big\rbrace \, d^3 x. \notag
	\end{align}
\end{proof}

\subsubsection{Energies for the components of $g$}

\begin{definition}  \label{D:energiesforg}
We define the \emph{non-negative} energies $\gzerozeroenergy{N}(t),$ 
$\gzerostarenergy{N}(t),$ $\hstarstarenergy{N}(t),$ and $\genergy{N}(t)$ as follows:

 \begin{subequations}
\begin{align}
	\gzerozeroenergy{N}^2 & \eqdef \sum_{|\vec{\alpha}| \leq N}
		e^{2q \Omega} \mathcal{E}_{(\upgamma_{00},\updelta_{00})}^2[\partial_{\vec{\alpha}} (g_{00} + 1),
		\partial (\partial_{\vec{\alpha}} g_{00})], \label{E:g00energydef} \\
	\gzerostarenergy{N}^2 & \eqdef \sum_{|\vec{\alpha}| \leq N} \sum_{j=1}^3 
		e^{2(q-1) \Omega}\mathcal{E}_{(\upgamma_{0*},\updelta_{0*})}^2[\partial_{\vec{\alpha}} g_{0j},
			\partial (\partial_{\vec{\alpha}} g_{0j})], \label{E:g0*energydef} \\
	\hstarstarenergy{N}^2 & \eqdef \sum_{|\vec{\alpha}| \leq N} 
		\Big\lbrace \sum_{j,k=1}^3 e^{2q \Omega} 
		\mathcal{E}_{(0,0)}^2[0,\partial (\partial_{\vec{\alpha}} h_{jk})] 
		+ \frac{1}{2} \int_{\mathbb{T}^3} c_{\vec{\alpha}} H^2 \big(\partial_{\vec{\alpha}} h_{jk} \big)^2 \, d^3 x 
		\Big\rbrace,	 \label{E:h**energydef} \\
	\genergy{N} & \eqdef \gzerozeroenergy{N} + \gzerostarenergy{N} + 	
		\hstarstarenergy{N}, \label{E:gtotalenergydef}
\end{align}
\end{subequations}
where

\begin{subequations}
\begin{align}
	h_{jk} & \eqdef e^{-2 \Omega} g_{jk}, && (j,k = 1,2,3), \\
	c_{\vec{\alpha}} & \eqdef 0, && \mbox{if} \ |\vec{\alpha}| = 0, \\
	c_{\vec{\alpha}} & \eqdef 1, && \mbox{if} \ |\vec{\alpha}| > 0, 
\end{align}
\end{subequations}
and $(\upgamma_{00}, \updelta_{00}), (\upgamma_{0*}, \updelta_{0*}),$ and $(\upgamma_{**}, \updelta_{**})=(0,0)$ are the constants generated by applying Lemma \ref{L:buildingblockmetricenergy} to equations \eqref{E:finalg00equation} - \eqref{E:finalhjkequation} respectively.

\end{definition}

In the next lemma, we provide a preliminary estimate of the time derivative of these energies.

\begin{lemma} \cite[Lemma 6.2.2]{iRjS2009} \label{L:metricfirstdiferentialenergyinequality}
		Assume that $g_{\mu \nu},$ $(\mu,\nu=0,1,2,3),$ is a solution to the modified equations 
	\eqref{E:finalg00equation} - \eqref{E:finalhjkequation}, and let $\gzerozeroenergy{N},$ $\gzerostarenergy{N},$ and
	$\hstarstarenergy{N}$ be as in Definition \ref{D:energiesforg}. Let 
	$[\hat{\Square}_g ,\partial_{\vec{\alpha}}]$ denote the commutator of the operators $\hat{\Square}_g$ and
	$\partial_{\vec{\alpha}}.$ Then under the assumptions of Lemma \ref{L:buildingblockmetricenergy},
	the following differential inequalities are satisfied, where 
	$\triangle_{\mathcal{E};(\gamma, \delta)}[\cdot, \partial(\cdot )]$ is defined in \eqref{E:trianglemathscrEdef}, and the
	constants $(\upgamma_{00}, \updelta_{00}), (\upgamma_{0*}, \updelta_{0*}),$ and $(\upgamma_{**}, \updelta_{**})=(0,0)$ 
	are defined in Definition \ref{D:energiesforg}:
	
	\begin{subequations}
	\begin{align}
		\frac{d}{dt}(\gzerozeroenergy{N}^2) & \leq (2q - \upeta_{00})H\gzerozeroenergy{N}^2 
			+ 2q(\omega - H) \gzerozeroenergy{N}^2 \\
		& \ \ - \sum_{|\vec{\alpha}| \leq N } \int_{\mathbb{T}^3} e^{2q \Omega} \big\lbrace \partial_t 
			\partial_{\vec{\alpha}}(g_{00}+1)
		  + \upgamma_{00} H \partial_{\vec{\alpha}}(g_{00}+1) \big\rbrace \notag \\
		 & \hspace{1in} \times \big\lbrace \partial_{\vec{\alpha}} \triangle_{00} 
		 	+ [\hat{\Square}_g ,\partial_{\vec{\alpha}}](g_{00}+1) \big\rbrace \, d^3 x \notag \\
		& \ \ + \sum_{|\vec{\alpha}| \leq N } \int_{\mathbb{T}^3} 
			e^{2q \Omega} \triangle_{\mathcal{E};(\upgamma_{00}, \updelta_{00})}[\partial_{\vec{\alpha}}(g_{00}+1)
				,\partial (\partial_{\vec{\alpha}} g_{00})]   \,d^3 x, \notag 
	\end{align}
	
	\begin{align}
		\frac{d}{dt}(\gzerostarenergy{N}^2) & \leq [2(q-1) - \upeta_{0*}]H\gzerostarenergy{N}^2 
			+ 2(q-1)(\omega - H) \gzerostarenergy{N}^2 \\
		& \ \ - \sum_{|\vec{\alpha}| \leq N } \sum_{j=1}^3 \int_{\mathbb{T}^3} e^{2(q-1) \Omega} \big\lbrace \partial_t 
			\partial_{\vec{\alpha}} g_{0j}
		  + \upgamma_{0*} H \partial_{\vec{\alpha}} g_{0j} \big\rbrace \notag \\
		 & \hspace{1in} \times \big\lbrace -2H \partial_{\vec{\alpha}} (g^{ab}\Gamma_{a j b})
			+ \partial_{\vec{\alpha}} \triangle_{0j} + [\hat{\Square}_g ,\partial_{\vec{\alpha}}] g_{0j} 
			\big\rbrace \, d^3 x \notag \\
		& \ \ + \sum_{|\vec{\alpha}| \leq N } \sum_{j=1}^3 \int_{\mathbb{T}^3} 
			e^{2(q-1)\Omega} \triangle_{\mathcal{E};(\upgamma_{0*}, \updelta_{0*})}[\partial_{\vec{\alpha}} g_{0j},
			\partial( \partial_{\vec{\alpha}} g_{0j})]   \,d^3 x, \notag 
	\end{align}
	
	\begin{align}			
		\frac{d}{dt}(\hstarstarenergy{N}^2) & \leq (2q - \upeta_{**})H\hstarstarenergy{N}^2 
			+ 2q(\omega - H) \hstarstarenergy{N}^2 \\
		& \ \ - \sum_{|\vec{\alpha}| \leq N } \sum_{j,k=1}^3 \int_{\mathbb{T}^3} e^{2q \Omega} \big\lbrace \partial_t 
			\partial_{\vec{\alpha}} h_{jk}
		  + \underbrace{\upgamma_{**}}_0 H \partial_{\vec{\alpha}} h_{jk} \big\rbrace \big\lbrace \partial_{\vec{\alpha}} 
		  	\triangle_{jk} 
		  + [\hat{\Square}_g ,\partial_{\vec{\alpha}}] h_{jk} \big\rbrace \, d^3 x \notag \\
		& \ \ + \sum_{|\vec{\alpha}| \leq N } \sum_{j,k=1}^3 \int_{\mathbb{T}^3} 
			e^{2q \Omega} \triangle_{\mathcal{E};(0, 0)}[0,\partial (\partial_{\vec{\alpha}} h_{jk})]  \,d^3 x  \notag \\ 
		& \ \ + \sum_{1 \leq |\vec{\alpha}| \leq N } \int_{\mathbb{T}^3}  
			H^2 (\partial_{\vec{\alpha}} \partial_t h_{jk})(\partial_{\vec{\alpha}} h_{jk}) \, d^3 x. \notag
	\end{align}	
	\end{subequations}
\end{lemma}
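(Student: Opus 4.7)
The plan is to derive each of the three inequalities by applying Lemma \ref{L:buildingblockmetricenergy} to the spatially differentiated forms of equations \eqref{E:finalg00equation}--\eqref{E:finalhjkequation}, and then summing over multi-indices $|\vec{\alpha}| \leq N$ while carefully tracking the weight factors $e^{2q\Omega}$ and $e^{2(q-1)\Omega}$.

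First I would commute $\partial_{\vec{\alpha}}$ through each of the three wave equations. For example, for $g_{00} + 1$, applying $\partial_{\vec{\alpha}}$ to \eqref{E:finalg00equation} gives
\begin{align*}
  \hat{\square}_g \partial_{\vec{\alpha}}(g_{00} + 1)
  = 5 H \partial_t \partial_{\vec{\alpha}}(g_{00}+1) + 6 H^2 \partial_{\vec{\alpha}}(g_{00}+1)
  + \partial_{\vec{\alpha}} \triangle_{00} + [\hat{\square}_g, \partial_{\vec{\alpha}}](g_{00}+1),
\end{align*}
so $v = \partial_{\vec{\alpha}}(g_{00}+1)$ satisfies equation \eqref{E:vscalar} with $\upalpha = 5, \upbeta = 6$, and inhomogeneity $F = \partial_{\vec{\alpha}} \triangle_{00} + [\hat{\square}_g, \partial_{\vec{\alpha}}](g_{00}+1)$; the constants $(\upgamma_{00}, \updelta_{00})$ and the decay constant $\upeta_{00} > 0$ are then the ones produced by Lemma \ref{L:buildingblockmetricenergy}. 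The same strategy applies verbatim to $g_{0j}$ (with $\upalpha = 3, \upbeta = 2$, and with the additional piece $-2H \partial_{\vec{\alpha}}(g^{ab}\Gamma_{ajb})$ absorbed into $F$) and to $h_{jk}$ (with $\upalpha = 3, \upbeta = 0$, hence $(\upgamma_{**}, \updelta_{**}) = (0,0)$ by the last sentence of Lemma \ref{L:buildingblockmetricenergy}).

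Next I would combine the energy identity \eqref{E:mathcalEtimederivativebound} with the time differentiation of the weight. Writing $E_{\vec{\alpha}}^2 \eqdef e^{2\sigma \Omega} \mathcal{E}_{(\upgamma,\updelta)}^2[\partial_{\vec{\alpha}} v, \partial (\partial_{\vec{\alpha}} v)]$ with $\sigma \in \{q, q-1\}$, we have
\begin{align*}
  \frac{d}{dt} E_{\vec{\alpha}}^2 = 2 \sigma \omega \, E_{\vec{\alpha}}^2 + e^{2\sigma \Omega} \frac{d}{dt}\mathcal{E}_{(\upgamma,\updelta)}^2[\partial_{\vec{\alpha}} v, \partial (\partial_{\vec{\alpha}} v)],
\end{align*}
and then splitting $2 \sigma \omega = 2 \sigma H + 2\sigma(\omega - H)$ and applying the estimate \eqref{E:mathcalEtimederivativebound} yields the $(2\sigma - \upeta) H \, E_{\vec{\alpha}}^2$ leading term, the $2\sigma(\omega - H) E_{\vec{\alpha}}^2$ lower-order drift term, and the two inhomogeneous integrals involving $(\partial_t \partial_{\vec{\alpha}} v + \upgamma H \partial_{\vec{\alpha}} v) F$ and $\triangle_{\mathcal{E};(\upgamma,\updelta)}$. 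Summing over $|\vec{\alpha}| \leq N$ (and, in the $g_{0j}$ case, over $j$, and in the $h_{jk}$ case, over $j,k$) produces the first three lines of each stated inequality.

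The only remaining piece is the final term appearing in the $\hstarstarenergy{N}^2$ inequality, which comes from the extra summand $\frac{1}{2} \int c_{\vec{\alpha}} H^2 (\partial_{\vec{\alpha}} h_{jk})^2 \, d^3 x$ in the definition \eqref{E:h**energydef}. Differentiating directly in time gives $\int c_{\vec{\alpha}} H^2 (\partial_t \partial_{\vec{\alpha}} h_{jk})(\partial_{\vec{\alpha}} h_{jk}) \, d^3 x$, which matches the last line of the claimed inequality upon summing over the relevant indices with $|\vec{\alpha}| \geq 1$. I do not expect a serious obstacle here: once one has set up the correct bookkeeping for the exponential weights and isolated $F$ correctly (in particular, recognizing that the $-2Hg^{ab}\Gamma_{ajb}$ term for $g_{0j}$ must be treated as part of the forcing and not of the principal dissipative structure), the proof is a direct application of Lemma \ref{L:buildingblockmetricenergy} followed by summation. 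The main care required is keeping track of which constants $(\upgamma, \updelta)$ are being used for each component and not conflating the commutator $[\hat{\square}_g, \partial_{\vec{\alpha}}]$ with the genuine nonlinear error $\partial_{\vec{\alpha}} \triangle_{\mu\nu}$; their detailed estimation is deferred to Section \ref{S:BootstrapConsequences} and is not needed to obtain the present differential inequalities.
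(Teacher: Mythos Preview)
Your proposal is correct and follows precisely the approach the paper takes: the paper's own proof consists of the single sentence that the lemma ``follows easily from definitions \eqref{E:g00energydef}--\eqref{E:gtotalenergydef}, and from \eqref{E:mathcalEtimederivativebound},'' and you have simply spelled out the bookkeeping (commute $\partial_{\vec{\alpha}}$, identify $(\upalpha,\upbeta,F)$ in each case, differentiate the weight $e^{2\sigma\Omega}$ and split $2\sigma\omega = 2\sigma H + 2\sigma(\omega-H)$, then sum) that this sentence encodes. Your identification of the $c_{\vec{\alpha}}$ contribution to the last line of the $\hstarstarenergy{N}^2$ inequality and your remark that the $-2Hg^{ab}\Gamma_{ajb}$ term belongs in $F$ are both exactly right.
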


\begin{proof}
	Lemma \ref{L:metricfirstdiferentialenergyinequality} follows easily from definitions \eqref{E:g00energydef} - \eqref{E:gtotalenergydef}, and from \eqref{E:mathcalEtimederivativebound}.
\end{proof}

The following corollary follows easily from Lemma \ref{L:metricfirstdiferentialenergyinequality}, 
definitions \eqref{E:g00energydef} - \eqref{E:h**energydef}, and the Cauchy-Schwarz inequality for integrals.

\begin{corollary} \cite[Corollary 6.2.3]{iRjS2009} \label{C:metricfirstdiferentialenergyinequality}
	Under the assumptions of Lemma \ref{L:metricfirstdiferentialenergyinequality}, we have that
	
	\begin{subequations}
	\begin{align}
		\frac{d}{dt}(\gzerozeroenergy{N}^2) & \leq (2q - \upeta_{00})H\gzerozeroenergy{N}^2 
			+ 2q(\omega - H) \gzerozeroenergy{N}^2 
			+ \gzerozeronorm{N} e^{q \Omega}  \| \triangle_{00} \|_{H^N} \\
		& \ \ + \gzerozeronorm{N} \sum_{|\vec{\alpha}| \leq N} 
		 	e^{q \Omega} \| [\hat{\Square}_g ,\partial_{\vec{\alpha}}](g_{00}+1) \|_{L^2} \notag \\
		& \ \ + \sum_{|\vec{\alpha}| \leq N } e^{2q \Omega} \| \triangle_{\mathcal{E};(\upgamma_{00}, 
			\updelta_{00})}[\partial_{\vec{\alpha}}(g_{00}+1),\partial (\partial_{\vec{\alpha}} g_{00})] \|_{L^1}, \notag 
	\end{align}
	
	\begin{align}
	 	\frac{d}{dt}(\gzerostarenergy{N}^2) & \leq [2(q-1) - \upeta_{0*}]H\gzerostarenergy{N}^2 
			+ 2(q-1)(\omega - H) \gzerostarenergy{N}^2 	\label{E:underlinemathfrakEg0*firstdifferential} \\
		& \ \ + 2H \gzerostarnorm{N} \sum_{j=1}^3 e^{(q-1) \Omega} \| g^{ab} \Gamma_{ajb} \|_{H^N}
		  + \gzerostarnorm{N} \sum_{j=1}^3 e^{(q-1) \Omega} \| \triangle_{0j} \|_{H^N}
			\notag \\
		&	\ \ + \gzerostarnorm{N} 
			\sum_{|\vec{\alpha}| \leq N } \sum_{j=1}^3 e^{(q-1) \Omega} \| [\hat{\Square}_g ,\partial_{\vec{\alpha}}] g_{0j} \|_{L^2}
			\notag \\
		& \ \ + \sum_{|\vec{\alpha}| \leq N } \sum_{j=1}^3 e^{2(q-1) \Omega} 
			\| \triangle_{\mathcal{E};(\upgamma_{0*}, \updelta_{0*})}[\partial_{\vec{\alpha}} g_{0j}
			,\partial(\partial_{\vec{\alpha}} g_{0j})] \|_{L^1}, \notag 
	\end{align}		

	\begin{align}			
		\frac{d}{dt}(\hstarstarenergy{N}^2) & \leq (2q - \upeta_{**})H\hstarstarenergy{N}^2 
			+ 2q(\omega - H) \hstarstarenergy{N}^2 + \hstarstarnorm{N} \sum_{j,k=1}^3 \| \triangle_{jk} \|_{H^N} \\
		& \ \ + \hstarstarnorm{N} \sum_{|\vec{\alpha}| \leq N } \sum_{j,k=1}^3 
				\| [\hat{\Square}_g ,\partial_{\vec{\alpha}}] h_{jk} \|_{L^2} \notag \\
		& \ \ + \sum_{|\vec{\alpha}| \leq N } \sum_{j,k=1}^3 
			\| \triangle_{\mathcal{E};(0, 0)}[0,\partial(\partial_{\vec{\alpha}} h_{jk}) \|_{L^1}
			+ H^2 e^{-q \Omega}\hstarstarnorm{N}^2, \notag
	\end{align}	
	\end{subequations}
	where the norms $\gzerozeronorm{N},$ $\gzerostarnorm{N},$ $\hstarstarnorm{N}$ are defined in Definition \ref{D:Norms}.
\end{corollary}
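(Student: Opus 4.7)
The plan is to start from the three integral-form differential inequalities in Lemma \ref{L:metricfirstdiferentialenergyinequality} and process each remaining $\int_{\mathbb{T}^3}\{\ldots\}\,d^3x$ term by the Cauchy-Schwarz inequality, packaging the resulting $L^2$ norms into the appropriate Sobolev quantities $\gzerozeronorm{N}$, $\gzerostarnorm{N}$, $\hstarstarnorm{N}$. The exponential weights $e^{q\Omega}$, $e^{(q-1)\Omega}$, and $e^{q\Omega}$ attached to the three energies were chosen precisely so that the factor $\|\partial_t \partial_{\vec{\alpha}} v + \upgamma H\partial_{\vec{\alpha}} v\|_{L^2}$, after weighting, gets absorbed into the corresponding $\mathfrak{S}$-norm, while the remaining exponential weight stays attached to the inhomogeneous factor as it is written in the statement.

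Concretely, for the $(g_{00}+1)$ inequality I would bound each integrand $e^{2q\Omega}\{\partial_t\partial_{\vec{\alpha}}(g_{00}+1) + \upgamma_{00} H \partial_{\vec{\alpha}}(g_{00}+1)\}\cdot\{\partial_{\vec{\alpha}}\triangle_{00} + [\hat{\square}_g,\partial_{\vec{\alpha}}](g_{00}+1)\}$ via Cauchy-Schwarz as a product of $e^{q\Omega}\|\partial_t\partial_{\vec{\alpha}}(g_{00}+1) + \upgamma_{00}H\partial_{\vec{\alpha}}(g_{00}+1)\|_{L^2}$ and $e^{q\Omega}(\|\partial_{\vec{\alpha}}\triangle_{00}\|_{L^2} + \|[\hat{\square}_g,\partial_{\vec{\alpha}}](g_{00}+1)\|_{L^2})$. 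The first factor is bounded by a constant times $\gzerozeronorm{N}$ by definition \eqref{E:mathfrakSNg00}, and summation over $|\vec{\alpha}|\leq N$ assembles the second factor into $e^{q\Omega}\|\triangle_{00}\|_{H^N}$ plus the commutator piece, yielding the first inequality. An analogous argument, with the weight $e^{(q-1)\Omega}$ and the extra principal term $-2H\partial_{\vec{\alpha}}(g^{ab}\Gamma_{ajb})$ carried along, produces the $\gzerostarenergy{N}^2$ inequality; and the same procedure with $\upgamma_{**}=0$ produces the $\hstarstarenergy{N}^2$ inequality, except for its final term.

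That final term, $\sum_{1\leq|\vec{\alpha}|\leq N}\int_{\mathbb{T}^3}H^2(\partial_{\vec{\alpha}}\partial_t h_{jk})(\partial_{\vec{\alpha}}h_{jk})\,d^3 x$, requires a single extra bookkeeping step: inserting the balanced factors $e^{q\Omega}\cdot e^{-q\Omega}$ and applying Cauchy-Schwarz gives
\[
H^2\,\bigl(e^{q\Omega}\|\partial_{\vec{\alpha}}\partial_t h_{jk}\|_{L^2}\bigr)\bigl(e^{-q\Omega}\|\partial_{\vec{\alpha}}h_{jk}\|_{L^2}\bigr).
\]
The first parenthetical factor is a piece of $\hstarstarnorm{N}$ by the definition \eqref{E:mathfrakSNh**}, while for $|\vec{\alpha}|\geq 1$ the second parenthetical factor, summed, gives a contribution bounded by $e^{-q\Omega}\|\underpartial h_{jk}\|_{H^{N-1}}$, also a piece of $\hstarstarnorm{N}$. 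Thus the total extra contribution is bounded by $H^2 e^{-q\Omega}\hstarstarnorm{N}^2$, matching the last term of the third inequality.

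There is no real obstacle here: the corollary is essentially a repackaging lemma, and the only care needed is in matching the exponential weights so that the left-over factor $e^{-q\Omega}$ in the third inequality appears exactly where claimed; this is just a matter of distributing $e^{2q\Omega}=e^{q\Omega}\cdot e^{q\Omega}$ (or $e^{q\Omega}\cdot e^{q\Omega}$ with one weight moved onto $\partial_{\vec\alpha} h_{jk}$ and re-expressed as $e^{-q\Omega}$ times a weight-free $L^2$ norm). Once the weights are tracked, the remaining steps are mechanical.
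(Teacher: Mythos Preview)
Your proposal is correct and follows exactly the approach the paper indicates: the paper's entire proof is the sentence ``follows easily from Lemma \ref{L:metricfirstdiferentialenergyinequality}, definitions \eqref{E:g00energydef}--\eqref{E:h**energydef}, and the Cauchy-Schwarz inequality for integrals,'' and you have supplied precisely those details, including the correct handling of the exponential weights and the $e^{q\Omega}\cdot e^{-q\Omega}$ insertion that produces the final $H^2 e^{-q\Omega}\hstarstarnorm{N}^2$ term.
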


\subsection{The equations of variation and energies for \texorpdfstring{$P - \bar{p}, u^j$}{the fluid variables}} \label{SS:fluidEnergies}
In this section, we define energies that are useful for studying the Euler equations \eqref{E:finalfirstEuler} - 
\eqref{E:finalEulerj}. Now in order to estimate the spatial derivatives of the fluid variables, we will of course have to differentiate the equations \eqref{E:finalfirstEuler} - \eqref{E:finalEulerj}, or equivalently, equation \eqref{E:matrixvectorfluidequations}. The derivatives are solutions to the linearization of \eqref{E:finalfirstEuler} - \eqref{E:finalEulerj} around the background variables $(P,u^1,u^2,u^3).$ We refer to the linearized system as the \emph{equations of variation}, while the unknowns $(\dot{P}, \dot{u}^1, \dot{u}^2, \dot{u}^3)$ are the \emph{variations}. More specifically, the equations of variation in the unknowns $(\dot{P}, \dot{u}^1, \dot{u}^2, \dot{u}^3)$ corresponding to the background $(P,u^1,u^2,u^3)$ are defined by

\begin{subequations}
\begin{align}
	u^{\alpha} \partial_{\alpha} \dot{P} + (1 + \speed^2) \Big(\frac{-1}{u_0}\Big) P u_a \partial_t \dot{u}^a + (1 + \speed^2) P 
		\partial_a \dot{u}^a & = \mathfrak{F}, \label{E:EOV1} \\
	u^{\alpha} \partial_{\alpha} \dot{u}^j + \frac{\speed^2}{(1 + \speed^2)P}\Pi^{j \alpha} \partial_{\alpha} \dot{P}
		& = (\decayparameter - 2)\omega \dot{u}^j + \mathfrak{G}^j, \label{E:EOV2}
\end{align}
\end{subequations}
where $u^0 > 0$ is such that $g_{\alpha \beta} u^{\alpha} u^{\beta} = -1.$ In our applications below $(\dot{P}, \dot{u}^1, \dot{u}^2, \dot{u}^3)$ will be equal to $\big(\partial_{\vec{\alpha}}(P - \bar{p}),\partial_{\vec{\alpha}} u^1, \partial_{\vec{\alpha}} u^2, \partial_{\vec{\alpha}} u^3 \big),$ where $\vec{\alpha}$ is a spatial derivative multi-index. The terms $\mathfrak{F}, (\decayparameter - 2)\omega \dot{u}^j,$  $\mathfrak{G}^j$ denote the inhomogeneous terms that arise from differentiating \eqref{E:finalfirstEuler} - \eqref{E:finalEulerj}. Note that we have split the inhomogeneous term in \eqref{E:EOV2} into two pieces; the $(\decayparameter - 2)\omega \dot{u}^j$ piece is responsible for creating decay in the $\dot{u}^j$ variable, while in our applications, $\mathfrak{G}^j$ will be an error term.

Using matrix-vector notation, we can abbreviate the equations \eqref{E:EOV1}- \eqref{E:EOV2} as  

\begin{align} \label{E:EOVmatrixvector}
	A^{\beta} \partial_{\beta} \dot{\mathbf{W}} = \mathbf{I},
\end{align}
where the $A^{\mu}$ are defined in \eqref{E:A0def} - \eqref{E:A1def}, and

\begin{align}
	\dot{\mathbf{W}} & \eqdef (\dot{P}, \dot{u}^1, \dot{u}^2, \dot{u}^3)^T, \\
	\mathbf{I} & \eqdef (\mathfrak{F}, \mathfrak{G}^1, \mathfrak{G}^2, \mathfrak{G}^3)^T.
\end{align}

To each variation $(\dot{u}^1, \dot{u}^2, \dot{u}^3),$ we associate a quantity $\dot{u}^0$ defined by

\begin{align} \label{E:dot0intermsofdotj}
	\dot{u}^0 \eqdef  -\frac{1}{u_0}u_a \dot{u}^a.
\end{align}
This quantity appears below in the expression \eqref{E:EnergyCurrent}, which defines our fluid energy current. In our analysis, we will need the following lemma, which essentially states that $\dot{u}^0$ is a solution to the linearization of \eqref{E:u0equation} around the background $(P,u^1,u^2,u^3).$

\begin{lemma} \label{L:dot0equation}
Assume that $(\dot{P},\dot{u}^1, \dot{u}^2, \dot{u}^3)$ is a solution to the equations of variation \eqref{E:EOV1} - \eqref{E:EOV2} corresponding to the background $(P,u^1,u^2,u^3),$ and let $\dot{u}^0 \eqdef  -\frac{1}{u_0}u_a \dot{u}^a$ be as defined in \eqref{E:dot0intermsofdotj}. Then $\dot{u}^0$ is a solution to the following equation:

\begin{align}
	u^{\alpha} \partial_{\alpha} \dot{u}^0 + \frac{\speed^2}{(1 + \speed^2)P}\Pi^{0 \alpha} \partial_{\alpha} \dot{P}
		& = \mathfrak{G}^0, \label{E:dotu0equation} 
\end{align}
where

\begin{subequations}
\begin{align}
	\Pi^{0 \mu} & = u^0 u^{\mu} + g^{0 \mu},&& (\mu = 0,1,2,3), \\
	\mathfrak{G}^0 & = -\bigg[u^{\alpha} \partial_{\alpha} \Big(\frac{u_a}{u_0}\Big)\bigg] \dot{u}^a 
		- (\decayparameter - 2)\omega\Big(\frac{1}{u_0}\Big)u_a \dot{u}^a - \Big(\frac{1}{u_0}\Big)u_a \mathfrak{G}^a.
		\label{E:dotu0G0def}
\end{align}
\end{subequations}
\end{lemma}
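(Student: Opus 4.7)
The plan is to prove Lemma \ref{L:dot0equation} by direct computation, differentiating the defining relation $\dot{u}^0 = -(u_a/u_0)\dot{u}^a$ along $u^{\alpha}\partial_{\alpha}$, substituting the equations of variation, and identifying the resulting expression via a linear-algebraic identity for $u_a \Pi^{a\alpha}$.

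First, I would apply the operator $u^{\alpha}\partial_{\alpha}$ to both sides of \eqref{E:dot0intermsofdotj}. Using the product rule, this gives
\begin{align*}
u^{\alpha}\partial_{\alpha}\dot{u}^0
= -\Big[u^{\alpha}\partial_{\alpha}\Big(\tfrac{u_a}{u_0}\Big)\Big]\dot{u}^a
  - \tfrac{u_a}{u_0}\, u^{\alpha}\partial_{\alpha}\dot{u}^a.
\end{align*}
Next, I would use the equations of variation \eqref{E:EOV2} (with index $j$ replaced by $a$) to substitute for $u^{\alpha}\partial_{\alpha}\dot{u}^a$, yielding
\begin{align*}
u^{\alpha}\partial_{\alpha}\dot{u}^0
&= -\Big[u^{\alpha}\partial_{\alpha}\Big(\tfrac{u_a}{u_0}\Big)\Big]\dot{u}^a
   + \tfrac{\speed^2}{(1+\speed^2)P}\,\tfrac{u_a}{u_0}\,\Pi^{a\alpha}\partial_{\alpha}\dot{P}
   \\ & \quad - (\decayparameter-2)\omega\, \tfrac{u_a}{u_0}\,\dot{u}^a
   - \tfrac{u_a}{u_0}\,\mathfrak{G}^a.
\end{align*}

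The key algebraic step is to show that $u_a \Pi^{a\alpha} = -u_0\, \Pi^{0\alpha}$. This follows from the normalization $u_\alpha u^\alpha = -1$ together with the definition $\Pi^{\mu\nu} = u^\mu u^\nu + g^{\mu\nu}$: expanding $u_a(u^a u^\alpha + g^{a\alpha}) = (u_\alpha u^\alpha - u_0 u^0) u^\alpha + u_a g^{a\alpha}$ and noting that $u_a g^{a\alpha} = u^\alpha - u_0 g^{0\alpha}$, the two $u^\alpha$ terms cancel and one is left with $-u_0(u^0 u^\alpha + g^{0\alpha}) = -u_0 \Pi^{0\alpha}$. Dividing by $u_0$, this identity converts the pressure term on the right-hand side into $-\tfrac{\speed^2}{(1+\speed^2)P}\,\Pi^{0\alpha}\partial_{\alpha}\dot{P}$.

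Moving this term to the left-hand side then produces exactly equation \eqref{E:dotu0equation}, with the remaining right-hand side matching the definition \eqref{E:dotu0G0def} of $\mathfrak{G}^0$ (using $u_a/u_0 = (1/u_0)u_a$). There is no real obstacle; the only ``content'' in the argument is the identity $u_a\Pi^{a\alpha} = -u_0\Pi^{0\alpha}$, which encodes the fact that contracting $\Pi$ against $u$ annihilates it (since $\Pi$ projects onto the $g$-orthogonal complement of $u$), and so the $\alpha=0$ and $\alpha=a$ pieces are linked by the normalization constraint.
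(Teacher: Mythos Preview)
Your proof is correct and follows essentially the same route as the paper's: both arguments differentiate the defining relation $\dot{u}^0=-(u_a/u_0)\dot{u}^a$, substitute the equations of variation \eqref{E:EOV2}, and invoke the identity $u_a\Pi^{a\alpha}=-u_0\Pi^{0\alpha}$ (which the paper simply cites, while you verify it explicitly). The only cosmetic difference is that the paper first writes the left-hand side of \eqref{E:dotu0equation} in terms of $\dot{u}^a$, then separately contracts \eqref{E:EOV2} against $u_j$ and combines, whereas you carry out the substitution in one continuous computation; the content is identical.
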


\begin{proof}
	By the definition of $\dot{u}^0,$ the left-hand side of \eqref{E:dotu0equation} is equal to
	
	\begin{align} \label{E:dotu0equationidentity1}
		-\frac{u_a}{u_0} u^{\alpha} \partial_{\alpha} \dot{u}^a + \frac{\speed^2}{(1 + \speed^2)P}\Pi^{0 \alpha} \partial_{\alpha} \dot{P}
		-\bigg[u^{\alpha} \partial_{\alpha} \Big(\frac{u_a}{u_0}\Big)\bigg] \dot{u}^a.
	\end{align}
	
	On the other hand, contracting equation \eqref{E:EOV2} against $u_j$ and using the identity 
	$u_a \Pi^{a \alpha} = - u_0 \Pi^{0 \alpha},$ we conclude that
	
	\begin{align} \label{E:dotu0equationidentity2}
		u_a u^{\alpha} \partial_{\alpha} \dot{u}^a - u_0 \frac{\speed^2}{(1 + \speed^2)P}\Pi^{0 \alpha} \partial_{\alpha} \dot{P}
		= (\decayparameter - 2)\omega u_a \dot{u}^a + u_a \mathfrak{G}^a.
	\end{align}
	Multiplying \eqref{E:dotu0equationidentity2} by $-\frac{1}{u_0}$ and using \eqref{E:dotu0equationidentity1},
	we arrive at \eqref{E:dotu0equation}.
\end{proof}

In the next lemma, we provide detailed information about the structure of the inhomogeneous terms appearing in the equations of variation and in Lemma \ref{L:dot0equation}. We again split the terms into two pieces to facilitate our analysis in later sections.

\begin{lemma}
Let $\mathbf{W} \eqdef (P - \bar{p}, u^1,u^2,u^3)^T$ be a solution to the relativistic Euler equations \eqref{E:matrixvectorfluidequations}. Then $(\dot{P},\dot{u}^1,\dot{u}^2,\dot{u}^3)^T \eqdef$ 
$(\partial_{\vec{\alpha}} (P - \bar{p}), \partial_{\vec{\alpha}} u^1, \partial_{\vec{\alpha}}u^2, \partial_{\vec{\alpha}}u^3)^T$ is a solution to the equations of variation \eqref{E:EOVmatrixvector} with inhomogeneous term $\mathbf{I}$ that can be expressed as follows:

\begin{align} \label{E:Ialphadecomp}
	\mathbf{I} \eqdef \partial_{\vec{\alpha}} \mathbf{b} + \mathbf{b}_{\triangle \vec{\alpha}},
\end{align}
where

\begin{subequations}
\begin{align} \label{E:bdef}
	\mathbf{b} & \eqdef \big(0, (\decayparameter - 2)u^1, (\decayparameter - 2)u^2, (\decayparameter - 2)u^3 \big)^T, 
					 \\ 
	\mathbf{b}_{\triangle \vec{\alpha}} & \eqdef \big(\mathfrak{F}_{\vec{\alpha}}, \mathfrak{G}_{\vec{\alpha}}^1, 
		\mathfrak{G}_{\vec{\alpha}}^2, \mathfrak{G}_{\vec{\alpha}}^3 \big)^T
		= \Big\lbrace A^0 \partial_{\vec{\alpha}} \big[ (A^0)^{-1}\mathbf{b} \big] 
		- \partial_{\vec{\alpha}} \mathbf{b} \Big\rbrace +
		A^0 \partial_{\vec{\alpha}} \big[ (A^0)^{-1} \mathbf{b}_{\triangle} \big] 
			\label{E:FalphamathfrakGjalphainhomogeneousterms}  \\
		& \ \ + A^0 \Big\lbrace(A^0)^{-1}A^a \partial_a \partial_{\vec{\alpha}} \mathbf{W}
 			- \partial_{\vec{\alpha}} \big[(A^0)^{-1} A^a \partial_a \mathbf{W}\big] \Big\rbrace. \notag
\end{align}

\end{subequations}
Furthermore, $\dot{u}^0 \eqdef -\frac{1}{u_0}u_a \dot{u}^a$ is a solution to equation 
\eqref{E:dotu0equation} with inhomogeneous term $\mathfrak{G}_{\vec{\alpha}}^0$ defined by

\begin{align}
 	\mathfrak{G}_{\vec{\alpha}}^0 & = -\bigg[u^{\nu} \partial_{\nu} \Big(\frac{u_a}{u_0}\Big)\bigg] 
 		\partial_{\vec{\alpha}} u^a 
		- (\decayparameter - 2)\omega\Big(\frac{1}{u_0}\Big)u_a \partial_{\vec{\alpha}} u^a
		- \Big(\frac{1}{u_0}\Big)u_a \mathfrak{G}_{\vec{\alpha}}^a. \label{E:mathfrakG0alphainhomogeneousterm}
\end{align}

\end{lemma}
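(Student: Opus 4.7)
The plan is to prove both identities by direct algebraic manipulation; no estimates or hyperbolicity theory are needed beyond the invertibility of $A^0,$ which is guaranteed by formula \eqref{E:A0inverse} in the regime under consideration.

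First I would put the Euler system \eqref{E:matrixvectorfluidequations} into normal form by left-multiplying with $(A^0)^{-1},$ apply the purely spatial multi-derivative $\partial_{\vec{\alpha}}$ (which commutes with $\partial_t$), and then left-multiply the result back by $A^0$ to obtain
\[
A^0 \partial_t \partial_{\vec{\alpha}}\mathbf{W} + A^0 \partial_{\vec{\alpha}}\big[(A^0)^{-1}A^a \partial_a \mathbf{W}\big] = A^0 \partial_{\vec{\alpha}}\big[(A^0)^{-1}\mathbf{b}\big] + A^0 \partial_{\vec{\alpha}}\big[(A^0)^{-1}\mathbf{b}_{\triangle}\big].
\]
To recover the operator $A^\beta \partial_\beta$ acting on $\partial_{\vec{\alpha}}\mathbf{W}$ on the left, I would add and subtract $A^a \partial_a \partial_{\vec{\alpha}}\mathbf{W};$ the mismatch becomes the flux commutator $A^0\big\{(A^0)^{-1}A^a \partial_a \partial_{\vec{\alpha}}\mathbf{W} - \partial_{\vec{\alpha}}[(A^0)^{-1}A^a \partial_a \mathbf{W}]\big\},$ which I would transfer to the right-hand side. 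Finally, to isolate the decay-inducing principal piece, I would split $A^0 \partial_{\vec{\alpha}}[(A^0)^{-1}\mathbf{b}] = \partial_{\vec{\alpha}}\mathbf{b} + \big\{A^0 \partial_{\vec{\alpha}}[(A^0)^{-1}\mathbf{b}] - \partial_{\vec{\alpha}}\mathbf{b}\big\}$ and collect the three remaining pieces on the right as $\mathbf{b}_{\triangle\vec{\alpha}}.$ Reading off components then yields exactly \eqref{E:Ialphadecomp}--\eqref{E:FalphamathfrakGjalphainhomogeneousterms}.

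For the $\mathfrak{G}_{\vec{\alpha}}^0$ formula I would not redo the derivation: the portion of the lemma just established shows that $(\dot{P},\dot{u}^1,\dot{u}^2,\dot{u}^3) = (\partial_{\vec{\alpha}}(P-\bar{p}),\partial_{\vec{\alpha}}u^1,\partial_{\vec{\alpha}}u^2,\partial_{\vec{\alpha}}u^3)$ solves the equations of variation \eqref{E:EOV1}--\eqref{E:EOV2} with inhomogeneous term $(\mathfrak{F}_{\vec{\alpha}},\mathfrak{G}_{\vec{\alpha}}^1,\mathfrak{G}_{\vec{\alpha}}^2,\mathfrak{G}_{\vec{\alpha}}^3),$ so Lemma \ref{L:dot0equation} applies verbatim. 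Substituting $\dot{u}^a = \partial_{\vec{\alpha}}u^a$ and $\mathfrak{G}^a = \mathfrak{G}^a_{\vec{\alpha}}$ into \eqref{E:dotu0G0def} then reproduces \eqref{E:mathfrakG0alphainhomogeneousterm}. The whole argument is purely algebraic, and the only real ``obstacle'' is careful bookkeeping: one must verify that after the final splitting the principal piece is genuinely $\partial_{\vec{\alpha}}\mathbf{b},$ so that the damping coefficient $(\decayparameter - 2)\omega$ from \eqref{E:bdef} is preserved under differentiation, and that each of the three remaining terms grouped into $\mathbf{b}_{\triangle\vec{\alpha}}$ is a commutator in which at least one derivative falls on either $(A^0)^{-1},$ $\mathbf{b},$ or $\mathbf{b}_{\triangle}.$ These structural features are precisely what will let the error terms be bounded by lower-order quantities in the bootstrap estimates of Section \ref{S:BootstrapConsequences}.
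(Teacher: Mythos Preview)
Your proof is correct and follows essentially the same approach as the paper: the paper's proof simply phrases the same manipulation in commutator notation, writing $\mathbf{I} = A^0 \partial_{\vec{\alpha}} \big\lbrace (A^0)^{-1} (\mathbf{b} + \mathbf{b}_{\triangle}) \big\rbrace + A^0 [(A^0)^{-1}A^a \partial_a, \partial_{\vec{\alpha}}] \mathbf{W}$ and then splitting off $\partial_{\vec{\alpha}}\mathbf{b},$ and likewise notes that \eqref{E:mathfrakG0alphainhomogeneousterm} follows directly from \eqref{E:dotu0G0def}. Your step-by-step normal-form derivation is the same computation unpacked.
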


\begin{proof}
	Equations \eqref{E:Ialphadecomp} - \eqref{E:FalphamathfrakGjalphainhomogeneousterms} 
	are a straightforward decomposition of the inhomogeneous term $\mathbf{I} = A^0 \partial_{\vec{\alpha}} \big\lbrace 
	(A^0)^{-1} A^{\mu} \partial_{\mu} \mathbf{W} \big\rbrace + A^0 [(A^0)^{-1}A^{\mu} \partial_{\mu}, \partial_{\vec{\alpha}}] 
	\mathbf{W}$ \\
	$= A^0 \partial_{\vec{\alpha}} \big\lbrace (A^0)^{-1} (\mathbf{b} + \mathbf{b}_{\triangle}) \big\rbrace
	+ A^0 [(A^0)^{-1}A^a \partial_a, \partial_{\vec{\alpha}}] \mathbf{W},$ where $[\cdot,\cdot]$
	denotes the commutator. The relation \eqref{E:mathfrakG0alphainhomogeneousterm} follows directly from \eqref{E:dotu0G0def}.
	
\end{proof}

\subsubsection{The fluid energy currents}

To each variation $\dot{\mathbf{W}}=(\dot{P},\dot{u}^1,\dot{u}^2,\dot{u}^3)^T,$ we associate the
following \emph{energy current}, where $\dot{u}^0 \eqdef -\frac{1}{u_0}u_a \dot{u}^a:$ 

\begin{align} \label{E:EnergyCurrent}
	\dot{J}^{\mu} & \eqdef \frac{u^{\mu}}{(1 + \speed^2)P}\dot{P}^2 + 2 \dot{u}^{\mu} \dot{P} 
		+ \frac{(1 + \speed^2)P u^{\mu}}{\speed^2} g_{\alpha \beta} \dot{u}^{\alpha} \dot{u}^{\beta}.
\end{align}
Currents of the form \eqref{E:EnergyCurrent} are the building blocks for some of our fluid energies (see \eqref{E:fluidenergydef}). We remark that similar currents were used in \cite{dC2007}, \cite{jS2008b}, \cite{jS2008a}.

\begin{remark}
	We sometimes write $\dot{J}^{\mu}[\dot{\mathbf{W}}, \dot{\mathbf{W}}]$ to emphasize that
	$\dot{J}^{\mu}$ depends quadratically on the variations.
\end{remark}

\subsubsection{The divergence of the fluid energy current}

Let $\dot{\mathbf{W}}=(\dot{P},\dot{u}^1,\dot{u}^2,\dot{u}^3)^T$ be a solution to the equations of variation 
\eqref{E:EOV1} - \eqref{E:EOV2}. Then using the equations \eqref{E:EOV1} - \eqref{E:EOV2} and \eqref{E:dotu0equation} to replace derivatives of $\dot{\mathbf{W}}$ with the inhomogeneous terms, an omitted, tedious computation gives that the divergence of $\dot{J}^{\mu}$ can be expressed as follows:

\begin{align} \label{E:divergenceofdotJ}
	\partial_{\mu} \big( \dot{J}^{\mu}[\dot{\mathbf{W}}, \dot{\mathbf{W}}] \big) 
		& = \bigg(\partial_t \Big[\frac{u^0}{(1 + \speed^2)P} \Big] \bigg)\dot{P}^2 
		+	\bigg(\partial_a \Big[\frac{u^a}{(1 + \speed^2)P} \Big] \bigg)\dot{P}^2  \\
	& \ \ + \bigg( \partial_t \Big[\frac{(1 + \speed^2)P u^0}{\speed^2} \Big] \bigg) 
		\bigg(g_{00} (\dot{u}^{0})^2 + 2 g_{0a} \dot{u}^{0} \dot{u}^{a} 
		+ g_{ab} \dot{u}^{a} \dot{u}^{b} \bigg)		\notag \\
	& \ \ + \bigg( \partial_a \Big[\frac{(1 + \speed^2)P u^a}{\speed^2} \Big] \bigg) 
		\bigg(g_{00} (\dot{u}^{0})^2 + 2 g_{0a} \dot{u}^{0} \dot{u}^{a} 
		+ g_{ab} \dot{u}^{a} \dot{u}^{b} \bigg) \notag  \\
	& \ \ + \frac{(1 + \speed^2)P u^a}{\speed^2} (\partial_a g_{00}) (\dot{u}^{0})^2
		+ 2 \frac{(1 + \speed^2)P u^a}{\speed^2} (\partial_a g_{0b}) \dot{u}^{0} \dot{u}^{b} 
		\notag \\
	& \ \ + \frac{(1 + \speed^2)P u^a}{\speed^2} (\partial_a g_{lm}) \dot{u}^{l} \dot{u}^{m}
		+ \frac{(1 + \speed^2)P u^0}{\speed^2} (\partial_t g_{00}) (\dot{u}^0) ^2 \notag \\
	& \ \ + \frac{2(1 + \speed^2)P u^0}{\speed^2} (\partial_t g_{0 a}) \dot{u}^0 \dot{u}^a
		+ \frac{(1 + \speed^2)P (u^0 - 1)}{\speed^2} (\partial_t g_{ab}) \dot{u}^a \dot{u}^b \notag \\
	& \ \ - 2 \bigg(\partial_t \Big[ \frac{u_a}{u_0} \Big] \bigg)\dot{u}^a \dot{P}
		+ \frac{(1+\speed^2)P}{\speed^2} (\partial_t g_{ab} - 2 \omega g_{ab})\dot{u}^a \dot{u}^b 
		 \notag \\
	& \ \ + \underbrace{\frac{2(1+\speed^2)P}{\speed^2}(\decayparameter - 1 ) \omega g_{ab}\dot{u}^a \dot{u}^b}_{< 0} 
		+ \frac{2\mathfrak{F}}{(1 + \speed^2)P} \dot{P} \notag \\ 
	& \ \ + \frac{2(1+\speed^2)P}{\speed^2} g_{00} \mathfrak{G}^0 \dot{u}^0
		+ \frac{4(1+\speed^2)P}{\speed^2} g_{0a} \mathfrak{G}^0 \dot{u}^a 
		+ \frac{2(1+\speed^2)P}{\speed^2} g_{ab} \mathfrak{G}^a \dot{u}^b. \notag
\end{align}
\emph{Note that the right-hand side of \eqref{E:divergenceofdotJ} does not depend on the derivatives of $\dot{\mathbf{W}};$}
this property is essential for closing our energy estimates for the top derivatives of the fluid variables. We remark that we have organized the terms on the terms on the right-hand side of \eqref{E:divergenceofdotJ} in a way that will be useful for proving inequality \eqref{E:divergenceofdotJL1estimate}.

\subsubsection{The definition of $\fluidenergy{N}$}

In our analysis, we will make use of several different norms and energies for the fluid. The energy $\fluidenergy{N}$ 
will control all spatial derivatives of all of $\mathbf{W} \eqdef (P - \bar{p},u^1,u^2,u^3)^T,$ while the norm $U_{N-1}$ defined in \eqref{E:ujnorm} will control the lower derivatives of $(u^1,u^2,u^3)$ with \emph{larger} weights. These larger weights lead to better $L^{\infty}$ decay for the lower-order derivatives of $(u^1,u^2,u^3)$ than for the top order derivatives; this improved decay plays an essential role in our analysis. We now proceed to the definition of the \emph{non-negative} (see inequality \eqref{E:ENSNcomparison}) quantity $\fluidenergy{N}.$ 

\begin{definition} \label{D:fluidenergydef}
	Let $N$ be a positive integer, and let $\mathbf{W} \eqdef (P - \bar{p},u^1,u^2,u^3)^T$ be the array of fluid variables. We 
	define the fluid energy $\fluidenergy{N}(t) \geq 0$ by
	
	\begin{align} \label{E:fluidenergydef}
		\fluidenergy{N}^2 & \eqdef \sum_{|\vec{\alpha}| \leq N} \int_{\mathbb{T}^3} \dot{J}^0[\partial_{\vec{\alpha}} 
			\mathbf{W},\partial_{\vec{\alpha}} \mathbf{W}] \, d^3 x,
	\end{align}
where $\dot{J}^0[\partial_{\vec{\alpha}} \mathbf{W},\partial_{\vec{\alpha}} \mathbf{W}]$ is defined
in \eqref{E:EnergyCurrent}.

\end{definition}

In the next corollary, we provide a preliminary estimate for the time derivatives of $U_{N-1}$ and 
$\fluidenergy{N}.$

\begin{corollary} \label{C:fluidenergytimederivative}
	Let $\mathbf{W} \eqdef (P - \bar{p}, u^1,u^2,u^3)^T$ be a solution to the relativistic Euler equations 
	\eqref{E:matrixvectorfluidequations}.	Let $U_{N-1}(t)$ and $\fluidenergy{N}(t)$ be the fluid norm and
	energy defined in \eqref{E:ujnorm} and \eqref{E:fluidenergydef} respectively. Then the following differential
	inequalities are satisfied:
	
	\begin{align} \label{E:Unormtimederivative}
		\frac{d}{dt}\big(U_{N-1}^2 \big) & \leq 2 \overbrace{(\decayparameter - 1 + q)}^{< 0}\omega  e^{(1 + q) \Omega} 
			U_{N-1}^2 + 2 e^{(1 + q) \Omega} U_{N-1} \sum_{a=1}^3 \| \triangle'^a \|_{H^{N-1}}, 
	\end{align}
	
	\begin{align} \label{E:fluidenergytimederivative}
		\frac{d}{dt}\big(\fluidenergy{N}^2 \big) & \leq 
			\sum_{|\vec{\alpha}| \leq N} \big\| \partial_{\mu} \big(\dot{J}^{\mu}[\partial_{\vec{\alpha}} 
			\mathbf{W},\partial_{\vec{\alpha}} \mathbf{W}] \big) \big\|_{L^1}.
	\end{align}

\end{corollary}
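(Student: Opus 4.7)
My plan is to prove the two inequalities separately; both are clean consequences of differentiating the defining quantity in time and invoking identities already established in Corollary \ref{C:isolatepartialtPandpartialtuj} and in the divergence formula \eqref{E:divergenceofdotJ}.

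For the first inequality, I would start from $U_{N-1}^2 = e^{2(1+q)\Omega}\sum_{j=1}^3\|u^j\|_{H^{N-1}}^2$. Differentiating the scalar prefactor produces $2(1+q)\omega\, U_{N-1}^2$. To handle the time derivative of the Sobolev norm, I would commute $\partial_t$ past $\partial_{\vec{\alpha}}$ (legal because these are coordinate partials) and substitute equation \eqref{E:partialtuj}, namely $\partial_t u^j = (\decayparameter - 2)\omega u^j + \triangle'^j$. The linear piece $(\decayparameter - 2)\omega u^j$ contributes $2(\decayparameter - 2)\omega \|u^j\|_{H^{N-1}}^2$ to $\frac{d}{dt}\|u^j\|_{H^{N-1}}^2$, so that the combined coefficient on $U_{N-1}^2$ becomes $2[(1+q)+(\decayparameter - 2)]\omega = 2(\decayparameter - 1 + q)\omega$, matching the announced expression. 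The remaining term
\begin{align*}
2 e^{2(1+q)\Omega}\sum_{j=1}^3\sum_{|\vec{\alpha}|\leq N-1}\int_{\mathbb{T}^3}(\partial_{\vec{\alpha}} u^j)(\partial_{\vec{\alpha}}\triangle'^j)\, d^3x
\end{align*}
is bounded via Cauchy-Schwarz applied first in $L^2(\mathbb{T}^3)$, then summed in $\vec{\alpha}$ using the discrete Cauchy-Schwarz inequality to produce $\|u^j\|_{H^{N-1}}\|\triangle'^j\|_{H^{N-1}}$, and finally bounded in $j$ by another application of Cauchy-Schwarz together with the identity $\big(\sum_j\|u^j\|_{H^{N-1}}^2\big)^{1/2}=e^{-(1+q)\Omega}U_{N-1}$ and the elementary bound $\big(\sum_j \|\triangle'^j\|_{H^{N-1}}^2\big)^{1/2} \leq \sum_a \|\triangle'^a\|_{H^{N-1}}$. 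This assembles precisely the error term $2 e^{(1+q)\Omega} U_{N-1} \sum_a \|\triangle'^a\|_{H^{N-1}}$ displayed in the corollary.

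For the second inequality, the strategy is to combine the definition of $\fluidenergy{N}^2$ with the divergence theorem on $\mathbb{T}^3$. For each multi-index $\vec{\alpha}$ with $|\vec{\alpha}|\leq N$, set $\dot{\mathbf{W}}_{\vec{\alpha}} \eqdef \partial_{\vec{\alpha}}\mathbf{W}$ and let $\dot{J}^{\mu} = \dot{J}^{\mu}[\dot{\mathbf{W}}_{\vec{\alpha}},\dot{\mathbf{W}}_{\vec{\alpha}}]$ be the corresponding energy current from \eqref{E:EnergyCurrent}. Because $\mathbb{T}^3$ is closed, integration by parts in the spatial variables yields $\int_{\mathbb{T}^3}\partial_a \dot{J}^a\, d^3x = 0$, so that
\begin{align*}
\frac{d}{dt}\fluidenergy{N}^2 \;=\; \sum_{|\vec{\alpha}|\leq N}\int_{\mathbb{T}^3}\partial_t \dot{J}^0\, d^3x \;=\; \sum_{|\vec{\alpha}|\leq N}\int_{\mathbb{T}^3}\partial_{\mu}\dot{J}^{\mu}\, d^3x.
\end{align*}
The stated bound then follows immediately from the elementary estimate $\big|\int_{\mathbb{T}^3} f\, d^3x\big|\leq \|f\|_{L^1}$ applied to each summand.

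There is no real obstacle here; the corollary is essentially the bookkeeping step that packages the identities of Corollary \ref{C:isolatepartialtPandpartialtuj} and formula \eqref{E:divergenceofdotJ} into a form suitable for use in the bootstrap argument. The only point that requires some care is the weighted Cauchy-Schwarz argument in the first inequality, where one must track the exponential factors $e^{(1+q)\Omega}$ so that the error term is expressed against the norm $U_{N-1}$ rather than against the unweighted $H^{N-1}$ norms of the $u^j$.
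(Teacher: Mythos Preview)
Your proof is correct and follows essentially the same approach as the paper: differentiate the definitions, substitute \eqref{E:partialtuj} for the first inequality, and use the divergence theorem on $\mathbb{T}^3$ for the second. Your treatment of the Cauchy--Schwarz step is in fact more carefully written out than the paper's, which simply says ``the Cauchy--Schwarz inequality for integrals''; note also that the explicit formula \eqref{E:divergenceofdotJ} is not actually needed for \eqref{E:fluidenergytimederivative} itself (only the divergence theorem is), though it is of course used later to estimate the $L^1$ norm.
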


\begin{proof}
	To prove \eqref{E:Unormtimederivative}, we use the definition \eqref{E:ujnorm} of $U_{N-1}$
	and equation \eqref{E:partialtuj} (differentiated with $\partial_{\vec{\alpha}}$) to conclude that
	
	\begin{align} \label{E:partialtunderlineUNsquaredexpression}
		\frac{d}{dt}\big(U_{N-1}^2 \big) = 2(1 + q)\omega U_{N-1}^2
			+ 2 e^{2(1 + q)\Omega} \sum_{|\vec{\alpha}| \leq N} \sum_{a=1}^3 \int_{\mathbb{T}^3}
			(\partial_{\vec{\alpha}} u^a) \big\lbrace (\decayparameter  - 2)\omega \partial_{\vec{\alpha}}u^a 
				+ \partial_{\vec{\alpha}}\triangle'^j \big\rbrace \, d^3 x.
	\end{align}
	Inequality \eqref{E:Unormtimederivative} now follows from \eqref{E:partialtunderlineUNsquaredexpression},
	the definition of $U_{N-1},$ and the Cauchy-Schwarz inequality for integrals.
	
	To prove \eqref{E:fluidenergytimederivative}, we use the definition \eqref{E:fluidenergydef} of $\fluidenergy{N}^2$
	and the divergence theorem to conclude that
	
	\begin{align}
		\frac{d}{dt}\big(\fluidenergy{N}^2 \big) & = \sum_{|\vec{\alpha}| \leq N} \int_{\mathbb{T}^3} 
			\partial_t\big(\dot{J}^0 [\partial_{\vec{\alpha}} \mathbf{W},\partial_{\vec{\alpha}} 
			\mathbf{W}]\big) \, d^3 x \\
		& = \sum_{|\vec{\alpha}| \leq N} \int_{\mathbb{T}^3} 
			\partial_{\mu}\big(\dot{J}^{\mu} \big[\partial_{\vec{\alpha}} \mathbf{W}, \partial_{\vec{\alpha}} 
			\mathbf{W}]\big) \, d^3 x, \notag 
	\end{align}
	from which \eqref{E:fluidenergytimederivative} easily follows.

\end{proof}

\subsection{The total energy \texorpdfstring{$\totalenergy{N}$}{}} \label{SS:TotalEnergy}

\begin{definition}
Using definitions \eqref{E:ujnorm}, \eqref{E:gtotalenergydef}, and \eqref{E:fluidenergydef}, we define $\totalenergy{N},$ the total energy associated to $g,$ $P,$ $u,$ as follows:
	
	\begin{align} \label{E:totalenergy}
		\totalenergy{N} & \eqdef \genergy{N} + \fluidenergy{N} + U_{N-1}.
	\end{align}
\end{definition}

\section{Linear-Algebraic Estimates of \texorpdfstring{$g_{\mu \nu}$ and $g^{\mu \nu},$ $(\mu,\nu=0,1,2,3)$}{the metric}} \label{S:LinearAlgebra}
\setcounter{equation}{0}

In this section, we recall some linear-algebraic estimates of $g_{\mu \nu}$ and $g^{\mu \nu}$ that were proved by
Ringstr\"{o}m. In addition to providing some rough $L^{\infty}$ estimates that we will use in Sections \ref{S:BootstrapConsequences} and \ref{S:EnergyNormEquivalence}, the lemmas will guarantee that $g_{\mu \nu}$ is a Lorentzian metric. This latter fact has already been used in our statement of the conclusions of Theorem \ref{T:LocalExistence}.

\begin{lemma} \cite[Lemmas 1 and 2]{hR2008}\label{L:ginverseformluas}
	Let $g_{\mu \nu}$ be a symmetric $4 \times 4$ matrix of real numbers. 
	Let $(g_{\flat})_{jk}$ be the $3 \times 3$ matrix defined by $(g_{\flat})_{jk} = g_{jk},$ and let $(g_{\flat}^{-1})^{jk}$
	be the $3 \times 3$ inverse of $(g_{\flat})_{jk}.$ Assume that $g_{00} < 0$ and that $(g_{\flat})_{jk}$ is positive definite. 
	Then $g_{\mu \nu}$ is a Lorentzian metric with inverse $g^{\mu \nu},$ $g^{00} < 0,$ and the $3 \times 3$ matrix 
	$(g^{\#})^{jk}$ defined by $(g^{\#})^{jk} \eqdef g^{jk}$ is positive definite. Furthermore, the following relations hold:
	
	\begin{subequations}
	\begin{align}
		g^{00} & = \frac{1}{g_{00} - d^2}, \\
		\frac{g_{00}}{g_{00} - d^2} (g_{\flat}^{-1})^{ab} X_{a}X_{b} 
			& \leq (g^{\#})^{ab} X_{a}X_{b} \leq (g_{\flat}^{-1})^{ab} X_{a}X_{b}, && \forall (X_1,X_2,X_3) \in \mathbb{R}^3, \\
		g^{0j} & = \frac{1}{d^2 - g_{00}} (g_{\flat}^{-1})^{aj} g_{0a}, && (j=1,2,3),
	\end{align}
	\end{subequations}
	where 
	
	\begin{align}
		d^2 = (g_{\flat}^{-1})^{ab} g_{0a} g_{0b}.
	\end{align}
	
\end{lemma}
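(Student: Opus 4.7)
The plan is to treat $g_{\mu\nu}$ as a $(1{+}3)$ block matrix and carry out the inversion via the Schur complement, then read off all the claims from the resulting explicit formulas. Writing $g = \begin{pmatrix} g_{00} & g_{0k} \\ g_{j0} & (g_\flat)_{jk} \end{pmatrix}$, the Schur complement of the invertible block $(g_\flat)_{jk}$ is the scalar
\[
S \;\eqdef\; g_{00} - g_{0a}(g_\flat^{-1})^{ab} g_{0b} \;=\; g_{00} - d^2,
\]
which is strictly negative because $g_{00}<0$ and $d^2\geq 0$. In particular, $\det g = S \det(g_\flat) < 0$, so $g$ is invertible; combined with the fact that $(g_\flat)_{jk}$ already contributes three positive eigenvalues to the quadratic form (viewed on the $3$-dimensional coordinate subspace), Sylvester's law of inertia forces the signature of $g$ to be $(-,+,+,+)$, i.e.\ $g_{\mu\nu}$ is Lorentzian.

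Next, I would apply the standard block-inversion formula to obtain the three components of $g^{\mu\nu}$ in one stroke:
\begin{align*}
g^{00} &= S^{-1} \;=\; \frac{1}{g_{00}-d^2}, \\
g^{0j} &= -S^{-1} g_{0a}(g_\flat^{-1})^{aj} \;=\; \frac{1}{d^2-g_{00}}\,(g_\flat^{-1})^{aj} g_{0a}, \\
(g^{\#})^{jk} &= (g_\flat^{-1})^{jk} + S^{-1}\,(g_\flat^{-1})^{ja} g_{0a}\, g_{0b}(g_\flat^{-1})^{bk}.
\end{align*}
The first two formulas give precisely the claimed identities, and since $S<0$ the first also immediately yields $g^{00}<0$. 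The third formula exhibits $(g^{\#})^{jk}$ as $(g_\flat^{-1})^{jk}$ plus a rank-one negative semidefinite correction (because $S^{-1}<0$), which at once delivers the upper bound $(g^{\#})^{ab}X_aX_b \leq (g_\flat^{-1})^{ab}X_aX_b$.

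The step I expect to require the most care is the sharp lower bound, which simultaneously encodes the positive definiteness of $(g^{\#})^{jk}$. Given $X\in\mathbb{R}^3$, set $Y^a \eqdef (g_\flat^{-1})^{ab} X_b$; then a Cauchy--Schwarz estimate with respect to the positive definite inner product $(g_\flat)_{ab}$ gives
\[
\bigl(Y^a g_{0a}\bigr)^2 \;\leq\; \bigl((g_\flat)_{ab}Y^a Y^b\bigr)\bigl((g_\flat^{-1})^{cd} g_{0c} g_{0d}\bigr) \;=\; d^2\,(g_\flat^{-1})^{ab}X_a X_b.
\]
Substituting this into the expression for $(g^{\#})^{jk}X_j X_k$ and using $S<0$ to flip the inequality yields
\[
(g^{\#})^{ab} X_a X_b \;\geq\; (g_\flat^{-1})^{ab}X_a X_b + \frac{d^2}{g_{00}-d^2}(g_\flat^{-1})^{ab}X_a X_b \;=\; \frac{g_{00}}{g_{00}-d^2}\,(g_\flat^{-1})^{ab}X_a X_b,
\]
which is the claimed lower bound. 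Since $g_{00}/(g_{00}-d^2)>0$, this bound also proves that $(g^{\#})^{jk}$ is positive definite, closing out the lemma.
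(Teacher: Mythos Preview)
Your proof is correct. The paper itself does not supply a proof of this lemma; it simply cites \cite[Lemmas 1 and 2]{hR2008} and marks the statement with a $\qed$. Your block-decomposition/Schur-complement argument, together with Cauchy--Schwarz for the sharp lower bound, is the standard route to these identities and is almost certainly what Ringstr\"{o}m's original proof does as well.
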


\hfill $\qed$

The estimates in the next lemma are based on the following rough assumptions, which we will upgrade during our global existence argument.

\begin{center}
	\large{Rough Bootstrap Assumptions for $g_{\mu \nu}:$}
\end{center}

We assume that there are constants $\upeta > 0$ and $c_1 \geq 1$ such that

\begin{subequations}
\begin{align}
	|g_{00} + 1| & \leq \upeta, && \label{E:metricBAeta} \\
	c_1^{-1} \delta_{ab} X^a X^b 
		& \leq e^{-2 \Omega} g_{ab}X^{a}X^{b} 
		\leq c_1 \delta_{ab} X^a X^b, && \forall (X^1,X^2,X^3) \in \mathbb{R}^3, \label{E:gjkBAvsstandardmetric}   \\
	\sum_{a=1}^3 |g_{0a}|^2 & \leq \upeta c_1^{-1} e^{2(1 - q) \Omega}. && \label{E:g0jBALinfinity}
\end{align}
\end{subequations}
For our global existence argument, we will assume that $\upeta = \upeta_{min},$ where $\upeta_{min}$ is defined in Section \ref{SS:etamin}.

\begin{lemma} \cite[Lemma 7]{hR2008} \label{L:ginverseestimates}
	Let $g_{\mu \nu}$ be a symmetric $4 \times 4$ matrix of real numbers satisfying \eqref{E:metricBAeta} - 
	\eqref{E:g0jBALinfinity}, where $\Omega \geq 0$ and $0 \leq q < 1.$ 
	Then $g$ is a Lorentzian metric, and there exists a constant $\upeta_0 > 0$ 
	such that $0 \leq \upeta \leq \upeta_0$ implies that the following estimates hold for the 
	its inverse $g^{\mu \nu}:$ 
	
	\begin{subequations}
	\begin{align}
		|g^{00} + 1| & \leq 4 \upeta, && \label{E:g00upperplusoneroughestimate} \\
		\sqrt{\sum_{a=1}^3 |g^{0a}|^2} & \leq \upeta c_1^{-1} e^{-2 \Omega} \sqrt{\sum_{a=1}^3 |g_{0a}|^2}, \\
		|g^{0a} g_{0a}| & \leq 2 c_1 e^{-2 \Omega} \sum_{a=1}^3 |g_{0a}|^2, && \\
		\frac{2}{3c_1} \delta^{ab}X_{a} X_{b}
		& \leq e^{2 \Omega} g^{ab}X_{a}X_{b} 
			\leq \frac{3c_1}{2}\delta^{ab}X_{a}X_{b}, && \forall (X_1,X_2,X_3) \in \mathbb{R}^3. 
				\label{E:gjkuppercomparetostandard}  
	\end{align}
	\end{subequations}
\end{lemma}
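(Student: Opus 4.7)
The plan is to reduce everything to the exact formulas supplied by Lemma \ref{L:ginverseformluas}, then control the quantity $d^2 = (g_\flat^{-1})^{ab}g_{0a}g_{0b}$ using the bootstrap assumptions and conclude with direct substitution. The first step is to dualize the bootstrap assumption \eqref{E:gjkBAvsstandardmetric}: the quadratic-form sandwich $c_1^{-1} e^{2\Omega} \delta_{ab} \leq g_{ab} \leq c_1 e^{2 \Omega} \delta_{ab}$ inverts (since all three matrices are symmetric positive definite) to
\begin{equation*}
 c_1^{-1} e^{-2 \Omega} \delta^{ab} X_a X_b \leq (g_\flat^{-1})^{ab} X_a X_b \leq c_1 e^{-2 \Omega} \delta^{ab} X_a X_b, \qquad \forall (X_1,X_2,X_3)\in \mathbb{R}^3.
\end{equation*}
Combined with the bootstrap assumption \eqref{E:g0jBALinfinity}, this immediately yields $d^2 \leq c_1 e^{-2\Omega} \sum_a |g_{0a}|^2 \leq \upeta e^{-2q\Omega} \leq \upeta$. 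In particular, $d^2$ is small, and together with \eqref{E:metricBAeta} this shows that $g_{00} - d^2$ is negative and bounded in absolute value between $1 - \upeta$ and $1 + 2\upeta$; hence Lemma \ref{L:ginverseformluas} applies and produces a genuine Lorentzian inverse.

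The remaining bounds now follow by plugging into the formulas of Lemma \ref{L:ginverseformluas}. For \eqref{E:g00upperplusoneroughestimate}, I would write
\begin{equation*}
 g^{00} + 1 = \frac{1}{g_{00} - d^2} + 1 = \frac{1 + g_{00} - d^2}{g_{00} - d^2},
\end{equation*}
bound the numerator by $|1 + g_{00}| + d^2 \leq 2\upeta$ and the reciprocal of the denominator by $(1-\upeta)^{-1} \leq 2$ for $\upeta \leq \upeta_0$ sufficiently small. The bound on $\sqrt{\sum_a |g^{0a}|^2}$ follows from the identity $g^{0j}=(d^2-g_{00})^{-1}(g_\flat^{-1})^{aj} g_{0a}$: squaring and summing in $j$ produces the quadratic form $[(g_\flat^{-1})^2]^{ab} g_{0a} g_{0b}$, which by the spectral bound on $g_\flat^{-1}$ is controlled by $(c_1 e^{-2\Omega})^2 \sum_a |g_{0a}|^2$, and the prefactor $(d^2-g_{00})^{-2}$ contributes a harmless constant; the bound on $|g^{0a} g_{0a}|$ is then a Cauchy–Schwarz consequence. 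For the spatial estimate \eqref{E:gjkuppercomparetostandard}, I would apply the sandwich inequality on $(g^\#)^{jk}$ from Lemma \ref{L:ginverseformluas}, combine it with the bounds on $(g_\flat^{-1})^{ab}$ derived above, and use $\frac{g_{00}}{g_{00}-d^2} = \frac{|g_{00}|}{|g_{00}-d^2|} \geq \frac{1-\upeta}{1+2\upeta}$, which exceeds $2/3$ for $\upeta$ sufficiently small.

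There is really no obstacle beyond bookkeeping: the entire lemma is a linear-algebraic perturbation argument off the background profile $g_{00} \approx -1$, $g_{0a} \approx 0$, $g_{ab} \approx e^{2\Omega}\delta_{ab}$. The only thing that requires attention is choosing $\upeta_0$ small enough that each of the steps above — inverting $g_\flat$, keeping $g_{00}-d^2$ bounded away from zero, and converting the multiplicative constant $(1-\upeta)/(1+2\upeta)$ into $2/3$ — goes through simultaneously. Since the estimate on $d^2$ exhausts the factor $e^{2(1-q)\Omega}$ in \eqref{E:g0jBALinfinity} thanks to the matching $e^{-2\Omega}$ coming from the inverse of $g_\flat$, the argument is uniform in $\Omega \geq 0$, which is what will be needed in the later bootstrap.
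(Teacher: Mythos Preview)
The paper does not supply its own proof of this lemma; it simply cites \cite[Lemma 7]{hR2008} and closes with a $\qed$. Your argument---dualizing \eqref{E:gjkBAvsstandardmetric} to bound $(g_\flat^{-1})^{ab}$, estimating $d^2$ via \eqref{E:g0jBALinfinity}, and then substituting into the explicit formulas of Lemma \ref{L:ginverseformluas}---is correct and is exactly the natural route, and presumably the one taken in the cited reference. One minor remark: your computation for $\sqrt{\sum_a |g^{0a}|^2}$ yields a constant of the form $(1-\upeta)^{-1} c_1 e^{-2\Omega}$ rather than the $\upeta c_1^{-1} e^{-2\Omega}$ printed in the statement; this is almost certainly a typo in the paper's transcription of Ringstr\"{o}m's lemma (note that the third inequality, which you correctly derive by Cauchy--Schwarz from the second, carries the factor $2c_1$, consistent with your version), and in any case only the qualitative bound $|g^{0j}| \lesssim e^{-2\Omega}|g_{0j}|$ is used downstream.
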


\hfill $\qed$

\section{The Bootstrap Assumption for \texorpdfstring{$\totalnorm{N}$}{the Solution Norm} and the Definition of $N, \upeta_{min},$ and $q$} \label{S:BootstrapAssumptions}

For the remainder of the article, $N$ denotes a fixed integer subject to the requirement

\begin{align}
	N &\geq 3 \qquad (\mbox{This is large enough for all of our results except some of the conclusions of 
		Theorem \ref{T:Asymptotics}}), \\ \label{E:Ndef}
	N &\geq 5 \qquad	(\mbox{This is large enough for the full results of Theorem \ref{T:Asymptotics} to be valid}).
\end{align}
We require $N$ to be of this size to ensure that various Sobolev embedding results and the conclusions of the propositions in Appendix \ref{A:SobolevMoser} are valid.

In our global existence argument, we will make the following bootstrap assumption:

\begin{align} \label{E:fundamentalbootstrap}
	\totalnorm{N} \leq \epsilon,
\end{align}
where $\totalnorm{N}$ is defined in \eqref{E:totalnorm}, and $\epsilon$ is a sufficiently 
small positive number. Observe that $\totalnorm{N}$ measures how much $(g, p, u)$ differs from the FLRW background solution
$(\widetilde{g}, \widetilde{p}, \widetilde{u})$ derived in Section \ref{S:backgroundsolution}. In particular, $\totalnorm{N} \equiv 0$ for the background solution.

\subsection{The definitions of \texorpdfstring{$\upeta_{min}$ and $q$}{}} \label{SS:etamin}

\begin{definition}
	Let us apply Lemma \ref{L:buildingblockmetricenergy} to each of the equations \eqref{E:finalg00equation} - 
	\eqref{E:finalhjkequation}, denoting the constant $\upeta$ produced by the lemma in each case by
	 $\upeta_{00}, \upeta_{0*},$ and $\upeta_{**}$ respectively. Furthermore, let $\upeta_0$ be the constant from
	 Lemma \ref{L:ginverseestimates}. We now define the positive quantities (recalling that $0 < \decayparameter < 1$ when
	 $0 < \speed < \sqrt{1/3}$) $\upeta_{min}$ and $q$ by
	 \begin{align}
	 	\upeta_{min} & \eqdef \frac{1}{8} \mbox{min} \big\lbrace 1, \upeta_0, \upeta_{00}, \upeta_{0*}, \upeta_{**} \big\rbrace, 
	 		\label{E:etamindef} \\
	 	q & \eqdef \frac{2}{3} \mbox{min}\big\lbrace \upeta_{min}, \decayparameter, 1 - \decayparameter \big\rbrace. 
	 		\label{E:qdef}
	 \end{align}
\end{definition}
The constants $\upeta_{min}$ and $q$ have been chosen to be small enough to close the bootstrap argument for global existence given in Section \ref{SS:globalexistencetheorem}. In particular, inequality \eqref{E:g00upperplusoneroughestimate}, with $\upeta \leq \upeta_{min},$ guarantees that the energies $\mathcal{E}_{(\upgamma,\updelta)}[\cdot,\partial (\cdot)]$ 
for solutions to \eqref{E:finalg00equation} - \eqref{E:finalhjkequation} have the coercive property \eqref{E:mathcalEfirstlowerbound}.

\begin{remark} \label{R:RoughBootstrapAutomatic}

By Sobolev embedding and Lemma \ref{L:ginverseestimates}, if $\epsilon$ is small enough, then the assumption $\totalnorm{N} \leq \epsilon$ implies that there exists a constant $C > 0$ such that

\begin{subequations}
\begin{align} \label{E:g00plusonesmall}
	|g_{00} + 1| & \leq C \epsilon, \\
	\sum_{j=1}^3 |g_{0j}| & \leq C \epsilon e^{(1 - q) \Omega}. \label{E:g0jremarkbound}
\end{align}
\end{subequations}

Therefore, if $\epsilon$ is sufficiently small, the inequalities \eqref{E:metricBAeta} and \eqref{E:g0jBALinfinity} 
are an automatic consequence of \eqref{E:gjkBAvsstandardmetric} and the assumption $\totalnorm{N} \leq \epsilon.$

\end{remark}

\section{Sobolev Estimates} \label{S:BootstrapConsequences}

In this section, we use the bootstrap assumptions of Sections \ref{S:LinearAlgebra} and \ref{S:BootstrapAssumptions} to deduce estimates of $g_{\mu \nu},$ $g^{\mu \nu},$ $P,$ $u^{\mu},$ and the nonlinear terms in the modified equations \eqref{E:finalg00equation} - \eqref{E:finalEulerj}, \eqref{E:u0equation}. The main goal is to show that the error terms are small compared to the principal terms, which is the main step in closing the bootstrap argument in the proof of Theorem \ref{T:GlobalExistence}. The primary tools for estimating these terms are standard Sobolev-Moser estimates, which we have collected together in the Appendix for convenience.

\subsection{Estimates of \texorpdfstring{$g_{\mu \nu},$ $g^{\mu \nu}$}{the metric components}} \label{SS:Bootstrapconsequencesg}

In this section, we state the first proposition that will be used to deduce the energy inequalities of Section
\ref{SS:IntegralInequalities}. The proposition was essentially proved in \cite[Proposition 9.1.1]{iRjS2009}, using the ideas
of \cite[Lemmas 9,11,18,20]{hR2008}. We don't bother to repeat the proof here, since similar arguments are used below in the proof of Proposition \ref{P:Nonlinearities}. We remark that the proof makes use of the lemmas stated in Section \ref{S:LinearAlgebra}.

\begin{proposition} \cite[Proposition 9.1.1]{iRjS2009} \label{P:BoostrapConsequences}
Let $N \geq 3$ be an integer, and assume that the bootstrap assumptions \eqref{E:metricBAeta} - \eqref{E:g0jBALinfinity} hold 
on the spacetime slab $[0,T) \times \mathbb{T}^3$ for some constant $c_1 \geq 1$ and for $\upeta = \upeta_{min}.$
Then there exists a constant $\epsilon' > 0$ and a constant $C > 0,$ where $C$ depends on $N, c_1,$ and $\upeta_{min},$ such that if $\totalnorm{N}(t) \leq \epsilon'$ on $[0,T),$ then the following estimates also hold on $[0,T),$ where $h_{jk} = e^{-2 \Omega} g_{jk}:$

\begin{subequations}
\begin{align}
	\| g_{00} \|_{L^{\infty}} & \leq 2, \label{E:g00lowerLinfinity} \\
	\| g_{0j} \|_{L^{\infty}} & \leq C e^{(1 - q) \Omega} \gnorm{N}, \label{E:g0jlowerLinfinity} \\
	\| g_{jk} \|_{L^{\infty}} & \leq C e^{2 \Omega},  \label{E:gjklowerLinfinity}
\end{align}
\end{subequations}

\begin{subequations}
\begin{align}
	\| \underpartial g^{00} \|_{H^{N-1}} & \leq C e^{-q \Omega}  \gnorm{N}, \label{E:partialg00upperHNminusone} \\
	\|g^{00} \|_{L^{\infty}} & \leq 5, \label{E:g00upperLinfinity} \\
	\| g^{00} + 1 \|_{H^N} & \leq C e^{-q \Omega} \gnorm{N}, \label{E:g00upperplusoneHN} \\
	\| g^{00} + 1 \|_{L^{\infty}} & \leq C e^{-q \Omega} \gnorm{N}, \label{E:g00upperplusoneLinfinity} \\
	\| g^{jk} \|_{L^{\infty}} & \leq C e^{-2 \Omega},  \label{E:gjkupperLinfinity} \\
	\| \underpartial g^{jk} \|_{H^{N-1}} & \leq C e^{-2 \Omega}  \gnorm{N}, \label{E:partialgjkupperHNminusone} \\
	\| \underpartial g^{jk} \|_{L^{\infty}} & \leq C e^{-2 \Omega}  \gnorm{N}, \label{E:partialgjkupperLinfinity} \\
	\| g^{0j} \|_{H^N} & \leq C e^{-(1 + q)\Omega}  \gnorm{N}, \label{E:g0jupperHN} \\
	\| g^{0j} \|_{C_b^1} & \leq C e^{-(1 + q)\Omega} \gnorm{N}, \label{E:g0jupperCb1} 
\end{align}
\end{subequations}

\begin{subequations}
\begin{align}
	\| \partial_t g_{jk} - 2 \omega g_{jk} \|_{H^N} & \leq C e^{(2 - q) \Omega} \hstarstarnorm{N}, 
		\label{E:partialtgjkminusomegagjklowerHN} \\
	\| \partial_t g_{jk} - 2 \omega g_{jk} \|_{C_b^1} & \leq C e^{(2 - q) \Omega} \hstarstarnorm{N}, 
		\label{E:partialtgjkminusomegagjklowerLinfinity} \\
	\| \partial_t g_{jk} \|_{C_b^1} & \leq C e^{2 \Omega}, \label{E:partialtgjklowerC1} 
\end{align}
\end{subequations}

\begin{subequations}
\begin{align}
	\| g^{aj} \partial_t g_{ak} - 2 \omega \delta_k^j \|_{H^N} & \leq C e^{-q \Omega}  \gnorm{N}, 
		\label{E:gjaupperpartialtgaklowerminus2omegadeltakjHN} \\
	\| g^{aj} \partial_t g_{ak} - 2 \omega \delta_k^j \|_{L^{\infty}} & \leq C e^{-q \Omega}  \gnorm{N}, 
		\label{E:gjaupperpartialtgaklowerminus2omegadeltakjLinfinity} 
\end{align}
\end{subequations}
	
\begin{subequations}
\begin{align}
	\| \partial_t g^{jk} + 2 \omega g^{jk} \|_{H^N} & \leq C e^{-(2+q)\Omega} \gnorm{N},  
		\label{E:partialtgjkupperplusomegagjkHN} \\
	\| \partial_t g^{jk} + 2 \omega g^{jk} \|_{L^{\infty}} & \leq C e^{-(2+q)\Omega} \gnorm{N},  
		\label{E:partialtgjkupperplusomegagjkLinfinity} \\
	\| \partial_t g^{00} \|_{L^{\infty}} & \leq C e^{-q \Omega} \gnorm{N},  
		\label{E:partialtg00upperLinfinity} \\
	\| \partial_t g^{0j} \|_{L^{\infty}} & \leq C e^{-(1 + q) \Omega} \gnorm{N}, 
		\label{E:partialtg0jupperLinfinity} \\
	\| \partial_t g^{jk} \|_{L^{\infty}} & \leq C e^{-2 \Omega},  
		\label{E:partialtgjkupperLinfinity} 
\end{align}
\end{subequations}

\begin{subequations}
\begin{align} 
	\| g^{ab}\Gamma_{a j b} \|_{H^N} & \leq C e^{(1-q)\Omega}\hstarstarnorm{N}, 
		\label{E:gabupperGammaajblowerHN} \\
	\| g^{ab}\Gamma_{a j b} \|_{H^{N-1}} & \leq C \hstarstarnorm{N}.
		\label{E:gabupperGammaajblowerHNminusone} 
\end{align}
\end{subequations}

In the above estimates, the norms $\hstarstarnorm{N}$ and $\gnorm{N}$ are defined in Definition \ref{D:Norms}.

\end{proposition}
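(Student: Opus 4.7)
The proposition is a bundle of functional-analytic bounds that follow from three ingredients: (i) the linear-algebraic identities of Lemmas \ref{L:ginverseformluas} and \ref{L:ginverseestimates} expressing $g^{\mu\nu}$ in terms of $g_{\mu\nu}$; (ii) the Sobolev-Moser product and composition estimates collected in the Appendix; and (iii) careful bookkeeping of the exponential weights built into the norms of Definition \ref{D:Norms}. The strategy is to proceed in the order of the listed estimates, first handling $L^{\infty}$ bounds on the $g_{\mu\nu}$, then deducing algebraic bounds on the $g^{\mu\nu}$, and finally propagating these through the time-derivative and Christoffel quantities.

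\textbf{Step 1: $L^\infty$ bounds on $g_{\mu\nu}$.} Since $N\geq 3$, Sobolev embedding $H^N(\mathbb{T}^3)\hookrightarrow L^\infty$ applied to the definitions in \eqref{E:mathfrakSNg00}--\eqref{E:mathfrakSNh**} gives $|g_{00}+1|\lesssim e^{-q\Omega}\gzerozeronorm{N}\lesssim \epsilon'$, which together with $\epsilon'$ sufficiently small yields \eqref{E:g00lowerLinfinity}, \eqref{E:g0jlowerLinfinity}, and (via $g_{jk}=e^{2\Omega}h_{jk}$ with $h_{jk}-\delta_{jk}\in H^N$) \eqref{E:gjklowerLinfinity}. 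At this point the rough bootstrap assumptions \eqref{E:metricBAeta}--\eqref{E:g0jBALinfinity} are automatically verified with $\upeta=\upeta_{min}$ by Remark \ref{R:RoughBootstrapAutomatic}, so Lemma \ref{L:ginverseestimates} is applicable and immediately yields the pointwise bounds \eqref{E:g00upperLinfinity}, \eqref{E:g00upperplusoneLinfinity}, \eqref{E:gjkupperLinfinity}.

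\textbf{Step 2: $H^N$ bounds on $g^{\mu\nu}$.} Writing $g^{\mu\nu}$ via the explicit formulas of Lemma \ref{L:ginverseformluas} as rational functions of the $g_{\mu\nu}$, with denominators $g_{00}-d^2$ bounded away from zero by the previous step, the composition-type Sobolev-Moser estimate of the Appendix applied to the smooth map $g_{\mu\nu}\mapsto g^{\mu\nu}$ converts the $H^N$ control of $g_{\mu\nu}+\widetilde{g}_{\mu\nu}^{-1}$-type differences into $H^N$ control of $g^{\mu\nu}$-type differences. Tracking the weights carefully, $g^{00}+1$ inherits the $e^{-q\Omega}$ weight from $g_{00}+1$ (giving \eqref{E:g00upperplusoneHN}); $g^{0j}$ inherits the $e^{(q-1)\Omega}$ weight of $g_{0j}$ combined with an additional $e^{-2\Omega}$ from the factor $(g^\sharp_\flat)^{-1}$ (giving \eqref{E:g0jupperHN}); and similarly $\underpartial g^{jk}$ carries the $e^{-2\Omega}$ weight of $e^{-2\Omega}\underpartial h^{jk}$ combined with the $\hstarstarnorm{N}$ control of $\underpartial h_{jk}$ (giving \eqref{E:partialgjkupperHNminusone}). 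The $L^\infty$ counterparts \eqref{E:partialgjkupperLinfinity}, \eqref{E:g0jupperCb1} then follow from Sobolev embedding.

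\textbf{Step 3: Time derivatives and Christoffel terms.} The identity $\partial_t g_{jk}-2\omega g_{jk}=e^{2\Omega}\partial_t h_{jk}$ combined with the $e^{q\Omega}\|\partial_t h_{jk}\|_{H^N}$ contribution to $\hstarstarnorm{N}$ directly yields \eqref{E:partialtgjkminusomegagjklowerHN}--\eqref{E:partialtgjkminusomegagjklowerLinfinity}; triangle inequality against the background $2\omega g_{jk}$ then gives \eqref{E:partialtgjklowerC1}. The key identity $\partial_t g^{\mu\nu}=-g^{\mu\alpha}g^{\nu\beta}\partial_t g_{\alpha\beta}$, combined with the Sobolev-Moser product estimates and Step 2's bounds, yields \eqref{E:partialtgjkupperplusomegagjkHN}--\eqref{E:partialtgjkupperLinfinity} after again accounting for the weights. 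The mixed quantity $g^{aj}\partial_t g_{ak}-2\omega\delta_k^j$ in \eqref{E:gjaupperpartialtgaklowerminus2omegadeltakjHN} is rewritten as $e^{2\Omega}g^{aj}\partial_t h_{ak}-2\omega g^{0j}g_{0k}$ (see the substitution patterns in Lemma \ref{L:Amunudecomposition}), and its $H^N$ bound follows by a product estimate. Finally, for \eqref{E:gabupperGammaajblowerHN}, expand $\Gamma_{ajb}=\frac{1}{2}(\partial_a g_{jb}+\partial_b g_{ja}-\partial_j g_{ab})$ and use $\partial g_{jk}=2\Omega'\,e^{2\Omega}h_{jk}$-type relations together with the $e^{2\Omega}\underpartial h_{jk}$ structure; the $e^{-2\Omega}$ weight in $g^{ab}$ combines with $e^{2\Omega}$ in $\underpartial g_{jk}$ to leave the advertised $e^{(1-q)\Omega}$.

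\textbf{Main obstacle.} No single step is conceptually hard; the central difficulty is combinatorial and is the reason the proposition occupies a long list of estimates: at every product or composition one must verify that the weights $e^{-q\Omega},e^{(q-1)\Omega},e^{-2\Omega},e^{2\Omega},\ldots$ combine to produce exactly the weights declared in Definition \ref{D:Norms}, and one must keep the arguments of the composition lemma bounded in $L^\infty$ uniformly in $t$, which is why the smallness of $\epsilon'$ (and the universal nature of $\upeta_{min}$) is essential.
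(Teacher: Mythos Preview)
Your proposal is correct and follows essentially the same approach that the paper indicates: the paper does not give a detailed proof here but refers to \cite[Proposition 9.1.1]{iRjS2009} and notes that the arguments parallel those in the proof of Proposition \ref{P:Nonlinearities}, namely the linear-algebraic Lemmas \ref{L:ginverseformluas}--\ref{L:ginverseestimates}, the Sobolev--Moser product and composition estimates of the Appendix, and careful tracking of the exponential weights (the ``Counting Principle''). One minor imprecision: the bound \eqref{E:gjklowerLinfinity} does not come from $h_{jk}-\delta_{jk}\in H^N$ (the norm $\hstarstarnorm{N}$ controls only derivatives of $h_{jk}$, not $h_{jk}$ itself in $L^2$) but rather directly from the rough bootstrap assumption \eqref{E:gjkBAvsstandardmetric}; similarly, in your Step 3 the expansion of $\Gamma_{ajb}$ involves only spatial derivatives $\partial_a g_{jb}=e^{2\Omega}\partial_a h_{jb}$, so no $\Omega'$ enters.
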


\hfill $\qed$

\subsection{Estimates of \texorpdfstring{$P,$ $u^{\mu},$}{the fluid quantities} and the error terms} \label{SS:Nonlinearities}

In this section, we state and prove the second proposition that will be used to deduce the energy inequalities of Section
\ref{SS:IntegralInequalities}.

\begin{proposition} \label{P:Nonlinearities}
	
	Let $N \geq 3$ be an integer, and let $(g_{\mu \nu}, P, u^{\mu}),$ $(\mu,\nu = 0,1,2,3),$ be a solution to the reduced
	equations \eqref{E:finalg00equation} - \eqref{E:finalEulerj} on the spacetime slab $[0,T) \times \mathbb{T}^3$. Assume that 
	the bootstrap assumptions \eqref{E:metricBAeta} - \eqref{E:g0jBALinfinity} hold on the same slab for some constant $c_1 \geq 
	1$ and for $\upeta = \upeta_{min}.$ Then there exists a constant $\epsilon'' > 0$ and a constant $C > 0,$ where $C$ depends 
	on $N, c_1,$ and $\upeta_{min},$ such that if $\totalnorm{N}(t) \leq \epsilon''$ on $[0,T),$ then the following estimates 
	also hold on $[0,T)$ for the quantities $\triangle_{A,\mu \nu}, \triangle_{C,00},$ and $\triangle_{C,0j},$ defined in 
	\eqref{E:triangleA00def} - \eqref{E:triangleAjkdef} and \eqref{E:triangleC00def} - \eqref{E:triangleC0jdef}:
	
	\begin{subequations}
	\begin{align}
		\| \triangle_{A,00} \|_{H^N} & \leq C e^{-2q \Omega} \gnorm{N}^2, \label{E:triangleA00HN} \\
		\| \triangle_{A,0j} \|_{H^N} & \leq C e^{(1-2q) \Omega} \gnorm{N}^2, \label{E:triangleA0jHN} \\
		\| \triangle_{A,jk} \|_{H^N} & \leq C e^{(2-2q) \Omega} \gnorm{N}^2, \label{E:triangleAjkHN} \\
		\| \triangle_{C,00} \|_{H^N} & \leq C e^{-2q \Omega} \gnorm{N}^2, \label{E:triangleC00HN} \\
		\| \triangle_{C,0j} \|_{H^N} & \leq C e^{(1-2q) \Omega} \gnorm{N}^2. \label{E:triangleC0jHN}
	\end{align}
	\end{subequations}
	
	For the fluid quantities, we have the following estimates on $[0,T):$
	
	\begin{subequations}
	\begin{align} 
		\| P \|_{L^{\infty}} & \leq C, \label{E:PLinfinity} \\
		\| u^0 - 1 \|_{H^N} & \leq C e^{- q \Omega} \totalnorm{N}, \label{E:u0upperHN} \\
		\| u^0 - 1 \|_{L^{\infty}} & \leq C e^{- q \Omega} \totalnorm{N}, \label{E:u0upperLinfinity} \\
		\| u_0 + 1 \|_{H^N} & \leq C e^{- q \Omega} \totalnorm{N}, \label{E:u0lowerHN} \\
		\| u_0 + 1 \|_{L^{\infty}} & \leq C e^{- q \Omega} \totalnorm{N}, \label{E:u0lowerLinfinity} \\
		\| u_j \|_{H^{N-1}} & \leq C e^{(1 - q) \Omega} \totalnorm{N}, \label{E:ujlowerHNminusone} \\
		\| u_j \|_{L^{\infty}} & \leq C e^{(1 - q) \Omega} \totalnorm{N}, \label{E:ujlowerLinfinity} \\
		\| u_j \|_{H^N} & \leq C e^{\Omega} \totalnorm{N}. \label{E:ujlowerHN}
	\end{align}
	\end{subequations}

	For the time derivatives of the fluid quantities, we have the following estimates on $[0,T):$
	
	\begin{subequations}
	\begin{align}
		\| \partial_t P \|_{H^{N-1}} & \leq C e^{-q \Omega} \totalnorm{N}, \label{E:partialtPHNminusone} \\
		\| \partial_t P \|_{L^{\infty}} & \leq C e^{-q \Omega} \totalnorm{N}, \label{E:partialtPLinfinity} \\
		\| \partial_t u^0 \|_{H^{N-1}} & \leq C e^{-q \Omega} \totalnorm{N}, \label{E:partialtu0upperHNminusone} \\
		\| \partial_t u^0 \|_{L^{\infty}} & \leq C e^{-q \Omega} \totalnorm{N}, \label{E:partialtu0upperLinfinity} \\
		\| \partial_t u^j \|_{H^{N-1}} & \leq C e^{-(1 + q) \Omega} \totalnorm{N}, \label{E:partialtujupperHNminusone} \\
		\| \partial_t u^j \|_{L^{\infty}} & \leq C e^{-(1 + q) \Omega} \totalnorm{N}, \label{E:partialtujupperLinfinity} \\
		\| \partial_t u_0 \|_{H^{N-1}} & \leq C e^{-q \Omega} \totalnorm{N}, \label{E:partialtu0lowerHNminusone} \\
		\| \partial_t u_0 \|_{L^{\infty}} & \leq C e^{-q \Omega} \totalnorm{N}, \label{E:partialtu0lowerLinfinity} \\
		\| \partial_t u_j \|_{H^{N-1}} & \leq C e^{(1 - q) \Omega} \totalnorm{N}, \label{E:partialtujlowerHNminusone} \\
		\| \partial_t u_j \|_{L^{\infty}} & \leq C e^{(1 - q ) \Omega} \totalnorm{N}. \label{E:partialtujlowerLinfinity}
	\end{align}
	\end{subequations}
	
	For the quantities $\triangle_{\mu \nu}$ defined in \eqref{E:triangle00} - \eqref{E:trianglejk}, we have the
	following estimates on $[0,T):$
	
	\begin{subequations}
	\begin{align}
		\| \triangle_{00} \|_{H^N} & \leq C e^{-2q \Omega} \totalnorm{N}, \label{E:triangle00HN} \\
		\| \triangle_{0j} \|_{H^N} & \leq C  e^{(1 - 2q)\Omega} \totalnorm{N},  \label{E:triangle0jHN} \\
		\| \triangle_{jk} |_{H^N} & \leq C e^{-2q \Omega} \totalnorm{N}. \label{E:trianglejkHN}
	\end{align}
	\end{subequations}
	
	For the commutator terms from Corollary \ref{C:metricfirstdiferentialenergyinequality}, we have the following
	estimates on $[0,T):$
	
	\begin{subequations}	
	\begin{align}
		\| [\hat{\square}_g, \partial_{\vec{\alpha}} ] (g_{00} + 1) \|_{L^2} & \leq C e^{-2q \Omega} \totalnorm{N},  
			 \label{E:g00commutatorL2} \\
		\| [\hat{\square}_g, \partial_{\vec{\alpha}} ] g_{0j} \|_{L^2} & \leq 
			 C e^{(1-2q) \Omega} \totalnorm{N}, \label{E:g0jcommutatorL2} \\
		\| [\hat{\square}_g, \partial_{\vec{\alpha}} ] h_{jk} \|_{L^2} & \leq  
			 C e^{-2q \Omega} \totalnorm{N}.  \label{E:hjkcommutatorL2}
	\end{align}
	\end{subequations}
	
	For the terms from Corollary \ref{C:metricfirstdiferentialenergyinequality},
	where $\triangle_{\mathcal{E};(\gamma, \delta)}[v,\partial v]$ is defined in \eqref{E:trianglemathscrEdef},
	we have the following estimates on $[0,T):$
	
	\begin{subequations}
	\begin{align}
		e^{2q \Omega} \| \triangle_{\mathcal{E};(\upgamma_{00}, \updelta_{00})}[\partial_{\vec{\alpha}} (g_{00} + 1)
			,\partial (\partial_{\vec{\alpha}} g_{00})] \|_{L^1}
			& \leq C e^{-q \Omega} \gzerozeronorm{N} \totalnorm{N},    \label{E:triangleEgamma00delta00L1} \\
		e^{2(q-1) \Omega} \| \triangle_{\mathcal{E};(\upgamma_{0*}, \updelta_{0*})}[\partial_{\vec{\alpha}} (g_{0j}),
			\partial (\partial_{\vec{\alpha}} g_{0j})] \|_{L^1}
			& \leq C e^{-q \Omega} \gzerostarnorm{N} \totalnorm{N}, \label{E:triangleEgamma0jdelta0*L1} \\
		e^{2q \Omega} \| \triangle_{\mathcal{E};(0,0)}[0,\partial (\partial_{\vec{\alpha}} h_{jk})] \|_{L^1}
			& \leq C e^{-q \Omega} \hstarstarnorm{N} \totalnorm{N}.
			\label{E:triangleEgamma**delta**L1}
	\end{align}
	\end{subequations}
	
	For the Christoffel symbol error terms defined in \eqref{E:triangle000} - \eqref{E:triangleikj}, 
	we have the following estimates on $[0,T):$
		
		\begin{subequations}
		\begin{align}
		\| \triangle_{0 \ 0}^{\ 0} \|_{H^N} & \leq C e^{-q \Omega} \gnorm{N},  \label{E:triangleGamma000HN} \\		
		\| \triangle_{j \ 0}^{\ 0} \|_{H^N} & \leq C e^{(1-q) \Omega} \gnorm{N}, \label{E:triangleGammaj00HN} \\
		\| \triangle_{0 \ 0}^{\ j} \|_{H^N} & \leq C e^{-(1+q) \Omega} \gnorm{N}, \label{E:triangleGamma0j0HN} \\
		\| \triangle_{0 \ k}^{\ j} \|_{H^N} & \leq C e^{-q \Omega} \gnorm{N}, \label{E:triangleGamma0jkHN} \\
		\| \triangle_{j \ k}^{\ 0} \|_{H^N} & \leq C e^{(2-q) \Omega} \gnorm{N}, \label{E:triangleGammaj0kHN} \\
		\| \triangle_{i \ j}^{\ k} \|_{H^N} & \leq C e^{(1-q) \Omega} \gnorm{N}. \label{E:triangleGammaikjHN}
	\end{align}
	\end{subequations}
	
	For the error terms $\triangle,$ $\triangle^{\mu},$ $\triangle',$ and $\triangle'^{\mu}$ 
	defined in \eqref{E:triangledef}, \eqref{E:trianglejdef}, \eqref{E:triangle0def}, 
	\eqref{E:triangleprimedef}, \eqref{E:triangleprime0def}, \eqref{E:triangleprimejdef} we have the 
	following estimates on $[0,T):$
	
	\begin{subequations}	
	\begin{align}
		\| \triangle \|_{H^N} & \leq C e^{- q \Omega} \totalnorm{N}, \label{E:triangleHN} \\
		\| \triangle^0 \|_{H^N} & \leq C e^{- q \Omega} \totalnorm{N}, \label{E:triangle0HN} \\
		\| \triangle^j \|_{H^N} & \leq C e^{- (1 + q) \Omega} \totalnorm{N}, \label{E:trianglejHN} \\
		\| \triangle' \|_{H^{N-1}} & \leq C e^{-q \Omega} \totalnorm{N}, \label{E:triangleprimeHNminusone} \\
		\| \triangle'^0 \|_{H^{N-1}} & \leq C e^{-q \Omega} \totalnorm{N}, \label{E:triangleprime0HNminusone} \\
		\| \triangle'^j \|_{H^{N-1}} & \leq C e^{- (1 + q) \Omega} \totalnorm{N}. \label{E:triangleprimejHNminusone}
	\end{align}
	\end{subequations}

	For the $L^2$ norm of the variation $\dot{u}^0$ defined in \eqref{E:dot0intermsofdotj}, we have the following estimate
	on $[0,T):$
	
	\begin{align} \label{E:dotu0L2intermsofdotuaL2}
		\| \dot{u}^0 \|_{L^2} & \leq C e^{(1-q) \Omega}\totalnorm{N} \sum_{a=1}^3 \| \dot{u}^a \|_{L^2}.
	\end{align}

For the $L^2$ norms of the inhomogeneous terms $\mathfrak{F}_{\vec{\alpha}},$ 
$\mathfrak{G}_{\vec{\alpha}}^{\mu}$ defined in \eqref{E:FalphamathfrakGjalphainhomogeneousterms}, \eqref{E:mathfrakG0alphainhomogeneousterm}, we have the following estimates on $[0,T):$

\begin{subequations}
\begin{align} \label{E:mathfrakFalphamathfrakGjalphaL2}
	\left\| \begin{pmatrix}
					\mathfrak{F}_{\vec{\alpha}} \\
					\mathfrak{G}_{\vec{\alpha}}^1 \\
					\mathfrak{G}_{\vec{\alpha}}^2 \\
					\mathfrak{G}_{\vec{\alpha}}^3																			
				\end{pmatrix} \right\|_{L^2}
				& \leq C \totalnorm{N}
				\begin{pmatrix}
                        e^{-q \Omega} \\
                       	e^{-(1 + q) \Omega}   \\
                        e^{-(1 + q) \Omega}   \\
                        e^{-(1 + q) \Omega} 
     		\end{pmatrix}, && (0 \leq |\vec{\alpha}| \leq N), \\
	\| \mathfrak{G}_{\vec{\alpha}}^0 \|_{L^2} & \leq C e^{-q \Omega} \totalnorm{N}, && (0 \leq |\vec{\alpha}| \leq N).  
		\label{E:mathfrakG0alphaL2}
\end{align}
\end{subequations}	
	
In the above estimates, the norms $\gzerozeronorm{N},$ $\gzerostarnorm{N},$ $\hstarstarnorm{N},$ $\gnorm{N},$ and $\totalnorm{N}$ are defined in Definition \ref{D:Norms}.
	
\end{proposition}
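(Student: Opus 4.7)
The plan is to establish this proposition as a systematic bootstrap consequence, building up from the already-proved estimates for $g_{\mu\nu}$ and $g^{\mu\nu}$ in Proposition~\ref{P:BoostrapConsequences} via Sobolev-Moser-type composition/product estimates from the Appendix, together with Sobolev embedding $H^{N-2} \hookrightarrow L^\infty$. The overall philosophy is that each error term has a natural decomposition into factors with known decay weights, and the role of the bootstrap assumption $\totalnorm{N} \leq \epsilon''$ is to let us treat the solution as a small perturbation so that Sobolev norms of smooth nonlinear functions are controlled by first-order Taylor expansions plus Moser-type remainders.

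First I would handle the purely metric error terms $\triangle_{A,\mu\nu}$ and $\triangle_{C,\mu\nu}$: each of these has been written in Lemma~\ref{L:Amunudecomposition} and Lemma~\ref{L:AmunuplusImunudecomposition} as a sum of products where one factor carries a ``smallness'' slot such as $g^{00}+1$, $g^{0a}$, $e^{2\Omega}g^{ab}\partial_t h_{al}$, or $\partial_t h_{jk}$, and all remaining factors have rough $L^\infty$ control from Proposition~\ref{P:BoostrapConsequences}. Applying Hölder in $L^\infty \cdot H^N$ on the highest-derivative slot and a Sobolev-Moser product estimate to the remaining factors produces the claimed decay weights $e^{-2q\Omega}$, $e^{(1-2q)\Omega}$, $e^{(2-2q)\Omega}$. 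The Christoffel error terms $\triangle_{\mu\ \nu}^{\ \alpha}$ are handled the same way directly from their definitions \eqref{E:triangle000}--\eqref{E:triangleikj} and the estimates for $\underpartial g,$ $\partial_t g,$ $g^{-1}$.

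Next I would turn to the fluid quantities. The bound $\|P\|_{L^\infty} \leq C$ is immediate from $\|P-\bar p\|_{H^N} \leq \totalnorm{N}$ and Sobolev embedding. For $u^0-1,$ I substitute \eqref{E:U0UPPERISOLATED} into a first-order Taylor expansion of $\sqrt{1+z}-1$: the key algebraic observation is that $g_{ab}u^a u^b = e^{2\Omega} h_{ab}u^a u^b$, which gains a factor $e^{-q\Omega}$ because it carries two powers of $u^j$ controlled by the $e^{-(1+q)\Omega}$-weighted norm $U_{N-1}$; together with the bounds on $g_{00}+1$, $g_{0a}u^a$ and $\hstarstarnorm{N}$, composition/product Moser estimates deliver \eqref{E:u0upperHN}. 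The lowered-index quantities $u_0+1 = g_{0\alpha}u^\alpha + 1$ and $u_j = g_{j\alpha}u^\alpha = e^{2\Omega}h_{ja}u^a + g_{0j}u^0$ are reduced to the upstairs case plus the metric estimates; crucially the low-derivative bound $\|u_j\|_{H^{N-1}} \leq Ce^{(1-q)\Omega}\totalnorm{N}$ requires $U_{N-1}$ rather than $\fluidnorm{N}$. The time-derivative estimates then follow from equations \eqref{E:partialtP}--\eqref{E:partialtuj} together with the bounds for $\triangle'$, $\triangle'^0$, $\triangle'^j$ from \eqref{E:triangleprimedef}--\eqref{E:triangleprimejdef}, which in turn reduce to the already-controlled quantities $\triangle, \triangle^\mu$ plus spatial derivative terms; the $(A^0)$-inversion factor $\{1 - (u^0)^{-2}\speed^2\Pi^{00}\}^{-1}$ is safely bounded because $u^0-1$ and $g^{00}+1$ are small.

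Then I would prove \eqref{E:triangle00HN}--\eqref{E:trianglejkHN} by reading off the explicit decompositions \eqref{E:triangle00}--\eqref{E:trianglejk} and recycling the fluid and metric estimates just derived, paying close attention to the decay rate $|\omega - H| \leq C e^{-\varkappa H t}$ from Lemma~\ref{L:backgroundaoftestimate} for the ``background inhomogeneity'' terms; similarly for $\triangle, \triangle^\mu$ from \eqref{E:triangledef}--\eqref{E:trianglejdef}, using the key cancellation that $(\varkappa-2)\omega(u^0-1)u^j$ already carries the $u^0-1$ smallness. The commutator bounds \eqref{E:g00commutatorL2}--\eqref{E:hjkcommutatorL2} expand $[g^{\mu\nu}\partial_\mu\partial_\nu, \partial_{\vec\alpha}]$ as a sum $\sum \partial_{\vec\beta}g^{\mu\nu} \cdot \partial_{\vec\gamma}\partial_\mu\partial_\nu(\cdot)$ with $|\vec\beta|+|\vec\gamma| = |\vec\alpha|-1$, then controls each summand by Hölder placing the $H^N$ factor on whichever slot allows $L^\infty$ on the other via Sobolev embedding (i.e.\ the standard Moser commutator estimate). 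The $\triangle_{\mathcal{E}}$ bounds \eqref{E:triangleEgamma00delta00L1}--\eqref{E:triangleEgamma**delta**L1} follow from Hölder using the $L^\infty$ control of $\partial g^{-1}$ and $(g^{00}+1)$ from Proposition~\ref{P:BoostrapConsequences}. Finally, \eqref{E:dotu0L2intermsofdotuaL2} is immediate from $\dot{u}^0 = -u_0^{-1}u_a \dot{u}^a$ and the $L^\infty$ bound on $u_a$, while \eqref{E:mathfrakFalphamathfrakGjalphaL2}--\eqref{E:mathfrakG0alphaL2} are standard commutator estimates applied to \eqref{E:FalphamathfrakGjalphainhomogeneousterms}--\eqref{E:mathfrakG0alphainhomogeneousterm}, using smallness of $(A^0)^{-1}A^\mu$ minus its background value and the bounds on $\mathbf{b}_\triangle$ already obtained.

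The main obstacle will be the bookkeeping for the $\mathfrak{F}_{\vec\alpha}, \mathfrak{G}^j_{\vec\alpha}$ estimates, since the matrix-valued coefficients $(A^0)^{-1}A^\mu$ depend nonlinearly on $P$ and $u^\mu$ through expressions like $\{(u^0)^2 - \speed^2\Pi^{00}\}^{-1}$ and $P^{-1}\Pi^{j\mu}$; one must verify that when $\partial_{\vec\alpha}$ is distributed across $(A^0)^{-1}A^a \partial_a \mathbf{W}$ via a commutator, every resulting term either has $|\vec\alpha|$ derivatives on $\mathbf{W}$ (in which case the coefficient carries a smallness slot via $u^0-1$, $g^{0a}$, or $P - \bar{p}$) or places at most $|\vec\alpha|-1$ derivatives on $\mathbf{W}$ so that the $L^\infty$ slot is available. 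Getting the $e^{-(1+q)\Omega}$ weight on the $\mathfrak{G}^j_{\vec\alpha}$ components (as opposed to the weaker $e^{-q\Omega}$ weight) requires identifying the key cancellation that all the ``bad'' terms actually contain a factor of $u^a$ or $g^{0j}$, which supplies the extra $e^{-\Omega}$ decay via the $U_{N-1}$-type improvements for the four-velocity.
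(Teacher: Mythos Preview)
Your proposal is correct and follows essentially the same route as the paper: Sobolev--Moser product/composition estimates from the Appendix combined with the metric bounds of Proposition~\ref{P:BoostrapConsequences}, proved in roughly the same logical order (metric error terms, then fluid quantities via \eqref{E:U0UPPERISOLATED}, then the $\triangle$'s, then the inhomogeneities $\mathfrak{F}_{\vec\alpha},\mathfrak{G}^\mu_{\vec\alpha}$ via explicit matrix-entry bounds on $A^0,(A^0)^{-1},A^a$).

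One point you gloss over: in the commutator estimates \eqref{E:g00commutatorL2}--\eqref{E:hjkcommutatorL2}, your expansion produces terms $\partial_{\vec\beta}g^{00}\cdot\partial_{\vec\gamma}\partial_t^2 v$, and $\partial_t^2 v$ is \emph{not} controlled by $\totalnorm{N}$ directly. The paper handles this by first using the wave equation itself (e.g.\ \eqref{E:finalg00equation}) to isolate $\partial_t^2 g_{00}$ and bound $\|\partial_t^2 g_{00}\|_{H^{N-1}}\leq Ce^{-q\Omega}\totalnorm{N}$, and only then applying the Moser commutator lemma (Proposition~\ref{P:SobolevMissingDerivativeProposition}). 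Your phrase ``standard Moser commutator estimate'' hides this step; make sure to include it.
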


\begin{proof} 

\begin{remark} \label{R:ProofsRemark}
Throughout the remaining proofs in this article, we will make use of the results of Lemma \ref{L:backgroundaoftestimate}, the definitions of the norms from Section \ref{S:NormsandEnergies}, the definitions \eqref{E:etamindef}, \eqref{E:qdef} of $\upeta_{min}$ and $q,$ and the Sobolev embedding result $H^{M+2}(\mathbb{T}^3) \hookrightarrow C_b^M(\mathbb{T}^3),$ $(M \geq 0)$. \textbf{We will also use the assumption that $\totalnorm{N},$ which is defined in \eqref{E:totalnorm}, is sufficiently small without explicitly mentioning it every time. Furthermore, the smallness is adjusted as necessary at each step in the proof.} For brevity, we don't explicitly estimate how small $\totalnorm{N}$ must be. We also remark that as discussed in Section \ref{SS:runningconstants}, the constants $c,$ $C,$ $C_*$ that appear throughout the article can be chosen uniformly (however, they may depend on $N$) as long as $\totalnorm{N}$ is sufficiently small. Finally, we prove statements in logical order, rather than the order in which they are stated in the proposition.
\end{remark}

Before beginning the proof, we observe the following updated version of the Counting Principle from \cite{iRjS2009}, which provides useful heuristic guidelines for many of the estimates. This tool is only intended to help guide the reader through the estimates; we provide complete proofs of many of the estimates.

\begin{center}
	\textbf{Counting Principle}
\end{center}	
	
	\emph{Consider a product which contains as factors metric components $g_{\mu \nu},$ inverse metric components
	$g^{\mu \nu},$ the first derivatives of these quantities, and the four-velocity $u^{\mu}.$ If $U$ denotes the total number of 
	\textbf{upstairs spatial indices} among these factors, and $D$ denotes the total number of 
	\textbf{downstairs spatial metric indices}, then the expected contribution to the rate of growth/decay of the $H^N$ norm of 
	the product coming from these terms is no larger than $e^{\Omega(D-U)}$ (i.e., we expect these terms to contribute 
	at least this much decay). For purposes of counting, a spatial derivative $\partial_j$ of a metric or inverse metric 
	component is considered to be a downstairs spatial index, while time derivatives 
	of these quantities are neutral. We remark that by these criteria, $h_{jk} \eqdef e^{-2 \Omega} g_{jk},$ 
	$(j,k = 1,2,3),$ is an order $1$ term that doesn't contribute to the decay rate. Furthermore, each factor in a product, 
	excluding $h_{jk}$ but including $\partial_i h_{jk},$ that is equal to one of quantities under an $H^N$ norm in definitions 
	\eqref{E:mathfrakSNg00} - \eqref{E:ujnorm} contributes an additional decay factor of $e^{-q \Omega}.$ Finally, the rescaled 
	pressure $P$ counts as an order $1$ term.}
	\\
	
	\textbf{In this way, many of the estimates proved below in detail can be ascertained by counting spatial indices and making 
	sure that at least one factor in a product contributes an additional decay factor of $e^{-q \Omega}.$}
	
	\ \\
		
\noindent \emph{Proofs of \eqref{E:triangleA00HN} - \eqref{E:triangleC0jHN}}: To prove \eqref{E:triangleA00HN}, we first recall equation \eqref{E:triangleA00def}:

\begin{align}
	\triangle_{A,00} & = (g^{00})^2 \Big\lbrace(\partial_t g_{00})^2 - (\Gamma_{000})^2 \Big\rbrace 
			+ g^{00}g^{0a} \Big\lbrace 2 (\partial_t g_{00})(\partial_t g_{0a} + \partial_a g_{00}) 
			- 4 \Gamma_{000} \Gamma_{00a} \Big\rbrace \label{E:triangleA00defagain} \\
		& \ \ + g^{00}g^{ab} \Big\lbrace (\partial_t g_{0a})(\partial_t g_{0b}) 
				+ (\partial_a g_{00}) (\partial_b g_{00}) 
				- 2 \Gamma_{00a} \Gamma_{00b} \Big\rbrace \notag \\
		& \ \ + g^{0a} g^{0b} \Big\lbrace 2(\partial_t g_{00})(\partial_a g_{0b}) + 2(\partial_t g_{0b})(\partial_a g_{00}) 
				- 2 \Gamma_{000} \Gamma_{a0b} - 2 \Gamma_{00b} \Gamma_{00a} \Big\rbrace \notag \\
		& \ \ + g^{ab} g^{0l} \Big\lbrace 2(\partial_t g_{0a})(\partial_l g_{0b}) + 2(\partial_b g_{00})(\partial_a g_{0l}) 
				- 4\Gamma_{00a} \Gamma_{l0b}  \Big\rbrace \notag \\
		& \ \ + g^{ab}g^{lm}(\partial_a g_{0l})(\partial_b g_{0m}) 
				+ \frac{1}{2} g^{lm}(\underbrace{g^{ab} \partial_t g_{al} - 2\omega \delta_l^b}_{
				e^{2 \Omega} g^{ab} \partial_t h_{al} - 2 \omega g^{0b}g_{0l}})(\partial_b g_{0m} + \partial_m g_{0b}) \notag \\
		& \ \ - \frac{1}{4} g^{ab} g^{lm}(\partial_a g_{0l} + \partial_l g_{0a})(\partial_b g_{0m} + \partial_m g_{0b}) 
			\notag \\
		& \ \ - \frac{1}{4}(\underbrace{g^{ab} \partial_t g_{al} - 2\omega \delta_l^b}_{
				e^{2 \Omega} g^{ab} \partial_t h_{al} - 2 \omega g^{0b}g_{0l}}) 
				(\underbrace{g^{lm} \partial_t g_{bm} - 2 \omega \delta_b^l}_{e^{2 \Omega} g^{lm} \partial_t h_{bm} 
				- 2 \omega g^{0l}g_{0b}}). \notag 
\end{align}

We now use Proposition \ref{P:F1FkLinfinityHN}, the definition \eqref{E:totalnorm} of $\totalnorm{N},$ Sobolev embedding, 
\eqref{E:g00upperplusoneHN}, \eqref{E:g00upperplusoneLinfinity}, \eqref{E:gjkupperLinfinity}, \eqref{E:partialgjkupperHNminusone}, \eqref{E:g0jupperHN}, \eqref{E:g0jupperCb1}, \eqref{E:partialtgjkminusomegagjklowerHN}, \eqref{E:partialtgjklowerC1}, \eqref{E:gjaupperpartialtgaklowerminus2omegadeltakjHN}, \eqref{E:gjaupperpartialtgaklowerminus2omegadeltakjLinfinity}, and the  relation $\Gamma_{\mu \alpha \nu} = \frac{1}{2}(\partial_{\mu} g_{\alpha \nu} + \partial_{\nu} g_{\alpha \mu} - \partial_{\alpha} g_{\mu \nu})$ to conclude that

\begin{align}
	\| \triangle_{A,00} \|_{H^N} & \leq C e^{-2q \Omega} \gnorm{N}^2.
\end{align}
This proves \eqref{E:triangleA00HN}. The proofs of \eqref{E:triangleA0jHN} - \eqref{E:triangleC0jHN} are similar, and we omit the details. We also remark that slight variations of these inequalities were proved in \cite[Lemma 12]{hR2008}. 
\\

\noindent \emph{Proofs of \eqref{E:PLinfinity} - \eqref{E:ujlowerHN}}:
Inequality \eqref{E:PLinfinity} follows from the definition \eqref{E:totalnorm} of $\totalnorm{N}$ and the
Sobolev embedding result $\| P - \bar{p} \|_{L^{\infty}} \leq C \| P - \bar{p} \|_{H^N} \leq C \totalnorm{N}.$

To prove \eqref{E:u0upperHN}, we first recall equation \eqref{E:U0UPPERISOLATED}:

\begin{align} \label{E:U0UPPERISOLATEDagain}
	u^0 = - \frac{g_{0a}u^a}{g_{00}} + \sqrt{1 + \Big(\frac{g_{0a}u^a}{g_{00}}\Big)^2 - \frac{g_{ab}u^a u^b}{g_{00}} 
		- \Big(\frac{g_{00} + 1}{g_{00}}\Big)}.
\end{align}
We now apply Corollary \ref{C:SobolevTaylor}, Proposition \ref{P:F1FkLinfinityHN}, and Sobolev embedding to conclude that

\begin{align} \label{E:u0HNfirstinequality}
	\| u^0 - 1 \|_{H^N} & \leq C \bigg\lbrace \Big\| \frac{g_{0a}u^a}{g_{00}} \Big\|_{H^N} 
		+ \Big\| \Big(\frac{g_{0a}u^a}{g_{00}}\Big)^2 - \frac{g_{ab}u^a u^b}{g_{00}} - \frac{g_{00} + 1}{g_{00}} \Big\|_{H^N} 
		\bigg\rbrace \\
	& \leq C \bigg\lbrace 
		\Big\| \frac{1}{g_{00}} \Big \|_{L^{\infty}} +  \Big\| \underpartial \Big(\frac{1}{g_{00}}\Big) \Big\|_{H^{N-1}} 
		+ \Big\| \frac{1}{g_{00}} \Big \|_{L^{\infty}}^2 +  \Big\| \underpartial \Big(\frac{1}{g_{00}}\Big) \Big\|_{H^{N-1}}^2 
		\bigg\rbrace \notag \\
	& \ \times \bigg\lbrace \| g_{0a} \|_{H^N} \| u^a \|_{H^N}  
		+ (\| g_{0a} \|_{H^N} \| u^a \|_{H^N})^2 \notag \\
	& \ \ \ \ \ + (\| g_{ab} \|_{L^{\infty}} + \| \underpartial g_{ab} \|_{H^{N-1}})
		\| u^a \|_{L^{\infty}} \| \underpartial u^b \|_{H^{N-1}} + \| g_{00} + 1 \|_{H^N} \bigg\rbrace.
\end{align}
Using Corollary \ref{C:DifferentiatedSobolevComposition}, \eqref{E:gjklowerLinfinity}, the definition of $\totalnorm{N},$ 
Sobolev embedding, and in particular the estimate $\| u^a \|_{L^{\infty}} \leq C e^{-(1 + q)\Omega} \totalnorm{N},$
it follows that

\begin{align}
	\| u^0 - 1 \|_{H^N} & \leq C \totalnorm{N} e^{-q \Omega}.
\end{align}
This proves \eqref{E:u0upperHN}. Inequality \eqref{E:u0upperLinfinity} then follows from \eqref{E:u0upperHN} and Sobolev embedding.

Inequalities \eqref{E:u0lowerHN} - \eqref{E:u0lowerLinfinity} can be proved similarly using \eqref{E:u0upperHN} -
\eqref{E:u0upperLinfinity} and the triangle inequality estimate
\begin{align}
	\| u_0 + 1 \|_{H^N} & = \|g_{0\alpha}u^{\alpha} + 1 \|_{H^N} \leq \|(g_{00} + 1)u^0 \|_{H^N}
		+ \|u^0 - 1 \|_{H^N} + \|g_{0a}u^a \|_{H^N}.
\end{align}
Inequalities \eqref{E:ujlowerHNminusone} - \eqref{E:ujlowerHN} can also be proved similarly 
using the relation $u_j = g_{j \alpha} u^{\alpha}.$
\\

\noindent \emph{Proofs of \eqref{E:triangle00HN} - \eqref{E:trianglejkHN}}:
To prove \eqref{E:triangle00HN}, we first use equation \eqref{E:triangle00}
and Proposition \ref{P:F1FkLinfinityHN} to arrive at the following estimate:

\begin{align}
	\| \triangle_{00} \|_{H^N} & \leq C\Big\lbrace \| \triangle_{A,00} \|_{H^N} + \| \triangle_{C,00} \|_{H^N}
		+ e^{-3(1 + \speed^2) \Omega} \| g_{00} + 1 \|_{H^N} \\
	& \ \ + \| P \|_{L^{\infty}} \| u_0 + 1 \|_{L^{\infty}} \| u_0 - 1 \|_{H^N}
		+ \| u_0 + 1 \|_{L^{\infty}} \| P \|_{L^{\infty}} \| u_0 - 1 \|_{L^{\infty}} \| \underpartial u_0 \|_{H^{N-1}} 
		 \notag \\
	& \ \ + \| u_0 + 1 \|_{L^{\infty}} \| u_0 - 1 \|_{L^{\infty}} \| \underpartial P \|_{H^{N-1}}
		+ e^{-3(1 + \speed^2)\Omega} \| P \|_{L^{\infty}} \| g_{00} + 1 \|_{H^N} 
		 \notag \\
	& \ \ + e^{-3(1 + \speed^2)\Omega} \| g_{00} + 1 \|_{L^{\infty}} \| \underpartial P \|_{H^{N-1}} \notag \\
	& \ \ + |\omega - H| \| \partial_t g_{00}\|_{H^N} + |\omega - H| \| g_{00} + 1 \|_{H^N} \Big\rbrace. \notag
\end{align}
We now use \eqref{E:g00upperplusoneHN}, \eqref{E:g00upperplusoneLinfinity}, \eqref{E:triangleA00HN}, \eqref{E:triangleC00HN}, \eqref{E:PLinfinity}, \eqref{E:u0lowerHN}, \eqref{E:u0lowerLinfinity}, the definition \eqref{E:totalnorm} of $\totalnorm{N},$ and Sobolev embedding to conclude that

\begin{align}
	\| \triangle_{00} \|_{H^N} < C e^{-2q \Omega}\totalnorm{N},
\end{align}
which proves \eqref{E:triangle00HN}. Inequalities \eqref{E:triangle0jHN} and \eqref{E:trianglejkHN} can be proved using similar reasoning; we omit the details.
\\

\noindent \emph{Proofs of \eqref{E:triangleEgamma00delta00L1} - \eqref{E:triangleEgamma**delta**L1}}:
To begin, we first recall equation \eqref{E:trianglemathscrEdef}:

\begin{align} \label{E:trianglemathscrEdefagain}
	\triangle_{\mathcal{E};(\upgamma, \delta)}[v,\partial v] & = - \upgamma H (\partial_a g^{ab}) v \partial_b v
		- 2 \upgamma H (\partial_a g^{0a}) v \partial_t v - 2 \upgamma H g^{0a}(\partial_a v)(\partial_t v) \\
	& \ \ - (\partial_a g^{0a})(\partial_t v)^2 - (\partial_a g^{ab})(\partial_b v)(\partial_t v)
		- \frac{1}{2}(\partial_t g^{00})(\partial_t v)^2 \notag \\
	& \ \ + \Big(\frac{1}{2} \partial_t g^{ab} + \omega g^{ab} \Big) (\partial_a v) (\partial_b v)
		+ (H - \omega) g^{ab} (\partial_a v) (\partial_b v) \notag \\
	& \ \ - \upgamma H (\partial_t g^{00}) v \partial_t v - \upgamma H (g^{00} + 1)(\partial_t v)^2. \notag
\end{align}

We now claim that the following inequality holds for any function $v$ for which the right-hand side is finite:
\begin{align} \label{E:trianglegammadeltaL1}
	\| \triangle_{\mathcal{E};(\upgamma, \updelta)}[v,\partial v] \|_{L^1} & \leq C \Big\lbrace e^{-q \Omega}  
		\| \partial_t v \|_{L^2}^2 + e^{-(2 + q) \Omega} 
		\| \underpartial v \|_{L^2}^2 + C_{(\upgamma)} e^{-q \Omega} \| v \|_{L^2}^2 \Big\rbrace,
\end{align}
where $C_{(\upgamma)}$ is defined in \eqref{E:mathcalEfirstlowerbound} To obtain \eqref{E:trianglegammadeltaL1}, we use the Cauchy-Schwarz inequality for integrals, \eqref{E:g00upperplusoneLinfinity}, \eqref{E:gjkupperLinfinity}, \eqref{E:partialgjkupperLinfinity}, \eqref{E:g0jupperCb1}, \eqref{E:partialtgjkupperplusomegagjkLinfinity}, and \eqref{E:partialtg00upperLinfinity}. Inequalities \eqref{E:triangleEgamma00delta00L1} - \eqref{E:triangleEgamma**delta**L1} now easily follow from definitions \eqref{E:mathfrakSNg00} - \eqref{E:totalnorm} and \eqref{E:trianglegammadeltaL1}.
\\

\noindent \emph{Proofs of \eqref{E:triangleGamma000HN} - \eqref{E:triangleGammaikjHN}}:
To estimate $\triangle_{0 \ 0}^{\ 0},$ we first recall equation \eqref{E:triangle000}: 

\begin{align} 
	\triangle_{0 \ 0}^{\ 0} & = \frac{1}{2}g^{00} \partial_t g_{00} + g^{0a} \partial_t g_{0a} - 
		\frac{1}{2} g^{0a}\partial_a g_{00}. \label{E:triangleGamma000again}
\end{align}
	Using Proposition \ref{P:F1FkLinfinityHN}, the definition \eqref{E:totalnorm} of $\totalnorm{N},$ Sobolev embedding, 
 	\eqref{E:g00upperLinfinity}, \eqref{E:g00upperplusoneHN}, \eqref{E:g0jupperHN}, and \eqref{E:g0jupperCb1}, 
 	it follows that
 
\begin{align}
	\| \triangle_{0 \ 0}^{\ 0} \|_{H^N} \leq C e^{- q \Omega} \gnorm{N},
\end{align}
which proves \eqref{E:triangleGamma000HN}. 

The estimates \eqref{E:triangleGammaj00HN} - \eqref{E:triangleGammaikjHN} can be proved similarly using 
Propositions \ref{P:BoostrapConsequences} and \ref{P:F1FkLinfinityHN}. 
\\

\noindent \emph{Proofs of \eqref{E:triangleHN} - \eqref{E:triangleprimejHNminusone}}:
To prove \eqref{E:triangleHN}, we first apply Proposition \ref{P:F1FkLinfinityHN} to
equation \eqref{E:triangledef}, concluding that:

\begin{align} \label{E:triangleHNfirstinequality}
	\| \triangle \|_{H^N} & \leq C \bigg\lbrace 
		\| P \|_{L^{\infty}} \| \triangle_{\alpha \ 0}^{\ \alpha}\|_{L^{\infty}} \| \underpartial u^0 \|_{H^{N-1}} \|
		+ \| P \|_{L^{\infty}} \| u^0 \|_{L^{\infty}}  \| \triangle_{\alpha \ 0}^{\ \alpha}\|_{H^N}
		 \\
	& \ \ + \| \triangle_{\alpha \ 0}^{\ \alpha}\|_{L^{\infty}} 
		\| u^0 \|_{L^{\infty} } \| \underpartial P \|_{H^{N-1}}
		+ \| P \|_{L^{\infty}} \| \triangle_{\alpha \ a}^{\ \alpha}\|_{L^{\infty}} \| u^a \|_{H^N} 
			\notag \\
	& \ \ + \| P \|_{L^{\infty}}  \| u^a \|_{L^{\infty}} \| \triangle_{\alpha \ a}^{\ \alpha}\|_{H^N}
		+ \| \triangle_{\alpha \ a}^{\ \alpha}\|_{L^{\infty}} \| u^a \|_{L^{\infty} } \| \underpartial P \|_{H^{N-1}}
		\notag \\
	& \ \  + \Big\| \frac{1}{u_0} \Big\|_{L^{\infty}} \| u^0 \|_{L^{\infty}}^2 \| \partial_t g_{00} \|_{H^N}
		+ \Big\| \frac{1}{u_0} \Big\|_{L^{\infty}} \| \partial_t g_{00} \|_{L^{\infty}} 
		\| u^0 \|_{L^{\infty}} \| \underpartial u^0 \|_{H^{N-1}}  \notag \\
	& \ \ + \| u^0 \|_{L^{\infty}}^2 \| \partial_t g_{00} \|_{L^{\infty}} 
		\Big\| \underpartial \Big( \frac{1}{u_0} \Big) \Big\|_{H^{N-1}} 
		+ \Big\| \frac{1}{u_0} \Big\|_{L^{\infty}} \| \partial_t g_{0a} \|_{L^{\infty}} \| u^0 \|_{L^{\infty}}\| u^a \|_{H^N}
		\notag \\
	& \ \ + \Big\| \frac{1}{u_0} \Big\|_{L^{\infty}} \| \partial_t g_{0a} \|_{L^{\infty}} 
		\| u^a \|_{L^{\infty}} \| \underpartial u^0 \|_{H^{N-1}} 
		+ \Big\| \frac{1}{u_0} \Big\|_{L^{\infty}}  \| u^0 \|_{L^{\infty}}\| u^a \|_{L^{\infty}} \| \partial_t g_{0a} \|_{H^N}
		\notag \\
	& \ \ + \| \partial_t g_{0a} \|_{L^{\infty}} 
		\| u^0 \|_{L^{\infty}}\| u^a \|_{L^{\infty}} \Big\| \underpartial \Big( \frac{1}{u_0} \Big) \Big\|_{H^{N-1}} 
		+ \Big\| \frac{1}{u_0} \Big\|_{L^{\infty}} \| \partial_t g_{ab} \|_{L^{\infty}} \| u^a \|_{L^{\infty}} 
		\| u^b \|_{H^N} \notag \\
	& \ \ + \Big\| \frac{1}{u_0} \Big\|_{L^{\infty}} \| u^a \|_{L^{\infty}} \| u^b \|_{L^{\infty}} 
			\| \underpartial (\partial_t g_{ab}) \|_{H^{N-1}} \notag \\
	& \ \ + \| u^a \|_{L^{\infty}} \| u^b \|_{L^{\infty}} \| \partial_t g_{ab} \|_{L^{\infty}} 
			\Big\| \underpartial \Big( \frac{1}{u_0} \Big) \Big\|_{H^{N-1}} \bigg\rbrace. \notag
\end{align}

Using Corollary \ref{C:DifferentiatedSobolevComposition} to estimate $\Big\| \underpartial \Big( \frac{1}{u_0} \Big) \Big\|_{H^{N-1}},$ \eqref{E:partialtgjkminusomegagjklowerHN}, \eqref{E:partialtgjklowerC1}, 
\eqref{E:u0upperHN} - \eqref{E:u0lowerLinfinity}, \eqref{E:triangleGamma000HN} - \eqref{E:triangleGammaikjHN}
the definition \eqref{E:totalnorm} of $\totalnorm{N},$ Sobolev embedding, and in particular the estimate $\| u^a \|_{L^{\infty}} \leq C \| u^a \|_{H^{N-1}} \leq C e^{-(1 + q)\Omega}\totalnorm{N},$ it follows that

\begin{align}
	\| \triangle \|_{H^N} & \leq C e^{-q \Omega} \totalnorm{N}.
\end{align}
This proves \eqref{E:triangleHN}. The proofs of \eqref{E:triangle0HN} - \eqref{E:triangleprimejHNminusone}
are similar, and we omit the details.
\\

\noindent \emph{Proofs of \eqref{E:partialtPHNminusone} - \eqref{E:partialtujlowerLinfinity}}:

Inequality \eqref{E:partialtPHNminusone}, follows trivially from equation \eqref{E:partialtP} and from \eqref{E:triangleprimeHNminusone}. \eqref{E:partialtu0upperLinfinity} then follows from \eqref{E:partialtPHNminusone}
and Sobolev embedding. The proofs of \eqref{E:partialtu0upperHNminusone} - \eqref{E:partialtujupperLinfinity} are similar.

The estimates \eqref{E:partialtu0lowerHNminusone} - \eqref{E:partialtujlowerLinfinity} follow from the identity

\begin{align}
	\partial_t u_{\mu} & = g_{\mu \alpha} \partial_t u^{\alpha} + (\partial_t g_{\mu \alpha})u^{\alpha},&& (\mu = 0,1,2,3),
\end{align}
Proposition \ref{P:F1FkLinfinityHN}, the definition \eqref{E:totalnorm} of $\totalnorm{N},$ Sobolev embedding, the estimates of Proposition \ref{P:BoostrapConsequences}, and \eqref{E:partialtu0upperHNminusone} - \eqref{E:partialtujupperLinfinity}.
\\

\noindent \emph{Proof of \eqref{E:dotu0L2intermsofdotuaL2}}:  
To prove \eqref{E:dotu0L2intermsofdotuaL2}, we first recall equation \eqref{E:dot0intermsofdotjagain}:

\begin{align} \label{E:dot0intermsofdotjagain}
	\dot{u}^0 \eqdef  -\frac{1}{u_0}u_a \dot{u}^a.
\end{align}

Using \eqref{E:u0lowerLinfinity} and \eqref{E:ujlowerLinfinity}, it follows that

\begin{align}
	\| \dot{u}^0 \|_{L^2} & \leq \Big\| \frac{1}{u_0} \Big\|_{L^{\infty}} \| u_a \|_{L^{\infty}} \| \dot{u}^a \|_{L^2}
		\leq C e^{(1-q)\Omega} \totalnorm{N} \| \dot{u}^a \|_{L^2},
\end{align}
which proves \eqref{E:dotu0L2intermsofdotuaL2}.
\\

\noindent \emph{Proofs of \eqref{E:mathfrakFalphamathfrakGjalphaL2} - \eqref{E:mathfrakG0alphaL2}}: To prove
\eqref{E:mathfrakFalphamathfrakGjalphaL2}, we first use equation \eqref{E:FalphamathfrakGjalphainhomogeneousterms}
to deduce that

\begin{align} \label{E:FalphamathfrakGjalphainhomogeneoustermsagain}
	\left\| \begin{pmatrix}
					\mathfrak{F}_{\vec{\alpha}} \\
					\mathfrak{G}_{\vec{\alpha}}^1 \\
					\mathfrak{G}_{\vec{\alpha}}^2 \\
					\mathfrak{G}_{\vec{\alpha}}^3																			
				\end{pmatrix} \right\|_{L^2}
	& \leq \big \|  A^0 \partial_{\vec{\alpha}} \big[ (A^0)^{-1}\mathbf{b} \big] 
		- \partial_{\vec{\alpha}} \mathbf{b} \big\|_{L^2} 
		+ \big\| A^0 \partial_{\vec{\alpha}} \big[ (A^0)^{-1} \mathbf{b}_{\triangle} \big] \big\|_{L^2} \\
	& \ \ + \Big\| A^0 \Big\lbrace(A^0)^{-1}A^a \partial_a \partial_{\vec{\alpha}} \mathbf{W}
 		- \partial_{\vec{\alpha}} \big[(A^0)^{-1} A^a \partial_a \mathbf{W}\big] \Big\rbrace \Big\|_{L^2}. \notag
\end{align}
See the remarks at the end of Section \ref{SS:NORMS} concerning our use of notation for the norms of array-valued functions.

We will estimate each of the three terms on the right-hand side of \eqref{E:FalphamathfrakGjalphainhomogeneoustermsagain}
using the following estimates for $\mathbf{W} \eqdef (P - \bar{p}, u^1,u^2,u^3)^{T},$ \\
$\mathbf{b}=(0,(\decayparameter - 2)u^1,(\decayparameter - 2)u^2,(\decayparameter - 2)u^3)^{T},$ $\mathbf{b}_{\triangle} = (\triangle, \triangle^1,\triangle^2,\triangle^3)^{T},$ $A^{\mu},$ and $(A^0)^{-1}:$

\begin{align} 
		& \| \mathbf{b} \|_{H^{N-1}}
		\leq C \totalnorm{N} \begin{pmatrix}
                        0 \\
                       	e^{-(1 + q) \Omega}   \\
                        e^{-(1 + q) \Omega}   \\
                        e^{-(1 + q) \Omega} 
                    \end{pmatrix}, \qquad  
      \| \mathbf{b} \|_{H^N}
							\leq C \totalnorm{N} \begin{pmatrix}
                        0 \\
                       	e^{- \Omega}   \\
                        e^{- \Omega}   \\
                        e^{- \Omega} 
                    \end{pmatrix},
\end{align}

\begin{align}
	\| A^{0} \|_{L^{\infty}}
	& \leq C \begin{pmatrix}
                        1  & e^{(1-q)\Omega}\totalnorm{N} & e^{(1-q)\Omega}\totalnorm{N} & e^{(1-q)\Omega}\totalnorm{N} \\
                       	e^{-(1+q)\Omega}\totalnorm{N} & 1 & 0 & 0 \\
                        e^{-(1+q)\Omega}\totalnorm{N} & 0 & 1 & 0 \\
                        e^{-(1+q)\Omega}\totalnorm{N} & 0 & 0 & 1 \\
                    \end{pmatrix},
\end{align}

\begin{align}
	\big\| (A^{0})^{-1} \big\|_{L^{\infty}}
	& \leq C \begin{pmatrix}
                        1  & e^{(1-q)\Omega}\totalnorm{N} & e^{(1-q)\Omega}\totalnorm{N} & e^{(1-q)\Omega}\totalnorm{N} \\
                       	e^{-(1+q)\Omega}\totalnorm{N} & 1 & e^{-q \Omega}\totalnorm{N} & e^{-q \Omega}\totalnorm{N} \\
                        e^{-(1+q)\Omega}\totalnorm{N} & e^{-q \Omega}\totalnorm{N} & 1 & e^{-q \Omega}\totalnorm{N} \\
                        e^{-(1+q)\Omega}\totalnorm{N} & e^{-q \Omega}\totalnorm{N} & e^{-q \Omega}\totalnorm{N} & 1 \\
                    \end{pmatrix},
\end{align}

\begin{align}
	\big\| \underpartial (A^{0})^{-1} \big\|_{H^{N-1}}
	& \leq C \totalnorm{N} \begin{pmatrix}
                        e^{-q \Omega}  & e^{\Omega} & e^{\Omega} & e^{\Omega} \\
                       	e^{- \Omega} & e^{-q \Omega} & e^{-q \Omega} & e^{-q \Omega} \\
                        e^{- \Omega} & e^{-q \Omega} & e^{-q \Omega} & e^{-q \Omega}\\
                        e^{- \Omega} & e^{-q \Omega} & e^{-q \Omega} & e^{-q \Omega} \\
                    \end{pmatrix},
\end{align}

\begin{align}
	\| \mathbf{b}_{\triangle} \|_{H^N} 
		\leq
			C \totalnorm{N} \begin{pmatrix}
                        e^{-q \Omega} \\
                       	e^{-(1 + q) \Omega}   \\
                        e^{-(1 + q) \Omega}   \\
                        e^{-(1 + q) \Omega} 
                    \end{pmatrix},
\end{align}

\begin{align} 
            \| \mathbf{W} \|_{H^N}
							\leq C \totalnorm{N} \begin{pmatrix}
                        1 \\
                       	e^{- \Omega}   \\
                        e^{- \Omega}   \\
                        e^{- \Omega} 
                    \end{pmatrix},
\end{align}

\begin{align}
	\| A^1 \|_{L^{\infty}} & \leq C \begin{pmatrix}
                        e^{-(1+q)\Omega} \totalnorm{N} & 1  & 0 & 0 \\
                       	e^{- 2\Omega} & e^{-(1 + q) \Omega} \totalnorm{N} & 0 & 0 \\
                        e^{- 2\Omega} & 0 & e^{-(1 + q) \Omega} \totalnorm{N}  & 0 \\
                        e^{- 2\Omega} & 0 & 0 & e^{-(1 + q) \Omega} \totalnorm{N}  \\
                    \end{pmatrix},
\end{align}

\begin{align}
	\| \underpartial A^1 \|_{H^{N-1}} & \leq C \totalnorm{N} \begin{pmatrix}
                        e^{- \Omega}  & 1  & 0 & 0 \\
                       	e^{- 2 \Omega} & e^{- \Omega} & 0 & 0 \\
                        e^{- 2 \Omega} & 0 & e^{- \Omega} & 0 \\
                        e^{- 2 \Omega} & 0 & 0 & e^{- \Omega}  \\
                    \end{pmatrix},
\end{align}
and analogously for $A^2,A^3.$ All of the above estimates follow from repeated applications of 
Corollary \ref{C:DifferentiatedSobolevComposition}, Proposition \ref{P:F1FkLinfinityHN}, the definition \eqref{E:totalnorm} of $\totalnorm{N},$ Sobolev embedding, the estimates of Proposition \ref{P:BoostrapConsequences}, \eqref{E:PLinfinity} - \eqref{E:partialtujlowerLinfinity}, and \eqref{E:triangleHN} - \eqref{E:trianglejHN}.

To estimate the first term on the right-hand side of \eqref{E:FalphamathfrakGjalphainhomogeneoustermsagain}, we use Propositions \ref{P:F1FkLinfinityHN} and \ref{P:SobolevMissingDerivativeProposition}, together with the above estimates and Sobolev embedding, to conclude that for $0 \leq |\vec{\alpha}| \leq N$

\begin{align} \label{E:1firsttermmathfrakFalphamathfrakGjalphaL2again}
	\Big\| A^0 & \partial_{\vec{\alpha}} \big[ (A^0)^{-1}\mathbf{b} \big] - \partial_{\vec{\alpha}} \mathbf{b} \Big\|_{L^2} 
		\\
	& \leq C \| A^{0} \|_{L^{\infty}} 
			* \Big\| \partial_{\vec{\alpha}} \big[ (A^0)^{-1}\mathbf{b} \big] - (A^0)^{-1} \partial_{\vec{\alpha}} \mathbf{b} 
			\Big\|_{L^2} \notag \\
	& \leq C \| A^{0} \|_{L^{\infty}} * \bigg\lbrace \big\| \underpartial (A^0)^{-1} \big\|_{L^{\infty}} 
		* \| \mathbf{b} \|_{H^{N-1}}
		+ \| \underpartial (A^0)^{-1} \|_{H^{N-1}} * \| \mathbf{b} \|_{L^{\infty}} \bigg\rbrace \notag \\
		& \leq C \totalnorm{N} \begin{pmatrix}
                        e^{-q \Omega} \\
                       	e^{-(1 + q) \Omega}   \\
                        e^{-(1 + q) \Omega}   \\
                        e^{-(1 + q) \Omega} 
                    \end{pmatrix}, \notag
\end{align}
where we write $*$ to emphasize that we are performing matrix multiplication on the matrices of norms.

We estimate the second and third terms on the right-hand side of \eqref{E:FalphamathfrakGjalphainhomogeneoustermsagain} using similar reasoning, which results in the following bounds:

\begin{align} 
		\Big\| A^0 & \Big\lbrace(A^0)^{-1}A^a \partial_a \partial_{\vec{\alpha}} \mathbf{W}
 		- \partial_{\vec{\alpha}} \big[(A^0)^{-1} A^a \partial_a \mathbf{W}\big] \Big\rbrace \Big\|_{L^2} \\
 		& \leq C \| A^0 \|_{L^{\infty}} * \| \underpartial [(A^0)^{-1} A^a ] \|_{H^{N-1}} 
 			* \| \partial_a \mathbf{W} \|_{H^{N-1}} \notag \\
 		& \leq C \| A^0 \|_{L^{\infty}} * \Big\lbrace 
 			\| (A^0)^{-1} \|_{L^{\infty}} * \| \underpartial A^a \|_{H^{N-1}}
 			+ \| \underpartial (A^0)^{-1} \|_{H^{N-1}} * \| \underpartial A^a \|_{H^{N-1}} \notag \\
 		& \hspace{1in} + \| \underpartial (A^0)^{-1} \|_{H^{N-1}} * \| A^a \|_{L^{\infty}}
 			\Big\rbrace * \| \partial_a \mathbf{W} \|_{H^{N-1}} \notag \\
 		& \leq C \totalnorm{N} \begin{pmatrix}
                        e^{-\Omega} \\
                       	e^{-2\Omega}   \\
                        e^{-2\Omega}   \\
                        e^{-2\Omega} 
                    \end{pmatrix}, \notag 
\end{align}

\begin{align} \label{E:2firsttermmathfrakFalphamathfrakGjalphaL2again}
	\Big\| A^0 & \partial_{\vec{\alpha}} \big[ (A^0)^{-1} \mathbf{b}_{\triangle} \big] \Big\|_{L^2} \\
		& \leq \| A^0 \|_{L^{\infty}} * \bigg\lbrace \|(A^0)^{-1}\|_{L^{\infty}} * \|\mathbf{b}_{\triangle} \|_{H^N}
		+ \|\underpartial (A^0)^{-1} \|_{H^{N-1}} * \|\mathbf{b}_{\triangle} \|_{L^{\infty}}  \bigg\rbrace 
		 \notag \\
		& \leq C \totalnorm{N} \begin{pmatrix}
                        e^{-q \Omega} \\
                       	e^{-(1 + q) \Omega}   \\
                        e^{-(1 + q) \Omega}   \\
                        e^{-(1 + q) \Omega} 
                    \end{pmatrix}.  \notag
\end{align}
Finally, adding \eqref{E:1firsttermmathfrakFalphamathfrakGjalphaL2again} -
\eqref{E:2firsttermmathfrakFalphamathfrakGjalphaL2again} implies that

\begin{align} 
	\left\| \begin{pmatrix}
					\mathfrak{F}_{\vec{\alpha}} \\
					\mathfrak{G}_{\vec{\alpha}}^1 \\
					\mathfrak{G}_{\vec{\alpha}}^2 \\
					\mathfrak{G}_{\vec{\alpha}}^3																			
				\end{pmatrix} \right\|_{L^2}
				& \leq C \totalnorm{N}
				\begin{pmatrix}
                        e^{-q \Omega} \\
                       	e^{-(1 + q) \Omega}   \\
                        e^{-(1 + q) \Omega}   \\
                        e^{-(1 + q) \Omega} 
     		\end{pmatrix},
\end{align}
which proves \eqref{E:mathfrakFalphamathfrakGjalphaL2}.

To prove \eqref{E:mathfrakG0alphaL2}, we use equation \eqref{E:mathfrakG0alphainhomogeneousterm},
the definition \eqref{E:totalnorm} of $\totalnorm{N},$ Sobolev embedding, and in particular the 
estimates $\| u^j \|_{L^{\infty}} \leq C e^{-(1 + q)\Omega} \totalnorm{N}, \| u_j \|_{L^{\infty}} \leq C e^{(1 - q)\Omega} \totalnorm{N},$ \eqref{E:partialtu0upperLinfinity}, \eqref{E:partialtujlowerLinfinity}, and \eqref{E:mathfrakFalphamathfrakGjalphaL2}
to conclude that

\begin{align} \label{E:mathfrakG0alphainhomogeneoustermagain}
		\| \mathfrak{G}_{\vec{\alpha}}^0 \|_{L^2} & \leq \Big\| u^{\nu} \partial_{\nu} \Big(\frac{u_a}{u_0}\Big) 
			\Big\|_{L^{\infty}} \| u^a \|_{H^N}
		+ \Big\| \frac{1}{u_0} \Big \|_{L^{\infty}} \| u_a \|_{L^{\infty}} \sum_{|\vec{\alpha}| \leq N} 
			\| \mathfrak{G}_{\vec{\alpha}}^a \|_{L^2} \\
		& \leq \| u^{\nu} \|_{L^{\infty}} \| \partial_{\nu} u_a \|_{L^{\infty} } \Big \| \frac{1}{u_0} \Big \|_{L^{\infty}} 
			\| u^a \|_{H^N} \notag \\
		& \ \ + \| u^{\nu} \|_{L^{\infty}} \| u_a \|_{L^{\infty} } \Big\| \Big(\frac{1}{u_0} \Big)^2 \Big\|_{L^{\infty}} 
			\| \partial_{\nu} u^0 \|_{L^{\infty}} \| u^a \|_{H^N} \notag \\
		& \ \ + \Big\| \frac{1}{u_0} \Big \|_{L^{\infty}} \| u_a \|_{L^{\infty}} \sum_{|\vec{\alpha}| \leq N} 
			\| \mathfrak{G}_{\vec{\alpha}}^a \|_{L^2} \notag \\
		& \leq C e^{-q\Omega} \totalnorm{N}. \notag
\end{align}
This completes the proof of \eqref{E:mathfrakG0alphaL2}.
\\

\noindent \emph{Proofs of \eqref{E:g00commutatorL2} - \eqref{E:hjkcommutatorL2}}:
To prove \eqref{E:g00commutatorL2}, we first estimate $\| \partial_t^2 g_{00} \|_{H^{N-1}}.$ Using
equation \eqref{E:finalg00equation}, we have that

\begin{align} \label{E:partialtsquaredg00isolated}
	\partial_t^2 g_{00} = (g^{00})^{-1} \Big\lbrace -g^{ab} \partial_a \partial_b g_{00}  
	-2 g^{0a} \partial_a \partial_t g_{00} + 5 H \partial_t g_{00} + 6 H^2 (g_{00} + 1) + \triangle_{00} \Big\rbrace.
\end{align}

Using \eqref{E:partialtsquaredg00isolated}, Corollary \ref{C:DifferentiatedSobolevComposition}, Proposition \ref{P:SobolevMissingDerivativeProposition},
the definition \eqref{E:totalnorm} of $\totalnorm{N},$ Sobolev embedding, 
\eqref{E:gjkupperLinfinity}, \eqref{E:g0jupperHN}, \eqref{E:g00upperplusoneLinfinity}, 
and \eqref{E:triangle00HN}, it follows that

\begin{align} \label{E:partialtsquaredg00HNminusone}
	\| \partial_t^2 g_{00} \|_{H^{N-1}} \leq C e^{- q \Omega} \totalnorm{N}.
\end{align}

We now use Corollary \ref{C:DifferentiatedSobolevComposition} and Proposition \ref{P:SobolevMissingDerivativeProposition}, together with the definition \eqref{E:totalnorm} of $\totalnorm{N},$ Sobolev embedding, \eqref{E:partialg00upperHNminusone}, \eqref{E:partialgjkupperHNminusone},
\eqref{E:g0jupperHN}, \eqref{E:partialtsquaredg00HNminusone} to obtain

\begin{align}
	\| [\hat{\square}_g, \partial_{\vec{\alpha}}] (g_{00} + 1) \|_{L^2} & \leq 
		\| g^{00}\partial_{\vec{\alpha}} (\partial_{t}^2g_{00}) 
		- \partial_{\vec{\alpha}}(g^{00} \partial_{t}^2 g_{00}) \|_{H^N} \\
	& \ \ + \| g^{ab}\partial_{\vec{\alpha}} (\partial_{a} \partial_b g_{00}) 
		- \partial_{\vec{\alpha}}(g^{ab} \partial_{a} \partial_b g_{00}) \|_{H^N} \notag \\
	& \ \ + 2\| g^{0a}\partial_{\vec{\alpha}} (\partial_{t} \partial_a g_{00})
		- \partial_{\vec{\alpha}}(g^{0a} \partial_{t} \partial_a g_{00}) \|_{H^N} \notag \\
	& \leq C  \|\underpartial g^{00} \|_{H^{N-1}} \| \partial_t^2 g_{00} \|_{H^{N-1}}
		+ C \|\underpartial g^{ab} \|_{H^{N-1}} \| \partial_a \partial_b g_{00} \|_{H^{N-1}} \notag \\
	& \ \ + C \| \underpartial g^{0a} \|_{H^{N-1}} \| \partial_a \partial_t g_{00} \|_{H^{N-1}} \notag \\
	& \leq C e^{-2q \Omega} \totalnorm{N}.  \notag
\end{align}
This completes the proof of \eqref{E:g00commutatorL2}. Inequalities \eqref{E:g0jcommutatorL2} and 
\eqref{E:hjkcommutatorL2} can be proved similarly; we omit the details.

\end{proof}

\section{The Equivalence of Sobolev and Energy Norms} \label{S:EnergyNormEquivalence}

Our global existence proof is based on a standard strategy: showing that the Sobolev norms of Section \ref{S:NormsandEnergies} satisfy suitable bounds (they happen to be uniformly bounded for $t \geq 0$ by $C \epsilon$ in the problem studied here), and then appealing to the continuation principle of Theorem \ref{T:ContinuationCriterion}. However, we do not have direct control over the growth of the norms; we can only control the norms indirectly through the use of the energies. In this section, we bridge the gap between the energies and the norms. More specifically, in the following proposition, we prove that under suitable bootstrap assumptions, the Sobolev-type norms and energies defined in Section \ref{S:NormsandEnergies} are equivalent.

\begin{proposition}[\textbf{Equivalence of Sobolev Norms and Energy Norms}] \label{P:energynormomparison} 
Let $N \geq 3$ be an integer, and assume that the bootstrap assumptions \eqref{E:metricBAeta} - \eqref{E:g0jBALinfinity} hold 
on the spacetime slab $[0,T) \times \mathbb{T}^3$ for some constant $c_1 \geq 1$ and for $\upeta = \upeta_{min}.$ Let 
$(\updelta, \upgamma)$ be any of the pairs of constants given in Definition \ref{D:energiesforg}, and let $C_{(\upgamma)}$ be the corresponding constant from Lemma \ref{L:buildingblockmetricenergy}. There exist constants $\epsilon''' > 0$ and $C > 0$ 
depending on $N,$ $c_1,$ $\upeta_{min},$ $\upgamma,$ and $\updelta,$ such that if $\totalnorm{N} \leq \epsilon''',$ then
the following inequalities hold on the interval $[0,T)$ for the norms and energies defined in 
\eqref{E:mathfrakSNg00} - \eqref{E:totalnorm}, \eqref{E:mathcalEdef}, \eqref{E:g00energydef} - \eqref{E:gtotalenergydef}, \eqref{E:fluidenergydef}, and \eqref{E:totalenergy}:
	
	\begin{subequations}
	\begin{align}
		C^{-1} \big(\| \partial_t v \|_{L^2} + e^{- \Omega} \| \underpartial v \|_{L^2} + C_{(\upgamma)} \| v \|_{L^2} \big) 
			\leq  \mathcal{E}_{(\upgamma,\updelta)}[v,\partial v] & \leq 	C \big(\| \partial_t v \|_{L^2} + e^{- \Omega} \| 
			\underpartial v 
			\|_{L^2} + C_{(\upgamma)} \| v \|_{L^2} \big), \label{E:mathcalEcomparison} \\
		C^{-1} \gzerozeroenergy{N} \leq \gzerozeronorm{N} & \leq C \gzerozeroenergy{N}, 
			\label{E:mathfrakENg00mathfrakSNcomparison} \\
		C^{-1} \gzerostarenergy{N} \leq \gzerostarnorm{N} & \leq C \gzerostarenergy{N}, \\
		C^{-1} \hstarstarenergy{N} \leq \hstarstarnorm{N} & \leq C \hstarstarenergy{N}, 
			\label{E:mathfrakENh**mathfrakSNcomparison} \\
		C^{-1} \genergy{N} \leq \gnorm{N} & \leq C \genergy{N}, \label{E:mathfrakENmathfrakSNcomparison} \\
		C^{-1} \fluidenergy{N} \leq \fluidnorm{N} & \leq C \fluidenergy{N}, \label{E:ENSNcomparison} \\
		C^{-1} \totalenergy{N}  \leq \totalnorm{N} & \leq C \totalenergy{N}.  \label{E:mathcalQNQNcomparison}
	\end{align}
	\end{subequations}
	
\end{proposition}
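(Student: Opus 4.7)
The plan is to first establish the building-block estimate \eqref{E:mathcalEcomparison}, then obtain the metric equivalences by applying it componentwise with the appropriate exponential weights, and finally handle the fluid equivalence by algebraically expanding $\dot{J}^0$ and exploiting the pointwise smallness of $\dot{u}^0$. The total equivalence \eqref{E:mathcalQNQNcomparison} follows immediately by summation, since $U_{N-1}$ appears identically in both $\totalnorm{N}$ and $\totalenergy{N}$.

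For \eqref{E:mathcalEcomparison}, I would start from Definition \eqref{E:mathcalEdef} and argue both sides directly. The upper bound follows from $|g^{00}| \leq 5$ (estimate \eqref{E:g00upperLinfinity}), the pointwise bound $g^{ab} X_a X_b \leq (3c_1/2) e^{-2\Omega}|X|^2$ from \eqref{E:gjkuppercomparetostandard}, and a weighted Cauchy--Schwarz on the cross term $-2\upgamma H g^{00} v\, \partial_t v$. The lower bound is already supplied by \eqref{E:mathcalEfirstlowerbound} of Lemma \ref{L:buildingblockmetricenergy}, whose hypothesis $|g^{00}+1|\leq \upeta$ is guaranteed by the bootstrap assumption via \eqref{E:g00upperplusoneLinfinity} once $\totalnorm{N}$ is small enough; combining that estimate with the reverse inequality $g^{ab}(\partial_a v)(\partial_b v)\geq (2/3c_1) e^{-2\Omega}|\underpartial v|^2$ from \eqref{E:gjkuppercomparetostandard} yields the claimed coercivity.

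Once \eqref{E:mathcalEcomparison} is in hand, the metric equivalences \eqref{E:mathfrakENg00mathfrakSNcomparison}--\eqref{E:mathfrakENmathfrakSNcomparison} reduce to bookkeeping. I would apply \eqref{E:mathcalEcomparison} with $v$ chosen in turn to be $\partial_{\vec{\alpha}}(g_{00}+1)$, $\partial_{\vec{\alpha}} g_{0j}$, and $\partial_{\vec{\alpha}} h_{jk}$, sum over $|\vec{\alpha}|\leq N$, and attach the outer weights $e^{2q\Omega},\, e^{2(q-1)\Omega},\, e^{2q\Omega}$ dictated by Definition \ref{D:energiesforg}. Comparing against \eqref{E:mathfrakSNg00}--\eqref{E:mathfrakSNh**} via $\|f\|_{H^N}^2\sim \sum_{|\vec{\alpha}|\leq N}\|\partial_{\vec{\alpha}}f\|_{L^2}^2$ then gives the three equivalences. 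The one mild subtlety is for $\hstarstarenergy{N}$ when $|\vec{\alpha}|=0$: the coefficient $c_{\vec{\alpha}}$ vanishes there, but this matches the fact that $\hstarstarnorm{N}$ controls only $\underpartial h_{jk}$ and not $h_{jk}$ itself, so the two sides remain consistent. Summing yields \eqref{E:mathfrakENmathfrakSNcomparison}.

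The main work is the fluid equivalence \eqref{E:ENSNcomparison}, and I expect this to be the primary obstacle. Setting $(\dot{P},\dot{u}^j)=(\partial_{\vec{\alpha}}(P-\bar{p}),\partial_{\vec{\alpha}} u^j)$ and $\dot{u}^0=-u_a\dot{u}^a/u_0$, I expand $\dot{J}^0$ using \eqref{E:EnergyCurrent}. The coercive pieces are $\frac{u^0}{(1+\speed^2)P}\dot{P}^2\approx \frac{1}{(1+\speed^2)\bar{p}}\dot{P}^2$ and the spatial-spatial term $\frac{(1+\speed^2)Pu^0}{\speed^2}g_{ab}\dot{u}^a\dot{u}^b\approx \frac{(1+\speed^2)\bar{p}}{\speed^2}e^{2\Omega}\delta_{ab}\dot{u}^a\dot{u}^b$, using \eqref{E:PLinfinity}, \eqref{E:u0upperLinfinity}, and \eqref{E:gjkBAvsstandardmetric}. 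The indefinite pieces to absorb are $2\dot{u}^0\dot{P}$, the $g_{00}(\dot{u}^0)^2$ and $2g_{0a}\dot{u}^0\dot{u}^a$ contributions hidden in the expansion of $g_{\alpha\beta}\dot{u}^\alpha\dot{u}^\beta$, and the subleading deviations of the coefficients. The essential point is the pointwise estimate $|\dot{u}^0|\leq C e^{(1-q)\Omega}\totalnorm{N}\,|\dot{u}|$, derived from \eqref{E:u0lowerLinfinity}--\eqref{E:ujlowerLinfinity}, together with $|g_{0a}|\leq C e^{(1-q)\Omega}\gnorm{N}$ from \eqref{E:g0jlowerLinfinity}; these show that each indefinite term is dominated by one of the two coercive pieces by a factor of $\totalnorm{N} e^{-q\Omega}$. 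For $\totalnorm{N}$ small enough, $\dot{J}^0[\partial_{\vec{\alpha}}\mathbf{W},\partial_{\vec{\alpha}}\mathbf{W}]$ is then pointwise equivalent to $|\partial_{\vec{\alpha}}(P-\bar{p})|^2 + e^{2\Omega}\sum_j|\partial_{\vec{\alpha}}u^j|^2$, and summing over $|\vec{\alpha}|\leq N$ and matching against \eqref{E:fluidnorm} yields \eqref{E:ENSNcomparison}.
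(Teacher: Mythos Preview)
Your proposal is correct and follows essentially the same architecture as the paper's proof. The only noteworthy difference is in how you absorb the indefinite cross term $2\dot{u}^0\dot{P}$: the paper uses the specific weighted Young inequality $|2\dot{u}^0\dot{P}|\leq \frac{\speed^2}{(1+\speed^2)P}\dot{P}^2+\frac{(1+\speed^2)P}{\speed^2}(\dot{u}^0)^2$, which leaves the structurally positive piece $\frac{u^0-\speed^2}{(1+\speed^2)P}\dot{P}^2$ as the residual $\dot{P}^2$-coercivity (here $u^0-\speed^2>0$ since $u^0\approx 1$), whereas you bound $|\dot{u}^0|$ pointwise by $C e^{(1-q)\Omega}\totalnorm{N}|\dot{u}|$ and absorb via smallness of $\totalnorm{N}$. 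Both arguments close; the paper's version is slightly sharper in that this particular cross term is absorbed without invoking the smallness hypothesis, but the remaining error terms in both proofs still require $\totalnorm{N}$ small, so the distinction is cosmetic.
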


\begin{proof}
	The inequalities in \eqref{E:mathcalEcomparison} follow from the definition \eqref{E:mathcalEdef} of 
	$\mathcal{E}_{(\gamma,\delta)}[v, \partial v],$ \eqref{E:mathcalEfirstlowerbound}, the definition \eqref{E:totalnorm} of 
	$\totalnorm{N},$ Sobolev embedding, and \eqref{E:gjkuppercomparetostandard}. The inequalities in 
	\eqref{E:mathfrakENg00mathfrakSNcomparison} - \eqref{E:mathfrakENh**mathfrakSNcomparison} then follow from definitions 
	\eqref{E:mathfrakSNg00} - \eqref{E:mathfrakSNh**}, definitions \eqref{E:g00energydef} - \eqref{E:h**energydef}, and 
	\eqref{E:mathcalEcomparison}.

To prove \eqref{E:ENSNcomparison}, for notational convenience, we set $\dot{u}^j \eqdef \partial_{\vec{\alpha}} u^j,$ 
$\dot{P} \eqdef \partial_{\vec{\alpha}}(P - \bar{p}),$ and as in \eqref{E:dot0intermsofdotj}, we define $\dot{u}^0 \eqdef  -\frac{1}{u_0}u_a \dot{u}^a.$ We now recall the definition \eqref{E:EnergyCurrent} of the energy current component $\dot{J}^0$ associated to $\dot{P},$ $\dot{u}^{\mu}:$

\begin{align}
	\dot{J}^0 & \eqdef \frac{u^0}{(1 + \speed^2)P}\dot{P}^2 + 2 \dot{u}^0 \dot{P} 
		+ \frac{(1 + \speed^2)P u^0}{\speed^2} g_{\alpha \beta} \dot{u}^{\alpha} \dot{u}^{\beta}.
\end{align}
Using the fact that $0 < \speed^2 < \frac{1}{3},$ \eqref{E:g00lowerLinfinity}, \eqref{E:g0jlowerLinfinity}, \eqref{E:gjklowerLinfinity}, \eqref{E:u0upperLinfinity}, \eqref{E:u0lowerLinfinity}, \eqref{E:ujlowerLinfinity},
Sobolev embedding, and the algebraic estimate $|2 \dot{u}^0 \dot{P}| \leq \frac{\speed^2}{(1 + \speed^2)P}\dot{P}^2
+ \frac{(1 + \speed^2)P}{\speed^2}\Big(\frac{1}{u_0} u_a \dot{u}^a \Big)^2,$ it follows that

\begin{align} \label{E:Jdot0inequality}
	\dot{J}^0[\partial_{\vec{\alpha}} \mathbf{W}, \partial_{\vec{\alpha}} \mathbf{W}] \eqdef \dot{J}^0  
		& \geq \frac{(u^0 - \speed^2)}{(1 + \speed^2)P} \dot{P}^2 + \frac{(1 + \speed^2)P u^0}{\speed^2} g_{ab}\dot{u}^a \dot{u}^b 
		 \\
	& \ \ - \frac{(1 + \speed^2)P}{\speed^2}\Big(\frac{1}{u_0} u_a \dot{u}^a \Big)^2
		- \frac{(1 + \speed^2)P u^0}{\speed^2}|g_{00}|\Big(\frac{1}{u_0}u_a \dot{u}^a\Big)^2 
		\notag \\
	& \ \ - \frac{2(1 + \speed^2)P u^0}{\speed^2} \Big|\frac{1}{u_0}g_{0a}\dot{u}^a u_b \dot{u}^b \Big| \notag \\
	& \geq C_1 (\dot{P}^2 + e^{2 \Omega} \delta_{ab} \dot{u}^a \dot{u}^b) 
		- C_2 \epsilon^{'''} e^{2(1 - q) \Omega} \delta_{ab} \dot{u}^a \dot{u}^b \notag \\
	& \geq C (\dot{P}^2 + e^{2 \Omega} \delta_{ab} \dot{u}^a \dot{u}^b), \notag
\end{align}
where $\mathbf{W} \eqdef (P - \bar{p}, u^1,u^2,u^3)^T.$ Integrating inequality \eqref{E:Jdot0inequality} over $\mathbb{T}^3,$ it follows that

\begin{align}
	\| \dot{J}^0 \|_{L^1} \geq C \Big\lbrace \| \dot{P} \|_{L^2}^2 + e^{2 \Omega} \sum_{a=1}^3 \| \dot{u}^a \|_{L^2}^2 
		\Big\rbrace.
\end{align}
Similarly, it can be shown that
\begin{align}
	\| \dot{J}^0 \|_{L^1} \leq C \Big\lbrace \| \dot{P} \|_{L^2}^2 + e^{2 \Omega} \sum_{a=1}^3 \| \dot{u}^a 
	\|_{L^2}^2 \Big\rbrace.
\end{align}
Summing over all derivatives $\partial_{\vec{\alpha}} \mathbf{W}$ with $|\vec{\alpha}| \leq N,$ we have that

\begin{align} \label{E:Jdot0toporderderivativesinequality}
	C^{-1} \sum_{|\vec{\alpha}| \leq N} & \Big\lbrace \| \partial_{\vec{\alpha}} (P - \bar{p}) \|_{L^2}^2 
		+ e^{2 \Omega} \sum_{a=1}^3 \| \partial_{\vec{\alpha}} u^a \|_{L^2}^2 \Big\rbrace \\
	& \leq \sum_{|\vec{\alpha}| \leq N} 
		\| \dot{J}^0[\partial_{\vec{\alpha}} \mathbf{W},\partial_{\vec{\alpha}} \mathbf{W}] \|_{L^1} \notag \\
	& \leq C \sum_{|\vec{\alpha}| \leq N} \Big\lbrace \| \partial_{\vec{\alpha}} (P - \bar{p}) \|_{L^2}^2 
		+ e^{2 \Omega} \sum_{a=1}^3 \| \partial_{\vec{\alpha}} u^a \|_{L^2}^2 \Big\rbrace. \notag
\end{align}
Inequality \eqref{E:ENSNcomparison} now follows from \eqref{E:Jdot0toporderderivativesinequality} and the definitions
of $\fluidenergy{N}$ and $\fluidnorm{N}.$

Finally, \eqref{E:mathfrakENmathfrakSNcomparison} and \eqref{E:mathcalQNQNcomparison} follow trivially from definitions \eqref{E:supgnorm}, \eqref{E:totalnorm}, \eqref{E:gtotalenergydef}, and \eqref{E:totalenergy}, and from the previous inequalities.

\end{proof}

\begin{remark}
	The pointwise positivity \eqref{E:Jdot0inequality} of $\dot{J}^0$ is no accident. See \cite{dC2007} and \cite{jS2008b} for 		
	further discussion concerning the coerciveness properties of the current $\dot{J}^{\mu}.$
\end{remark}

\section{Global Existence and Future Causal Geodesic Completeness} \label{S:GlobalExistence}

In this section, we use the estimates derived in Section \ref{S:BootstrapConsequences} to prove our main theorem. More specifically, we show that the modified equations \eqref{E:finalg00equation} - \eqref{E:finalEulerj} have future causally
geodesically complete solutions for initial data near that of the background FLRW solution $(\widetilde{g}_{\mu \nu}, \widetilde{P}, \widetilde{u}^{\mu})$ on $[0,\infty) \times \mathbb{T}^3,$ which is described in Section \ref{S:backgroundsolution}. We emphasize that this aspect of the theorem concerns only the modified equations, and does not necessarily produce a solution to the Euler-Einstein equations \eqref{E:EulerEinstein} + \eqref{E:fluidu} - \eqref{E:fluiduperp}. However, as described in Section \ref{SS:PreservationofHarmonicGauge}, if the Einstein constraint equations and the wave coordinate condition $Q_{\mu} = 0$ are both satisfied along the Cauchy hypersurface $\mathring{\Sigma} = \lbrace x \in \mathcal{M} \ | \ t = 0 \rbrace,$ and the data for the modified equations are constructed from the data for the unmodified Euler-Einstein equations as described in Section \ref{SS:IDREDUCED}, then the solution to the modified equations is also a solution to the Euler-Einstein equations. The main idea of the proof is to show that the energies satisfy a system of integral inequalities that forces them (via Gronwall's inequality) to remain uniformly small on the time interval of existence. By Proposition \ref{P:energynormomparison}, the norms must also remain uniformly small. The continuation principle of Theorem \ref{T:ContinuationCriterion} can then be applied to conclude that the solution exists globally in time.

\subsection{Integral inequalities for the energies} \label{SS:IntegralInequalities}

In this section, we derive the system of integral inequalities that was mentioned in the previous paragraph.

\begin{proposition}[\textbf{Integral Inequalities}] \label{P:IntegralEnergyInequalities}
	Assume that $(g_{\mu \nu}, P, u^j),$ $(\mu,\nu = 0,1,2,3),$ $(j = 1,2,3),$
	is a classical solution to the modified system \eqref{E:finalg00equation} - \eqref{E:finalEulerj} on $[0,T) \times 
	\mathbb{T}^3$ and that the bootstrap assumptions \eqref{E:metricBAeta} - \eqref{E:g0jBALinfinity} hold on $[0,T) \times 
	\mathbb{T}^3.$ Then there exists a constant $\epsilon''''$ with $0 < \epsilon'''' < 1$ and a uniform constant $C$ depending 
	only on $N$ such that if $\totalnorm{N}(t) \leq \epsilon''''$ holds on $[0,T)$ and $t_1 \in [0,T),$ then the following 
	system of integral inequalities is satisfied for $t \in [t_1,T)$ by the energies defined in Section \ref{S:NormsandEnergies}: 	
	\begin{subequations}
	\begin{align}
		U_{N-1}^2(t) & \leq U_{N-1}^2(t_1) \label{E:UNminusoneintegral} \\
			& \ \ + \int_{\tau = t_1}^t \overbrace{2(\decayparameter - 1 + q)}^{< 0} e^{(1 + q) \Omega(\tau)} 
			U_{N-1}^2(\tau) + 
			C \totalenergy{N}(\tau) U_{N-1}(\tau) \, d \tau, \notag \\
		\fluidenergy{N}^2(t) & \leq \fluidenergy{N}^2(t_1) 
			+ C \int_{t_1}^t e^{-q H \tau} \totalenergy{N}^2(\tau) \, d \tau, \label{E:ENintegral} 
		\end{align}
		
		\begin{align}	
		\gzerozeroenergy{N}^2(t) 
			& \leq \gzerozeroenergy{N}^2(t_1) 
			+ \int_{t_1}^t -4qH \gzerozeroenergy{N}^2(\tau) + C  e^{-q H \tau} \totalenergy{N}(\tau) 
			\gzerozeroenergy{N}(\tau) \, d \tau, \label{E:mathfrakENg00integral} \\
		\gzerostarenergy{N}^2(t) & \leq \gzerostarenergy{N}^2(t_1) \label{E:mathfrakENg0*integral} \\
			& \ \ + \int_{t_1}^t -4qH \gzerostarenergy{N}^2(\tau) 
				+ C \hstarstarenergy{N}(\tau)\gzerostarenergy{N}(\tau)
				+ C e^{-q H \tau} \totalenergy{N}(\tau) \gzerostarenergy{N}(\tau) \, d \tau, \notag \\
		\hstarstarenergy{N}^2(t) & \leq \hstarstarenergy{N}^2(t_1)	
			+ \int_{t_1}^t H e^{- q H \tau} \hstarstarenergy{N}^2(\tau)
			+ C e^{-q H \tau}\totalenergy{N}(\tau) \hstarstarenergy{N}(\tau) \, d \tau. \label{E:mathfrakENh**integral}
	\end{align}
	\end{subequations}

\end{proposition}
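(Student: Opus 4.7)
The plan is to derive each inequality by starting from the preliminary differential inequalities already obtained in Section \ref{S:NormsandEnergies} (namely Corollary \ref{C:metricfirstdiferentialenergyinequality} and Corollary \ref{C:fluidenergytimederivative}), bounding every inhomogeneous/commutator/error term on the right-hand side via the Sobolev estimates of Proposition \ref{P:Nonlinearities}, converting Sobolev norms into energies using the equivalence Proposition \ref{P:energynormomparison}, and finally integrating from $t_1$ to $t$. Throughout we freely use that $|\omega - H| \leq C e^{-3(1+\speed^2)H\tau}$ by Lemma \ref{L:backgroundaoftestimate}, which decays faster than $e^{-qH\tau}$ and can therefore be absorbed into any $e^{-qH\tau}$ factor we produce.

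For the three metric energies, I would insert the $H^N$ bounds \eqref{E:triangle00HN}--\eqref{E:trianglejkHN} for $\triangle_{\mu\nu}$, the $L^2$ commutator bounds \eqref{E:g00commutatorL2}--\eqref{E:hjkcommutatorL2}, the $L^1$ bounds \eqref{E:triangleEgamma00delta00L1}--\eqref{E:triangleEgamma**delta**L1} for $\triangle_{\mathcal{E};(\cdot,\cdot)}$, and (for $\gzerostarenergy{N}$) the bound \eqref{E:gabupperGammaajblowerHN} on $g^{ab}\Gamma_{ajb}$ into Corollary \ref{C:metricfirstdiferentialenergyinequality}. The coefficient $-4qH$ in \eqref{E:mathfrakENg00integral}--\eqref{E:mathfrakENg0*integral} emerges because $\upeta_{min}$ was chosen in \eqref{E:etamindef} so large relative to $q$ (cf.\ \eqref{E:qdef}) that $2q - \upeta_{00} \leq -4q$, etc., with room to spare, and the $(\omega - H)\mathcal{E}_{\cdot}^2$ remainders decay fast enough to be swallowed. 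For $U_{N-1}^2$, inequality \eqref{E:UNminusoneintegral} follows immediately by inserting \eqref{E:triangleprimejHNminusone} into \eqref{E:Unormtimederivative}, with the $(\decayparameter - 1 + q)$ coefficient negative by \eqref{E:qdef} and the assumption $0 < \speed^2 < 1/3$.

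The genuinely delicate inequality is the fluid estimate \eqref{E:ENintegral}, which is where the structure of the energy current \eqref{E:EnergyCurrent} matters. The divergence identity \eqref{E:divergenceofdotJ} is \emph{algebraic} in $\dot{\mathbf W}$ (no derivatives of $\dot{\mathbf W}$ appear, a consequence of having substituted \eqref{E:EOV1}--\eqref{E:EOV2} and \eqref{E:dotu0equation} into what was originally a manifestly divergence-form expression), so each summand on the right-hand side of \eqref{E:divergenceofdotJ} is a product of coefficients involving $g_{\mu\nu}, g^{\mu\nu}, P, u^\mu$ and their derivatives times quadratic factors $\dot{P}^2, \dot{P}\dot{u}^a, \dot{u}^a \dot{u}^b$. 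I would estimate each coefficient in $L^\infty$ via the pointwise bounds of Proposition \ref{P:Nonlinearities}, integrate the quadratic piece via \eqref{E:dotu0L2intermsofdotuaL2} and the definition of $\fluidenergy{N},$ and finally bound the inhomogeneous contributions $\frac{2\mathfrak F}{(1+\speed^2)P}\dot P$ and $\frac{(1+\speed^2)P}{\speed^2}g_{\mu\nu}\mathfrak{G}^\mu \dot u^\nu$ using \eqref{E:mathfrakFalphamathfrakGjalphaL2}--\eqref{E:mathfrakG0alphaL2}. Crucially, the negative semidefinite term $\frac{2(1+\speed^2)P}{\speed^2}(\decayparameter - 1)\omega g_{ab}\dot u^a \dot u^b$ is discarded (it has the good sign), and one checks that every surviving coefficient carries at least one factor that decays like $e^{-qH\tau}$: for instance $\partial_t[\frac{u^0}{(1+\speed^2)P}]$ picks up $e^{-qH\tau}$ via \eqref{E:partialtu0upperLinfinity} and \eqref{E:partialtPLinfinity}, the spatial derivatives of the $u^a$ coefficients via $\|u^a\|_{C^1_b} \lesssim e^{-(1+q)\Omega}\totalnorm{N}$, and the $\partial_t g_{ab}$ contributions via the rewriting $\partial_t g_{ab} = 2\omega g_{ab} + e^{2\Omega}\partial_t h_{ab}$, which combines with the factor $(u^0 - 1)$ in \eqref{E:divergenceofdotJ} (rather than $u^0$ alone) to yield the needed $e^{-qH\tau}$ decay through \eqref{E:u0upperLinfinity}.

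The main obstacle is exactly this last accounting: the term $\tfrac{(1+\speed^2)P(u^0 - 1)}{\speed^2}(\partial_t g_{ab})\dot u^a\dot u^b$ is precisely where the restriction $\speed < \sqrt{1/3}$ is needed, since only the decay of $u^0 - 1$ furnished by the improved norm $U_{N-1}$ (discussed in Section \ref{SS:Commentsonanalysis}) produces the $e^{-qH\tau}$ factor that renders the integrand in \eqref{E:ENintegral} of the form $Ce^{-qH\tau}\totalenergy{N}^2(\tau)$ rather than $C\totalenergy{N}^2(\tau)$. Once every summand has been bounded in this form, \eqref{E:fluidenergytimederivative} integrates directly to \eqref{E:ENintegral}, and the proof is complete.
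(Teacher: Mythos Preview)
Your proposal is correct and follows essentially the same approach as the paper: you start from Corollaries \ref{C:metricfirstdiferentialenergyinequality} and \ref{C:fluidenergytimederivative}, insert the error-term estimates of Proposition \ref{P:Nonlinearities}, convert norms to energies via Proposition \ref{P:energynormomparison}, and integrate. In particular, your treatment of the fluid inequality \eqref{E:ENintegral}---dropping the negative $(\decayparameter - 1)\omega g_{ab}\dot u^a\dot u^b$ term, placing each $\dot u^\mu, \dot P, \mathfrak F_{\vec\alpha}, \mathfrak G_{\vec\alpha}^\mu$ in $L^2$ via \eqref{E:dotu0L2intermsofdotuaL2} and \eqref{E:mathfrakFalphamathfrakGjalphaL2}--\eqref{E:mathfrakG0alphaL2}, placing all remaining coefficients in $L^\infty$, and singling out the dangerous $\tfrac{(1+\speed^2)P(u^0-1)}{\speed^2}(\partial_t g_{ab})\dot u^a\dot u^b$ term as the place where the improved decay of $u^0-1$ furnished by $U_{N-1}$ is essential---matches the paper's argument step for step.
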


\begin{proof}
	
	To prove \eqref{E:UNminusoneintegral}, we simply use \eqref{E:Unormtimederivative},
	\eqref{E:triangleprimejHNminusone}, and Proposition \ref{P:energynormomparison} to conclude that
	
	\begin{align} \label{E:Unormtimederivativeagain}
		\frac{d}{dt}\big(U_{N-1}^2 \big) & \leq 2 \overbrace{(\decayparameter - 1 + q)}^{< 0}\omega  e^{(1 + q) \Omega} 
			U_{N-1}^2 + 2 e^{(1 + q) \Omega} U_{N-1} \sum_{a=1}^3 \| \triangle'^a \|_{H^{N-1}}  \\
		& \leq 2 \overbrace{(\decayparameter - 1 + q)}^{< 0} \omega e^{(1 + q) \Omega} 
			U_{N-1}^2 +  C \totalenergy{N} U_{N-1}. \notag
	\end{align}
	Inequality \eqref{E:UNminusoneintegral} now follows from integrating \eqref{E:Unormtimederivativeagain} 
	from $t_1$ to $t.$ 
	
	To prove \eqref{E:ENintegral}, we make use of the following inequality, which we prove below: 
	
	\begin{align} \label{E:divergenceofdotJL1estimate}
		\big\| \partial_{\mu} \big(\dot{J}^{\mu} [\partial_{\vec{\alpha}}\mathbf{W},
			\partial_{\vec{\alpha}} \mathbf{W}] \big) \big\|_{L^1} & \leq C e^{-q \Omega} \totalenergy{N}^2.
	\end{align}
	We also recall inequality \eqref{E:fluidenergytimederivative}:
	
	\begin{align} \label{E:fluidenergytimederivativeagain}
		\frac{d}{dt}\big(\fluidenergy{N}^2 \big) & \leq 
			\sum_{|\vec{\alpha}| \leq N} \big\| \partial_{\mu} \big(\dot{J}^{\mu}[\partial_{\vec{\alpha}} 
			\mathbf{W},\partial_{\vec{\alpha}} \mathbf{W}]\big) \big\|_{L^1}. 
	\end{align}
	Inequality \eqref{E:ENintegral} follows from integrating \eqref{E:fluidenergytimederivativeagain} from $t_1$ to $t$ 
	and using \eqref{E:divergenceofdotJL1estimate}. 
	
	To prove \eqref{E:divergenceofdotJL1estimate}, we first recall equation \eqref{E:divergenceofdotJ},
	where $\dot{\mathbf{W}} = (\dot{P}, \dot{u}^1, \dot{u}^2, \dot{u}^3)^T 
	\eqdef \partial_{\vec{\alpha}}\mathbf{W} = (\partial_{\vec{\alpha}} (P - \bar{p}), 
	\partial_{\vec{\alpha}} u^1,\partial_{\vec{\alpha}} u^2,\partial_{\vec{\alpha}} u^3)^T,$ and as in 
	\eqref{E:dot0intermsofdotj}, $\dot{u}^0 \eqdef -\frac{1}{u_0}u_a \dot{u}^a:$
	
	\begin{align} \label{E:divergenceofdotJagain}
	\partial_{\mu} \big( \dot{J}^{\mu}[\dot{\mathbf{W}}, \dot{\mathbf{W}}] \big) 
		& = \bigg(\partial_t \Big[\frac{u^0}{(1 + \speed^2)P} \Big] \bigg)\dot{P}^2 
		+	\bigg(\partial_a \Big[\frac{u^a}{(1 + \speed^2)P} \Big] \bigg)\dot{P}^2  \\
	& \ \ + \bigg( \partial_t \Big[\frac{(1 + \speed^2)P u^0}{\speed^2} \Big] \bigg) 
		\bigg(g_{00} (\dot{u}^{0})^2 + 2 g_{0a} \dot{u}^{0} \dot{u}^{a} 
		+ g_{ab} \dot{u}^{a} \dot{u}^{b} \bigg)		\notag \\
	& \ \ + \bigg( \partial_a \Big[\frac{(1 + \speed^2)P u^a}{\speed^2} \Big] \bigg) 
		\bigg(g_{00} (\dot{u}^{0})^2 + 2 g_{0a} \dot{u}^{0} \dot{u}^{a} 
		+ g_{ab} \dot{u}^{a} \dot{u}^{b} \bigg) \notag  \\
	& \ \ + \frac{(1 + \speed^2)P u^a}{\speed^2} (\partial_a g_{00}) (\dot{u}^{0})^2
		+ 2 \frac{(1 + \speed^2)P u^a}{\speed^2} (\partial_a g_{0b}) \dot{u}^{0} \dot{u}^{b} 
		\notag \\
	& \ \ + \frac{(1 + \speed^2)P u^a}{\speed^2} (\partial_a g_{lm}) \dot{u}^{l} \dot{u}^{m}
		+ \frac{(1 + \speed^2)P u^0}{\speed^2} (\partial_t g_{00}) (\dot{u}^0) ^2 \notag \\
	& \ \ + \frac{2(1 + \speed^2)P u^0}{\speed^2} (\partial_t g_{0 a}) \dot{u}^0 \dot{u}^a
		+ \frac{(1 + \speed^2)P (u^0 - 1)}{\speed^2} (\partial_t g_{ab}) \dot{u}^a \dot{u}^b \notag \\
	& \ \ - 2 \bigg(\partial_t \Big[ \frac{u_a}{u_0} \Big] \bigg)\dot{u}^a \dot{P}
		+ \frac{(1+\speed^2)P}{\speed^2} (\partial_t g_{ab} - 2 \omega g_{ab})\dot{u}^a \dot{u}^b 
		 \notag \\
	& \ \ + \underbrace{\frac{2(1+\speed^2)P}{\speed^2}(\decayparameter - 1 ) \omega g_{ab}\dot{u}^a \dot{u}^b}_{< 0} 
		+ \frac{2\mathfrak{F}}{(1 + \speed^2)P} \dot{P} \notag \\ 
	& \ \ + \frac{2(1+\speed^2)P}{\speed^2} g_{00} \mathfrak{G}^0 \dot{u}^0
		+ \frac{4(1+\speed^2)P}{\speed^2} g_{0a} \mathfrak{G}^0 \dot{u}^a 
		+ \frac{2(1+\speed^2)P}{\speed^2} g_{ab} \mathfrak{G}^a \dot{u}^b. \notag
\end{align}
In the above expression, the terms $\mathfrak{F} \eqdef \mathfrak{F}_{\vec{\alpha}}$ and $\mathfrak{G} \eqdef \mathfrak{G}_{\vec{\alpha}}^{\mu},$ are given by \eqref{E:FalphamathfrakGjalphainhomogeneousterms} and \eqref{E:mathfrakG0alphainhomogeneousterm}. The estimate \eqref{E:divergenceofdotJL1estimate} now follows from Proposition \ref{P:energynormomparison} and \eqref{E:divergenceofdotJagain} using the following four steps: (i) we drop the negative (since $\decayparameter \eqdef 3 \speed^2 < 1$ and $g_{ab}$ is positive definite) fifth-from-last term on the right-hand side of \eqref{E:divergenceofdotJagain}; (ii) we bound each variation $\dot{u}^{\mu},$ $\mathfrak{F}_{\vec{\alpha}},$ and $\mathfrak{G}_{\vec{\alpha}}^{\mu}$ in $L^2,$ and make use of \eqref{E:dotu0L2intermsofdotuaL2}, \eqref{E:mathfrakFalphamathfrakGjalphaL2}, and \eqref{E:mathfrakG0alphaL2}; (iii) we bound all of the remaining terms in $L^{\infty},$ and use Sobolev embedding together with the estimates \eqref{E:partialtgjkminusomegagjklowerLinfinity}, \eqref{E:partialtgjklowerC1}, \eqref{E:u0upperLinfinity}, \eqref{E:u0lowerLinfinity},
\eqref{E:partialtPLinfinity}, \eqref{E:partialtu0upperLinfinity}, \eqref{E:partialtujupperLinfinity}, \eqref{E:partialtu0lowerLinfinity}, and \eqref{E:partialtujlowerLinfinity}; (iv) we make repeated use of the estimate
$\| v_1 v_2 v_3 \|_{L^1} \leq \| v_1 \|_{L^{\infty}} \| v_2 \|_{L^2} \| v_3 \|_{L^2},$ where $v_2$ and $v_3$ are terms estimated in step ii), and $v_1$ is estimated in step iii).

To prove \eqref{E:mathfrakENg0*integral}, we apply Corollary \ref{C:metricfirstdiferentialenergyinequality},
using \eqref{E:gabupperGammaajblowerHN}, \eqref{E:triangle0jHN}, \eqref{E:g0jcommutatorL2},  \eqref{E:triangleEgamma0jdelta0*L1}, and Proposition \ref{P:energynormomparison} to estimate the terms on the right-hand side of \eqref{E:underlinemathfrakEg0*firstdifferential}, and using 
definition \eqref{E:qdef} to deduce that $2(q-1) - \upeta_{0*} \leq - 4q,$ thereby arriving at the following inequality:

\begin{align} \label{E:gzerostardifferentialenergybound}
	\frac{d}{dt}\Big(\gzerostarenergy{N}^2 \Big)
	& \leq -4qH	\gzerostarenergy{N}^2 + 
		C \hstarstarenergy{N} \gzerostarenergy{N} + C e^{-q H \tau} \totalenergy{N} \gzerostarenergy{N}.
\end{align}
Inequality \eqref{E:mathfrakENg0*integral} now follows from integrating \eqref{E:gzerostardifferentialenergybound} in time. Inequalities \eqref{E:mathfrakENg00integral} and \eqref{E:mathfrakENh**integral} can be proved similarly; we omit the details.
\end{proof}

\begin{remark} \label{R:Dangerousterm}
	The term $C \hstarstarenergy{N} \gzerostarenergy{N}$
	in inequalities \eqref{E:mathfrakENg0*integral} and \eqref{E:gzerostardifferentialenergybound} arises from the 
	$\gzerostarnorm{N} \sum_{j=1}^3 e^{(q-1) \Omega} \| g^{ab}\Gamma_{a j b} \|_{H^N}$ term on the right-hand side of 
	\eqref{E:underlinemathfrakEg0*firstdifferential}. 
	This term is dangerous in the sense that it does not contain an exponentially decaying factor, and looks like it could lead 
	to the growth of $\gzerostarenergy{N}.$ However, as we shall see in 
	the proof of Theorem \ref{T:GlobalExistence}, there is a partial decoupling in the integral inequalities in the sense that
	the $C \hstarstarenergy{N}$ factor in the dangerous term can be controlled independently using inequality 
	\eqref{E:mathfrakENh**integral} alone. We will then insert this information into inequality \eqref{E:mathfrakENg0*integral}, 
	and also make use of the negative term $-4qH \gzerostarenergy{N}^2$ to obtain a bound for $\gzerostarenergy{N}.$
\end{remark}

For completeness, we state the following version of Gronwall's inequality; we omit the simple proof. We will use it in Section \ref{SS:globalexistencetheorem}.

\begin{lemma} \label{L:Gronwall}
	Let $b(t) \geq 0$ be a continuous function on the interval $[t_1,T],$ and let $B(t)$ be an anti-derivative of $b(t).$
	Suppose that $A \geq 0$ and that $y(t) \geq 0$ is a continuous function satisfying the inequality
	\begin{align}
		y(t) \leq A + \int_{t_1}^t b(\tau)y(\tau) \, d \tau
	\end{align}
	for $t \in [t_1,T].$ Then for $t \in [t_1,T],$
	we have
	
	\begin{align}
		y(t) \leq A \exp \Big[B(t) - B(t_1)\Big].
	\end{align}
	
\end{lemma}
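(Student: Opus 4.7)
The plan is to reduce the inequality to a differential inequality for a suitably chosen auxiliary function and then integrate it explicitly. First I would define
\begin{align}
	F(t) \eqdef A + \int_{t_1}^t b(\tau) y(\tau) \, d \tau,
\end{align}
so that $F$ is continuously differentiable on $[t_1,T]$ with $F(t_1) = A,$ the hypothesis reads $y(t) \leq F(t),$ and $F'(t) = b(t) y(t).$ Since $b(t) \geq 0$ and $y(t) \leq F(t),$ this gives the pointwise differential inequality
\begin{align}
	F'(t) \leq b(t) F(t), \qquad t \in [t_1, T].
\end{align}

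Next I would use the standard integrating factor trick: multiply both sides by $\exp[-(B(t) - B(t_1))]$ and observe that
\begin{align}
	\frac{d}{dt} \Big\lbrace F(t) \exp\big[-(B(t) - B(t_1))\big] \Big\rbrace
		= \big[F'(t) - b(t) F(t)\big] \exp\big[-(B(t) - B(t_1))\big] \leq 0,
\end{align}
where we have used that $B'(t) = b(t).$ Hence the function $F(t) \exp[-(B(t) - B(t_1))]$ is non-increasing on $[t_1,T].$ Evaluating at $t_1,$ where the exponential factor is $1$ and $F(t_1) = A,$ it follows that
\begin{align}
	F(t) \exp\big[-(B(t) - B(t_1))\big] \leq A, \qquad t \in [t_1,T],
\end{align}
which rearranges to $F(t) \leq A \exp[B(t) - B(t_1)].$ The conclusion then follows from the pointwise bound $y(t) \leq F(t).$

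There is no real obstacle here; the only thing to verify carefully is that $F$ is actually differentiable (which follows from the continuity of $b$ and $y$ via the fundamental theorem of calculus) so that the differential inequality step is rigorous. No sign issues arise because all quantities $A,$ $b(t),$ $y(t)$ are non-negative, and so $F(t) \geq A \geq 0$ throughout.
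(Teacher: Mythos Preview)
Your proof is correct and is the standard integrating-factor argument for Gronwall's inequality. The paper itself omits the proof entirely (stating only ``we omit the simple proof''), so there is nothing to compare against; your argument would serve perfectly well as the missing details.
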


\hfill $\qed$

In addition, in Section \ref{SS:globalexistencetheorem}, we will apply the following integral inequality to \eqref{E:mathfrakENg0*integral} in order to estimate the energy $\gzerostarenergy{N}(t).$ 

\begin{lemma} \cite[Lemma 11.1.3]{iRjS2009} \label{L:integralinequality}
	Let $b(t) > 0$ be a continuous \textbf{non-decreasing} function on the interval $[0,T],$ and let $\epsilon > 0.$
	Suppose that for each $t_1 \in [0,T],$ $y(t) \geq 0$ is a continuous function satisfying the inequality
	
	\begin{align} \label{E:integralinequality}
		y^2(t) \leq y^2(t_1) + \int_{\tau = t_1}^t - b(\tau)y^2(\tau) + \epsilon y(\tau) \, d \tau 
	\end{align}
	for $t \in [t_1,T].$ Then for any $t_1, t \in [0,T]$ with $t_1 \leq t,$
	we have that
	
	\begin{align} \label{E:integralinequalityconclusion}
		y(t) \leq y(t_1) + \frac{\epsilon}{b(t_1)}.
	\end{align}
	
\end{lemma}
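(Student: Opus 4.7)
The plan is to run a continuity and contradiction argument that exploits the sign structure of the integrand
$-b(\tau)y^2(\tau) + \epsilon y(\tau) = -y(\tau)\bigl[b(\tau)y(\tau) - \epsilon\bigr]$,
which is manifestly non-positive precisely when $y(\tau) \geq \epsilon/b(\tau)$. Set $A \eqdef y(t_1) + \epsilon/b(t_1)$, which is the bound to be proved. Fix an arbitrary $\eta > 0$; I will show that $y(t) < A + \eta$ throughout $[t_1, T]$ and then let $\eta \to 0^+$.

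If the claim failed, continuity of $y$ and the intermediate value theorem would produce a first crossing time $t^*_\eta \in (t_1, T]$ at which $y(t^*_\eta) = A + \eta$ and $y(\tau) < A + \eta$ on $[t_1, t^*_\eta)$. To engage the dissipation, I would next locate the last exit from the regime $\{y \leq \epsilon/b(t_1)\}$ by setting
$s^* \eqdef \sup\{s \in [t_1, t^*_\eta] : y(s) \leq \epsilon/b(t_1)\}$
when this set is non-empty, and $s^* \eqdef t_1$ otherwise; in the first case continuity forces $y(s^*) = \epsilon/b(t_1)$, while in the second $y(t_1) > \epsilon/b(t_1)$. By construction, on $(s^*, t^*_\eta]$ one has $y(\tau) > \epsilon/b(t_1) \geq \epsilon/b(\tau)$, where the monotonicity of $b$ is crucial; hence $-b(\tau)y^2(\tau) + \epsilon y(\tau) \leq 0$ throughout this subinterval.

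Applying the hypothesis on $[s^*, t^*_\eta]$—legitimate because the integral inequality \eqref{E:integralinequality} is assumed to hold for every choice of initial time in $[0,T]$—I then conclude $y^2(t^*_\eta) \leq y^2(s^*)$. In the first case this reads $(A+\eta)^2 \leq (\epsilon/b(t_1))^2$, which forces $y(t_1) + \eta \leq 0$ and contradicts $y \geq 0$ and $\eta > 0$; in the second it reads $(A+\eta)^2 \leq y^2(t_1)$, forcing $\epsilon/b(t_1) + \eta \leq 0$, another contradiction. Thus $y(t) \leq A + \eta$ on $[t_1, T]$ for every $\eta > 0$, and sending $\eta \to 0^+$ yields \eqref{E:integralinequalityconclusion}.

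The main delicate point—and the reason for introducing the auxiliary parameter $\eta$ rather than arguing directly at level $A$—is the borderline case $y(t_1) = 0$, in which $A = \epsilon/b(t_1)$ and the would-be contradiction collapses to the tautology $A^2 \leq A^2$. The $\eta$-perturbation isolates this degeneracy harmlessly. The monotonicity hypothesis on $b$ plays its essential role precisely in guaranteeing that the threshold $\epsilon/b(t_1)$ dominates $\epsilon/b(\tau)$ for all later $\tau$, so that any excursion above this level actually activates the dissipation and drives $y^2$ downward.
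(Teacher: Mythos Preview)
Your proof is correct. It differs from the paper's argument, which is a maximum-point approach: one lets $t_{\max} \in [t_1,T]$ be a point where $y$ attains its maximum and shows that either $t_{\max}=t_1$, or else $y(t_{\max}) \leq \epsilon/b(t_{\max}) \leq \epsilon/b(t_1)$; for if $y(t_{\max}) > \epsilon/b(t_{\max})$, then the integrand is strictly negative on a neighborhood of $t_{\max}$, and applying the hypothesis on a short interval $[t_*,t_{\max}]$ yields $y(t_*) > y(t_{\max})$, contradicting maximality. This actually delivers the slightly sharper bound $y(t) \leq \max\{y(t_1),\,\epsilon/b(t_1)\}$. Your first-crossing-time argument with the $\eta$-buffer is somewhat longer but has the merit of making the borderline case $y(t_1)=0$ completely explicit, whereas the paper's case split absorbs it silently. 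Both proofs rest on the same mechanism: once $y$ exceeds the threshold $\epsilon/b$, the dissipation drives $y^2$ strictly down, and the monotonicity of $b$ permits comparison against the fixed level $\epsilon/b(t_1)$ rather than the moving one.
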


\hfill $\qed$

\begin{proof}
	Let $\mathcal{C}$ be the ``highest'' curve in the $(t,y)$ plane on which the integrand in \eqref{E:integralinequality} 
	vanishes; i.e. $\mathcal{C} = \lbrace (t,y) | y = \frac{\epsilon}{b(t)} \rbrace.$  Then by 
	\eqref{E:integralinequality},
	above $\mathcal{C}$ (i.e. for larger $y$ values), $y(t)$ is \emph{strictly} decreasing. Let $y(t)$ achieve
	its maximum at $t_{max} \in [t_1,T].$ We separate the proof of \eqref{E:integralinequalityconclusion} into two cases. 
	Case i) assume that $t_{max} = t_1.$ Then $y(t) \leq y(t_{max}) = y(t_1)$ for $t \in 
	[t_1, T],$ which implies \eqref{E:integralinequalityconclusion}. Case ii) assume that $t_{max} \in (t_1,T].$ We claim that 
	$y(t_{max}) \leq \frac{\epsilon}{b(t_{max})}.$ For otherwise, the point 
	$\big(t_{max}, y(t_{max})\big)$ lies above $\mathcal{C}.$ Since $y(t)$ is then strictly decreasing in a neighborhood of 
	$t_{max},$ it follows that there are times $t_* < t_{max},$ with $t_* \in (t_1,T),$ at which $y(t_*) < y(t_{max}).$ This 
	contradicts the 
	definition of $t_{max}.$ Using also the fact that $\frac{1}{b(t)}$ is non-increasing, it follows that $y(t) 
	\leq y(t_{max}) \leq \frac{\epsilon}{b(t_{max})} \leq \frac{\epsilon}{b(t_1)};$ this concludes the proof of 
	\eqref{E:integralinequalityconclusion}.
\end{proof}

\subsection{The global existence theorem} \label{SS:globalexistencetheorem}

In this section, we state and prove our main theorem, which provides global existence criteria for the 
modified equations \eqref{E:finalg00equation} - \eqref{E:finalEulerj}, and future-stability criteria for the
unmodified Euler-Einstein equations \eqref{E:EulerEinstein} $+$ \eqref{E:fluidu} - \eqref{E:fluiduperp}.

\begin{theorem}[\textbf{Future-Stability of the FLRW Family}] \label{T:GlobalExistence}
Assume that $0 < \speed^2 < 1/3$ and $N \geq 3.$ Let $\mathring{g}_{\mu \nu} = g_{\mu \nu}|_{t=0},$ $2 \mathring{K}_{\mu \nu} = \partial_t g_{\mu \nu}|_{t=0},$ $(\mu, \nu = 0,1,2,3),$ $\mathring{P} = P|_{t=0} = p|_{t=0},$ $\mathring{u}^j = u^j|_{t=0},$ $(j=1,2,3),$ $g_{\alpha \beta}u^{\alpha}u^{\beta}|_{t=0} = -1,$ be initial data (not necessarily satisfying the wave coordinate condition or the Einstein constraints) on the manifold $\mathbb{T}^3$ for the modified Euler-Einstein system \eqref{E:finalg00equation} - \eqref{E:finalEulerj}, and let $\totalnorm{N} \eqdef \gnorm{N} + \fluidnorm{N} + U_{N-1}$ be the norm defined in \eqref{E:totalnorm}. Assume that there is a constant $c_1 \geq 2$ such that 

\begin{align} \label{E:mathringgjklowerequivalenttostandardmetric}
	\frac{2}{c_1} \delta_{ab} X^a X^b & \leq \mathring{g}_{ab}X^a X^b \leq \frac{c_1}{2} \delta_{ab} X^a X^b,&&
		\forall(X^1,X^2,X^3) \in \mathbb{R}^3.
\end{align}

Then there exist a small constant $\epsilon_0$ with $0 < \epsilon_0 < 1$ 
and a large constant $C_*$ such that if $\epsilon \leq \epsilon_0$ and $\totalnorm{N}(0) \leq C_*^{-1} \epsilon,$ then the classical solution $(g_{\mu \nu}, P=e^{3(1+\speed^2)\Omega}p, u^{\mu})$ provided by Theorem \ref{T:LocalExistence} exists on $\mathcal{M} \eqdef [0,\infty) \times \mathbb{T}^3,$ and

\begin{align} \label{E:QNgloballessthanepsilon}
	\totalnorm{N}(t) & \leq \epsilon 
\end{align}
holds for all $t \geq 0.$ Furthermore, the time $T_{max}$ from the hypotheses of Theorem \ref{T:ContinuationCriterion} is infinite, and the spacetime-with-boundary $(\mathcal{M}, g_{\mu \nu})$ is future causally geodesically complete.

Finally, if the initial data $(\mathring{g}_{\mu \nu}, \mathring{K}_{\mu \nu}, \mathring{P}= \mathring{p}, \mathring{u}^j)$
for the modified system are constructed from initial data $(\mathring{\underline{g}}_{jk}, \mathring{\underline{K}}_{jk}, \mathring{p}, \underline{\mathring{u}}^j),$ $(j,k = 1,2,3),$ for the unmodified Euler-Einstein equations \eqref{E:EulerEinstein} $+$ \eqref{E:fluidu} - \eqref{E:fluiduperp} as described in Section \ref{SS:IDREDUCED},
then the solution $(g_{\mu \nu}, p = e^{-3(1+\speed^2)\Omega}P, u^{\mu})$ to the modified system is also a future causally geodesically complete solution to the unmodified equations.

\end{theorem}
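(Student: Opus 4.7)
The plan is a continuous-induction/bootstrap argument driven by the integral inequalities of Proposition \ref{P:IntegralEnergyInequalities}, combined with the norm/energy equivalence of Proposition \ref{P:energynormomparison} and the continuation principle of Theorem \ref{T:ContinuationCriterion}. I would fix $\epsilon_0$ small enough that all estimates in Sections \ref{S:LinearAlgebra}--\ref{S:EnergyNormEquivalence} are available whenever $\totalnorm{N}\le\epsilon_0$, and then define $T_{\mathrm{boot}}$ to be the supremum of times $T\in[0,T_{\max})$ such that $\totalnorm{N}(t)\le\epsilon$ on $[0,T]$. Choosing $C_\ast$ large enough that $\totalnorm{N}(0)\le C_\ast^{-1}\epsilon$ guarantees $T_{\mathrm{boot}}>0$ by continuity. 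The goal is to upgrade $\totalnorm{N}(t)\le\epsilon$ to $\totalnorm{N}(t)\le\epsilon/2$ on $[0,T_{\mathrm{boot}})$, which (via continuity and Proposition \ref{P:energynormomparison}) forces $T_{\mathrm{boot}}=T_{\max}$, and then to rule out each of the four breakdown scenarios of Theorem \ref{T:ContinuationCriterion} to conclude $T_{\max}=\infty$.

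The upgrade is carried out by exploiting the partial hierarchy in the system \eqref{E:UNminusoneintegral}--\eqref{E:mathfrakENh**integral}. First I would bound $\hstarstarenergy{N}$: inequality \eqref{E:mathfrakENh**integral} has an integrable coefficient $He^{-qH\tau}$ in front of $\hstarstarenergy{N}^2$ and an integrable forcing coefficient $Ce^{-qH\tau}$, so Gronwall (Lemma \ref{L:Gronwall}) applied to $\hstarstarenergy{N}$ together with $\totalenergy{N}\le C\epsilon$ yields $\hstarstarenergy{N}(t)\le C\bigl(\hstarstarenergy{N}(0)+\epsilon\bigr)$ uniformly in $t$. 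Next, $\gzerozeroenergy{N}$ and $\fluidenergy{N}$ both come with an integrable forcing $Ce^{-qH\tau}\totalenergy{N}$, so direct application of Cauchy--Schwarz and Gronwall gives uniform bounds of the form $C(\totalenergy{N}(0)+\epsilon)$. For $\gzerostarenergy{N}$, the dangerous term $C\hstarstarenergy{N}\gzerostarenergy{N}$ in \eqref{E:mathfrakENg0*integral} has no decay, but I can now treat the already-controlled quantity $C\hstarstarenergy{N}$ as an effective constant forcing of size $C_\sharp\epsilon$; the presence of the strong damping $-4qH\gzerostarenergy{N}^2$ then lets me invoke Lemma \ref{L:integralinequality} to get $\gzerostarenergy{N}(t)\le \gzerostarenergy{N}(0)+C\epsilon/(4qH)$. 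Finally, $U_{N-1}$ is handled by \eqref{E:UNminusoneintegral}: since $\decayparameter-1+q<0$ and $\omega e^{(1+q)\Omega(\tau)}\to\infty$, the damping coefficient in $U_{N-1}^2$ grows exponentially, so Lemma \ref{L:integralinequality} gives $U_{N-1}(t)\le U_{N-1}(0)+C\epsilon/b(0)$ with $b$ large. Adding these five bounds and using Proposition \ref{P:energynormomparison}, $\totalnorm{N}(t)\le C_\ast^{-1}C'\epsilon+C''\epsilon\cdot\epsilon$, which is $\le \epsilon/2$ provided $C_\ast$ is large and $\epsilon_0$ is small.

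The main obstacle I expect is precisely this ordering: the $C\hstarstarenergy{N}\gzerostarenergy{N}$ term in \eqref{E:mathfrakENg0*integral} is not integrable in time, so naive Gronwall on the full system loses control. The key observation, already isolated in Remark \ref{R:Dangerousterm}, is that the inequality for $\hstarstarenergy{N}$ decouples from the others (modulo the integrable $\totalenergy{N}$ forcing, which is absorbed by the bootstrap) and that the $-4qH\gzerostarenergy{N}^2$ damping in \eqref{E:mathfrakENg0*integral} is strong enough to counter a bounded forcing. A secondary subtlety is that \eqref{E:UNminusoneintegral} uses a \emph{stronger} damping than a pure $\speed<\sqrt{1/3}$ computation would suggest; the weight $e^{(1+q)\Omega}$ multiplying the dissipative coefficient is what yields the extra $L^\infty$ decay of the $u^j$ built into $U_{N-1}$, and the definition \eqref{E:qdef} of $q$ is precisely what makes $\decayparameter-1+q<0$. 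Once the bootstrap is closed, ruling out the four continuation scenarios is routine: $g_{00}<0$, the $g_{jk}$-eigenvalue bound, and $P>0$ all follow from $\totalnorm{N}\le\epsilon$ via Proposition \ref{P:BoostrapConsequences} (since $g_{00}+1$, $h_{jk}-\delta_{jk}$, $P-\bar p$ stay uniformly small), and the $C_b^2$/$C_b^1$ bound on $(g,P,u)$ from Sobolev embedding and the bounds in Proposition \ref{P:Nonlinearities}.

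For future causal geodesic completeness, I would follow the now-standard argument (cf.\ \cite{hR2008}, \cite{iRjS2009}): for a future-directed causal geodesic $\gamma^\mu(\tau)$, the geodesic equation together with $\Gamma_{j\ k}^{\ 0}=\omega g_{jk}+\triangle_{j\ k}^{\ 0}$ and the $L^\infty$ estimates of Proposition \ref{P:Nonlinearities} show that $\dot\gamma^0$ stays bounded and bounded away from zero, while the spatial components $\dot\gamma^j$ decay like $e^{-2\Omega}$. Since $\dot t=\dot\gamma^0$ is bounded, the affine parameter $\tau$ is comparable to $t$ along $\gamma$, and $t\to\infty$ is reached in infinite affine parameter. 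Finally, if the data are constructed as in Section \ref{SS:IDREDUCED} from constraint-satisfying Einstein data, Proposition \ref{P:Preservationofgauge} gives $Q_\mu\equiv 0$ on $\mathcal{M}=\mathcal{D}(\mathring\Sigma)$, so the solution of the modified system also solves the unmodified Euler--Einstein system, and global existence plus geodesic completeness transfer immediately.
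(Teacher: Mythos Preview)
Your overall strategy (bootstrap, hierarchy of energy inequalities, continuation principle) matches the paper's, and your treatment of the rough bootstrap assumptions, geodesic completeness, and preservation of the wave coordinate condition is fine. However, there is a genuine gap in the core bootstrap improvement step.

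Trace your own bounds. For $\hstarstarenergy{N}$ you get $\hstarstarenergy{N}(t)\le C(\hstarstarenergy{N}(0)+\epsilon)$; for $\gzerozeroenergy{N}$ and $\fluidenergy{N}$ you get $C(\totalenergy{N}(0)+\epsilon)$; for $\gzerostarenergy{N}$ you correctly obtain $\gzerostarenergy{N}(0)+C\epsilon/(4qH)$; and for $U_{N-1}$ you get $U_{N-1}(0)+C\epsilon/b(0)$, where $b(0)=2|\decayparameter-1+q|\,\omega(0)$ is a \emph{fixed} constant (not ``large''---$\Omega(0)=0$). Summing gives $\totalnorm{N}(t)\le C\mathring\epsilon + K\epsilon$ for a constant $K$ that depends only on $N,c_1,\upeta_{min},q,H$. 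Your claimed final bound $C_\ast^{-1}C'\epsilon + C''\epsilon\cdot\epsilon$ is not what your argument produces: the contribution from the integrable forcings and from the dangerous undamped term $C\hstarstarenergy{N}\gzerostarenergy{N}$ is \emph{linear} in $\epsilon$, not quadratic. Since there is no mechanism to make $K<1/2$, the bootstrap does not close by taking $C_\ast$ large and $\epsilon_0$ small.

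The paper resolves this with a two-timescale device that you are missing. One first proves a crude Gronwall bound $\totalnorm{N}(t)\le C\mathring\epsilon\,e^{ct}$ valid on the whole bootstrap interval, then restarts the refined estimates from a late time $t_1$. Because every forcing term in \eqref{E:ENintegral}--\eqref{E:mathfrakENh**integral} carries a factor $e^{-qH\tau}$ (or is fed by $\hstarstarenergy{N}$, which itself is controlled this way), integrating from $t_1$ produces bounds of the form $C\{\mathring\epsilon\,e^{ct_1}+\epsilon\,e^{-qHt_1/2}\}$: the $\epsilon$-linear contribution now comes with the small factor $e^{-qHt_1/2}$. One then fixes $t_1$ so that $Ce^{-qHt_1/2}<1/4$, and only \emph{afterwards} chooses $\mathring\epsilon$ (equivalently $C_\ast$) so that $C\mathring\epsilon\,e^{ct_1}<\epsilon/4$. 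This order of choices is what makes the argument close; a direct $t_1=0$ approach, as in your proposal, cannot.
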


\begin{remark}
	It is possible to restate the stability criteria in terms of quantities that manifestly
	depend only on the closeness of the initial data $(\mathbb{T}^3, \mathring{\underline{g}}_{jk}, 
	\mathring{\underline{K}}_{jk}, \mathring{p}, \underline{\mathring{u}}^j)$ for the unmodified system to
	the corresponding data for the FLRW background solution $(\widetilde{g}, \widetilde{p}, \widetilde{u}).$   
	For example, a sufficient condition for global existence and future causal geodesic completeness would be
	
	\begin{align} \label{E:AbstractDataSmall}
		\sum_{j,k=1}^3 \| \mathring{\underline{g}}_{jk} & - \delta_{jk} \|_{H^{N+1}}
			+ \sum_{j,k=1}^3 \| \mathring{\underline{K}}_{jk} - \omega(0) \delta_{jk} \|_{H^N} \\
		& \ \ + \| \mathring{p} - \bar{p} \|_{H^N} 
			+ \sum_{j=1}^3 \| \underline{\mathring{u}}^j \|_{H^N} \leq \epsilon, \notag 
	\end{align}
	where $\epsilon$ is sufficiently small. This is because the condition 
	\eqref{E:AbstractDataSmall} implies that $\totalnorm{N}(0) \leq C \epsilon$
	and furthermore (by Sobolev embedding) that a condition of the form \eqref{E:mathringgjklowerequivalenttostandardmetric} holds
	(i.e. that the hypotheses of Theorem \ref{T:GlobalExistence} hold).
	
	To see that $\totalnorm{N}(0) \leq C \epsilon$ follows from \eqref{E:AbstractDataSmall}, we first 
	use the definition \eqref{E:totalnorm} of $\totalnorm{N}(0),$ 
	the construction of the modified data described in Section \ref{SS:IDREDUCED},
	and the triangle inequality to deduce that
	
	\begin{align} \label{E:TotalInitialNormInTermsofAbstract}
		\totalnorm{N}(0) & \leq 2 \| 3 \omega(0)  - \mathring{\underline{g}}^{ab} \mathring{\underline{K}}_{ab} \|_{H^N}
			+ \sum_{j,k=1}^3 \| \underpartial \mathring{\underline{g}}_{jk} \|_{H^N}
			+ 2 \sum_{j,k=1}^3 \| \omega(0)\mathring{\underline{g}}_{jk} - \mathring{\underline{K}}_{jk} \|_{H^N} \\
		& \ \ + \sum_{j=1}^3 \| \mathring{\underline{g}}^{ab}\big(\partial_a \mathring{\underline{g}}_{bj} - \frac{1}{2} \partial_j 
			\mathring{\underline{g}}_{ab}\big) \|_{H^N}
			+ \| \mathring{p} - \bar{p} \|_{H^N} + \sum_{j=1}^3 \| \underline{\mathring{u}}^j \|_{H^N} \notag \\
		& \leq 2 \| \big(\mathring{\underline{g}}^{ab} - \delta^{ab} \big)\mathring{\underline{K}}_{ab} \|_{H^{N}}
			+ 2 \| \delta^{ab} \big(\mathring{\underline{K}}_{ab} - \omega(0) \delta_{ab}\big) \|_{H^{N}} 
			\notag \\
		& \ \ + \sum_{j,k=1}^3 \| \underpartial \mathring{\underline{g}}_{jk} \|_{H^N}
			+ 2 \omega(0) \sum_{j,k=1}^3 \| \mathring{\underline{g}}_{jk} - \delta_{jk} \|_{H^N} 
			 \notag \\
		& \ \ + 2 \sum_{j,k=1}^3 \|\mathring{\underline{K}}_{jk} - \omega(0) \delta_{jk} \|_{H^N}
			+ \sum_{j=1}^3 \| \mathring{\underline{g}}^{ab}\big(\partial_a \mathring{\underline{g}}_{bj} - \frac{1}{2} \partial_j 
			\mathring{\underline{g}}_{ab}\big) \|_{H^N} \notag \\
		& \ \ + \| \mathring{p} - \bar{p} \|_{H^N} + \sum_{j=1}^3 \| \underline{\mathring{u}}^j \|_{H^N}. \notag
	\end{align}
	Now using Corollary \ref{C:SobolevTaylor}, Proposition \ref{P:F1FkLinfinityHN}, and Sobolev embedding,
	it follows that if \eqref{E:AbstractDataSmall} holds and if $\epsilon$ is sufficiently small, 
	then the right-hand side of \eqref{E:TotalInitialNormInTermsofAbstract} is $\leq C \epsilon.$
\end{remark}

\begin{proof}
	See Remark \ref{R:ProofsRemark} for some conventions that we use throughout this proof. We only discuss the issue of global 
	existence and obtaining the uniform bound $\totalnorm{N}(t) \leq \epsilon$ for $t \in [0,\infty).$ 
	The future causal geodesic completeness of the resulting spacetime-with-boundary follows from this bound via the arguments 
	given in \cite[Propositions 3 and 4]{hR2008} and \cite[Theorem 11.2]{iRjS2009}.
	Our global existence proof relies upon a standard bootstrap-style argument that ultimately relies on Theorem 
	\ref{T:ContinuationCriterion}; i.e., we will make assumptions concerning the size of the energies and concerning 
	$g_{\mu \nu},$ and we will use these assumptions, together with assumptions on the data, to deduce an improvement. 
	In effect, we will avoid the four breakdown possibilities of Theorem \ref{T:ContinuationCriterion},
	which will allow us to conclude global existence.
	
	To begin the analysis, we invoke Theorem \ref{T:LocalExistence}, which shows that if $\epsilon$ is small enough and 
	$\totalnorm{N}(0) < \epsilon,$ then there is a local solution $(g_{\mu \nu}, P, u^{\mu})$ existing on a (non-trivial)
	maximal interval $[0,T)$ on which the following bootstrap assumptions hold:
	
	\begin{align} 
		\totalnorm{N}(t) & \leq \epsilon, \label{E:proofbootstrapQN} \\
		|g_{00} + 1| & \leq \upeta_{min}, \label{E:g00plusoneproofBAeta} \\
		c_1^{-1} \delta_{ab}X^{a}X^{b} & \leq e^{-2 \Omega} g_{ab} X^{a}X^{b} 
			\leq c_1 \delta_{ab}X^{a}X^{b},&& \forall (X^1,X^2,X^3) \in \mathbb{R}^3, 
			\label{E:gjproofBAkvsstandardmetric} \\
			\sum_{a=1}^3 |g_{0a}|^2 & \leq \upeta_{min} c_1^{-1} e^{2(1 - q) \Omega}, \label{E:g0jproofBALinfinity} \\
			P > 0. \label{E:Ppositive}
		\end{align}
	Observe that the rough bootstrap assumptions \eqref{E:metricBAeta} - \eqref{E:g0jBALinfinity} are included in
	the above assumptions. By maximal interval, we mean that 
	\begin{align}
		T \eqdef \sup \big\lbrace t \geq 0 \ | \ \mbox{The solution exists on} \ [0,t] \times \mathbb{T}^3, \ \mbox{and} \ 
		\eqref{E:proofbootstrapQN} - \eqref{E:Ppositive} \ \mbox{hold} \big\rbrace.
	\end{align}
	We may assume that $T < \infty,$ since otherwise the theorem follows. The remainder of this proof is dedicated to reaching a 
	contradiction if $\epsilon$ is small enough and $C_*$ is large enough. 
	
	Our first assumption, which we use repeatedly throughout this proof, is that $\epsilon$ is small enough so that 
	Propositions \ref{P:BoostrapConsequences}, \ref{P:Nonlinearities}, and \ref{P:energynormomparison}, are valid on $[0,T).$
	We will make repeated use of Proposition \ref{P:energynormomparison} throughout this proof without explicitly mentioning it
	each time.
	
	We now address the bootstrap assumptions \eqref{E:g00plusoneproofBAeta} - \eqref{E:Ppositive}, 
	with the intent of showing an improvement. First, we note that the assumption $\totalnorm{N} \leq \epsilon$ implies that
	
	\begin{align} \label{E:partialthnksmall}
		\| \partial_t (e^{-2 \Omega}g_{jk}) \|_{L^{\infty}}= \| \partial_t h_{jk} \|_{L^{\infty}} & \leq C \epsilon e^{-qHt}.
	\end{align}
	We then use \eqref{E:partialthnksmall} to integrate in time from $t=0,$ concluding that
	
	\begin{align} \label{E:partialthnksmallintegrated}
		\| e^{-2 \Omega} g_{jk}(t,\cdot) - \mathring{g}_{jk}(\cdot) \|_{L^{\infty}}
		\leq C \epsilon. 
	\end{align}
	By \eqref{E:mathringgjklowerequivalenttostandardmetric} and \eqref{E:partialthnksmallintegrated}, 
	it follows that if $\epsilon$ is small enough, then on $[0,T) \times \mathbb{T}^3,$ we have that
	
	\begin{align} \label{E:gabequivalentbootstrapimprovement}
		\frac{3}{2c_1} \delta_{ab} X^a X^b & \leq e^{-2 \Omega} g_{ab}X^a X^b \leq \frac{2c_1}{3} \delta_{ab} X^a X^b, &&
			\forall (X^1,X^2,X^3) \in \mathbb{R}^3.
	\end{align}
	Furthermore, by the definition \eqref{E:totalnorm} of $\totalnorm{N},$ Sobolev embedding, and 
	\eqref{E:proofbootstrapQN}, the following inequalities hold on $[0,T):$
	
	\begin{align}
		\| g_{00} + 1 \|_{L^{\infty}} & \leq C \epsilon, \label{E:g00bootstrapimprovement} \\
		\| g_{0j} \|_{L^{\infty}} & \leq C \epsilon e^{(1-q)Ht}, \label{E:g0jbootstrapimprovement} \\
		\| P - \bar{p} \|_{L^{\infty}} & \leq C \epsilon e^{-qHt}. \label{E:Pbootstrapimprovement}
	\end{align}
	Inequalities \eqref{E:gabequivalentbootstrapimprovement} - \eqref{E:Pbootstrapimprovement} 
	show that if $\epsilon$ is small enough, then the bootstrap assumptions \eqref{E:g00plusoneproofBAeta} - 
	\eqref{E:Ppositive} can be \emph{strictly improved} on the interval $[0,T).$ 
	
	To complete our proof of the theorem, we will show that if $\epsilon$ is small enough and $C_*$ is large enough, then the 
	bootstrap assumption \eqref{E:proofbootstrapQN} can be improved by replacing $\epsilon$ with $\epsilon/2;$ the primary tool 
	for deducing an improvement is Proposition \ref{P:IntegralEnergyInequalities}. Throughout the remainder of the proof, we set 
	$\totalnorm{N}(0) \eqdef \mathring{\epsilon}.$ To begin our proof of an improvement 
	of \eqref{E:proofbootstrapQN}, we use a very non-optimal application of Proposition 
	\ref{P:IntegralEnergyInequalities} with $t_1 = 0,$ deducing (with the help of Proposition \ref{P:energynormomparison})
	that on $[0,T),$ we have that
	
	\begin{align} \label{E:mathcalQNGronwallready}
		\totalnorm{N}^2(t) & \leq C \totalnorm{N}^2(0) + \int_{\tau=0}^{t} c \totalnorm{N}^2(\tau) \, d\tau.
	\end{align}
	Applying Lemma \ref{L:Gronwall} (Gronwall's inequality) to \eqref{E:mathcalQNGronwallready}, using
	$\totalnorm{N}(0) = \mathring{\epsilon},$ and using Proposition \ref{P:energynormomparison}, we 
	conclude that the following weak inequalities hold on $[0,T):$
	
	\begin{align} 
		\totalnorm{N}(t) & \leq C \mathring{\epsilon} e^{ct}, \label{E:weaknorminequality} \\
		\totalenergy{N}(t) & \leq C \mathring{\epsilon} e^{ct}. \label{E:weakenergyinequality}
	\end{align}
	
	\begin{remark} \label{R:weaklifespan}
		The weak inequality \eqref{E:weaknorminequality} already shows that the time of existence is at least of 
		order $c^{-1} |\mbox{ln}(C \mathring{\epsilon})|,$ if $\mathring{\epsilon}$ is sufficiently small.
	\end{remark}

	We now fix a time $t_1 \in [0,T);$ $t_1$ will be adjusted at the end of the proof. Roughly speaking, it will play the role of
	a time that is large enough so that the exponentially damped terms on the right-hand sides of the inequalities of
	Proposition \ref{P:IntegralEnergyInequalities} are of size $\ll \epsilon.$
	
	We now estimate the energy $\gzerozeroenergy{N}(t).$ To begin, we drop 
	the negative term $-4qH \gzerozeroenergy{N}^2(\tau)$ on the right-hand side of inequality \eqref{E:mathfrakENg00integral}
	and use the bootstrap assumption \eqref{E:proofbootstrapQN} to deduce that
	
	\begin{align} \label{E:mathfrakENg00differentialtwoterms}
			\gzerozeroenergy{N}^2(t) & \leq \gzerozeroenergy{N}^2(t_1)
			+ C \epsilon^2 \int_{t_1}^t e^{-q H \tau} \, d \tau.
	\end{align}
	Using \eqref{E:weakenergyinequality} at time $t_1$ and performing the integration on the right-hand side of 
	\eqref{E:mathfrakENg00differentialtwoterms}, we have the following inequality for $t \in [t_1,T):$
	
	\begin{align} \label{E:mathfrakEg00Gronwallready}
			\gzerozeroenergy{N}(t) \leq C \lbrace \mathring{\epsilon} e^{ct_1} + \epsilon e^{-qHt_1/2} \rbrace.
	\end{align}
	Also using \eqref{E:weakenergyinequality} to obtain an upper bound
	for $\gzerozeroenergy{N}(t)$ on $[0,t_1],$ we have established the following inequality, which is valid 
	for $t \in [0,T):$
	
	\begin{align} \label{E:mathfrakEg00Gronwall}
		\gzerozeroenergy{N}(t) & \leq C \lbrace \mathring{\epsilon} e^{ct_1} + \epsilon e^{-qHt_1 /2} \rbrace.
	\end{align}

	To estimate $\hstarstarenergy{N},$ we can appeal to inequality \eqref{E:mathfrakENh**integral} argue as we did for 
	$\gzerozeroenergy{N},$ thus obtaining that the following inequality holds for $t \in [0,T):$
	
	\begin{align} \label{E:mathfrakEh**Gronwall}
		\hstarstarenergy{N}(t) & \leq C \lbrace \mathring{\epsilon} e^{ct_1} + \epsilon e^{-qHt_1/2} \rbrace. 
	\end{align}

	To estimate $\gzerostarenergy{N}(t),$ we use \eqref{E:mathfrakENg0*integral}, the bootstrap assumption 
	\eqref{E:proofbootstrapQN}, and \eqref{E:mathfrakEh**Gronwall} to arrive at the 
	following inequality valid for $t \in [t_1,T)$:
	
	\begin{align} \label{E:mathfrakE0*Gronwallready}
		\gzerostarenergy{N}^2(t) & \leq 
			\gzerostarenergy{N}^2(t_1) 
			+ \int_{\tau = t_1}^t -4qH \gzerostarenergy{N}^2(\tau) + C \lbrace \mathring{\epsilon} e^{ct_1} 
			+ \epsilon e^{-qHt_1 /2} \rbrace \gzerostarenergy{N}(\tau)  \, d \tau. 
	\end{align}
	Applying Lemma \ref{L:integralinequality} to \eqref{E:mathfrakE0*Gronwallready}, with 	
	$y(t) = \gzerostarenergy{N}(t)$ and $b(t) = 4 q H$ in the lemma, and also using
	\eqref{E:weakenergyinequality} at time $t_1,$ we conclude that the following inequality holds on $[0,T):$
	
	\begin{align} \label{E:mathfrakE0*Gronwall}
		\gzerostarenergy{N}(t) & \leq C \lbrace \mathring{\epsilon} e^{ct_1} + \epsilon e^{-qHt_1/2} \rbrace. 
	\end{align}

	To estimate $U_{N-1}(t)$ on $[t_1,T),$ we first use \eqref{E:UNminusoneintegral} and \eqref{E:proofbootstrapQN} to 
	deduce that for $t \in [t_1,T),$ we have
	
	\begin{align} \label{E:UNminusoneGronwallready}
		U_{N-1}^2(t) & \leq U_{N-1}^2(t_1) + \int_{\tau = t_1}^t   
			\overbrace{2(\decayparameter - 1 + q)}^{< 0} e^{(1 + q) H \tau} U_{N-1}^2(\tau) + 
			C \epsilon U_{N-1}(\tau) \, d \tau.
	\end{align} 
	Applying Lemma \ref{L:integralinequality}, to \eqref{E:UNminusoneGronwallready}, with 
	$y(t) = U_{N-1}(t)$ and $b(t) = |2(\decayparameter - 1 + q)|e^{(1 + q) H t}$ in the lemma, and also using 
	\eqref{E:weakenergyinequality}, it follows that on $[0,T),$ we have
	
	\begin{align} \label{E:UNminusoneGronwall}
		U_{N-1}(t) & \leq C \lbrace \mathring{\epsilon} e^{ct_1} 
			+ \epsilon e^{-qHt_1 /2} \rbrace.
	\end{align}

	To estimate $\fluidenergy{N}(t)$ on $[t_1,T),$ we simply use \eqref{E:proofbootstrapQN} and 
	\eqref{E:weakenergyinequality} to estimate the two terms on the right-hand side of \eqref{E:ENintegral}:
	
	\begin{align} \label{E:ENdifferentialtwoterms}
		\fluidenergy{N}^2(t) \leq \fluidenergy{N}^2(t_1) + C \epsilon^2 \int_{\tau=t_1}^{t} e^{-q H \tau}  \, d\tau
			\leq C \lbrace \mathring{\epsilon} e^{ct_1} + \epsilon e^{-qHt_1/2} \rbrace^2. 
	\end{align}
	Also using \eqref{E:weakenergyinequality} to estimate $\fluidenergy{N}(t)$ on $[0,t_1],$ 
	we conclude that the following inequality is valid on $[0,T):$
	
	\begin{align} \label{E:ENGronwall}
		\fluidenergy{N}(t) & \leq C \lbrace \mathring{\epsilon} e^{ct_1} + \epsilon e^{-qHt_1/2} \rbrace.
	\end{align}

	Adding \eqref{E:mathfrakEg00Gronwall}, \eqref{E:mathfrakEh**Gronwall}, 
	\eqref{E:mathfrakE0*Gronwall}, \eqref{E:UNminusoneGronwall}, and \eqref{E:ENGronwall}, referring to definition 
	\eqref{E:totalnorm}, and using Proposition \ref{P:energynormomparison}, it follows that on $[0,T)$
	
	\begin{align} \label{E:QNGronwall}
		\totalnorm{N}(t) & \leq C \lbrace \mathring{\epsilon} e^{ct_1} + \epsilon e^{-qHt_1/2} \rbrace.
	\end{align}
	We now choose $t_1$ such that $C e^{-qHt_1/2} < \frac{1}{4},$ and $\mathring{\epsilon}$ such that
	$\mathring{\epsilon} C e^{c t_1} \leq \frac{1}{4} \epsilon,$ where the constant $C$ is from the right-hand
	side of \eqref{E:QNGronwall}. This implies (with the help of Proposition \ref{P:energynormomparison}) that on $[0,T),$
	we have that
	
	\begin{align}
		\totalnorm{N}(t) & \leq \frac{1}{2} \epsilon. \label{E:QNbootstrapimprovement}
	\end{align}
	We remark that in order to guarantee that the solution exists long enough (i.e. that $T$ is large enough) 
	so that $t_1 \in [0,T),$ we may have to further shrink $\mathring{\epsilon};$ see Remark \ref{R:weaklifespan}. We also
	remark that by the above reasoning, it follows that the constant $C_*$ from the conclusions of the theorem can be chosen to 
	be $4 C e^{c t_1},$ where $C$ is from the right-hand side of \eqref{E:QNGronwall}.
	
	Combining \eqref{E:gabequivalentbootstrapimprovement} - \eqref{E:Pbootstrapimprovement}
	and \eqref{E:QNbootstrapimprovement}, and using Sobolev embedding, it now follows that none of the four existence-breakdown
	scenarios stated in the conclusions of Theorem \ref{T:ContinuationCriterion} occur. Using  
	the continuity of $\totalnorm{N}(t),$ it also follows that the solution can be extended to an interval $[0, T + \delta]$ 
	on which the bootstrap assumptions \eqref{E:proofbootstrapQN} - \eqref{E:Ppositive} hold. This contradicts the 
	maximality of $T$ and completes the proof of the theorem. 

\end{proof}

\section{Asymptotics} \label{S:Asymptotics}

In this section, we provide a theorem that strengthens the conclusions of Theorem \ref{T:GlobalExistence}. More specifically, we
show that $g_{\mu \nu},$ $g^{\mu \nu},$ $P - \bar{p},$ $u^{\mu},$ $(\mu, \nu = 0,1,2,3),$ and various coordinate derivatives of these quantities converge as $t \rightarrow \infty.$ Furthermore, some of the decay rates (for example, those in \eqref{E:g0jlowerconvergence} and \eqref{E:partialtg0jconvergence}) are a significant improvement compared to the rates that can be directly obtained from the bound $\totalnorm{N} \leq \epsilon,$ which was derived in Theorem \ref{T:GlobalExistence}. The results of this section parallel the ones obtained in \cite[Proposition 2]{hR2008} and \cite[Theorem 12.1]{iRjS2009}. We remark that they are not optimal, and more information could be extracted with additional work.

\begin{theorem}[\textbf{Asymptotics}]   \label{T:Asymptotics} 
 Assume that the initial data $(\mathring{g}_{\mu \nu}, \mathring{K}_{\mu \nu}, \mathring{P}, \mathring{u}^j),$
 $(\mu, \nu = 0,1,2,3),$ $(j=1,2,3),$ $g_{\alpha \beta}u^{\alpha}u^{\beta}|_{t=0} = -1,$
 for the modified Euler-Einstein system \eqref{E:finalg00equation} - \eqref{E:finalEulerj} 
 satisfy the assumptions of Theorem \ref{T:GlobalExistence}, including the smallness assumption 
 $\totalnorm{N}(0) \leq C_*^{-1} \epsilon,$ where $0 \leq \epsilon \leq \epsilon_0.$ Let $\mathring{g}^{\mu \nu}$ denote the 
 inverse of $\mathring{g}_{\mu \nu}.$ Assume in addition that $N \geq 5,$ and let $(g_{\mu \nu}, P, u^{\mu})$ 
 be the future-global solution launched by the data. Then there exists a constant $\epsilon_2$ satisfying
 $0 < \epsilon_2 \leq \epsilon_0$ such that if $\epsilon < \epsilon_2,$ then there exists a Riemann metric $g_{jk}^{(\infty)},$ 
 ($j,k = 1,2,3$), with corresponding Christoffel symbols $\Gamma_{ijk}^{(\infty)},$ $(i,j,k = 1,2,3),$
 and inverse $g_{(\infty)}^{jk}$ on $\mathbb{T}^3,$ a function $P_{(\infty)}$ on $\mathbb{T}^3,$
 and a time-independent vectorfield $u_{(\infty)}^{(j)}$ on $\mathbb{T}^3$ such that 
 $g_{jk}^{(\infty)} - \mathring{g}_{jk} \in H^{N},$ $g_{(\infty)}^{jk} - \mathring{g}^{jk} \in H^N,$ 
 $P_{(\infty)} - \bar{p} \in H^{N-1},$ $u_{(\infty)}^j \in H^{N-2},$ and such that the following estimates hold for all 
 $t \geq 0:$
	
	\begin{subequations}
		\begin{align}
			\| g_{jk}^{(\infty)} - \mathring{g}_{jk} \|_{H^N} & \leq C \epsilon, \label{E:gjklowerinfinityHN} \\
			\| g_{(\infty)}^{jk} - \mathring{g}^{jk} \|_{H^N} & \leq C \epsilon, \label{E:gjkupperinfinityHN} 
		\end{align}
	\end{subequations}
	
	\begin{subequations}
	\begin{align}
		\| e^{-2 \Omega} g_{jk} - g_{jk}^{(\infty)} \|_{H^N} & \leq C \epsilon e^{-qHt},  \label{E:gjklowerconvergence} \\
		\| e^{-2 \Omega} g_{jk} - g_{jk}^{(\infty)} \|_{H^{N-2}} & \leq C \epsilon e^{-2Ht},  
			\label{E:improvedgjklowerconvergence} \\
		\| e^{2 \Omega} g^{jk} - g_{(\infty)}^{jk} \|_{H^N} & \leq C \epsilon e^{-qHt},  \label{E:gjkupperconvergence} \\
		\| e^{2 \Omega} g^{jk} - g_{(\infty)}^{jk} \|_{H^{N-2}} & \leq C \epsilon e^{-2Ht},  
			\label{E:improvedgjkupperconvergence} \\
		\| e^{-2 \Omega} \partial_t g_{jk} - 2 \omega g_{jk}^{(\infty)} \|_{H^N} & \leq C \epsilon e^{-qHt}, 
			\label{E:e2Omegapartialgjkminusrhojklowerconvergence} \\
		\| e^{-2 \Omega} \partial_t g_{jk} - 2 \omega g_{jk}^{(\infty)} \|_{H^{N-2}} & \leq C \epsilon e^{-2Ht},
			\label{E:improvede2Omegapartialgjkminusrhojklowerconvergence} \\
			\| e^{2 \Omega} \partial_t g^{jk} + 2 \omega g_{(\infty)}^{jk}\|_{H^N} & \leq C \epsilon e^{-qHt}, 
			\label{E:e2Omegapartialgjkminusrhojkupperconvergence} \\
		\| e^{2 \Omega} \partial_t g^{jk} + 2 \omega g_{(\infty)}^{jk} \|_{H^{N-2}} & \leq C \epsilon e^{-2Ht},
			\label{E:improvede2Omegapartialgjkminusrhojkupperconvergence}
	\end{align}
	\end{subequations}

	\begin{subequations}
	\begin{align}
		\| g_{0j} - H^{-1} g_{(\infty)}^{ab} \Gamma_{ajb}^{(\infty)} \|_{H^{N-3}} & \leq C \epsilon e^{-qHt},
			\label{E:g0jlowerconvergence} \\
		\| \partial_t g_{0j} \|_{H^{N-3}} & \leq C \epsilon e^{-qHt},
			\label{E:partialtg0jconvergence} 
	\end{align}
	\end{subequations}
	
	\begin{subequations}
	\begin{align}
		\| g_{00} + 1\|_{H^N} & \leq C \epsilon e^{-qHt}, \label{E:g00plusoneconvergence} \\
		\| g_{00} + 1\|_{H^{N-2}} & \leq C \epsilon (1 + t) e^{-2Ht}, \label{E:improvedg00plusoneconvergence} \\
		\| \partial_t g_{00} \|_{H^N} & \leq C \epsilon e^{-qHt}, \label{E:partialtg00convergence} \\
		\| \partial_t g_{00} + 2 \omega(g_{00} + 1) \|_{H^{N-2}} & \leq C \epsilon e^{-2Ht}, 
			\label{E:improvedpartialtg00convergence}
	\end{align}
	\end{subequations}
	
	\begin{subequations}
	\begin{align} 
		\| e^{-2 \Omega} K_{jk} - \omega g_{jk}^{(\infty)} \|_{H^{N-1}} & \leq C \epsilon e^{-qHt}, \label{E:Kjkconvergence} \\
		\| e^{-2 \Omega} K_{jk} - \omega g_{jk}^{(\infty)} \|_{H^{N-2}} & \leq C (1+t) \epsilon e^{-2Ht}. 				
			\label{E:improvedKjkconvergence}
	\end{align}
	\end{subequations}
	In the above inequalities, $K_{jk}$ is the second fundamental form of the hypersurface 
	$\lbrace t= const \rbrace.$
	
	Furthermore, we have that
	\begin{subequations}
	\begin{align}
		\| u^0 - 1 \|_{H^{N}} & \leq C \epsilon e^{- qHt}, \label{E:u0convergence} \\
		\| u^0 - 1 \|_{H^{N - 2}} & \leq C \epsilon e^{- 2(1 - \decayparameter) Ht}, \label{E:improvedu0convergence} \\
		\| e^{(2 - \decayparameter) \Omega} u^j - u_{(\infty)}^j \|_{H^{N-2}} 
			& \leq C \epsilon \big\lbrace e^{- \decayparameter Ht} + e^{-2(1 - \decayparameter)Ht} \big\rbrace, 
			\label{E:improvedujconvergence} \\
		\| u_{(\infty)}^j \|_{H^{N-2}} & \leq C \epsilon,   \label{E:ujinfinityHNminusone} \\
		\|P - P_{(\infty)} \|_{H^{N-1}} & \leq C \epsilon e^{-qHt}, \label{E:Pconvergence} \\
		\|P - P_{(\infty)} \|_{H^{N-2}} & \leq C \epsilon e^{-2(1 - \decayparameter) Ht}, \label{E:improvedPconvergence} \\
		\|P_{(\infty)} - \bar{p} \|_{H^{N-1}} & \leq C \epsilon. \label{E:PinfinityHNminusone}
	\end{align}
	\end{subequations}
\end{theorem}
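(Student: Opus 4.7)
The plan is to extract all of the asymptotic information from the global uniform bound $\totalnorm{N}(t) \leq \epsilon$ established in Theorem \ref{T:GlobalExistence}, together with the detailed structure of the modified equations \eqref{E:finalg00equation} -- \eqref{E:finalEulerj} and the error estimates of Proposition \ref{P:Nonlinearities}. First I would handle the ``rescaled spatial metric'' quantities $h_{jk} = e^{-2\Omega}g_{jk}$. By definition of $\hstarstarnorm{N}$, the bound $\totalnorm{N} \leq \epsilon$ gives $\| \partial_t h_{jk}(t,\cdot) \|_{H^N} \leq C \epsilon e^{-qHt}$, which is integrable on $[0,\infty)$. Hence $h_{jk}(t,\cdot)$ is Cauchy in $H^N$ as $t \to \infty$ and converges to a limit $g_{jk}^{(\infty)}$ with $\| h_{jk}(t,\cdot) - g_{jk}^{(\infty)} \|_{H^N} \leq \int_t^\infty C \epsilon e^{-qH\tau}\,d\tau \leq C\epsilon H^{-1} q^{-1} e^{-qHt}$, giving \eqref{E:gjklowerconvergence}. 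The bound \eqref{E:gjklowerinfinityHN} follows by integrating from $0$ to $\infty$, and \eqref{E:gjkupperinfinityHN} -- \eqref{E:gjkupperconvergence} follow from the same argument applied to $e^{2\Omega}g^{jk}$, using either \eqref{E:partialtgjkupperplusomegagjkHN} directly or the identity $g^{jk} = (g_\flat^{-1})^{jk} + O(g_{0*})$ from Lemma \ref{L:ginverseformluas} together with differentiability of matrix inversion (Corollary \ref{C:DifferentiatedSobolevComposition}). The convergence statements \eqref{E:e2Omegapartialgjkminusrhojklowerconvergence} and \eqref{E:e2Omegapartialgjkminusrhojkupperconvergence} then follow by writing $e^{-2\Omega} \partial_t g_{jk} = \partial_t h_{jk} + 2\omega h_{jk}$ and invoking the convergence of $h_{jk}$ together with the exponential decay of $\partial_t h_{jk}$ and of $\omega - H$ from Lemma \ref{L:backgroundaoftestimate}.

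Next I would treat the lapse and shift quantities $g_{00}+1$ and $g_{0j}$. The fact that $\totalnorm{N}$ controls $e^{q\Omega}\|g_{00}+1\|_{H^N}$ and $e^{(q-1)\Omega}\|g_{0j}\|_{H^N}$ immediately yields the $e^{-qHt}$-rate bounds \eqref{E:g00plusoneconvergence} and \eqref{E:partialtg00convergence} (and the analogous bound for $g_{0j}$). To obtain the sharper asymptotic value in \eqref{E:g0jlowerconvergence}, I would inspect the wave equation \eqref{E:finallg0jequation}, $\hat{\square}_g g_{0j} = 3H \partial_t g_{0j} + 2H^2 g_{0j} - 2H g^{ab}\Gamma_{ajb} + \triangle_{0j}$: by the previously established convergence, $g^{ab}\Gamma_{ajb} \to g_{(\infty)}^{ab}\Gamma_{ajb}^{(\infty)}$, and the dissipation in the equation drives $g_{0j}$ toward the equilibrium $H^{-1} g_{(\infty)}^{ab}\Gamma_{ajb}^{(\infty)}$. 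Concretely, I would introduce the shifted variable $\widetilde{g}_{0j} := g_{0j} - H^{-1} g_{(\infty)}^{ab}\Gamma_{ajb}^{(\infty)}$, rewrite \eqref{E:finallg0jequation} for this variable, and repeat the energy-type analysis of Section \ref{S:NormsandEnergies} at one derivative below top order to absorb the cost of losing derivatives (hence the $H^{N-3}$ norm). The improved rates \eqref{E:improvedg00plusoneconvergence}, \eqref{E:improvedpartialtg00convergence}, and the improved $\partial_t g_{0j}$ bound \eqref{E:partialtg0jconvergence} follow by substituting the convergence \eqref{E:gjklowerconvergence} back into \eqref{E:finalg00equation} and \eqref{E:finallg0jequation}, rewriting them as ODEs in $t$ for the (rescaled) variables modulo spatially-differentiated source terms that decay like $e^{-2Ht}$ due to the explicit $e^{-2\Omega}$ factor on $\partial_a \partial_b$ in $\hat{\square}_g$, and applying Duhamel. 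Formulae \eqref{E:Kjkconvergence} -- \eqref{E:improvedKjkconvergence} then follow from $2K_{jk} = \partial_t g_{jk} + O(g_{0*})$.

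For the fluid, I would work with the isolated time-derivative formulas from Corollary \ref{C:isolatepartialtPandpartialtuj}. Equation \eqref{E:partialtP} together with \eqref{E:triangleprimeHNminusone} gives $\|\partial_t(P - \bar{p})\|_{H^{N-1}} \leq C\epsilon e^{-qHt}$, so $P(t,\cdot)$ is Cauchy in $H^{N-1}$ and converges to a limit $P_{(\infty)}$ satisfying \eqref{E:PinfinityHNminusone} and \eqref{E:Pconvergence} by the same integration argument used for $h_{jk}$. For the velocity, I would rewrite \eqref{E:partialtuj} as $\partial_t[e^{(2 - \decayparameter)\Omega} u^j] = e^{(2 - \decayparameter)\Omega}\triangle'^j + (\omega - H)(\decayparameter - 2) e^{(2 - \decayparameter)\Omega}u^j$. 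Using \eqref{E:triangleprimejHNminusone} and the fact that $e^{(2 - \decayparameter)\Omega}u^j$ is already uniformly bounded by $\totalnorm{N}$ in the relevant norm, both right-hand-side terms are integrable in time in $H^{N-2}$, giving Cauchyness and the limit $u_{(\infty)}^j$; the bounds \eqref{E:improvedujconvergence} -- \eqref{E:ujinfinityHNminusone} follow from the explicit decay rates. The improved estimate \eqref{E:improvedu0convergence} comes from Taylor-expanding the algebraic expression \eqref{E:U0UPPERISOLATED} for $u^0$ to quadratic order in $(u^1,u^2,u^3)$ and using the improved $L^\infty$ decay $\|u^j\|_{L^\infty} \leq C\epsilon e^{-(1+q)Ht}$ that is already built into the norm $U_{N-1}$ together with Sobolev interpolation against the $H^N$ uniform bound. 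The hardest part will be the sharp improved rates in lower Sobolev norms (\eqref{E:improvedgjklowerconvergence}, \eqref{E:improvede2Omegapartialgjkminusrhojklowerconvergence}, \eqref{E:improvedg00plusoneconvergence}, \eqref{E:improvedKjkconvergence}): these require one to trade derivatives for decay via the wave equation structure, treating $e^{-2\Omega}\partial_a \partial_b$ as a genuine improvement factor of $e^{-2Ht}$ when one has $H^{N-2}$ control of second spatial derivatives from the $H^N$ estimate, and then bootstrapping through the ODE $\partial_t^2 h + 3H \partial_t h \approx 0$ that \eqref{E:finalhjkequation} reduces to at leading order.
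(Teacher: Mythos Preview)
Your treatment of the metric asymptotics is essentially the same approach the paper takes (and defers to \cite[Theorem 12.1]{iRjS2009}): integrate the exponentially decaying bound on $\partial_t h_{jk}$ to get $g_{jk}^{(\infty)}$, then feed the convergence back into the wave equations for $g_{00}$ and $g_{0j}$, treating the spatial-derivative terms as $e^{-2\Omega}$-improved inhomogeneities at lower Sobolev order. The $P$ argument via \eqref{E:triangleprimeHNminusone} is also correct for \eqref{E:Pconvergence}.

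There is, however, a genuine gap in your velocity argument. You write $\partial_t\bigl[e^{(2-\decayparameter)\Omega}u^j\bigr] = e^{(2-\decayparameter)\Omega}\triangle'^j$ (the additional $(\omega-H)$ term you include actually cancels exactly, but that is harmless) and then assert that the right-hand side is integrable in $H^{N-2}$ ``using \eqref{E:triangleprimejHNminusone} and the fact that $e^{(2-\decayparameter)\Omega}u^j$ is already uniformly bounded by $\totalnorm{N}$.'' Neither claim holds. From the norm definitions one only has $\|u^j\|_{H^{N-1}} \leq \epsilon e^{-(1+q)\Omega}$, so $e^{(2-\decayparameter)\Omega}\|u^j\|_{H^{N-2}} \leq \epsilon e^{(1-\decayparameter-q)\Omega}$, and since $q \leq \tfrac{2}{3}(1-\decayparameter)$ we always have $1-\decayparameter-q > 0$; the rescaled velocity is \emph{not} a priori bounded. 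Likewise, \eqref{E:triangleprimejHNminusone} gives only $e^{(2-\decayparameter)\Omega}\|\triangle'^j\|_{H^{N-1}} \leq C\epsilon e^{(1-\decayparameter-q)Ht}$, which grows.

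The paper closes this gap by a self-improving step that you are missing. One first uses the upgraded metric estimates (in particular the improved $g_{0j}$ bounds $\|g_{0j}\|_{H^{N-1}} \leq C\epsilon$, $\|g^{0j}\|_{H^{N-1}} \leq C\epsilon e^{-2\Omega}$, and $\|\partial_t h_{jk}\|_{H^{N-2}} \leq C\epsilon e^{-2\Omega}$) to re-estimate $\triangle'^j$ in $H^{N-2}$ structurally, obtaining
\[
\|\triangle'^j\|_{H^{N-2}} \leq C\epsilon e^{-2\Omega} + C\,\newufluidenergy{N-2}\,e^{(3\decayparameter-4)\Omega},
\]
where $\newufluidenergy{N-2} \eqdef \bigl(\sum_j \|e^{(2-\decayparameter)\Omega}u^j\|_{H^{N-2}}^2\bigr)^{1/2}$ is exactly the unknown quantity. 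The second term comes from the cubic-in-$u$ contributions hidden in $\Pi^{0j}\triangle'$. Multiplying by $e^{(2-\decayparameter)\Omega}$ turns this into a Gronwall inequality for $\newufluidenergy{N-2}$ with integrable coefficient $e^{(2\decayparameter-2)\Omega}$, yielding the uniform bound $\newufluidenergy{N-2}(t) \leq C\epsilon$. Only \emph{after} this bootstrap does the right-hand side become integrable with the rate $e^{-\decayparameter Ht} + e^{-2(1-\decayparameter)Ht}$ that appears in \eqref{E:improvedujconvergence}. Without this step your integration-to-a-limit argument for $u^j$ does not go through, and the improved rates \eqref{E:improvedu0convergence} and \eqref{E:improvedPconvergence}, which feed on \eqref{E:improvedujconvergence}, are likewise unproved.
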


\begin{proof}
We only provide a sketch of the proof, since most of the details can be found in the proof of \cite[Theorem 12.1]{iRjS2009}.
The main idea behind the improved decay rates is that many of terms in the modified equations \eqref{E:finalg00equation} - \eqref{E:finalEulerj} can be treated as inhomogeneities for equations that have a more favorable structure. In fact, many of the terms that are of principal order from the point of view of the number of derivatives can be treated as lower-order terms in the sense of decay rates. Of course, this approach is only viable for treating the lower-order derivatives of the solution. A related consequence is that some of our estimates are proved only in Sobolev spaces of lower order than the ones the data belong to.

The estimates \eqref{E:gjklowerinfinityHN} - \eqref{E:improvedKjkconvergence} can be proved by a straightforward adaptation
of the proof of \cite[Theorem 12.1]{iRjS2009}, and we omit the details. Some very important ingredients in the proof of \cite[Theorem 12.1]{iRjS2009} were the following upgraded estimates

\begin{align}
	\| \partial_t g_{0j} \|_{H^{N-1}} & \leq C \epsilon, 
		\label{E:partialtg0jlowerHNimprovement} \\
	\|g_{0j}\|_{H^{N-1}} & \leq C \epsilon, 
			\label{E:g0jlowerHNimprovement} \\
	\|\underpartial g_{0j}\|_{H^{N-1}} & \leq C \epsilon e^{\Omega}, 
			\label{E:barpartialg0jlowerHNimprovement} \\ 
	\|g^{0j}\|_{H^{N-1}} & \leq C \epsilon e^{-2 \Omega}, \label{E:g0jupperHNimprovement} \\
	\|\partial_t h_{jk} \|_{H^{N-2}} & \leq C \epsilon e^{-2 \Omega}, \label{E:partialthjkimprovedbound}
\end{align}
which we will use below. 

To prove \eqref{E:improvedujconvergence}, we first use \eqref{E:partialtuj} to deduce that

\begin{align} \label{E:Improvedpartialtujequation}
	\partial_t (e^{(2 - \decayparameter)\Omega}u^j) = e^{(2 - \decayparameter)\Omega} \triangle'^j.
\end{align}
We now introduce the following non-negative fluid energy $\newufluidenergy{N-2}(t),$ which is defined by

\begin{align} \label{E:newfluidenergyNminusone}
	\newufluidenergy{N-2}^2 & \eqdef \sum_{j=1}^3 \| e^{(2 - \decayparameter)\Omega} u^j \|_{H^{N-2}}^2.
\end{align}
We then differentiate under the spatial integrals implicit in the definition \eqref{E:newfluidenergyNminusone} and
use \eqref{E:Improvedpartialtujequation} plus the Cauchy-Schwarz inequality for integrals, thus arriving at the following inequality:

\begin{align} \label{E:newenergyinequalityGronwallready}
	\frac{d}{dt} \big(\newufluidenergy{N-2}^2 \big) & \leq 2 \newufluidenergy{N-2} e^{(2 - \decayparameter)\Omega} 
		\| \triangle'^j \|_{H^{N-2}}. 
\end{align}

Using Corollary \ref{C:DifferentiatedSobolevComposition}, Proposition \ref{P:F1FkLinfinityHN}, the definition \eqref{E:totalnorm} of $\totalnorm{N}$ (which satisfies $\totalnorm{N} \leq \epsilon$),
the definition \eqref{E:newfluidenergyNminusone} of $\newufluidenergy{N-2},$ Sobolev embedding, the estimates of Proposition \ref{P:BoostrapConsequences} and Proposition \ref{P:Nonlinearities}, \eqref{E:gjklowerconvergence} - \eqref{E:improvedpartialtg00convergence}, and the improved estimates \eqref{E:partialtg0jlowerHNimprovement} - \eqref{E:partialthjkimprovedbound}, it follows that

\begin{align} \label{E:triangleprimejnewHNminusone}
	\| \triangle'^j \|_{H^{N-2}} & \leq C \epsilon e^{-2 \Omega}
		+ C \newufluidenergy{N-2} e^{(3\decayparameter - 4)\Omega}.
\end{align}
We remark that the $C \newufluidenergy{N-2} e^{(3\decayparameter - 4)\Omega}$ term on the right-hand
side of \eqref{E:triangleprimejnewHNminusone} arises from the $\frac{\speed^2}{(1 + \speed^2)P}\Pi^{0j}\triangle'$
term on the right-hand side of \eqref{E:triangleprimejdef}; this term contains terms that are the product of the spatial metric components $g_{ab}$ with the cube of the spatial four-velocity components $u^j.$ We now insert the bound \eqref{E:triangleprimejnewHNminusone} into the right-hand side of \eqref{E:newenergyinequalityGronwallready}, use the initial condition $\newufluidenergy{N-2}(0) \leq C \epsilon,$ and apply Gronwall's inequality, thus concluding that

\begin{align} \label{E:newenergyinequality}
	\newufluidenergy{N-2}(t) & \leq C \epsilon. 
\end{align}

Inserting the bound \eqref{E:newenergyinequality} into the right-hand side of \eqref{E:triangleprimejnewHNminusone} and
revisiting \eqref{E:Improvedpartialtujequation}, we have that

\begin{align} \label{E:improvedpartialtujbound}
	\| \partial_t (e^{(2 - \decayparameter)\Omega}u^j) \|_{H^{N-2}} & \leq 
	C \epsilon e^{- \decayparameter Ht} + C \epsilon e^{2(\decayparameter - 1)Ht}.
\end{align}
From \eqref{E:improvedpartialtujbound}, it easily follows that there exist functions $u_{(\infty)}^j(x^1,x^2,x^3)$
such that

\begin{align}
	\| e^{(2 - \decayparameter)\Omega}u^j - u_{(\infty)}^j \|_{H^{N-2}} 
		& \leq C \epsilon e^{- \decayparameter Ht} + C \epsilon e^{2(\decayparameter - 1)Ht},
\end{align}
where

\begin{align}
	\| u_{(\infty)}^j \|_{H^{N-2}} & \leq C \epsilon.
\end{align}
We have thus shown \eqref{E:improvedujconvergence} and \eqref{E:ujinfinityHNminusone}.

The estimates \eqref{E:u0convergence} - \eqref{E:improvedu0convergence} and \eqref{E:Pconvergence} - \eqref{E:PinfinityHNminusone} can be proved similarly with the help of equations \eqref{E:U0UPPERISOLATED} 
and \eqref{E:partialtP}; we omit the details.

\end{proof}

\section*{Acknowledgments}
I would like to thank Alan Rendall for inspiring this research, and for pointing out the reference \cite{uBaRoR1994}. I would also like to thank Cambridge University and Princeton University for their support during the conceptualization and writing of this article.

\begin{center}
	\textbf{\huge{Appendices}}
\end{center}
\setcounter{section}{0}
   \setcounter{subsection}{0}
   \setcounter{subsubsection}{0}
   \setcounter{paragraph}{0}
   \setcounter{subparagraph}{0}
   \setcounter{figure}{0}
   \setcounter{table}{0}
   \setcounter{equation}{0}
   \setcounter{theorem}{0}
   \setcounter{definition}{0}
   \setcounter{remark}{0}
   \setcounter{proposition}{0}
   \renewcommand{\thesection}{\Alph{section}}
   \renewcommand{\theequation}{\Alph{section}.\arabic{equation}}
   \renewcommand{\theproposition}{\Alph{section}-\arabic{proposition}}
   \renewcommand{\thecorollary}{\Alph{section}.\arabic{corollary}}
   \renewcommand{\thedefinition}{\Alph{section}.\arabic{definition}}
   \renewcommand{\thetheorem}{\Alph{section}.\arabic{theorem}}
   \renewcommand{\theremark}{\Alph{section}.\arabic{remark}}
   \renewcommand{\thelemma}{\Alph{section}-\arabic{lemma}}

\section{Sobolev-Moser Inequalities} \label{A:SobolevMoser}
		In this Appendix, we provide some standard Sobolev-Moser estimates that play a fundamental role in
		our analysis of the nonlinear terms in our equations. The propositions and corollaries stated below can be proved
		using methods similar to those used in \cite[Chapter 6]{lH1997} and in \cite{sKaM1981}. The proofs given in the 
		literature are commonly based on a version of the Gagliardo-Nirenberg inequality \cite{lN1959},
		which we state as Lemma \ref{L:GN}, together with repeated use of H\"{o}lder's inequality and/or Sobolev embedding. 
		Throughout this appendix, we abbreviate $L^p=L^p(\mathbb{T}^3),$ and $H^M=H^M(\mathbb{T}^3).$ 

\begin{lemma}                                                                                           \label{L:GN}
    If $M,N$ are integers such that $0 \leq M \leq N,$ and $v$ is a function on $\mathbb{T}^3$ such that $v \in
    L^{\infty}, \|\underpartial^{(N)} v \|_{L^2} < \infty,$ then
    \begin{align}
        \| \underpartial^{(M)} v \|_{L^{2N/M}} \leq C(M,N) \| v \|_{L^{\infty}}^{1 -
        \frac{M}{N}}\|\underpartial^{(N)} v \|_{L^2}^{\frac{M}{N}}.
    \end{align}
\end{lemma}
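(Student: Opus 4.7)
The plan is to prove this Gagliardo--Nirenberg inequality via the classical Landau--Kolmogorov approach: a single integration by parts on the torus, combined with a three-factor H\"older inequality, yields a midpoint-convexity estimate among the dyadic norms $\|\underpartial^{(k)} v\|_{L^{2N/k}}$, and this convexity then interpolates linearly between the endpoints $k = 0$ (which is $\|v\|_{L^\infty}$) and $k = N$ (which is $\|\underpartial^{(N)} v\|_{L^2}$). First I would reduce to the case $v \in C^{\infty}(\mathbb{T}^3)$ by convolving with a smooth periodic mollifier and passing to the limit, since both sides of the claimed inequality are controlled by the quantities $\|v\|_{L^\infty}$ and $\|\underpartial^{(N)} v\|_{L^2}$, which behave well under mollification on the torus.

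The key step is to establish, for smooth $v$ and each integer $1 \leq M \leq N-1$, the one-step interpolation
\begin{align*}
\|\underpartial^{(M)} v\|_{L^{2N/M}}^{2} \leq C(M,N)\, \|\underpartial^{(M-1)} v\|_{L^{2N/(M-1)}} \cdot \|\underpartial^{(M+1)} v\|_{L^{2N/(M+1)}},
\end{align*}
with the convention $L^{2N/0} = L^{\infty}$. To prove this, I would fix a multi-index $\vec{\alpha}$ of order $M$ and decompose $\partial_{\vec{\alpha}} = \partial_j \partial_{\vec{\beta}}$ with $|\vec{\beta}| = M-1$. Integration by parts on $\mathbb{T}^3$ gives
\begin{align*}
\int_{\mathbb{T}^3} |\partial_{\vec{\alpha}} v|^{2N/M}\, d^3 x = -\Bigl(\tfrac{2N}{M} - 1\Bigr) \int_{\mathbb{T}^3} (\partial_{\vec{\beta}} v) \cdot |\partial_{\vec{\alpha}} v|^{2N/M - 2} \cdot (\partial_j \partial_{\vec{\alpha}} v)\, d^3 x,
\end{align*}
and then H\"older's inequality with the three conjugate exponents $\tfrac{2N}{M-1},\, \tfrac{2N}{M+1},\, \tfrac{N}{N-M}$ (whose reciprocals sum to $1$ thanks to $\tfrac{M-1}{2N} + \tfrac{M+1}{2N} + \tfrac{N-M}{N} = 1$) distributes the three factors into the norms $\|\partial_{\vec{\beta}} v\|_{L^{2N/(M-1)}}$, $\|\partial_j \partial_{\vec{\alpha}} v\|_{L^{2N/(M+1)}}$, and $\|\partial_{\vec{\alpha}} v\|_{L^{2N/M}}^{2N/M - 2}$. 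Absorbing the last factor on the left-hand side and summing over $|\vec{\alpha}| = M$ produces the one-step inequality.

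Writing $a_k := \log \|\underpartial^{(k)} v\|_{L^{2N/k}}$ (with $a_0 := \log \|v\|_{L^{\infty}}$ and $a_N := \log \|\underpartial^{(N)} v\|_{L^2}$), the one-step estimate reads $a_M \leq \tfrac{1}{2}(a_{M-1} + a_{M+1}) + c(M,N)$, i.e.\ discrete midpoint convexity up to an additive constant. Telescoping this along the segment $\{0, 1, \dots, N\}$ (equivalently, invoking the standard lemma that any midpoint-convex sequence lies below its linear chord, with a defect controlled by a constant times $N^2$) yields $a_M \leq (1 - M/N)\, a_0 + (M/N)\, a_N + C(M,N)$, which upon exponentiation is the claimed inequality. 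The main obstacle will be the bookkeeping in the H\"older step at the boundary cases $M=1$ and $M = N-1$, where one of the three exponents becomes infinite or equals $2$; in those cases one of the factors must be estimated directly against $\|v\|_{L^{\infty}}$ or against $\|\underpartial^{(N)} v\|_{L^2}$, and I would verify that the bound closes uniformly in the degenerate exponent limit so that the same convexity interpolation argument still goes through.
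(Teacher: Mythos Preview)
Your proof is correct and follows the classical Landau--Kolmogorov route: integration by parts plus a three-factor H\"older inequality to get the one-step midpoint estimate, then discrete convexity to interpolate between the endpoints. The bookkeeping you flag at the boundary cases $M=1$ and $M=N-1$ does close---at $M=1$ the factor $\partial_{\vec{\beta}} v$ is just $v$ and lands in $L^{\infty}$, while at $M=N-1$ the top-order factor lands in $L^2$, so no degenerate limit is actually needed.

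The paper, however, does not supply a proof of this lemma at all: it states the inequality as a version of the Gagliardo--Nirenberg inequality and simply cites Nirenberg's original 1959 paper, treating it as a standard tool on which the subsequent Sobolev--Moser propositions of the appendix are built. So there is no ``paper's own proof'' to compare against; your argument fills in what the paper deliberately leaves to the literature, and it does so by essentially the same mechanism (integration by parts and H\"older) that underlies the classical proofs in the cited references.
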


\begin{proposition} \label{P:derivativesofF1FkL2}
	Let $M \geq 0$ be an integer. If $\lbrace v_a \rbrace_{1 \leq a \leq l}$ are functions such that $v_a \in
    L^{\infty}, \|\underpartial^{(M)} v_a \|_{L^2} < \infty$ for $1 \leq a \leq l,$ and
	$\vec{\alpha}_1, \cdots, \vec{\alpha}_l$ are spatial derivative multi-indices with 
	$|\vec{\alpha}_1| + \cdots + |\vec{\alpha}_l| = M,$ then
	
	\begin{align}
		\| (\partial_{\vec{\alpha}_1}v_1) (\partial_{\vec{\alpha}_2}v_2) \cdots (\partial_{\vec{\alpha}_l}v_l)\|_{L^2}
		& \leq C(l,M) \sum_{a=1}^l \Big( \| \underpartial^{(M)} v_a  \|_{L^2} \prod_{b \neq a} \|v_{b} \|_{L^{\infty}} \Big).
	\end{align}
\end{proposition}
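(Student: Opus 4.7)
The plan is to establish this by the classical combination of H\"older's inequality, the Gagliardo-Nirenberg interpolation of Lemma \ref{L:GN}, and a weighted arithmetic-geometric mean step. First I would dispose of the trivial case $M=0$ (where every $\vec{\alpha}_a$ is empty and the inequality reduces to an $L^\infty$ bound). For $M \geq 1$, set $m_a \eqdef |\vec{\alpha}_a|$, and split the indices into the set $T \eqdef \{a : m_a = 0\}$, contributing only $L^\infty$ factors $\| v_a \|_{L^\infty}$, and $S \eqdef \{a : m_a \geq 1\}$, which carries all of the derivatives and satisfies $\sum_{a \in S} m_a = M$.

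Next I would apply H\"older's inequality to the product over $S$ with the conjugate exponents $p_a \eqdef 2M/m_a$, which are admissible since $\sum_{a \in S} p_a^{-1} = \sum_{a \in S} m_a/(2M) = 1/2$. This gives
\begin{align*}
\Big\| \prod_{a \in S} \partial_{\vec{\alpha}_a} v_a \Big\|_{L^2}
\leq \prod_{a \in S} \| \partial_{\vec{\alpha}_a} v_a \|_{L^{2M/m_a}}.
\end{align*}
To each factor I would apply Lemma \ref{L:GN} with $M$ in the lemma played by our $M$ (the common top order) and its $M$ played by $m_a$, producing
\begin{align*}
\| \partial_{\vec{\alpha}_a} v_a \|_{L^{2M/m_a}} \leq C \| v_a \|_{L^\infty}^{1-m_a/M} \| \underpartial^{(M)} v_a \|_{L^2}^{m_a/M}.
\end{align*}

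Finally, I would convert this multiplicative bound into the additive bound stated in the proposition by weighted AM-GM: writing $X_a \eqdef \| \underpartial^{(M)} v_a \|_{L^2}$, $Y_a \eqdef \| v_a \|_{L^\infty}$, and $\theta_a \eqdef m_a/M$ (so $\sum_{a \in S} \theta_a = 1$), factor out $\prod_{a \in S} Y_a$ and use
\begin{align*}
\prod_{a \in S} (X_a/Y_a)^{\theta_a} \leq \sum_{a \in S} \theta_a (X_a/Y_a) \leq \sum_{a \in S} X_a/Y_a,
\end{align*}
which yields $\prod_{a \in S} Y_a^{1-\theta_a} X_a^{\theta_a} \leq \sum_{a \in S} X_a \prod_{b \in S, b\neq a} Y_b$. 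Reintroducing the $\prod_{a \in T} Y_a$ factors, and observing that adding the nonnegative terms indexed by $a \in T$ to the sum only weakens the inequality, delivers the stated estimate with a constant $C(l,M)$. The only substantive obstacle is bookkeeping: carefully handling the multi-indices of length zero and verifying the H\"older-exponent identity, both of which are routine.
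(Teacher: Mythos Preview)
Your proposal is correct and follows exactly the approach the paper indicates: the appendix does not give an explicit proof of this proposition but states that the results ``can be proved using methods similar to those used in \cite[Chapter 6]{lH1997} and in \cite{sKaM1981}\ldots based on a version of the Gagliardo-Nirenberg inequality\ldots together with repeated use of H\"older's inequality.'' Your H\"older + Lemma~\ref{L:GN} + weighted AM--GM argument is precisely the standard implementation of this strategy, so there is nothing to compare.
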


\begin{corollary}                                                  \label{C:DifferentiatedSobolevComposition}
    Let $M \geq 1$ be an integer, let $\mathfrak{K}$ be a compact set, and let $F \in C_b^M(\mathfrak{K})$ be a 
    function. Assume that $v$ is a function such that $v(\mathbb{T}^3) \subset \mathfrak{K}$ and $ \underpartial v \in H^{M-1}.$
    Then $\underpartial (F \circ v) \in H^{M-1},$ and
    
    \begin{align} 														\label{E:DifferentiatedModifiedSobolevEstimate}
    	\| \underpartial (F \circ v) \|_{H^{M-1}} 
    		& \leq C(M) \| \underpartial v \|_{H^{M-1}} \sum_{l=1}^M |F^{(l)}|_{\mathfrak{K}} 
    		\| v \|_{L^{\infty}}^{l - 1}.
    \end{align}
\end{corollary}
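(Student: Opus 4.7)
My plan is to obtain the estimate as a direct consequence of Fa\`a di Bruno's chain rule formula together with H\"older's inequality and the Gagliardo--Nirenberg interpolation supplied by Lemma A.1.

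First, I would fix a spatial multi-index $\vec{\alpha}$ with $1 \leq |\vec{\alpha}| \leq M$ and expand
\begin{align*}
	\partial_{\vec{\alpha}}(F \circ v) & = \sum_{l=1}^{|\vec{\alpha}|} F^{(l)}(v)
	\sum_{(\vec{\beta}_1,\ldots,\vec{\beta}_l)} c_{\vec{\beta}_1,\ldots,\vec{\beta}_l} \,
		(\partial_{\vec{\beta}_1} v) \cdots (\partial_{\vec{\beta}_l} v),
\end{align*}
where the inner sum runs over ordered $l$-tuples of multi-indices with $|\vec{\beta}_i| \geq 1$ and $\vec{\beta}_1 + \cdots + \vec{\beta}_l = \vec{\alpha},$ and the $c_{\vec{\beta}_1,\ldots,\vec{\beta}_l}$ are harmless combinatorial constants. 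Since $v(\mathbb{T}^3) \subset \mathfrak{K}$ and $F \in C_b^M(\mathfrak{K}),$ each factor $F^{(l)}(v)$ is pointwise bounded by $|F^{(l)}|_{\mathfrak{K}},$ so the task reduces to $L^2$-bounding each product $(\partial_{\vec{\beta}_1} v) \cdots (\partial_{\vec{\beta}_l} v).$

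To carry out that bound I would apply H\"older's inequality with exponents $p_i = 2|\vec{\alpha}|/|\vec{\beta}_i|,$ which by construction satisfy $\sum_i p_i^{-1} = 1/2,$ and then apply Lemma A.1 factor-by-factor with the choices $M \leftrightarrow |\vec{\beta}_i|$ and $N \leftrightarrow |\vec{\alpha}|.$ This produces
\begin{align*}
	\Bigl\| \prod_{i=1}^l \partial_{\vec{\beta}_i} v \Bigr\|_{L^2}
	\leq \prod_{i=1}^l \| \underpartial^{(|\vec{\beta}_i|)} v \|_{L^{p_i}}
	\leq C \prod_{i=1}^l \| v \|_{L^\infty}^{1 - |\vec{\beta}_i|/|\vec{\alpha}|} \,
		\|\underpartial^{(|\vec{\alpha}|)} v\|_{L^2}^{|\vec{\beta}_i|/|\vec{\alpha}|}.
\end{align*}
Because $\sum_i |\vec{\beta}_i| = |\vec{\alpha}|,$ the exponents on $\|v\|_{L^\infty}$ sum precisely to $l - 1$ and the exponents on $\|\underpartial^{(|\vec{\alpha}|)} v\|_{L^2}$ sum to $1.$ Using $\|\underpartial^{(|\vec{\alpha}|)} v\|_{L^2} \leq \|\underpartial v\|_{H^{M-1}}$ (which holds because $|\vec{\alpha}| \leq M$), this simplifies to
\begin{align*}
	\Bigl\| \prod_{i=1}^l \partial_{\vec{\beta}_i} v \Bigr\|_{L^2}
	\leq C(M) \| v \|_{L^\infty}^{l-1} \| \underpartial v \|_{H^{M-1}}.
\end{align*}

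Finally, I would combine these bounds with the Fa\`a di Bruno expansion, take $L^2$ norms, and sum over $\vec{\alpha}$ with $1 \leq |\vec{\alpha}| \leq M$ and over $1 \leq l \leq |\vec{\alpha}|$ to arrive at \eqref{E:DifferentiatedModifiedSobolevEstimate}. I do not foresee a serious obstacle: the only care required is the exponent bookkeeping, and in particular in checking that the identity $\sum_i |\vec{\beta}_i| = |\vec{\alpha}|$ cancels the $1/|\vec{\alpha}|$ denominators without leaving any residual interpolation weight—this is precisely what makes the clean factor $\|v\|_{L^\infty}^{l-1}$ in the statement emerge. (As a sanity check, Proposition A.1 can be recovered by this same H\"older--Gagliardo--Nirenberg procedure applied to $l$ distinct functions rather than powers of a single $v,$ so no new ingredients beyond Lemma A.1 are needed.)
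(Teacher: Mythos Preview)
Your proposal is correct and follows exactly the approach the paper prescribes: the paper does not give a detailed proof of this corollary but states in the introduction to Appendix~\ref{A:SobolevMoser} that all results there follow from the Gagliardo--Nirenberg inequality (Lemma~\ref{L:GN}) together with repeated use of H\"older's inequality, which is precisely your Fa\`a di Bruno plus H\"older plus Lemma~\ref{L:GN} argument. The exponent bookkeeping is correct, and your remark that Proposition~\ref{P:derivativesofF1FkL2} falls out of the same computation is apt.
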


\begin{corollary}                                                                                             \label{C:SobolevTaylor}
     Let $M \geq 1$ be an integer, let $\mathfrak{K}$ be a compact, convex set, and let $F \in C_b^M(\mathfrak{K})$ be a 
     function. Assume that $v$ is a function such that $v(\mathbb{T}^3) \subset \mathfrak{K}$ and $v - \bar{v} \in H^M,$
    where $\bar{v} \in \mathfrak{K}$ is a constant. Then $F \circ v - F \circ \bar{v} \in H^M,$ and
    
    \begin{align} 														\label{E:ModifiedSobolevEstimateConstantArray}
    	\|F \circ v - F \circ \bar{v} \|_{H^M} 
    		\leq C(M) \Big\lbrace |F^{(1)}|_{\mathfrak{K}}\| v - \bar{v} \|_{L^2} 
    		+ \| \underpartial v \|_{H^{M-1}} \sum_{l=1}^M  |F^{(l)}|_{\mathfrak{K}} 
    		\| v \|_{L^{\infty}}^{l - 1} \Big\rbrace.
    \end{align}
\end{corollary}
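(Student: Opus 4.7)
The plan is to split the $H^M$ norm into its zeroth order part and its homogeneous higher-order part, namely
\[
  \|F \circ v - F \circ \bar v\|_{H^M} \;\leq\; \|F \circ v - F \circ \bar v\|_{L^2} \;+\; \sum_{k=1}^{M} \|\underpartial^{(k)}(F \circ v - F \circ \bar v)\|_{L^2},
\]
and handle each piece separately. The first piece is a pointwise/Taylor estimate, while the second piece reduces immediately to the already-proved chain-rule composition estimate in Corollary \ref{C:DifferentiatedSobolevComposition}.

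For the $L^2$ piece, I would use the fundamental theorem of calculus along the segment from $\bar v$ to $v(x)$, which is contained in $\mathfrak{K}$ by the convexity hypothesis: for each $x \in \mathbb{T}^3$,
\[
  F(v(x)) - F(\bar v) \;=\; \int_0^1 F'\bigl(\bar v + s(v(x) - \bar v)\bigr)\,\bigl(v(x) - \bar v\bigr)\,ds.
\]
Taking absolute values, pulling the integral outside, and using that $\bar v + s(v(x)-\bar v) \in \mathfrak{K}$ gives the pointwise bound $|F(v(x)) - F(\bar v)| \leq |F^{(1)}|_{\mathfrak K}\,|v(x) - \bar v|$, and hence
\[
  \|F \circ v - F \circ \bar v\|_{L^2} \;\leq\; |F^{(1)}|_{\mathfrak{K}}\,\|v - \bar v\|_{L^2},
\]
which is precisely the first term on the right-hand side of \eqref{E:ModifiedSobolevEstimateConstantArray}.

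For the remaining higher-order derivatives, I observe that $\underpartial^{(k)}(F \circ v - F \circ \bar v) = \underpartial^{(k)}(F \circ v)$ for $k \geq 1$, since $F \circ \bar v$ is constant. Therefore
\[
  \sum_{k=1}^M \|\underpartial^{(k)}(F \circ v - F \circ \bar v)\|_{L^2} \;=\; \|\underpartial(F \circ v)\|_{H^{M-1}},
\]
and Corollary \ref{C:DifferentiatedSobolevComposition} applies directly (its hypotheses $v(\mathbb T^3) \subset \mathfrak K$ and $\underpartial v \in H^{M-1}$ are satisfied because $v - \bar v \in H^M$), yielding
\[
  \|\underpartial(F \circ v)\|_{H^{M-1}} \;\leq\; C(M)\,\|\underpartial v\|_{H^{M-1}} \sum_{l=1}^M |F^{(l)}|_{\mathfrak{K}}\,\|v\|_{L^\infty}^{l-1}.
\]
Adding the two bounds and absorbing constants gives \eqref{E:ModifiedSobolevEstimateConstantArray}; this simultaneously proves $F \circ v - F \circ \bar v \in H^M$ by a standard density argument (approximate $v - \bar v$ by smooth functions and pass to the limit using the estimate itself). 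The only genuinely delicate point is the $L^2$ bound, where convexity of $\mathfrak K$ is essential to keep the interpolating segment inside the domain of $F'$; the rest is bookkeeping that invokes Corollary \ref{C:DifferentiatedSobolevComposition} as a black box.
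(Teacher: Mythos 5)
Your proof is correct and is essentially the intended derivation: the paper leaves this corollary to the standard Sobolev--Moser toolkit, and deducing it by combining the mean-value/Taylor bound on the segment (where convexity of $\mathfrak{K}$ enters) for the $L^2$ piece with Corollary \ref{C:DifferentiatedSobolevComposition} for $\| \underpartial (F \circ v) \|_{H^{M-1}}$ is exactly the expected route. Note only that the final density argument is unnecessary, since Corollary \ref{C:DifferentiatedSobolevComposition} already asserts the membership $\underpartial (F \circ v) \in H^{M-1}$, which together with the $L^2$ bound gives $F \circ v - F \circ \bar{v} \in H^M$ directly.
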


\begin{proposition} \label{P:F1FkLinfinityHN}
	Let $M \geq 1, l \geq 2$ be integers. Suppose that $\lbrace v_a \rbrace_{1 \leq a \leq l}$ are functions such that $v_a \in
    L^{\infty}$ for $1 \leq a \leq l,$ that $v_l \in H^M,$ and that
	$\underpartial v_a \in H^{M-1}$ for $1 \leq a \leq l - 1.$
	Then
	
	\begin{align}
		\| v_1 v_2 \cdots v_l \|_{H^M} \leq C(l,M) \Big\lbrace \| v_l \|_{H^M} \prod_{a=1}^{l-1} \| v_a \|_{L^{\infty}}  
		+ \sum_{a=1}^{l-1} \| \underpartial v_a \|_{H^{M-1}} \prod_{b \neq a} \| v_b \|_{L^{\infty}} \Big\rbrace.
	\end{align}	
\end{proposition}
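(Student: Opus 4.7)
The plan is to reduce to Proposition \ref{P:derivativesofF1FkL2} via a direct Leibniz expansion. For any multi-index $\vec{\alpha}$ with $|\vec{\alpha}| \leq M,$ the Leibniz rule gives
\[
\partial_{\vec{\alpha}}(v_1 \cdots v_l) = \sum_{\vec{\alpha}_1 + \cdots + \vec{\alpha}_l = \vec{\alpha}} c_{\vec{\alpha}_1, \ldots, \vec{\alpha}_l} (\partial_{\vec{\alpha}_1} v_1) \cdots (\partial_{\vec{\alpha}_l} v_l),
\]
with combinatorial constants $c_{\vec{\alpha}_1, \ldots, \vec{\alpha}_l}.$ I would estimate each product in $L^2,$ sum over partitions, and sum over $|\vec{\alpha}| \leq M,$ absorbing the bookkeeping into a constant $C(l,M).$

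I would split the partitions into two cases according to whether all derivatives land on $v_l.$ In Case A, the ``trivial'' partition $\vec{\alpha}_1 = \cdots = \vec{\alpha}_{l-1} = 0,$ $\vec{\alpha}_l = \vec{\alpha},$ the product is simply $v_1 \cdots v_{l-1} (\partial_{\vec{\alpha}} v_l),$ which obeys the $L^2$ bound
\[
\| v_1 \cdots v_{l-1} (\partial_{\vec{\alpha}} v_l) \|_{L^2} \leq \|v_l\|_{H^M} \prod_{a=1}^{l-1} \|v_a\|_{L^\infty},
\]
feeding directly into the first piece of the desired inequality. In Case B, at least one index $a < l$ carries derivatives, i.e.\ $|\vec{\alpha}_a| \geq 1$ for some $a < l;$ in particular $K \eqdef |\vec{\alpha}| \geq 1.$ For this case, I would apply Proposition \ref{P:derivativesofF1FkL2} with its $M$ taken to be the present $K,$ which gives
\[
\|(\partial_{\vec{\alpha}_1} v_1) \cdots (\partial_{\vec{\alpha}_l} v_l)\|_{L^2} \leq C(l,K) \sum_{a=1}^l \|\underpartial^{(K)} v_a\|_{L^2} \prod_{b \neq a} \|v_b\|_{L^\infty}.
\]
The key inclusions are $\|\underpartial^{(K)} v_l\|_{L^2} \leq \|v_l\|_{H^M}$ for the $a=l$ contribution (again reinforcing the first piece of the claimed bound), and, since $K \geq 1$ in Case B, $\|\underpartial^{(K)} v_a\|_{L^2} \leq \|\underpartial v_a\|_{H^{K-1}} \leq \|\underpartial v_a\|_{H^{M-1}}$ for each $a < l$ (which produces the sum on the right-hand side of the desired estimate).

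The only subtle point, and hence the main ``obstacle,'' is the asymmetric hypothesis that $v_l$ is controlled in full $H^M$ while the remaining factors $v_a,$ $a < l,$ are controlled only in $L^\infty$ together with $\underpartial v_a \in H^{M-1};$ this is precisely why the partition with no derivatives on any $v_a$ for $a < l$ (Case A) must be singled out and paired with the $H^M$ norm of $v_l,$ and why the elementary bound $\|\underpartial^{(K)} v_a\|_{L^2} \leq \|\underpartial v_a\|_{H^{K-1}}$ is legitimate only once $K \geq 1$ has been secured. Apart from this piece of bookkeeping, the proof is a routine application of Proposition \ref{P:derivativesofF1FkL2} and of the embeddings $\|\underpartial^{(K)} v\|_{L^2} \leq \|\underpartial v\|_{H^{K-1}}$ for $1 \leq K \leq M.$
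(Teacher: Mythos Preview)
Your proposal is correct and is essentially the approach the paper has in mind: the appendix does not give an explicit proof of this proposition but indicates that all the results there follow from the Gagliardo--Nirenberg inequality (Lemma~\ref{L:GN}) together with H\"older's inequality, and your argument does exactly that by invoking Proposition~\ref{P:derivativesofF1FkL2} (which encapsulates the Gagliardo--Nirenberg/H\"older step) after a Leibniz expansion. Your bookkeeping---in particular the separation of the trivial partition so that the $K\geq 1$ needed for $\|\underpartial^{(K)} v_a\|_{L^2}\leq\|\underpartial v_a\|_{H^{M-1}}$ is guaranteed---is correct and is the only point requiring care.
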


\begin{remark}
	The significance of this proposition is that only one of the functions, namely $v_l,$ is estimated in $L^2.$
\end{remark}

\begin{proposition}                                                                             \label{P:SobolevMissingDerivativeProposition}
    Let $M \geq 1$ be an integer, let $\mathfrak{K}$ be a compact, convex set, and
    let $F \in C_b^M(\mathfrak{K})$ be a function.
    Assume that $v_1$ is a function such that $v_1(\mathbb{T}^3) \subset \mathfrak{K},$ that $\underpartial v_1 \in 
    L^{\infty},$ and that $\underpartial^{(M)} v_1 \in L^2.$ Assume that $v_2 \in L^{\infty},$ that $\underpartial^{(M-1)} v_2 \in L^2,$ 
    and let $\vec{\alpha}$ be a spatial derivative multi-index with with $|\vec{\alpha}| = M.$ Then 
    $\partial_{\vec{\alpha}} \left((F \circ v_1 )v_2\right) - (F \circ v_1)\partial_{\vec{\alpha}} v_2 \in L^2,$ 
    and
        
 			\begin{align}       \label{E:SobolevMissingDerivativeProposition}
      	\|\partial_{\vec{\alpha}} & \left((F \circ v_1 )v_2\right) - (F \circ v_1)\partial_{\vec{\alpha}} v_2\|_{L^2} \notag \\
        	& \leq C(M) \Big\lbrace |F^{(1)}|_{\mathfrak{K}} \|\underpartial v_1 \|_{L^{\infty}} \| \underpartial^{(M-1)} v_2 \|_{L^2} 
					+ \| v_2 \|_{L^{\infty}} \| \underpartial v_1 \|_{H^{M-1}} \sum_{l=1}^M |F^{(l)}|_{\mathfrak{K}} 
    			\| v_1 \|_{L^{\infty}}^{l - 1} \Big\rbrace.
       \end{align}
\end{proposition}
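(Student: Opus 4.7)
The proof is a standard commutator (Moser / Kato--Ponce) estimate for a nonlinear composition. My plan is to expand the commutator via the Leibniz rule, bound each resulting product by H\"{o}lder's inequality combined with the Gagliardo--Nirenberg interpolation of Lemma \ref{L:GN}, and finally invoke Corollary \ref{C:DifferentiatedSobolevComposition} to convert the resulting bound on $\|\underpartial^{(M)}(F\circ v_1)\|_{L^2}$ into the chain-rule-friendly form appearing on the right-hand side of \eqref{E:SobolevMissingDerivativeProposition}.

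The starting identity is
\begin{align*}
\partial_{\vec{\alpha}}((F\circ v_1)\, v_2) - (F\circ v_1)\,\partial_{\vec{\alpha}} v_2
= \sum_{\substack{\vec{\beta}+\vec{\gamma}=\vec{\alpha}\\ |\vec{\beta}|\geq 1}}
c_{\vec{\beta},\vec{\gamma}}\, \partial_{\vec{\beta}}(F\circ v_1)\,\partial_{\vec{\gamma}} v_2,
\end{align*}
so the commutator removes precisely the term $\vec{\beta}=0$ for which no $L^p$ control on $\partial_{\vec{\alpha}} v_2$ is available. Setting $k:=|\vec{\beta}|$, I reduce matters to bounding each term with $1\leq k\leq M$. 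Assuming $M\geq 2$, I would apply H\"{o}lder with conjugate exponents $p=2(M-1)/(k-1)$ and $q=2(M-1)/(M-k)$ (reading $L^\infty$ at the endpoints), and then invoke Lemma \ref{L:GN} on $\underpartial(F\circ v_1)$ with $M'=k-1$, $N=M-1$, and on $v_2$ with $M'=M-k$, $N=M-1$. Writing $A:=\|\underpartial(F\circ v_1)\|_{L^\infty}$, $B:=\|\underpartial^{(M)}(F\circ v_1)\|_{L^2}$, $C_2:=\|v_2\|_{L^\infty}$, $D:=\|\underpartial^{(M-1)} v_2\|_{L^2}$, the bound takes the unified form
\[
\|\partial_{\vec{\beta}}(F\circ v_1)\,\partial_{\vec{\gamma}} v_2\|_{L^2} \leq C\, (AD)^{(M-k)/(M-1)}\, (BC_2)^{(k-1)/(M-1)},
\]
which Young's inequality collapses to $C(AD + BC_2)$, degenerating correctly to $AD$ at $k=1$ and to $BC_2$ at $k=M$.

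To finish, the chain rule gives $A\leq |F^{(1)}|_{\mathfrak{K}}\,\|\underpartial v_1\|_{L^\infty}$, reproducing the first term of \eqref{E:SobolevMissingDerivativeProposition}, while Corollary \ref{C:DifferentiatedSobolevComposition} applied to $\underpartial(F\circ v_1)\in H^{M-1}$ yields $B\leq C\,\|\underpartial v_1\|_{H^{M-1}}\sum_{l=1}^M |F^{(l)}|_{\mathfrak{K}}\,\|v_1\|_{L^\infty}^{l-1}$, reproducing the second. Summing over $\vec{\beta}$ and collecting multinomial constants completes the proof for $M\geq 2$; the case $M=1$ is immediate since $\partial_{\vec{\alpha}}(F\circ v_1)=F'(v_1)\,\partial_{\vec{\alpha}} v_1$ and only the first summand appears. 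The only step needing care is the bookkeeping at the Gagliardo--Nirenberg endpoints: when $k=1$ or $k=M$, one exponent vanishes and the interpolation inequality degenerates into a trivial H\"{o}lder bound with an $L^\infty$ factor. To avoid any formal division by zero I would split the argument into the cases $k=1$, $k=M$, and $2\leq k\leq M-1$ (vacuous if $M\leq 2$) and verify each directly; no genuine obstacle is anticipated beyond this endpoint bookkeeping.
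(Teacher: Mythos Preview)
Your proof is correct and follows precisely the approach the paper indicates: the appendix does not give an explicit proof of this proposition but states that all results there ``can be proved using methods similar to those used in \cite[Chapter 6]{lH1997} and in \cite{sKaM1981}\ldots based on a version of the Gagliardo--Nirenberg inequality\ldots together with repeated use of H\"{o}lder's inequality.'' Your Leibniz expansion, H\"{o}lder/Gagliardo--Nirenberg interpolation with the indicated exponents, and appeal to Corollary~\ref{C:DifferentiatedSobolevComposition} for the $B$ term constitute exactly this standard argument.
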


\begin{remark}
	The significance of this proposition is that the $M^{th}$ order derivatives of $v_2$ do not 
	appear on the right-hand side of \eqref{E:SobolevMissingDerivativeProposition}.
\end{remark}

\bibliographystyle{amsalpha}
\bibliography{JBib}

\def\cprime{$'$}
\providecommand{\bysame}{\leavevmode\hbox to3em{\hrulefill}\thinspace}
\providecommand{\MR}{\relax\ifhmode\unskip\space\fi MR }
\providecommand{\MRhref}[2]{%
  \href{http://www.ams.org/mathscinet-getitem?mr=#1}{#2}
}
\providecommand{\href}[2]{#2}
\begin{thebibliography}{{Sah}04}

\bibitem[And05]{mA2005}
Michael~T. Anderson, \emph{Existence and stability of even-dimensional
  asymptotically de {S}itter spaces}, Ann. Henri Poincar\'e \textbf{6} (2005),
  no.~5, 801--820. \MR{MR2219857 (2007c:53090)}

\bibitem[BRR94]{uBaRoR1994}
Uwe Brauer, Alan Rendall, and Oscar Reula, \emph{The cosmic no-hair theorem and
  the non-linear stability of homogeneous {N}ewtonian cosmological models},
  Classical Quantum Gravity \textbf{11} (1994), no.~9, 2283--2296.
  \MR{MR1296335 (95g:85008)}

\bibitem[BZ09]{lBnZ2009}
Lydia Bieri and Nina Zipser (eds.), \emph{Extensions of the stability theorem
  of the {Minkowski} space in general relativity}, American Mathematical
  Society, Providence, RI, 2009.

\bibitem[Car01]{sC2001}
Sean~M. Carroll, \emph{The cosmological constant}, Living Rev. Relativ.
  \textbf{4} (2001), 2001--1, 80 pp. (electronic). \MR{MR1810924 (2002a:83101)}

\bibitem[CB52]{CB1952}
Yvonne~Foures (Choquet)-Bruhat, \emph{{Th\'{e}or\`{e}me d'existence pour
  certains syst\`{e}mes d'\'{e}quations aux d\'{e}riv\'{e}es partielles non
  lin\'{e}aires}}, Acta Mathematica \textbf{88} (1952), 141--225.

\bibitem[CBG69]{cBgR1969}
Yvonne Choquet-Bruhat and Robert Geroch, \emph{Global aspects of the {C}auchy
  problem in general relativity}, Comm. Math. Phys. \textbf{14} (1969),
  329--335. \MR{MR0250640 (40 \#3872)}

\bibitem[CGP10]{pCgGdP2010}
Piotr~T. Chru{\'s}ciel, Gregory~J. Galloway, and Daniel Pollack,
  \emph{Mathematical general relativity: a sampler}, Bull. Amer. Math. Soc.
  (N.S.) \textbf{47} (2010), no.~4, 567--638. \MR{2721040}

\bibitem[CH89]{rCdH1962}
R.~Courant and D.~Hilbert, \emph{Methods of mathematical physics. {V}ol. {II}},
  Wiley Classics Library, John Wiley \& Sons Inc., New York, 1989, Partial
  differential equations, Reprint of the 1962 original, A Wiley-Interscience
  Publication. \MR{1013360 (90k:35001)}

\bibitem[Chr00]{dC2000}
Demetrios Christodoulou, \emph{The action principle and partial differential
  equations}, Annals of Mathematics Studies, vol. 146, Princeton University
  Press, Princeton, NJ, 2000. \MR{1739321 (2003a:58001)}

\bibitem[Chr07a]{dC2007b}
\bysame, \emph{The {E}uler equations of compressible fluid flow}, Bull. Amer.
  Math. Soc. (N.S.) \textbf{44} (2007), no.~4, 581--602 (electronic).
  \MR{MR2338367}

\bibitem[Chr07b]{dC2007}
\bysame, \emph{The formation of shocks in 3-dimensional fluids}, EMS Monographs
  in Mathematics, European Mathematical Society (EMS), Z\"urich, 2007.
  \MR{2284927 (2008e:76104)}

\bibitem[Chr08]{dC2008}
\bysame, \emph{Mathematical problems of general relativity. {I}}, Zurich
  Lectures in Advanced Mathematics, European Mathematical Society (EMS),
  Z\"urich, 2008. \MR{MR2391586 (2008m:83008)}

\bibitem[CK93]{dCsK1993}
Demetrios Christodoulou and Sergiu Klainerman, \emph{The global nonlinear
  stability of the {M}inkowski space}, Princeton Mathematical Series, vol.~41,
  Princeton University Press, Princeton, NJ, 1993. \MR{MR1316662 (95k:83006)}

\bibitem[Daf10]{cD2010}
Constantine~M. Dafermos, \emph{Hyperbolic conservation laws in continuum
  physics}, third ed., Grundlehren der Mathematischen Wissenschaften
  [Fundamental Principles of Mathematical Sciences], vol. 325, Springer-Verlag,
  Berlin, 2010. \MR{2574377}

\bibitem[dD21]{tD1921}
{Th\'{e}ophile} de~Donder, \emph{La gravifique {Einsteinienne}},
  Gauthier-Villars, Paris, 1921.

\bibitem[Ein17]{aE1917}
Albert Einstein, \emph{Kosmologische {Betrachtungen} zur allgemeinen
  {Relativit\"{a}tstheorie}}, Sitzungsberichte der K{\"{o}}niglich
  Preu{\ss}ische Akademie der Wissenschaften (Berlin) \textbf{142-152} (1917),
  235--237.

\bibitem[Fri54]{kF1954}
K.~O. Friedrichs, \emph{Symmetric hyperbolic linear differential equations},
  Comm. Pure Appl. Math. \textbf{7} (1954), 345--392. \MR{0062932 (16,44c)}

\bibitem[Fri86]{hF1986a}
Helmut Friedrich, \emph{On the existence of {$n$}-geodesically complete or
  future complete solutions of {E}instein's field equations with smooth
  asymptotic structure}, Comm. Math. Phys. \textbf{107} (1986), no.~4,
  587--609. \MR{MR868737 (88b:83006)}

\bibitem[Fri91]{hF1991}
\bysame, \emph{On the global existence and the asymptotic behavior of solutions
  to the {E}instein-{M}axwell-{Y}ang-{M}ills equations}, J. Differential Geom.
  \textbf{34} (1991), no.~2, 275--345. \MR{MR1131434 (92i:58191)}

\bibitem[GTZ99]{yGsTZ1998}
Yan Guo and A.~Shadi Tahvildar-Zadeh, \emph{Formation of singularities in
  relativistic fluid dynamics and in spherically symmetric plasma dynamics},
  Nonlinear partial differential equations ({E}vanston, {IL}, 1998), Contemp.
  Math., vol. 238, Amer. Math. Soc., Providence, RI, 1999, pp.~151--161.
  \MR{1724661}

\bibitem[Haw67]{sH1967}
S.~W. Hawking, \emph{The occurrence of singularities in cosmology. iii.
  causality and singularities}, Proceedings of the Royal Society of London.
  Series A, Mathematical and Physical Sciences \textbf{300} (1967), no.~1461,
  pp. 187--201 (English).

\bibitem[H{\"o}r97]{lH1997}
Lars H{\"o}rmander, \emph{Lectures on nonlinear hyperbolic differential
  equations}, Math\'ematiques \& Applications (Berlin) [Mathematics \&
  Applications], vol.~26, Springer-Verlag, Berlin, 1997. \MR{MR1466700
  (98e:35103)}

\bibitem[KM81]{sKaM1981}
Sergiu Klainerman and Andrew Majda, \emph{Singular limits of quasilinear
  hyperbolic systems with large parameters and the incompressible limit of
  compressible fluids}, Comm. Pure Appl. Math. \textbf{34} (1981), no.~4,
  481--524. \MR{615627 (84d:35089)}

\bibitem[KN03]{sKfN2003}
Sergiu Klainerman and Francesco Nicol{\`o}, \emph{The evolution problem in
  general relativity}, Progress in Mathematical Physics, vol.~25, Birkh\"auser
  Boston Inc., Boston, MA, 2003. \MR{MR1946854 (2004f:58036)}

\bibitem[Loi09]{jL2009}
Julien Loizelet, \emph{Solutions globales des \'equations
  d'{E}instein-{M}axwell}, Ann. Fac. Sci. Toulouse Math. (6) \textbf{18}
  (2009), no.~3, 565--610. \MR{2582443}

\bibitem[LR10]{hLiR2010}
Hans Lindblad and Igor Rodnianski, \emph{The global stability of {Minkowski}
  space-time in harmonic gauge}, Annals of Mathematics \textbf{171} (2010),
  no.~3, 1401--1477.

\bibitem[Maj84]{aM1984}
A.~Majda, \emph{Compressible fluid flow and systems of conservation laws in
  several space variables}, Applied Mathematical Sciences, vol.~53,
  Springer-Verlag, New York, 1984. \MR{748308 (85e:35077)}

\bibitem[Nir59]{lN1959}
L.~Nirenberg, \emph{On elliptic partial differential equations}, Ann. Scuola
  Norm. Sup. Pisa (3) \textbf{13} (1959), 115--162. \MR{MR0109940 (22 \#823)}

\bibitem[O'N83]{bO1983}
Barrett O'Neill, \emph{Semi-{R}iemannian geometry}, Pure and Applied
  Mathematics, vol. 103, Academic Press Inc. [Harcourt Brace Jovanovich
  Publishers], New York, 1983, With applications to relativity. \MR{719023
  (85f:53002)}

\bibitem[Pen65]{rP1965}
Roger Penrose, \emph{Gravitational collapse and space-time singularities},
  Phys. Rev. Lett. \textbf{14} (1965), 57--59. \MR{0172678 (30 \#2897)}

\bibitem[PR03]{jPbR2003}
P.~J.~E. Peebles and Bharat Ratra, \emph{The cosmological constant and dark
  energy}, Rev. Modern Phys. \textbf{75} (2003), no.~2, 559--606. \MR{1973358
  (2004c:83162)}

\bibitem[Ren05]{aR2005b}
Alan~D. Rendall, \emph{Theorems on existence and global dynamics for the
  {E}instein equations}, Living Reviews in Relativity \textbf{8} (2005), no.~6.

\bibitem[Rin08]{hR2008}
Hans Ringstr{\"o}m, \emph{Future stability of the {E}instein-non-linear scalar
  field system}, Invent. Math. \textbf{173} (2008), no.~1, 123--208.
  \MR{MR2403395}

\bibitem[RS09]{iRjS2009}
I.~{Rodnianski} and J.~{Speck}, \emph{{The stability of the irrotational
  {Euler-Einstein} system with a positive cosmological constant}}, arXiv
  preprint: http://arxiv.org/abs/0911.5501 (2009), 1--70.

\bibitem[{Sah}04]{vS2004}
V.~{Sahni}, \emph{{Dark Matter and Dark Energy}}, Lecture Notes in Physics,
  Berlin Springer Verlag ({E.~Papantonopoulos}, ed.), Lecture Notes in Physics,
  Berlin Springer Verlag, vol. 653, 2004, pp.~141--+.

\bibitem[Sog08]{cS2008}
Christopher~D. Sogge, \emph{Lectures on non-linear wave equations}, second ed.,
  International Press, Boston, MA, 2008. \MR{2455195 (2009i:35213)}

\bibitem[Spe09a]{jS2008b}
Jared Speck, \emph{The non-relativistic limit of the {E}uler-{N}ordstr\"om
  system with cosmological constant}, Rev. Math. Phys. \textbf{21} (2009),
  no.~7, 821--876. \MR{MR2553428}

\bibitem[Spe09b]{jS2008a}
\bysame, \emph{Well-posedness for the {E}uler-{N}ordstr\"om system with
  cosmological constant}, J. Hyperbolic Differ. Equ. \textbf{6} (2009), no.~2,
  313--358. \MR{MR2543324}

\bibitem[Spe10]{jS2010b}
\bysame, \emph{{The global stability of the Minkowski spacetime solution to the
  Einstein-nonlinear electromagnetic system in wave coordinates}}, arXiv
  preprint: http://arxiv.org/abs/1009.6038 (2010), 1--100.

\bibitem[SS98]{jSmS1998}
Jalal Shatah and Michael Struwe, \emph{Geometric wave equations}, Courant
  Lecture Notes in Mathematics, vol.~2, New York University Courant Institute
  of Mathematical Sciences, New York, 1998. \MR{MR1674843 (2000i:35135)}

\bibitem[SY79]{rSstY1979}
Richard Schoen and Shing~Tung Yau, \emph{On the proof of the positive mass
  conjecture in general relativity}, Comm. Math. Phys. \textbf{65} (1979),
  no.~1, 45--76. \MR{MR526976 (80j:83024)}

\bibitem[SY81]{rSstY1981}
\bysame, \emph{Proof of the positive mass theorem. {II}}, Comm. Math. Phys.
  \textbf{79} (1981), no.~2, 231--260. \MR{MR612249 (83i:83045)}

\bibitem[Tay97]{mT1997III}
Michael~E. Taylor, \emph{Partial differential equations. {III}}, Applied
  Mathematical Sciences, vol. 117, Springer-Verlag, New York, 1997, Nonlinear
  equations, Corrected reprint of the 1996 original. \MR{MR1477408 (98k:35001)}

\bibitem[Wal84]{rW1984}
Robert~M. Wald, \emph{General relativity}, University of Chicago Press,
  Chicago, IL, 1984. \MR{MR757180 (86a:83001)}

\bibitem[Wit81]{eW1981}
Edward Witten, \emph{A new proof of the positive energy theorem}, Comm. Math.
  Phys. \textbf{80} (1981), no.~3, 381--402. \MR{MR626707 (83e:83035)}

\end{thebibliography}

\ \\

\end{document}